\definecolor{darkgreen}{rgb}{0 255 0}
\newcommand{\assign}{:=}
\newcommand{\cdummy}{\cdot}
\newcommand{\nocomma}{}
\newcommand{\noplus}{}
\newcommand{\tmem}[1]{{\em #1\/}}
\newcommand{\tmmathbf}[1]{\mathbf{#1}}
\renewcommand{\mathbbm}[1]{\mathbb{#1}}
\newcommand{\tmop}[1]{\ensuremath{\operatorname{#1}}}
\newcommand{\tmstrong}[1]{\textbf{#1}}
\newcommand{\tmtextbf}[1]{{\bfseries{#1}}}
\newcommand{\tmtextit}[1]{{\itshape{#1}}}
\newcommand{\udots}{{\mathinner{\mskip1mu\raise1pt\vbox{\kern7pt\hbox{.}}\mskip2mu\raise4pt\hbox{.}\mskip2mu\raise7pt\hbox{.}\mskip1mu}}}
\newenvironment{enumeratealpha}{\begin{enumerate}[a{\textup{)}}] }{\end{enumerate}}
\newenvironment{enumeratenumeric}{\begin{enumerate}[1.] }{\end{enumerate}}
\newenvironment{itemizedot}{\begin{itemize} }{\end{itemize}}
\newenvironment{itemizeminus}{\begin{itemize} }{\end{itemize}}
\newenvironment{proof}{\noindent\textbf{Proof.\ }}{\hspace*{\fill}$\Box$\medskip}
\newtheorem{theorem}{Theorem}[section]
\newtheorem{corollary}[theorem]{Corollary}
\newtheorem{definition}[theorem]{Definition}
{\theorembodyfont{\rmfamily}\newtheorem{example}[theorem]{Example}}
\newtheorem{lemma}[theorem]{Lemma}
\newtheorem{proposition}[theorem]{Proposition}
{\theorembodyfont{\rmfamily}\newtheorem{remark}[theorem]{Remark}}
\def\bdb{\mathbf{b}}  
\def\bde{\mathbf{e}}  
\def\bdf{\mathbf{f}}  
\def\bdl{\mathbf{l}} 
\def\bdu{\mathbf{u}}  
\def\bdv{\mathbf{v}}  
\def\bdx{\mathbf{x}} 
\def\bdy{\mathbf{y}} 
\def\bdz{\mathbf{z}} 
\newcommand{\fsupp}{\ell^{0}}
\def\PolExp{\mathcal{PolExp}}
\def\PolyLog{\mathcal{PolyLog}}
\begin{document}

\title{Polynomial-exponential decomposition from moments}

\author{Bernard Mourrain\\
  Universit\'e C\^ote d'Azur, Inria, {\sc aromath}, France\\
\texttt{bernard.mourrain@inria.fr}}

\date{\today}

\maketitle

\begin{abstract}
  We analyze the decomposition problem of multivariate
  polynomial-exponential functions from their truncated series and
  present new algorithms to compute their decomposition.
  
  Using the duality between polynomials and formal power series, we
  first show how the elements in the dual of an Artinian algebra
  correspond to polynomial-exponential functions. They are also the
  solutions of systems of partial differential equations with constant
  coefficients.  We relate their representation to the inverse system
  of the isolated points of the characteristic variety.
  
  Using the properties of Hankel operators, we establish a
  correspondence between poly\-nomial-exponential series and Artinian
  Gorenstein algebras. We generalize Kronecker theorem to the
  multivariate case, by showing that the symbol of a Hankel operator
  of finite rank is a polynomial-exponential series and by connecting
  the rank of the Hankel operator with the decomposition of the
  symbol.
  
  A generalization of Prony's approach to multivariate decomposition
  problems is presented, exploiting eigenvector methods for solving
  polynomial equations. We show how to compute the frequencies and
  weights of a minimal polynomial-exponential decomposition, using the
  first coefficients of the series. A key ingredient of the approach
  is the flat extension criteria, which leads to a multivariate
  generalization of a rank condition for a Carath{\'e}odory-Fej{\'e}r
  decomposition of multivariate Hankel matrices. A new algorithm is
  given to compute a basis of the Artinian Gorenstein algebra, based
  on a Gram-Schmidt orthogonalization process and to decompose
  polynomial-exponential series.
  
  A general framework for the applications of this approach is
  described and illustrated in different problems. We provide
  Kronecker-type theorems for convolution operators, showing that a
  convolution operator (or a cross-correlation operator) is of finite
  rank, if and only if, its symbol is a polynomial-exponential
  function, and we relate its rank to the decomposition of its
  symbol. We also present Kronecker-type theorems for the
  reconstruction of measures as weighted sums of Dirac measures from
  moments and for the decomposition of polynomial-exponential
  functions from values. Finally, we describe an application of this
  method for the sparse interpolation of polylog functions from
  values.
\end{abstract}
\medskip \medskip
\noindent{}{\bf AMS classification:} 14Q20,  68W30, 47B35, 15B05
\vfill\newpage
{\tableofcontents}

\section{Introduction}

Sensing is a classical technique, which is nowadays heavily used in many
applications to transform continuous signals or functions into discrete data.
In other contexts, in order to analyze physical phenomena or the evolution of
our environments, large sequences of measurements can also be
produced from sensors, cameras or scanners to discretize the problem.

An important challenge is then to recover the underlying structure of the
observed phenomena, signal or function. This means to extract
from the data, a structured or sparse representation of the function, which is
easier to manipulate, to transmit or to analyze. Recovering this underlying
structure can boil down to compute an explicit representation of the function
in a given functional space. Usually, a ``good'' numerical approximation of
the function as a linear combination of a set of basis functions is
sufficient. The choice of the basis functions is very important from this
perspective. It can lead to a representation, with many non-zero coefficients
or a sparse representation with few coefficients, if the basis is well-suited.
To illustrate this point, consider a linear function over a bounded interval
of $\mathbbm{R}$. It has a sparse representation in the monomial basis since
it is represented by two coefficients. But its description as Fourier series
involves an infinite sequence of (decreasing) Fourier coefficients.

This raises the questions of how to determine a good functional space, in
which the functions we consider have a sparse representation, and how to
compute such a decomposition, using a small (if not minimal) amount of
information or measurements.

In the following, we consider a special reconstruction problem, which will
allow us to answer these two questions in several other contexts.
The functional space, in which we are going to compute sparse
representations is the space of polynomial-exponential functions.
The data that we use corresponds to the Taylor coefficients of these functions.
Hereafter, we call them {\em moments}.
They  are for instance the Fourier coefficients of a signal or the values of
a function sampled on a regular grid. It can also be High Order Statistical
moments or cumulants, used in Signal Processing to perform blind
identification {\cite{LiCo:2014}}. Many other examples of reconstruction from
moments can be found in Image Analysis, Computer Vision, Statistics \dots
The reconstruction problem consists in computing a polynomial-exponential
representation of the series from the (truncated) sequence of its moments.
We will see that this problem allows us to recover sparse
representations in several contexts.

With the multi-index notation: $\forall \alpha = (\alpha_1, \ldots, \alpha_n)
\in \mathbbm{N}^n, \forall \tmmathbf{u} \in \mathbbm{C}^n$,
$\alpha ! = \prod_{i = 1}^n \alpha_i !$, $\tmmathbf{u}^{\alpha} = \prod_{i =
1}^n u_i^{\alpha} $, $\tmmathbf{e}_{\xi} (\tmmathbf{y}) = \sum_{\alpha \in
\mathbbm{N}^n} \frac{1}{\alpha !} \xi^{\alpha} \tmmathbf{y}^{\alpha} =
e^{\langle \xi, \tmmathbf{{{y}}} \rangle} = e^{\xi_1 y_{1} + \cdots +
\xi_n y_n}$ for $\xi = (\xi_1, \ldots, \xi_n) \in \mathbbm{C}^n$, and for
$\mathbbm{C} [[{\bdy}]] =\mathbbm{C} [[y_1, \ldots, y_n]]$ the ring of
formal power series in $y_1, \ldots, y_n$, this decomposition problem
can be stated as follows.

\tmtextbf{Polynomial-exponential decomposition from moments:} {\tmem{Given
coefficients $\sigma_{\alpha}$ for $\alpha \in \tmmathbf{a} \subset
\mathbbm{N}^n$ of the series }}
\[ \sigma (\tmmathbf{y}) = \sum_{\alpha \in \mathbbm{N}^n} \sigma_{\alpha} 
   \frac{\tmmathbf{y}^{\alpha}}{\alpha !} \in \mathbbm{C} [[{\bdy}]],
\]
{\tmem{recover $r$ points $\mathbf{\xi}_1, \ldots, \mathbf{\xi}_r \in
\mathbbm{C}^n$ and $r$ polynomial coefficients $\omega_i (\tmmathbf{y}) \in
\mathbbm{C} [\tmmathbf{y}]$ such that}}
\begin{equation}
  \sigma (\tmmathbf{y}) = \sum_{i = 1}^r \hspace{0.25em} \omega_i
  (\tmmathbf{y}) \tmmathbf{e}_{{\xi_i}} (\tmmathbf{y}).
  \label{eq:decompseries}
\end{equation}
A function of the form (\ref{eq:decompseries}) is called a
{\tmem{polynomial-exponential}} function. We aim at recovering the minimal
number $r$ of terms in the decomposition (\ref{eq:decompseries}). Since only
the coefficients $\sigma_{\alpha}$ for $\alpha \in \tmmathbf{a}$ are known,
computing the decomposition (\ref{eq:decompseries}) means that the
coefficients of $\tmmathbf{y}^{\alpha}$ are the same in the series on both
sides of the equality, for $\alpha \in \tmmathbf{a}$.

\subsection{Prony's method in one variable\label{sec:pronyunivar}}

One of the first work in this area is probably due to Gaspard-Clair-Fran{\c
c}ois-Marie Riche de Prony, mathematician and engineer of the {\'E}cole
Nationale des Ponts et Chauss{\'e}es. He was working on Hydraulics. To analyze
the expansion of various gases, he proposed in {\cite{Prony1795}}
a method to fit a sum of exponentials at equally spaced data points in order
to extend the model at intermediate points. More precisely, he was studying
the following problem:

Given a function $h \in C^{\infty} (\mathbbm{R})$ of the form
\begin{equation}
  x \in \mathbbm{R} \mapsto h (x) = \sum_{i = 1}^r \hspace{0.25em} \omega_i
  \hspace{0.25em} e^{f_i x} \in \mathbbm{C} \label{eq:sumexpuni}
\end{equation}

where \ $f_1, \ldots, f_r \in \mathbbm{C}$ are pairwise distinct, $\omega_i
\in \mathbb{C} \setminus \{0\}$, the problem consists in recovering
\begin{itemize}
  \item the distinct \tmtextit{frequencies} $f_1, \ldots, f_r \in \mathbbm{C}$,
  
  \item the {\tmem{coefficients}} $\omega_i \in \mathbb{C} \setminus \{0\}$,
\end{itemize}
Figure \ref{fig:freq} shows an example of such a signal, which is the superposition of several
``oscillations'' with different frequencies.

\begin{figure}[h]
\begin{center}
  \resizebox{12cm}{!}{\includegraphics{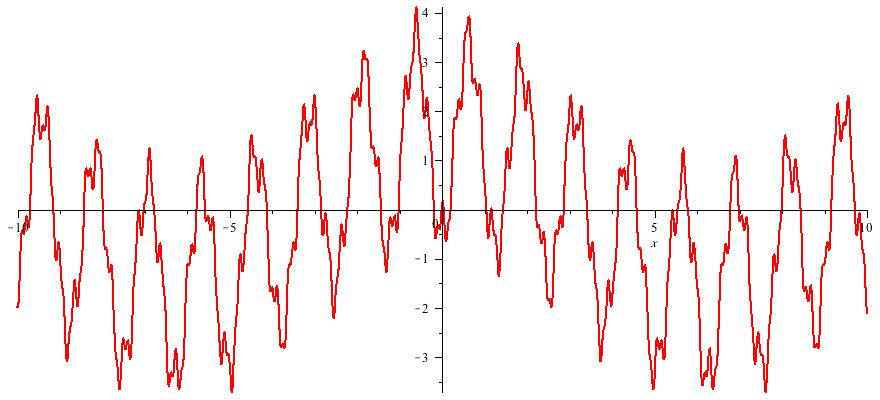}}
\end{center}
 \caption{\label{fig:freq}A superposition of oscillations with different frequencies}
\end{figure}

The approach proposed by G. de Prony can be reformulated into a truncated
series reconstruction problem. By choosing an arithmetic progression of points
in $\mathbbm{R}$, for instance the integers $\mathbbm{N}$, we
can associate to $h$, the generating series:
\[ \sigma_h (y) = \sum_{a \in \mathbbm{N}} h (a)  \frac{y^a}{a!} \in
   \mathbbm{C} [[y]], \]
where $\mathbbm{C} [[y]]$ is the ring of formal power series in the variable
$y$. If $h$ is of the form (\ref{eq:sumexpuni}), then
\begin{equation}
  \sigma_h (y) = \sum_{i = 1}^r \hspace{0.25em} \sum_{a \in \mathbbm{N}}
  \omega_i \xi_i^a \frac{y^a}{a!} = \sum_{i = 1}^r \omega_i e^{\xi_i y} 
  \label{eq:decompsigma}
\end{equation}
where $\xi_i = e^{f_i}$. Prony's method consists in reconstructing
the decomposition (\ref{eq:decompsigma}) from a small number of coefficients
$h (a)$ for $a = 0, \ldots, 2 r - 1$. It performs as
follows:
\begin{itemize}
  \item From the values $h (a)$ for $a \in [0, \ldots, 2 r - 1]$ ,
  compute the polynomial
  \[
  p (x) = \prod_{i = 1}^r (x - \xi_i) = x^r - \sum_{j =  0}^{r - 1} p_j x^j, \]
  which roots are $\xi_i = e^{f_i}$, $i = 1, \ldots, r$ as follows. Since it
  satisfies the recurrence relations
  \[ \forall j \in [0, \ldots, r - 1], \hspace{1cm} \sum^{r - 1}_{i = 0}
     \sigma_{j + i} p_i - \sigma_{j + r} = -\sum_{i = 1}^r w_i \xi_i^j p
     (\xi_i) = 0, \]
  it is the unique solution of the system:    
  \begin{equation}
    \left(\begin{array}{ccccc}
      \sigma_0 & \sigma_1 & \ldots &  & \sigma_{r - 1}\\
      \sigma_1 &  &  & \udots & \\
      \vdots &  & \udots &  & \vdots\\
      & \udots &  &  & \\
      \sigma_{r - 1} &  & \ldots &  & \sigma_{2 r - 2}
    \end{array}\right) \left(\begin{array}{c}
      p_0\\
      p_1\\
      \vdots\\
      \vdots\\
      p_{r - 1}
    \end{array}\right) = \left(\begin{array}{c}
      \sigma_r\\
      \sigma_{r + 1}\\
      \vdots\\
      \vdots\\
      \sigma_{2 r - 1}
    \end{array}\right). \label{syst1}
  \end{equation}
  \item Compute the roots $\xi_1, \ldots, \xi_r$ of the polynomial $p
  (x)$.
  
  \item To determine the weight coefficients $w_1, \ldots,
  w_r$, solve the following linear (Vandermonde) system:

  \[ \left(\begin{array}{cccc}
       1 & 1 & \ldots & 1\\
       \xi_1 & \xi_2 & \ldots & \xi_r\\
       \vdots & \vdots &  & \vdots\\
       \xi_1^{r - 1} & \xi_2^{r - 1} & \ldots & \xi_r^{r - 1}
     \end{array}\right) \left(\begin{array}{c}
       w_1\\
       w_2\\
       \vdots\\
       w_r
     \end{array}\right) = \left(\begin{array}{c}
       h_0\\
       h_1\\
       \vdots\\
       h_{r - 1}
     \end{array}\right). \]
\end{itemize}
This approach can be improved by computing the roots $\xi_1, \ldots,
\xi_r$, directly as the generalized eigenvalues of a pencil of Hankel
matrices. Namely, Equation (\ref{syst1}) implies that
\begin{equation}
  \label{linsyst2} \overbrace{\left(\begin{array}{ccccc}
    \sigma_0 & \sigma_1 & \ldots &  & \sigma_{r - 1}\\
    \sigma_1 &  &  & \udots & \\
    \vdots &  & \udots &  & \vdots\\
    & \udots &  &  & \\
    \sigma_{r - 1} &  & \ldots &  & \sigma_{2 r - 2}
  \end{array}\right)}^{H_0}  \overbrace{\left(\begin{array}{ccccc}
    0 &  &  &  & p_0\\
    1 & \ddots &  &  & p_1\\
    & \ddots & \ddots &  & \vdots\\
    &  & \ddots & 0 & \vdots\\
    &  &  & 1 & p_{r - 1}
  \end{array}\right)}^{M_p} = \overbrace{\left(\begin{array}{ccccc}
    \sigma_1 & \sigma_2 & \ldots &  & \sigma_r\\
    \sigma_2 &  &  & \udots & \\
    \vdots &  & \udots &  & \vdots\\
    & \udots &  &  & \\
    \sigma_r &  & \ldots &  & \sigma_{2 r - 1}
  \end{array}\right)}^{H_1},
\end{equation}
so that the generalized eigenvalues of the pencil $(H_1, H_0)$ are the
eigenvalues of the companion matrix $M_p$ of $p (x)$, that is, its the roots
$\xi_1, \ldots, \xi_r$. This variant of Prony's method is also called
the {\tmem{pencil method}} in the literature.

For numerical improvement purposes, one can also chose an arithmetic
progression \ $\frac{a}{T}$ \ and $a \in [0, \ldots, 2 r - 1]$, with
$T \in \mathbbm{R}^+$ of the same order of magnitude as the frequencies $f_i$.
The roots of the polynomial $p$ are then $\xi_i = e^{\frac{f_i}{T}}$.

\subsection{Related work}

The approximation of functions by a linear combination of exponential
functions appears in many contexts. It is the basis of Fourier analysis, where
infinite series are involved in the decomposition of functions. The
frequencies of these infinite sums of exponentials belong to an infinite grid
on the imaginary axis in the complex plane.

An important problem is to represent or approximate a function by a sparse or
finite sum of exponential terms, removing the constraints on the frequencies.
This problem has a long history and many applications, in particular in signal
processing {\cite{golub_separable_2003}}, {\cite{pereyra_exponential_2010}}.

Many works have been developed in the one dimensional case, which refers to
the well-known problem of {\tmem{parameter estimation for exponential sums}}.
A first family of methods can be classified as Prony-type methods. To take
into account the problem of noisy data, the recurrence relation is computed by
minimization techniques {\cite{pereyra_exponential_2010}}[chap. 1]. Another type of
methods is called Pencil-matrix {\cite{pereyra_exponential_2010}}[chap. 1]. Instead of
computing a recurrence relation, the generalized eigenvalues of a pencil of
Hankel matrices are computed.
The survey paper {\cite{golub_separable_2003}} describes some of these minimization
techniques implementing a variable projection algorithm and their applications
in various domains, including antenna analysis with so-called MUSIC
{\cite{SwindlehurstKailath92}} or ESPRIT {\cite{RoyKailath90}} methods.
In {\cite{BeyMon05}}, another approach based on conjugate-eigenvalue
computation and low rank Hankel matrix approximation is proposed. The
extension of this method in {\cite{PotTas10}}, called Approximate Prony
Method, is using controlled perturbations.
The problem of accurate reconstruction of sums of univariate polynomial-exponential
functions associated to confluent Prony systems has been investigated in \cite{batenkov_accuracy_2013}, where polynomial derivatives of Dirac measures correspond to polynomial weights. 

The reconstruction problem has also been studied in the multi-dimensional case
{\cite{And10}}, {\cite{potts_parameter_2013}},
{\cite{kunis_multivariate_2016}}.
These methods are applicable for problems where the degree of the moments is high enough to
recover the multivariate solutions from some projections in one dimension.
Even more recently, techniques for solving polynomial equations, which rely on the computation of $H$-bases, have been exploited in this context {\cite{sauer_pronys_2016-1}}.

The theory builds on the properties of Hankel matrices of finite
rank, starting with a result due to {\cite{kronecker_zur_1880}} in the one
variable case. This result states that there is an explicit correlation
between polynomial-exponential series and Hankel operators of finite rank. The
literature on Hankel operators is huge and mainly focus on one variable (see
e.g. {\cite{peller_excursion_1998}}). Kronecker's result has been extended to
several variables for multi-index sequences {\cite{power_finite_1982}},
{\cite{barachart_realisation_1985}}, {\cite{andersson_kronecker_2015}}, for
non-commutative variables {\cite{fliess_series_1970}}, for integral
cross-correlation operators {\cite{andersson_general_2015}}. In some cases as in
{\cite{andersson_general_2015}}, methods have been proposed to compute the
rank in terms of  the polynomial-exponential decomposition.

Hankel matrices are central in the theory of  Pad\'e approximants for functions
of one variable. Here also a large literature exists for univariate
Pad\'e approximants: see e.g. \cite{baker_pade_1996} for approximation properties,
\cite{beckermann_numerical_2007} for numerical stability problems,
\cite{beckermann_uniform_1994}, \cite{gathen_modern_2013} for algorithmic aspects.
The extension to multivariate functions is much less developed
{\cite{power_finite_1982}}, {\cite{cuyt_how_1999}}.

This type of approaches is also used in sparse interpolation of black box
polynomials. In the methods developed in {\cite{BenOrTiwari:1988}},
{\cite{Zippel:1979}}, further improved in {\cite{GiLaWe09}}, the sparse
polynomial is evaluated at points of the form $(\omega_1^k, \ldots,
\omega_n^k)$ where $\omega_i$ are prime numbers or primitive roots of unity of
co-prime order. The sparse decomposition of the black box polynomial is
computed from its values by a univariate Prony-like method.

Hankel matrices and their kernels also play an important role in error
correcting codes. Reed-Solomon codes, obtained by evaluation of a polynomial
at a set of points and convolution by a given polynomial, can be decoded from
their syndrome sequence by computing the error locator polynomial
{\cite{macwilliams_theory_1977}}[chap. 9]. This is a linear recurrence
relation between the syndrome coefficients, which corresponds to a non-zero
element in the kernel of a Hankel matrix. Berlekamp
{\cite{berlekamp_nonbinary_1968}} and Massey
{\cite{massey_shift-register_1969}} proposed an efficient algorithm to compute
such polynomials. Sakata extended the approach to compute Gr{\"o}bner bases of
polynomials in the kernel of a multivariate Hankel matrix
{\cite{sakata_finding_1988}}. The computation of multivariate linear
recurrence relations have been further investigated, e.g. in
{\cite{fitzpatrick_finding_1990}} and more recently in
{\cite{berthomieu_linear_2015}}.

Computing polynomials in the kernel of Hankel matrices and their roots is also
the basis of the method proposed by J.J. Sylvester
{\cite{sylvester_essay_1851}} to decompose binary forms. This approach has
been extended recently to the decomposition of multivariate symmetric and
multi-symmetric tensors in {\cite{BCMT:2009:laa}}, {\cite{bernardi_general_2013}}.

A completely different approach, known as {\tmem{compressive
sensing}}, has been developed over the last decades to compute sparse
decompositions of functions (see e.g. {\cite{CandesRombergTao06}}). In this
approach, a (large) dictionary of functions is chosen and a sparse combination
with few non-zero coefficients is computed from some observations. This boils
to find a sparse solution $X$ of an underdetermined linear system $Y = A X$.
Such a solution, which minimizes the $\ell^0$ ``norm'' can be computed by $\ell^1$
minimization, under some hypothesis.

For the sparse reconstruction problem from a discrete set of frequencies, it
is shown in {\cite{CandesRombergTao06}} that the $\ell^1$ minimization provides a
solution, for enough Fourier coefficients (at least $4 r$) chosen at random.
As shown in {\cite{plonka_prony_2014}}, this problem can also be solved by
a Prony-like approach, using only $2 r$ Fourier coefficients.

\subsection{Contributions}

In this paper, we analyze the problem of sparse decomposition of series from
an algebraic point of view and propose new methods to compute such
decompositions.

We exploit the duality between polynomials and formal power series. The
formalism is strongly connected to the inverse systems introduced by F.S
Macaulay \cite{macaulay_algebraic_1916}: evaluations at points correspond to
exponential functions and the multiplication to derivation.
This duality between polynomial equations and partial differential equations
has been investigated previously, for instance in \cite{riquier_les_1910},
\cite{grobner_uber_1937}, 
\cite{malgrange_existence_1956}, 
\cite{emsalem_geometrie_1978}, 
\cite{hormander_introduction_1990}
\cite{pedersen_basis_1999}, 
\cite{oberst_constructive_2001}, 
\cite{hakopian_partial_2004}. 
We give here an explicit description of the elements in the dual of an Artinian algebra (Theorem \ref{thm:artindual}), in terms of polynomial-exponential functions associated
to  the inverse system of the roots of the characteristic variety.
This gives a new and complete characterization of the
solutions of partial differential equations with constant coefficients for
zero-dimensional partial differential systems (Theorem \ref{thm:edp}).

The sparse decomposition problem is tackled by studying the Hankel
operators associated to the generating series. This approach has
been exploited in many contexts: signal processing
(see e.g. \cite{pereyra_exponential_2010}),
functional analysis (see e.g. \cite{peller_excursion_1998}),
but also in tensor decomposition problems \cite{BCMT:2009:laa}, \cite{bernardi_general_2013},
or polynomial optimization \cite{lasserre_moment_2013}. Our algebraic formalism
allows us to establish a correspondence between polynomial-exponential
series and Artinian Gorenstein algebras using these Hankel operators.

A fundamental result on univariate Hankel operators of finite rank is
Kronecker's theorem \cite{kronecker_zur_1880}.  Several works have
already proposed multivariate generalization of Kronecker's theorem, including methods to
compute the rank of the Hankel operator from its symbol
\cite{power_finite_1982}, \cite{gu_finite_1999},
\cite{olshevsky_2d-extension_2010}, \cite{andersson_general_2015},
\cite{andersson_kronecker_2015}.
We prove a new multivariate generalization of Kronecker's theorem (Theorem
\ref{thm:gorenstein}), showing that the symbol associated to a Hankel
operator of finite rank is a polynomial-exponential series and
describing its rank in terms of the polynomial weights.  More
precisely, we show that, as in the univariate case, the rank of the
Hankel operator is simply the sum of the dimensions of the vector spaces
spanned by all the derivatives of the polynomial weights of its
symbol, represented as a polynomial-exponential series.

Exploiting classical eigenvector methods for solving polynomial
equations, we show how to compute a polynomial-exponential
decomposition, using the first coefficients of the generating series
(Algorithms \ref{algo:genprony1} and \ref{algo:genprony2}).
In particular, we show how to recover the weights in the decomposition
from the eigenspaces, for simple roots and multiple roots. Compared to
methods used in \cite{kunis_multivariate_2016} or
\cite{sauer_pronys_2016-1}, we need not solve polynomial equations
deduced from elements in the kernel of the Hankel operator. We 
directly apply linear algebra operations on some truncated Hankel
matrices to recover the decomposition. This allow us to use moments of lower degree
compared for instance to methods proposed in 
\cite{And10},
\cite{potts_parameter_2013},
\cite{kunis_multivariate_2016}.

This approach also leads to a multivariate generalization of
Carath{\'e}odory-Fej{\'e}r decomposition
\cite{caratheodory_uber_1911}, which has been recently generalized to
positive semidefinite multi-level block Toeplitz matrices in
\cite{yang_vandermonde_2016} and \cite{andersson_kronecker_2015}.
These results apply for simple roots or constant weights.
Propositions \ref{dec:vdm:simple} and \ref{dec:vdm:multiple} provide
such a decomposition of Hankel matrices of a polynomial-exponential symbol
in terms of the weights and generalized Vandermonde matrices,
for simple and multiple roots.

To have a complete method for decomposing a polynomial-exponential
series from its first moments using the proposed Algorithms
\ref{algo:genprony1} and \ref{algo:genprony2}, one needs to determine
a basis of the Artinian Gorenstein algebra. A new algorithm is
described to compute such a basis (Algorithm \ref{algo:orthobasis}),
which applies a Gram-Schmidt orthogonalization process and computes
pairwise orthogonal polynomial bases.  The method is connected to
algorithms to compute linear recurrence relations of multi-index
series, such as Berlekamp-Massey-Sakata algorithm
\cite{sakata_finding_1988}, or \cite{fitzpatrick_finding_1990},
\cite{berthomieu_linear_2015}. It proceeds inductively by projecting
orthogonally new elements on the space spanned by the previous
orthogonal polynomials, instead of computing the discrepancy of the
new elements. Thus, it can compute more general basis of the ideal of
recurrence relations, such as Border Bases \cite{m-99-nf},
\cite{MoTr:2005:issac}.

A key ingredient of the approach is the flat extension criteria.  In
Theorem \ref{thm:flatext}, we provide such a criteria based on rank
conditions, for the existence of a polynomial-exponential series
extension. It generalizes results from \cite{curto_solution_1996},
\cite{laurent_generalized_2009}.

A general framework for the application of this approach based on
generating series is described, extending the construction of
{\cite{peter_generalized_2013}} to the multivariate setting. We
illustrate it in different problems, showing that several results in
analysis are consequences of the algebraic multivariate Kronecker
theorem (Theorem \ref{thm:gorenstein}).  In particular, we provide
Kronecker-type theorems (Theorems \ref{thm:conv1}, \ref{thm:conv2} and
\ref{thm:discreteconv}) for convolution operators (or
cross-correlation operators), considered in
{\cite{andersson_general_2015}}, {\cite{andersson_kronecker_2015}}.
Theorem \ref{thm:gorenstein} implies that the rank of a convolution
(or correlation) operator with a polynomial-exponential symbol is the
sum of the dimensions of the space spanned by all the derivatives of
the polynomial weights of the symbol. By Lemma \ref{lm:mu=rank}, this
gives a simple description of the output of the method proposed in
{\cite{andersson_general_2015}} to compute the rank of convolution
operators.  We also deduce Kronecker-type theorems for the
reconstruction of measures as weighted sums of Dirac measures from
moments and the decomposition of polynomial-exponential functions from
values.
Finally, we describe a new approach for the sparse interpolation of
polylog functions from values, with Kronecker-type results on their
decomposition. Compared to previous approaches such as
\cite{BenOrTiwari:1988}, \cite{Zippel:1979}, \cite{GiLaWe09}, we don't
project the problem in one dimension and recover sparse polylog
terms from the multiplicity structure of the roots.

\section{Duality and Hankel operators}

In this section, we consider polynomials and series with coefficients in a
field $\mathbbm{K}$ of characteristic $0$. In the applications, we are going
to take $\mathbbm{K}=\mathbbm{C}$ or $\mathbbm{K}=\mathbbm{R}$.

We are going to use the following notation: $\mathbbm{K} [x_1, \ldots, x_n]
=\mathbbm{K} [\tmmathbf{x}] = R$ is the ring of polynomials in the variables
$x_1, \ldots, x_n$ with coefficients in the field $\mathbbm{K}$, $\mathbbm{K}
[[y_1, \ldots, y_n]] =\mathbbm{K} [[\tmmathbf{y}]]$ is the ring of formal
power series in the variables $y_1, \ldots, y_n$ with coefficients in
$\mathbbm{K}$. For a set $B \subset \mathbbm{K} [\tmmathbf{x}]$, $B^+ =
\cup_{i = 1}^n x_i B \cup B$, $\partial B = B^+ \setminus B$. For $\alpha,
\beta \in \mathbbm{N}^n$, we say that $\alpha \ll \beta$ if $\alpha_i
\leqslant \beta_i$ for $i = 1, \ldots, n$.

\subsection{Duality}

In this section, we describe the natural isomorphism between the ring of
formal power series and the dual of $R =\mathbbm{K} [x_1, \ldots,
x_n]$. It is given by the following pairing:
\begin{eqnarray*}
  \mathbbm{K} [[y_1, \ldots, y_n]] \times \mathbbm{K} [x_1, \ldots,  x_n] & \rightarrow & \mathbbm{K}\\
  (\tmmathbf{y}^{\alpha}, \tmmathbf{x}^{\beta}) & \mapsto
  & \left\langle \tmmathbf{y}^{\alpha} |  
  \tmmathbf{x}^{\beta} \right\rangle = \left\{ \begin{array}{ll}
    \alpha ! & \tmop{if} \alpha = \beta\\
    0 & \tmop{otherwise}.
  \end{array} \right.
\end{eqnarray*}
Namely, if $\Lambda \in \tmop{Hom}_{\mathbbm{K}} (\mathbbm{K} [\tmmathbf{x}]
\nocomma, \mathbbm{K}) = R^{\ast}$ is an element of the dual of $\mathbbm{K}
[\tmmathbf{x}]$, it can be represented by the series:
\begin{equation}
  \Lambda (\tmmathbf{y}) = \sum_{\alpha \in \mathbbm{N}^n} \Lambda
  (\tmmathbf{x}^{\alpha})  \frac{\tmmathbf{y}^{\alpha}}{\alpha !} \in
  \mathbbm{K} [[y_1, \ldots, y_n]], \label{eq:dualfps}
\end{equation}
so that we have $\langle \Lambda (\tmmathbf{y}) | 
 \tmmathbf{x}^{\alpha} \rangle = \Lambda (\tmmathbf{x}^{\alpha})$.
The map $\Lambda \in R^{\ast} \mapsto \sum_{\alpha \in \mathbbm{N}^n} \Lambda
(\tmmathbf{x}^{\alpha})  \frac{\tmmathbf{y}^{\alpha}}{\alpha !} \in
\mathbbm{K} [[\tmmathbf{y}]]$ is an isomorphism and any series \ $\sigma
(\tmmathbf{y}) = \sum_{\alpha \in \mathbbm{N}^n} \sigma_{\alpha} 
\frac{\tmmathbf{y}^{\alpha}}{\alpha !} \in \mathbbm{K} [[\tmmathbf{y}]]$ can
be interpreted as a linear form
\[ p = \sum_{\alpha \in A \subset \mathbbm{N}^n} p_{\alpha}
   \tmmathbf{x}^{\alpha} \in \mathbbm{K} [\tmmathbf{x}] \mapsto \langle \sigma
   \mid p \rangle = \sum_{\alpha \in A \subset \mathbbm{N}^n} p_{\alpha}
   \sigma_{\alpha}. \]
Any linear form $\sigma \in R^{\ast}$ is uniquely defined by its
coefficients $\sigma_{\alpha} = \langle \sigma \mid
\tmmathbf{x}^{\alpha} \rangle$ for $\alpha \in \mathbbm{N}^n$, which are
called the {\tmem{moments}} of $\sigma$.

From now on, we identify the dual $\tmop{Hom}_{\mathbbm{K}} (\mathbbm{K}
[\tmmathbf{x}], \mathbbm{K})$ with $\mathbbm{K} [[\tmmathbf{y}]]$.
Using this identification, the dual basis of the monomial basis
$(\tmmathbf{x}^{\alpha})_{\alpha \in \mathbbm{N}^n}$ is $\left(
\frac{\tmmathbf{y}^{\alpha}}{\alpha !} \right)_{\alpha \in \mathbbm{N}^n}$.

If $\mathbbm{K}$ is a subfield of a field $\mathbbm{L}$, we have the
embedding $\mathbbm{K} [[\tmmathbf{y}]] \hookrightarrow \mathbbm{L}
[[\tmmathbf{y}]]$, which allows to identify an element of $\mathbbm{K}
[\tmmathbf{x}]^{\ast}$ with an element of $\mathbbm{L} [\tmmathbf{x}]^{\ast}$.

The truncation of an element $\sigma (\tmmathbf{y}) = \sum_{\alpha \in
\mathbbm{N}^n} \sigma_{\alpha}  \frac{\tmmathbf{y}^{\alpha}}{\alpha !} \in
\mathbbm{K} [[\tmmathbf{y}]]$ in degree $d$ \ is $\sum_{| \alpha | \leqslant
d} \sigma_{\alpha}  \frac{\tmmathbf{y}^{\alpha}}{\alpha !}$. It is denoted
$\sigma (\tmmathbf{y}) + ((\tmmathbf{y}))^{d + 1}$, that is, the class of
$\sigma$ modulo the ideal \ $(y_1, \ldots, y_n)^{d + 1} \subset \mathbbm{K}
[[\tmmathbf{y}]]$.

For an ideal $I \subset R =\mathbbm{K} [\tmmathbf{x}]$, we denote by $I^{\bot}
\subset \mathbbm{K} [[\tmmathbf{y}]]$ the space of linear forms $\sigma \in
\mathbbm{K} [[\tmmathbf{y}]]$, such that $\forall p \in I$, $\langle \sigma
\mid p \rangle = 0$. Similarly, for a vector space $D \subset \mathbbm{K}
[[\tmmathbf{y}]]$, we denoted by $D^{\bot} \subset \mathbbm{K} [\tmmathbf{x}]$
the space of polynomials $p \in \mathbbm{K} [\tmmathbf{x}]$, such that
$\forall \sigma \in D$, $\langle \sigma \mid p \rangle = 0$. If $D$
is closed for the $(\tmmathbf{y})$-adic topology, then $D^{\perp \perp} = D$
and if $I \subset \mathbbm{K} [\tmmathbf{x}]$ is an ideal $I^{\perp}$, then
$I^{\perp \perp} = I$.

The dual space $\tmop{Hom} (\mathbbm{K} [\tmmathbf{x}], \mathbbm{K})
\equiv \mathbbm{K} [[\tmmathbf{y}]]$ has a natural structure of $\mathbbm{K}
[\tmmathbf{x}]$-module, defined as follows: $\forall \sigma (\tmmathbf{y}
) \in \mathbbm{K} [[\tmmathbf{y}]], \forall p
(\tmmathbf{x}), q (\tmmathbf{x}) \in \mathbbm{K} [\tmmathbf{x}]
\nocomma$,
\begin{eqnarray*}
  \langle p (\tmmathbf{x}) \star \sigma (\tmmathbf{y} )
  \mid q (\tmmathbf{x}) \rangle & = & \langle \sigma (\tmmathbf{y}
  ) \mid p (\tmmathbf{x}) q (\tmmathbf{x}) \rangle
.
\end{eqnarray*}
We check that $\forall \sigma \in \mathbbm{K} [[\tmmathbf{y}]]
\nocomma, \forall p, q \in \mathbbm{K} [\tmmathbf{x}]$, $(p
q) \star \sigma = p \star (q \star \sigma)$. See e.g.
{\cite{emsalem_geometrie_1978}}, {\cite{mourrain_isolated_1996}} for more
details.

For $p = \sum_{\beta} p_{\beta} \tmmathbf{x}^{\beta} \in \mathbbm{K}
[\tmmathbf{x}]$ and $\sigma = \sum_{\alpha \in \mathbbm{N}^n} \sigma_{\alpha} 
\frac{\tmmathbf{y}^{\alpha}}{\alpha !} \in \mathbbm{K} [[\tmmathbf{y}]]$, the
series expansion of $p \star \sigma$ is $p \star \sigma = \sum_{\alpha \in
\mathbbm{N}^n} \rho_{\alpha}  \frac{\tmmathbf{y}^{\alpha}}{\alpha !} \in
\mathbbm{K} [[\tmmathbf{y}]]$ with $\forall \alpha \in \mathbbm{N}^n,$
\[
\rho_{\alpha} = \sum_{\beta} p_{\beta} \sigma_{\alpha + \beta}.
\]
Identifying $\mathbbm{K} [\tmmathbf{x}]$ with the set $\fsupp
(\mathbbm{K}^{\mathbbm{N}^n})$ of sequences $p =
(p_{\alpha})_{\alpha \in \mathbbm{N}^n}$ of finite support (i.e. a finite number of non-zero terms), and
$\mathbbm{K} [[\tmmathbf{y}]]$ with the set of sequences $\sigma =
(\sigma_{\alpha})_{\alpha \in \mathbbm{N}^n}$, $p \star \sigma$ is the
cross-correlation sequence of $p$ and $\sigma$.

For any $\sigma (\tmmathbf{y} ) \in \mathbbm{K}
[[\tmmathbf{y}]]$, the inner product associated to $\sigma (\tmmathbf{y})$ on
$\mathbbm{K} [\tmmathbf{x}]$ is defined as follows:
\begin{eqnarray*}
  \mathbbm{K} [\tmmathbf{x}] \times \mathbbm{K} [\tmmathbf{x}] & \rightarrow &
  \mathbbm{K}\\
  (p (\tmmathbf{x}) , q (\tmmathbf{x})) & \mapsto & \langle
  p (\tmmathbf{x}), q (\tmmathbf{x}) \rangle_{\sigma}
  \assign \langle \sigma (\tmmathbf{y}) |   p
  (\tmmathbf{x}) q (\tmmathbf{x}) \rangle.
\end{eqnarray*}

\subsubsection{Polynomial-Exponential series}

Among the elements of $\tmop{Hom} (\mathbbm{K} [\tmmathbf{x}],
\mathbbm{K}) \equiv \mathbbm{K} [[\tmmathbf{y}]]$, we have the evaluations at
points of $\mathbbm{K}^n$:

\begin{definition}
  The evaluation at a point $\xi = (\xi_1, \ldots, \xi_n) \in \mathbbm{K}^n$
  is:
  \begin{eqnarray*}
    \tmmathbf{e}_{{\xi}} : \mathbbm{K} [x_1, \ldots, x_n] &
    \rightarrow & \mathbbm{K}\\
    p (\tmmathbf{x}) & \mapsto & p (\xi)
  \end{eqnarray*}
  It corresponds to the series:
  \begin{eqnarray*}
    \tmmathbf{e}_{{\xi}} (\tmmathbf{y}) & = & \sum_{\alpha \in \mathbbm{N}^n}
    \xi^{\alpha}  \frac{\tmmathbf{y}^{\alpha}}{\alpha !} = \hspace{0.25em}
    e^{\xi_1 y_1 + \cdots + \xi_n y_n} = e^{\langle \xi, \mathbf{y} \rangle}.
  \end{eqnarray*}
\end{definition}

Using this formalism, the series $\sigma (\tmmathbf{y}) = \sum_{i = 1}^r
\omega_i \tmmathbf{e}_{\xi_i} (\tmmathbf{y})$ with $\omega_i \in \mathbbm{K}$
can be interpreted as a linear combination of evaluations at the points
$\xi_i$ with coefficients $\omega_i$, for $i = 1, \ldots, r$.
These series belong to the more general family of polynomial-exponential
series, that we define now.

\begin{definition}
  Let $\PolExp(y_1, \ldots, y_n) = \left\{ \sigma = \sum_{i
  = 1}^r \omega_i (\tmmathbf{y}) \tmmathbf{e}_{\xi_i} (\tmmathbf{y}) \in
  \mathbbm{K} [[\tmmathbf{y}]] \mid \xi_i \in \mathbbm{K}^n, \omega_i
  (\tmmathbf{y})\in \mathbbm{K} [\tmmathbf{y}] \right\}$ be the set of
  polynomial-exponential series. The polynomials $\omega_i (\tmmathbf{y})$ are called the weights of $\sigma$
and $\xi_{i}$ the frequencies.
\end{definition}

Notice that the product of $\tmmathbf{y}^{\alpha} 
\tmmathbf{e}_{{\xi}} (\tmmathbf{y})$ with a monomial $\tmmathbf{x}^{\beta}
\in \mathbbm{C} [x_1, \ldots, x_n]$ is given by
\begin{eqnarray}
  \left\langle \tmmathbf{y}^{\alpha} \tmmathbf{e}_{{\xi}} (\tmmathbf{y})
  |   \tmmathbf{x}^{\beta} \right\rangle & = &  \frac{\beta
  !}{(\beta - \alpha) !} \xi^{^{\beta - \alpha}} = \partial_{x_1}^{\alpha_1} \cdots \partial_{x_n}^{\alpha_n} 
  (\tmmathbf{x}^{\beta}) (\xi) \tmop{if} \alpha_i \leqslant \beta_i \tmop{for}
  i = 1, \ldots, n \label{eq:der} \\
  & = & 0 \tmop{otherwise}. \nonumber
\end{eqnarray}
Therefore an element $\sigma = \sum_{i = 1}^r \hspace{0.25em} \omega_i
(\tmmathbf{y}) \tmmathbf{e}_{{\xi_i}} (\tmmathbf{y})$ of
$\PolExp(\tmmathbf{y})$ can also be seen as a sum of
{\tmem{polynomial differential operators}} $\omega_i (\tmmathbf{\partial})$ ``at'' the
points $\xi_i$, that we call {\tmem{infinitesimal operators}}: $\forall p \in
\mathbbm{K} [\tmmathbf{x}], \langle \sigma |   p \rangle =
\sum_{i = 1}^r \hspace{0.25em} \omega_i (\tmmathbf{\partial}) (p) (\xi)$.

\subsubsection{Differential operators} An interesting property of the
outer product defined on $\mathbbm{K} [[\tmmathbf{y}]]$ is that polynomials
act as differentials on the series:

\begin{lemma}
  \label{lem:derivative}$\forall p \in \mathbbm{K} [\tmmathbf{x}], \forall
  \sigma \in \mathbbm{K} [[\tmmathbf{y}]]$, $p (\tmmathbf{x}) \star \sigma
  (\tmmathbf{y}) = p (\tmmathbf{\partial}) (\sigma)$.
\end{lemma}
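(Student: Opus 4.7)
The plan is to reduce to the case of a single variable $p = x_i$ using linearity together with the compatibility $(pq)\star \sigma = p\star(q\star \sigma)$ of the module structure, and then to verify the one-variable case by a direct computation of coefficients.

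First I would recall the explicit formula for the $\star$-action given just before the lemma: if $\sigma = \sum_{\alpha} \sigma_\alpha \frac{\tmmathbf{y}^\alpha}{\alpha!}$ and $p = \sum_\beta p_\beta \tmmathbf{x}^\beta$, then $p \star \sigma = \sum_{\alpha} \rho_\alpha \frac{\tmmathbf{y}^\alpha}{\alpha!}$ with $\rho_\alpha = \sum_\beta p_\beta\, \sigma_{\alpha+\beta}$. Specializing to $p = x_i$, only the coefficient $p_{e_i}=1$ survives, so $x_i \star \sigma = \sum_\alpha \sigma_{\alpha+e_i}\frac{\tmmathbf{y}^\alpha}{\alpha!}$.

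Next I would compute the partial derivative $\partial_{y_i}\sigma$ directly. Writing $\partial_{y_i}\bigl(\tmmathbf{y}^\alpha/\alpha!\bigr) = \tmmathbf{y}^{\alpha-e_i}/(\alpha-e_i)!$ when $\alpha_i \geqslant 1$ and $0$ otherwise, and re-indexing with $\beta = \alpha - e_i$, I get $\partial_{y_i}\sigma = \sum_\beta \sigma_{\beta+e_i} \frac{\tmmathbf{y}^\beta}{\beta!}$. Comparing with the previous display, $x_i \star \sigma = \partial_{y_i}\sigma$, which establishes the lemma for $p = x_i$.

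Finally I would extend to arbitrary monomials by induction on the total degree, using that $(x_i \cdot q)\star \sigma = x_i \star (q\star \sigma)$ from the $\mathbbm{K}[\tmmathbf{x}]$-module structure, which matches $\partial_{y_i}\partial^{\gamma}(\sigma) = \partial^{\gamma + e_i}(\sigma)$; then $\mathbbm{K}$-linearity in $p$ finishes the general case. There is no real obstacle here beyond bookkeeping with the normalizing factorials $\alpha!$; the only subtlety worth highlighting is that the convergence of these operations is formal (coefficient-wise), so no analytic issues arise despite $\sigma$ being an arbitrary element of $\mathbbm{K}[[\tmmathbf{y}]]$.
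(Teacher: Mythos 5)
Your proof is correct and follows essentially the same strategy as the paper: reduce to $p = x_i$ via the module structure $(pq)\star\sigma = p\star(q\star\sigma)$ and $\mathbbm{K}$-linearity, then verify the single-variable case by a coefficient computation. The only cosmetic difference is that the paper verifies $x_i\star\tmmathbf{y}^\alpha = \partial_{y_i}(\tmmathbf{y}^\alpha)$ by pairing against the monomial basis $\tmmathbf{x}^\beta$ and then extends by bilinearity, whereas you compute the coefficient sequence of $x_i\star\sigma$ directly from the formula $\rho_\alpha = \sum_\beta p_\beta\sigma_{\alpha+\beta}$ for an arbitrary $\sigma$, which sidesteps having to argue coefficient-wise linearity in the second slot; both are sound.
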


\begin{proof}
  We first prove the relation for $p = x_i$ ($i \in [1, n]$) and
  $\sigma =\tmmathbf{y}^{\alpha}$ ($\alpha \in \mathbbm{N}^n$). Let $e_i = (0,
  \ldots, 0, 1, 0, \ldots, 0)$ be the exponent vector of $x_i$. $\forall \beta
  \in \mathbbm{N}^n$ and $\forall i \in [1, n]$, we have
  \begin{eqnarray*}
    \left\langle x_i \star \tmmathbf{y}^{\alpha}  | \tmmathbf{x}^{\beta} \right\rangle & = & \left\langle \tmmathbf{y}^{\alpha} |   x_i \tmmathbf{x}^{\beta} \right\rangle =
    \hspace{1em} \alpha ! \hspace{1em} \tmop{if} \hspace{1em} \alpha = \beta +
    e_i \hspace{1em} \tmop{and} \hspace{1em} 0 \hspace{1em} \tmop{otherwise}\\
    & = & \alpha_i  \langle \tmmathbf{y}^{\alpha - e_i} |
     \tmmathbf{x}^{\beta} \rangle.
  \end{eqnarray*}
  with the convention that $\tmmathbf{y}^{\alpha - e_i} = 0$ if $\alpha_i =
  0$. This shows that $x_i \star \tmmathbf{y}^{\alpha} = \alpha_i
  \tmmathbf{y}^{\alpha - e_i} = \partial_{y_i} (\tmmathbf{y}^{\alpha})$. By
  transitivity and bilinearity of the product $\star$, we deduce
  that $\forall p \in \mathbbm{K} [\tmmathbf{x}], \forall \sigma \in
  \mathbbm{K} [[\tmmathbf{y}]]$, $p (\tmmathbf{x}) \star \sigma (\tmmathbf{y}) = p (\tmmathbf{\partial}) (\sigma)$.
\end{proof}

  \label{rem:edp}This property can be useful to analyze the solution of
  partial differential equations. Let $p_1 (\tmmathbf{\partial}), \ldots, p_s (\tmmathbf{\partial}) \in
  \mathbbm{K} [\partial_1, \ldots, \partial_n]=\mathbbm{K}[\tmmathbf{\partial}]$ be a set of
  partial differential polynomials with constant coefficients $\in \mathbbm{K}$. The set of solutions
  $\sigma \in \mathbbm{K} [[{\bdy}]]$ of the system
  \[ p_1 (\tmmathbf{\partial}) (\sigma) = 0, \ldots, p_s (\tmmathbf{\partial}) (\sigma) = 0 \]
  is in correspondence with the elements $\sigma \in (p_1, \ldots,
  p_s)^{\bot}$, which satisfy $p_i \star \sigma = 0$ for $i = 1, \ldots, s$
  (see Theorem \ref{thm:edp}). 
  The variety $\mathcal{V}(p_{1},\ldots, p_{n})\subset \overline{\mathbbm{K}}^{n}$ is called the {\em  characteristic variety} and
  $I=(p_{1},\ldots,p_{n})$ the {\em  characteristic ideal} of the system of partial differential equations.
\begin{lemma}
  \label{lem:diffpolexp}$\forall p \in \mathbbm{K} [\tmmathbf{x}], \forall
  \omega \in \mathbbm{K} [[\tmmathbf{y}]]$, $\xi \in \mathbbm{K}^n, p
  (\tmmathbf{x}) \star (\omega (\tmmathbf{y}) \tmmathbf{e}_{\xi}
  (\tmmathbf{y})) = p (\xi_1 + \partial_{y_1}, \ldots, \xi_n + \partial_{y_n})
  (\omega (\tmmathbf{y})) \tmmathbf{e}_{\xi} (\tmmathbf{y})$.
\end{lemma}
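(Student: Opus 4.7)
The plan is to reduce the statement to the one-variable, degree-one case and then use the product rule inductively. By Lemma \ref{lem:derivative}, the action $p(\mathbf{x}) \star -$ on any series coincides with applying the differential operator $p(\partial_{y_1},\ldots,\partial_{y_n})$. So the identity I must prove is really the purely differential identity
\[
p(\partial_{\mathbf{y}})\bigl(\omega(\mathbf{y})\,\mathbf{e}_{\xi}(\mathbf{y})\bigr)
= p(\xi_{1}+\partial_{y_{1}},\ldots,\xi_{n}+\partial_{y_{n}})(\omega(\mathbf{y}))\,\mathbf{e}_{\xi}(\mathbf{y}).
\]

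First I would verify the base case $p=x_{i}$. Here the computation is direct: since $\partial_{y_{i}}\mathbf{e}_{\xi}(\mathbf{y})=\xi_{i}\,\mathbf{e}_{\xi}(\mathbf{y})$, the Leibniz rule for formal power series yields
\[
x_{i}\star(\omega\,\mathbf{e}_{\xi})
=\partial_{y_{i}}(\omega\,\mathbf{e}_{\xi})
=(\partial_{y_{i}}\omega)\,\mathbf{e}_{\xi}+\omega\,\xi_{i}\,\mathbf{e}_{\xi}
=(\xi_{i}+\partial_{y_{i}})(\omega)\,\mathbf{e}_{\xi},
\]
which is exactly the claim for $p=x_{i}$.

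Next I would extend to an arbitrary monomial $\mathbf{x}^{\beta}$ by induction on $|\beta|$, exploiting the associativity property $(pq)\star\sigma = p\star(q\star\sigma)$ recalled earlier. Applying $x_{i}$ repeatedly and using the inductive hypothesis, I obtain
\[
\mathbf{x}^{\beta}\star(\omega\,\mathbf{e}_{\xi})
=(\xi_{1}+\partial_{y_{1}})^{\beta_{1}}\cdots(\xi_{n}+\partial_{y_{n}})^{\beta_{n}}(\omega)\,\mathbf{e}_{\xi},
\]
where the order of the factors is immaterial because partial derivatives (and hence the shifted operators $\xi_{i}+\partial_{y_{i}}$) commute. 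Finally, linearity in $p$ extends the identity from monomials to arbitrary polynomials $p\in\mathbb{K}[\mathbf{x}]$.

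There is no real obstacle beyond the product-rule bookkeeping; the only point to be careful about is that $\omega(\mathbf{y})\,\mathbf{e}_{\xi}(\mathbf{y})$ lies in $\mathbb{K}[[\mathbf{y}]]$ rather than $\mathbb{K}[\mathbf{y}]$, so the Leibniz rule is applied in the formal power series ring. This is legitimate because $\partial_{y_{i}}$ is continuous for the $(\mathbf{y})$-adic topology and acts termwise on the monomial expansion, making the base-case computation valid coefficient by coefficient. Once the case $p=x_{i}$ is established in this formal setting, the rest of the proof is a clean induction and invocation of linearity.
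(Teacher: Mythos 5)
Your proof is correct and follows essentially the same route as the paper's: both establish the base case $p=x_i$ via Lemma \ref{lem:derivative} and the Leibniz rule (using $\partial_{y_i}\tmmathbf{e}_\xi=\xi_i\tmmathbf{e}_\xi$), then extend to general $p$ by iterating the $\star$-action and invoking linearity. You spell out a few details the paper leaves implicit (commutativity of the shifted operators, validity of the termwise Leibniz rule in the formal power series ring), but the argument is the same.
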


\begin{proof}
  By the previous lemma, $x_i \star (\omega (\tmmathbf{y}) \tmmathbf{e}_{\xi}
  (\tmmathbf{y})) = \partial_{y_i} (\omega) (\tmmathbf{y}) \tmmathbf{e}_{\xi}
  (\tmmathbf{y}) + \xi_i \, \omega (\tmmathbf{y}) \tmmathbf{e}_{\xi}
  (\tmmathbf{y}) = ( \xi_i + \partial_{y_i}$)$(\omega
  (\tmmathbf{y})) \tmmathbf{e}_{\xi} (\tmmathbf{y})$ for $i = 1, \ldots, n$.
  We deduce that the relation is true for any polynomial $p \in \mathbbm{K}
  [\tmmathbf{x}]$ by repeated multiplications by the variables and linear
  combination.
\end{proof}

\begin{definition}
  For a subset $D \subset \mathbbm{K} [[\tmmathbf{y}]]$, the {\tmem{inverse
  system}} generated by $D$ is the vector space spanned by the elements $p
  (\tmmathbf{x}) \star \delta (\tmmathbf{y})$ for $\delta (\tmmathbf{y}) \in
  D$, $p (\tmmathbf{x}) \in \mathbbm{K} [\tmmathbf{x}]$, that is, by the
  elements in $D$ and all their derivatives. \
  
  For $\omega \in \mathbbm{K} [\tmmathbf{y}]$, we denote by ${\mu}
  (\omega)$ the dimension of the inverse system of $\omega$, generated by
  $\omega$ and all its derivatives $\tmmathbf{\partial}^{\alpha}
  (\omega)$, $\alpha \in \mathbbm{N}^n$.
\end{definition}
A simple way to compute this dimension is given in the next lemma:
\begin{lemma}\label{lm:mu=rank}
  For $\omega \in \mathbbm{K} [\tmmathbf{y}]$, $\mu(\omega)$ is the rank of the matrix $\Theta=(\theta_{\alpha,\beta})_{\alpha\in A,\beta\in B}$
  where
$$ 
\omega(\tmmathbf{y} + \tmmathbf{t}) = \sum_{\alpha\in A\subset \mathbb{N}^{n}, \beta\in B\subset \mathbb{N}^{n}} \theta_{\alpha,\beta}\, \tmmathbf{y}^{\alpha} \tmmathbf{t}^{\beta}
$$
for some finite subsets $A,B$ of $\mathbb{N}^{n}$.
\end{lemma}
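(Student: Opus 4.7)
The plan is to identify the columns (or rows) of $\Theta$ with the coefficient vectors, in the monomial basis, of the derivatives of $\omega$, so that the column span of $\Theta$ is literally the inverse system of $\omega$.

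First I would write the Taylor expansion of $\omega$ around $\mathbf{y}$ in the auxiliary variable $\mathbf{t}$:
\[
\omega(\mathbf{y}+\mathbf{t}) \;=\; \sum_{\beta \in \mathbb{N}^n} \frac{\partial^{\beta}\omega(\mathbf{y})}{\beta!}\, \mathbf{t}^{\beta}.
\]
Since $\omega$ is a polynomial, this sum is finite and all but finitely many $\theta_{\alpha,\beta}$ vanish, so one may choose finite sets $A,B\subset\mathbb{N}^n$ containing the supports of the nonzero coefficients. Comparing with $\omega(\mathbf{y}+\mathbf{t}) = \sum_{\alpha,\beta} \theta_{\alpha,\beta}\,\mathbf{y}^{\alpha}\mathbf{t}^{\beta}$, the $\beta$-th column of $\Theta$ is precisely the coefficient vector of the polynomial $\frac{1}{\beta!}\partial^{\beta}\omega(\mathbf{y})$ expressed in the monomial basis $(\mathbf{y}^{\alpha})_{\alpha\in A}$.

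Next I would invoke Lemma \ref{lem:derivative}, which states that for any $p\in\mathbb{K}[\mathbf{x}]$ and $\sigma\in\mathbb{K}[[\mathbf{y}]]$ one has $p(\mathbf{x})\star\sigma = p(\partial)(\sigma)$. Applied to $\sigma=\omega$ and $p=\mathbf{x}^{\beta}$, this gives $\mathbf{x}^{\beta}\star\omega = \partial^{\beta}\omega$. Therefore the inverse system generated by $\omega$, i.e.\ the $\mathbb{K}$-vector space spanned by $\{\,p(\mathbf{x})\star\omega \mid p\in\mathbb{K}[\mathbf{x}]\,\}$, coincides with the span of $\{\,\partial^{\beta}\omega \mid \beta\in\mathbb{N}^n\,\}$, and hence with the span of $\{\,\tfrac{1}{\beta!}\partial^{\beta}\omega \mid \beta\in\mathbb{N}^n\,\}$.

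Combining the two steps: the dimension $\mu(\omega)$ of the inverse system equals the dimension of the span of the columns of $\Theta$, that is, the rank of $\Theta$. The only thing to check is that restricting to finite index sets $A,B$ loses no information, but this is immediate since $\omega$ is a polynomial and all its derivatives lie in the finite-dimensional space of polynomials whose support is contained in the downward closure of the support of $\omega$; choosing $A$ to contain this downward closure and $B$ to contain the support of $\omega$ suffices. I do not foresee a real obstacle here; the only thing to get right is the bookkeeping of the factorials $\beta!$ and the explicit identification of the entry $\theta_{\alpha,\beta}$ with (a rescaling of) the coefficient of $\mathbf{y}^{\alpha}$ in $\partial^{\beta}\omega$, so that the span of columns really is the inverse system and not some deformed version of it.
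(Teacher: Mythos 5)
Your proof is correct and follows essentially the same route as the paper's: both expand $\omega(\mathbf{y}+\mathbf{t})$ by Taylor's formula in $\mathbf{t}$, identify the columns of $\Theta$ with the (rescaled) coefficient vectors of $\partial^{\beta}\omega$, and conclude that the column rank of $\Theta$ equals the dimension of the span of the derivatives, which is $\mu(\omega)$ by definition. Your additional appeal to Lemma~\ref{lem:derivative} is harmless but not needed, since the definition of $\mu(\omega)$ already states it is the dimension of the span of $\omega$ and its derivatives.
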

\begin{proof} The Taylor expansion of $\omega(\tmmathbf{y} + \tmmathbf{t}) $ at $\tmmathbf{y}$ yields
$$ 
\omega(\tmmathbf{y} + \tmmathbf{t}) = \sum_{\beta \in B\subset \mathbb{N}^{n}} \tmmathbf{t}^{\beta}
  \frac{1}{\beta!}
    \tmmathbf{\partial}^{\beta}(\omega)(\tmmathbf{y}) = \sum_{\beta \in B\subset \mathbb{N}^{n}}   \tmmathbf{t}^{\beta} \sum_{\alpha\in A\subset \mathbb{N}} \theta_{\alpha,\beta}\, \tmmathbf{y}^{\alpha}.
$$
This shows that the rank of  the matrix $\Theta=(\theta_{\alpha,\beta})_{\alpha\in A,\beta\in B}$, that is, the rank of the vector space spanned by $\sum_{\alpha\in A\subset \mathbb{N}} \theta_{\alpha,\beta}\, \tmmathbf{y}^{\alpha}$ for $\beta\in B$ is the rank $\mu(\omega)$ of the vector space spanned by all the derivatives $\tmmathbf{\partial}^{\beta}(\omega)(\tmmathbf{y})$ of $\omega$.
\end{proof}
 
\begin{lemma}
  \label{lem:indep}The series $\tmmathbf{y}^{\alpha_{i, j}}
  \tmmathbf{e}_{{\xi_i}} (\tmmathbf{y})$ for $i = 1, \ldots, r$ and $j = 1,
  \ldots {\mu}_i$ with $\alpha_{i, 1}, \ldots, \alpha_{i, {\mu}_i} \in
  \mathbbm{N}^n$ and $\xi_i \in \mathbbm{K}^n$ pairwise distinct are linearly
  independent.
\end{lemma}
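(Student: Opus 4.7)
The plan is to reduce the claim to linear independence of a family of linear functionals on $\mathbbm{K}[\tmmathbf{x}]$ and then, for any putative non-zero coefficient, isolate it by a well-chosen test polynomial. By equation (\ref{eq:der}), the series $\tmmathbf{y}^{\alpha_{i,j}} \tmmathbf{e}_{\xi_i}(\tmmathbf{y})$ corresponds under the duality identification to the functional $\Lambda_{i,j}: p \mapsto \partial^{\alpha_{i,j}} p (\xi_i)$ on $\mathbbm{K}[\tmmathbf{x}]$, so it suffices to show the $\Lambda_{i,j}$ are linearly independent on $\mathbbm{K}[\tmmathbf{x}]$. (The exponents $\alpha_{i,1}, \ldots, \alpha_{i,\mu_i}$ within each group are implicitly pairwise distinct, otherwise the statement is vacuous.)

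Assume a non-trivial vanishing $\Lambda := \sum_{i,j} c_{i,j} \Lambda_{i,j} = 0$ and pick an index $(i_0, j_0)$ with $c_{i_0, j_0} \neq 0$ such that $\alpha_{i_0, j_0}$ is maximal, with respect to the componentwise partial order $\ll$, in $\{\alpha_{i_0, j} : c_{i_0, j} \neq 0\}$. I will test $\Lambda$ on
\[
p(\tmmathbf{x}) = (\tmmathbf{x} - \xi_{i_0})^{\alpha_{i_0, j_0}} \, Q(\tmmathbf{x}), \qquad Q(\tmmathbf{x}) = \prod_{i \neq i_0} \ell_i(\tmmathbf{x})^N,
\]
where each $\ell_i$ is a linear form satisfying $\ell_i(\xi_i) = 0 \neq \ell_i(\xi_{i_0})$ (such forms exist since the $\xi_i$ are pairwise distinct), and $N > \max_{i,j} |\alpha_{i,j}|$. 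Note that $Q(\xi_{i_0}) \neq 0$ by construction.

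For $i \neq i_0$, the factor $\ell_i^N$ makes $p$ vanish to order $\geqslant N$ at $\xi_i$, so all of its derivatives of order at most $N-1$ vanish there, killing every contribution to $\Lambda(p)$ from these indices. For $i = i_0$, expand $\partial^{\alpha_{i_0, j}} p(\xi_{i_0})$ by Leibniz: the factor $\partial^\gamma\bigl((\tmmathbf{x} - \xi_{i_0})^{\alpha_{i_0, j_0}}\bigr)$ vanishes at $\xi_{i_0}$ unless $\gamma = \alpha_{i_0, j_0}$ (where it equals $\alpha_{i_0, j_0}!$), and the index constraint $\gamma \ll \alpha_{i_0, j}$ then forces $\alpha_{i_0, j_0} \ll \alpha_{i_0, j}$. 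By the maximality choice combined with distinctness, this fails for every $j \neq j_0$ with $c_{i_0, j} \neq 0$. Only $j = j_0$ survives, giving $\partial^{\alpha_{i_0, j_0}} p(\xi_{i_0}) = \alpha_{i_0, j_0}! \, Q(\xi_{i_0}) \neq 0$, and hence $\Lambda(p) = c_{i_0, j_0} \, \alpha_{i_0, j_0}! \, Q(\xi_{i_0}) \neq 0$, contradicting $\Lambda = 0$.

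The critical step is the maximality choice: selecting $\alpha_{i_0, j_0}$ arbitrarily among the indices with $c_{i_0, j_0} \neq 0$ would not suffice, because any strictly larger exponent in the same group would contribute an unwanted cross-term $\binom{\alpha_{i_0, j}}{\alpha_{i_0, j_0}} \alpha_{i_0, j_0}! \, \partial^{\alpha_{i_0, j} - \alpha_{i_0, j_0}} Q(\xi_{i_0})$ in the Leibniz expansion and prevent the target coefficient from being isolated.
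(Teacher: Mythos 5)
Your proof is correct, and it takes a genuinely different route from the paper's. The paper argues by induction on the maximal degree of the polynomial weights $\omega_i(\tmmathbf{y}) = \sum_j \omega_{i,j}\tmmathbf{y}^{\alpha_{i,j}}$: it applies $p \star \sigma = \sum_i p(\xi_i + \tmmathbf{\partial})(\omega_i)\tmmathbf{e}_{\xi_i}$ with $p = l(\tmmathbf{x}) - l(\xi_i)$ a shifted separating linear form, which strictly reduces the degree of at least one non-zero weight, and then uses interpolation polynomials in the base case of constant weights. You instead work directly with the functionals $\Lambda_{i,j} : p \mapsto \partial^{\alpha_{i,j}}p(\xi_i)$, pick a coefficient whose exponent is $\ll$-maximal within its group, and build a single explicit test polynomial $(\tmmathbf{x}-\xi_{i_0})^{\alpha_{i_0,j_0}}\prod_{i\neq i_0}\ell_i^N$ that annihilates all the other terms and detects the chosen one via Leibniz. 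Both proofs are elementary; the paper's inductive reduction fits the operator-theoretic machinery ($\star$, $p(\xi+\tmmathbf{\partial})$) that it is about to use repeatedly, whereas your version is more self-contained, avoids the induction, and makes the role of the partial order $\ll$ on exponents explicit -- a point the paper leaves implicit. Your remark about why maximality is needed (cross-terms from strictly larger exponents in the Leibniz expansion) correctly identifies the one place where a careless choice would break the argument.
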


\begin{proof}
  Suppose that there exist $w_{i, j} \in \mathbbm{K}$ such that $\sigma
  (\tmmathbf{y}) = \sum_{i = 1}^r \sum_{j = 1}^{{\mu}_i} \omega_{i, j}
  \tmmathbf{y}^{\alpha_{i, j}} \tmmathbf{e}_{{\xi_i}} (\tmmathbf{y}) = 0$ and let
  $\omega_i  (\tmmathbf{y}) =\sum_{j = 1}^{{\mu}_i} \omega_{i, j}
  \tmmathbf{y}^{\alpha_{i, j}}$. Then $\forall p \in \mathbbm{K} [\tmmathbf{x}], p \star
  \sigma = 0 = \sum_{i = 1}^r p (\xi_i + \tmmathbf{\partial}) (\omega_i)
  \tmmathbf{e}_{{\xi_i}} (\tmmathbf{y})$. If the weights $\omega_i
  (\tmmathbf{y}) \in \mathbbm{K}$ are of degree $0$, by choosing for $p$ an
  interpolation polynomial at one of the distinct points $\xi_i$, we deduce
  that $\omega_i = 0$ for $i = 1, \ldots, r$. If the weights $\omega_i 
  (\tmmathbf{y}) \in \mathbbm{K}$ are degree $\geqslant 1$, by choosing $p = l
  (\tmmathbf{x}) - l (\xi_i) \in \mathbbm{K} [\tmmathbf{x}]$ for a separating
  polynomial $l$ of degree $1$ ($l (\xi_i) \neq l (\xi_j)$ if $i \neq j$), we
  can reduce to a case where at least one of the non-zero weights has one
  degree less. By induction on the degree, we deduce that $\omega_i 
  (\tmmathbf{y}) = 0$ for $i = 1, \ldots, r$. This proves the linear
  independency of the series $\tmmathbf{y}^{\alpha_{i, j}}
  \tmmathbf{e}_{{\xi_i}} (\tmmathbf{y})$ for any $\alpha_{i, 1}, \ldots,
  \alpha_{i, {\mu}_i} \in \mathbbm{N}^n$ and $\xi_i \in \mathbbm{K}^n$
  pairwise distinct. 
\end{proof}

\subsubsection{Z-transform and positive characteristic}\label{sec:ztransform}
In the identification (\ref{eq:dualfps}) of $\tmop{Hom}_{\mathbbm{K}}
(\mathbbm{K} [\tmmathbf{x}], \mathbbm{K})$ with the ring of power
series in the variables $\tmmathbf{y}$, we can replace
$\frac{\tmmathbf{y}^{\alpha}}{\alpha !}$ \ by $\tmmathbf{z}^{\alpha}$ where
$\tmmathbf{z}= (z_1, \ldots, z_n)$ is a set of new variables, so that
$\tmop{Hom}_{\mathbbm{K}} (\mathbbm{K} [\tmmathbf{x}], \mathbbm{K})$
is identified with $\mathbbm{K} [[z_1, \ldots, z_n]] =\mathbbm{K}
[[\tmmathbf{z}]]$. Any $\Lambda \in \tmop{Hom}_{\mathbbm{K}} (\mathbbm{K}
[\tmmathbf{x}], \mathbbm{K}) = R^{\ast}$ can be represented by the
series:
\begin{equation}
  \Lambda (\tmmathbf{y}) = \sum_{\alpha \in \mathbbm{N}^n} \Lambda
  (\tmmathbf{x}^{\alpha})  {\tmmathbf{z}}^{\alpha} \in \mathbbm{K}
  [[{\tmmathbf{z}}]] \label{eq:dualiny}
\end{equation}
where $(\tmmathbf{z}^{\alpha})_{\alpha \in \mathbbm{N}^n}$ denotes the basis
dual to the monomial basis $({\bdx}^{\alpha})_{\alpha \in
\mathbbm{N}^n}$ of $\mathbbm{K} [{\bdx}]$. This corresponds to the
{\tmem{Z-transform}} of the sequence $(\Lambda
(\tmmathbf{x}^{\alpha}))_{\alpha \in \mathbbm{N}^n}$
{\cite{encyclopedia_mathematics_2016}} or to the embedding in the ring of
{\tmem{divided powers}} (${\bdz^{\alpha}} =
\frac{\tmmathbf{y}^{\alpha}}{\alpha !}$)
{\cite{eisunbud_commutative_1994}}[Sec. A 2.4],
{\cite{iarrobino_power_1999}}[Appendix A]. It allows to extend the duality
properties to any field $\mathbbm{K}$, which is not of characteristic $0$.

The inverse transformation of the series $\sum_{\alpha \in \mathbbm{N}^n}
\sigma_{\alpha}  {\bdz}^{\alpha} \in \mathbbm{K} [[\tmmathbf{z}]]$ into
the series $\sum_{\alpha \in \mathbbm{N}^n} \sigma_{\alpha} 
\frac{\tmmathbf{y}^{\alpha}}{\alpha !} \in \mathbbm{K} [[\tmmathbf{y}]]$ is
known as the {\tmem{Borel transform}} {\cite{encyclopedia_mathematics_2016}}.

For $\alpha, \beta \in \mathbbm{N}^n$, we have$\begin{array}{lll}
  {\bdx}^{\alpha} \star {\bdz}^{\beta} & = &  \left\{
  \begin{array}{ll}
    {\bdz}^{\beta - \alpha} & \tmop{if} \beta - \alpha \in
    \mathbbm{N}^n\\
    0 & \tmop{otherwise}
  \end{array} \right.
\end{array}$so that $z_i$ plays the role of the inverse of $x_i$. This
explains the terminology of inverse system, introduced in
{\cite{macaulay_algebraic_1916}}. With this formalism, the variables $x_1,
\ldots, x_n$ act on the series in $\mathbbm{K} [[\tmmathbf{z}]]$
as shift operators:
\[ x_i \star \left( \sum_{\alpha \in \mathbbm{N}^n} \sigma_{\alpha} 
   {\tmmathbf{z}}^{\alpha} \right) = \sum_{\alpha \in \mathbbm{N}^n}
   \sigma_{\alpha + e_i}  {\tmmathbf{z}}^{\alpha} \]
where $e_1, \ldots, e_n$ is the canonical basis of $\mathbbm{N}^n$. Therefore,
for any $p_1, \ldots, p_n \in \mathbbm{K} [\tmmathbf{x}]$, the system of
equations
\[ p_1 \star \sigma = 0, \ldots, p_n \star \sigma = 0 \]
corresponds to a system of {\tmem{difference equations}} on $\sigma \in
\mathbbm{K} [[\tmmathbf{z}]]$.

In this setting, the evaluation $\tmmathbf{e}_{\xi}$ at a point $\xi \in
\mathbbm{K}^n$ is represented in $\mathbbm{K} [[\tmmathbf{z}]]$ by the
rational fraction $\frac{1}{\prod_{j = 1}^n (1 - \xi_j z_j)}$. The series
$\tmmathbf{y}^{\beta} \tmmathbf{e}_{\xi} \in \mathbbm{K} [[\tmmathbf{y}]]$
corresponds to the series of $\mathbbm{K} [[\tmmathbf{z}]]$
\[ \sum_{\alpha \in \mathbbm{N}^n} \frac{(\alpha + \beta) !}{\alpha !}
   \xi^{\alpha} \tmmathbf{z}^{\alpha + \beta} = \beta !\tmmathbf{z}^{\beta}
   \sum_{\alpha \in \mathbbm{N}^n}  \binom{\alpha + \beta}{\beta} \xi^{\alpha}
   \tmmathbf{z}^{\alpha} = \frac{\beta !\,\tmmathbf{z}^{\beta}
   \hspace{0.25em}}{\prod_{j = 1}^n (1 - \xi_j z_j)^{1 + \beta_j}}. \]
The reconstruction of truncated series consists then in finding points
{\tmem{$\mathbf{\xi}_1, \ldots, \mathbf{\xi}_{r'} \in \mathbbm{K}^n$}} and
finite sets $A_i$ of coefficients $\omega_{i, \alpha} \in \mathbbm{K}$ for $i
= 1, \ldots, r'$ and $\alpha \in A_i $ such that
\begin{equation}
  \sum_{\alpha \in \mathbbm{N}^n} \sigma_{\alpha}  {\bdz}^{\alpha} =
  \sum_{i = 1}^{r'} \sum_{\alpha \in A_i} \frac{\omega_{i, \alpha}
  \tmmathbf{z}^{\alpha} \hspace{0.25em}}{\prod_{i = 1}^n (1 - \xi_{i, j}
  z_j)^{1 + \alpha_j}} = \prod_{i = 1}^n \bar{z}_j \sum_{i = 1}^{r'}
  \sum_{\alpha \in A_i} \frac{\omega_{i, \alpha}}{\prod_{i = 1}^n (\bar{z}_j -
  \xi_{i, j})^{1 + \alpha_j}} \label{eq:decompseriesiny}
\end{equation}
where $\bar{z}_j = z_j^{- 1}$.

In the univariate case, this reduces to computing polynomials $\omega
\nocomma (z), \delta (z) = \prod_{i = 1}^{r'} (1 - \xi_i z)^{{\mu}_i} \in
\mathbbm{K} [z]$ with $\deg (\omega) <\deg (\delta) = \sum_i {\mu}_i = r$ such that
\[ \sum_{k \in \mathbbm{N}} \sigma_k z^k = \frac{w (z)}{\delta (z)}. \]
The decomposition can thus be computed from the {\tmem{Pad{\'e} approximant}}
of order $(r - 1, r)$ of the sequence $(\sigma_k)_{k \in \mathbbm{N}}$ (see
e.g. {\cite{gathen_modern_2013}}[chap. 5]).

Unfortunately, this representation in terms of Pad{\'e} approximant does not
extend so nicely to the multivariate case. The series $\sigma = \sum_{\alpha
\in \mathbbm{N}^n} \sigma_{\alpha}  {\bdz}^{\alpha} $ with a
decomposition of the form (\ref{eq:decompseriesiny}) correspond to the series
$\sum_{\alpha \in \mathbbm{N}^n} \sigma_{\alpha}  {\bdz}^{- \alpha}$,
which is rational of the form $\frac{\tmmathbf{z}^{\tmmathbf{1}} p(\tmmathbf{z})}{\prod q_i (z_i)}$ with
a splitable denominator where $\deg (q_i) \geqslant 1$ are univariate polynomials
(see e.g. {\cite{power_finite_1982}}, {\cite{barachart_realisation_1985}}). Though
Pad{\'e} approximants could be computed in this case by ``separating'' the
variables (or by relaxing the constraints on the Pad{\'e} approximants
{\cite{cuyt_how_1999}}), the rational fraction
$\frac{\tmmathbf{z}^{\tmmathbf{1}} p (\tmmathbf{z})}{\prod q_i (z_i)}$ is
mixing the coordinates of the points \ {\tmem{$\mathbf{\xi}_1, \ldots,
\mathbf{\xi}_{r'} \in \mathbbm{K}^n$}} and the weights $\omega_{i, \alpha}$.

As the duality between multiplication and differential operators is less
natural in $\mathbbm{K} [[\tmmathbf{z}]]$, we will use hereafter the
identification (\ref{eq:dualfps}) of $R^{*}$ with  $\mathbbm{K} [[\tmmathbf{y}]]$,
when $\mathbbm{K}$ is of characteristic $0$.

\subsection{Hankel operators}

The product $\star$ allows us to define a
Hankel operator as a multiplication operator by a dual element $\in
\mathbbm{K} [[\tmmathbf{y}]]$: 

\begin{definition}
  The Hankel operator associated to an element $\sigma (\tmmathbf{y} 
  ) \in \mathbbm{C} [[\tmmathbf{y}]]$ is
  \begin{eqnarray*}
    H_{\sigma} : \mathbbm{K} [\tmmathbf{x}] & \rightarrow &
    \mathbbm{K} [[\tmmathbf{y}]]\\
    p (\tmmathbf{x}) & \mapsto & p (\tmmathbf{x}) \star \sigma
    (\tmmathbf{y} ).
  \end{eqnarray*}
  Its kernel is denoted $I_{\sigma}=\ker H_{\sigma}$.
  The series $\sigma = \sum_{\alpha \in \mathbbm{N}^n} \sigma_{\alpha} 
  \frac{\tmmathbf{y}^{\alpha}}{\alpha !} = H_{\sigma} (1) \in \mathbbm{K}
  [[\tmmathbf{y}]]$ is called the {\tmem{symbol}} of $H_{\sigma}$.
\end{definition}

\begin{definition}
  The rank of an element $\sigma \in \mathbbm{K} [[\tmmathbf{y}]]$ is 
the rank of the Hankel operator   $H_{\sigma} = r$. 
\end{definition}

\begin{definition}
  The variety $\mathcal{V}_{\overline{\mathbbm{K}}}(I_{\sigma})$ is called the {\em characteristic variety} of $\sigma$. 
\end{definition}

The Hankel operator can also be interpreted as an operator on sequences:
\begin{eqnarray*}
  H_{\sigma} : \fsupp (\mathbbm{K}^{\mathbbm{N}^n}) & \rightarrow &
  \mathbbm{K}^{\mathbbm{N}^n}\\
  p = (p_{\beta})_{\beta \in B \subset \mathbbm{N}^n} & \mapsto & p \star
  \sigma = \left( \sum_{\beta \in B} p_{\beta} \sigma_{\alpha + \beta}
  \right)_{\alpha \in \mathbbm{N}^n}
\end{eqnarray*}
where $\fsupp (\mathbbm{K}^{\mathbbm{N}^n})$ is the set of sequences $\in
\mathbbm{K}^{\mathbbm{N}^n}$ with a finite support. This
definition applies for a field $\mathbbm{K}$ of any characteristic. The
operator $H_{\sigma}$ can also be interpreted, via the Z-transform of the
sequence $p \star \sigma$ (see Section \ref{sec:ztransform}), as the Hankel
operator
\begin{eqnarray*}
  H_{\sigma} : \mathbbm{K} [\tmmathbf{x}] & \rightarrow &
  \mathbbm{K} [[\tmmathbf{z}]]\\
  p = \sum_{\beta \in B} p_{\beta} \tmmathbf{x}^{\beta} & \mapsto & p \star
  \sigma = \sum_{\alpha \in \mathbbm{N}^n}  \left( \sum_{\beta \in B}
  p_{\beta} \sigma_{\alpha + \beta} \right) \tmmathbf{z}^{\alpha}.
\end{eqnarray*}

As $\forall p, q \in \mathbbm{K} [\tmmathbf{x}]$, \ $p q \star \sigma = p
\star (q \star \sigma)$, we easily check that $I_{\sigma} = \ker H_{\sigma}$
is an ideal of $\mathbbm{K} [\tmmathbf{x}]$ and that
$\mathcal{A}_{\sigma} =\mathbbm{K} [\tmmathbf{x}] / I_{\sigma}$ is an
algebra.

Since \ $\forall p (\tmmathbf{x}) , q (\tmmathbf{x}) \in
\mathbbm{K} [\tmmathbf{x}]$, $\langle p (\tmmathbf{x}) + I_{\sigma}, 
q (\tmmathbf{x}) + I_{\sigma} \rangle_{\sigma} = \langle p
(\tmmathbf{x}), q (\tmmathbf{x}) \rangle_{\sigma}$, we see
that \ $\langle \cdot , \cdot \rangle_{\sigma}$ induces an inner
product on $\mathcal{A}_{\sigma}$.

Given a sequence $\sigma = (\sigma_{\alpha})_{\alpha \in \mathbbm{N}^n} \in
\mathbbm{K}^{\mathbbm{N}^n}$, the kernel of $H_{\sigma}$ is the set of
polynomials $p = \sum_{\beta \in B} p_{\beta} \tmmathbf{x}^{\beta}$ such that
$\sum_{\beta \in B} p_{\beta} \sigma_{\alpha + \beta}=0$ for all $\alpha \in
\mathbbm{N}^n$. This kernel is also called the set of {\tmem{linear recurrence
relations}} of the sequence $\sigma = (\sigma_{\alpha})_{\alpha \in
\mathbbm{N}^n}$.

\begin{example}
  If $\sigma =\tmmathbf{e}_{\xi}\in \mathbbm{K}[[\tmmathbf{y}]]$ is the evaluation at a point $\xi \in
  \mathbbm{K}^n$, then $H_{\tmmathbf{e}_{\xi}} : p \in \mathbbm{K}
  [\tmmathbf{x}] \mapsto p (\xi) \tmmathbf{e}_{\xi} (\tmmathbf{y}) \in
  \mathbbm{K} [[\tmmathbf{y}]]$. We easily check that $\tmop{rank}
  H_{\tmmathbf{e}_{\xi}} = 1$ since the image of $H_{\tmmathbf{e}_{\xi}}$ is
  spanned by $\tmmathbf{e}_{\xi} (\tmmathbf{y})$ and that
  $I_{\tmmathbf{e}_{\xi}} = (x_1 - \xi_1, \ldots, x_n - \xi_n)$.
  
  If $\sigma = \sum_{i = 1}^r \hspace{0.25em} \omega_i (\tmmathbf{y})
  \tmmathbf{e}_{{\xi_i}} (\tmmathbf{y})$ then, by Lemma \ref{lem:derivative},
  the kernel $I_{\sigma}$ is the set of polynomials $p \in \mathbbm{K}
  [\tmmathbf{x}]$ such that $\forall q \in \mathbbm{K} [\tmmathbf{x}]$, $p$ is
  a solution of the following partial differential equation:
  \[ \sum_{i = 1}^r \hspace{0.25em} \omega_i (\tmmathbf{\partial}) (p q) (\xi_i) = 0. \]
\end{example}

\begin{remark}
  The matrix of the operator $H_{\sigma}$ in the basis
  $(\tmmathbf{x}^{\alpha})_{\alpha \in \mathbbm{N}^n}$ and its dual basis
  $\left( \frac{\tmmathbf{y}^{\alpha}}{\alpha !}  \right)_{\alpha \in
  \mathbbm{N}^n}$ is
\begin{eqnarray*}{}
  [H_{\sigma}] & = & (\sigma_{\alpha + \beta})_{\alpha, \beta \in
    \mathbbm{N}^n} = (\langle \sigma |  \tmmathbf{x}^{\alpha + \beta} \rangle)_{\alpha, \beta \in
    \mathbbm{N}^n}.
  \end{eqnarray*}
\end{remark}

In the case $n = 1$, the coefficients of $[H_{\sigma}]$ depends only on the sum
of the indices indexing the rows and columns, which explains why it is are
called a {\tmem{Hankel}} operator.

\subsubsection{Truncated Hankel operators}

In the sparse reconstruction problem, we are dealing with
truncated series with known coefficients $\sigma_{\alpha}$ for $\alpha$
in a subset $\tmmathbf{a}$ of $\mathbbm{N}^n$. This leads to the definition
of truncated Hankel operators.

\begin{definition}
  For two vector spaces $V, V' \subset \mathbbm{K} [\tmmathbf{x}]$ and \
  $\sigma \in \langle V \cdummy V' \rangle^{\ast} = \langle v \cdummy v' \mid
  v \in V, v' \in V' \rangle^{\ast} \subset \mathbbm{K}
  [[\tmmathbf{y}]]$, we denote by $H_{\sigma}^{V, V'}$ the following map:
  \begin{eqnarray*}
    H^{V, V'}_{\sigma} :V & \rightarrow & V'^{\ast} = \hom_{\mathbbm{K}}
    (V', \mathbbm{K})\\
    p (\tmmathbf{x}) & \mapsto & p (\tmmathbf{x}) \star \sigma
    (\tmmathbf{y} )_{| V' }.
  \end{eqnarray*}
  It is called the {\tmem{truncated Hankel operator}} on $(V, V')$.
\end{definition}

When $V' = V$, the truncated Hankel operator is also denoted $H_{\sigma}^V$.
When $V$ (resp. $V'$) is the vector space of polynomials of degree $\leqslant
d \in \mathbbm{N}$ (resp. $\leqslant d' \in \mathbbm{N}$), the truncated
operator is denoted $H_{\sigma}^{d, d'}$. If $B = \{ b_1, \ldots, b_r
\}$ (resp. $B' = \{ b_1', \ldots, b_r' \}$) is a basis of $V$ (resp.
$V'$), then the matrix of the operator $H_{\sigma}^{V, V'}$ in $B$ and the
dual basis of $B'$ is
\[ [H_{\sigma}^{B, B'}] = (\langle \sigma |   b_j b_i'  \rangle)_{1
   \leqslant i, j \leqslant r}. \]
If $B=\tmmathbf{x}^{\tmmathbf{b}}$ and $B'=\tmmathbf{x}^{\tmmathbf{b}'}$ are monomial sets, we obtain the so-called {\tmem{truncated
moment matrix}} of $\sigma$:
\begin{eqnarray*}
  {}[H_{\sigma}^{B, B'}] & = & (\sigma_{\beta + \beta'})_{\beta' \in \tmmathbf{b}', \beta
  \in \tmmathbf{b}}.
\end{eqnarray*}
When $n = 1$, this matrix is a classical Hankel matrix, which entries depend
only on the sum of the indices of the rows and columns. When $n \geqslant 2$,
we have a similar family of structured matrices whose rows and columns are
indexed by exponents in $\mathbbm{N}^n$ and whose entries depends on the sum
of the row and column indices. These structured matrices called quasi-Hankel
matrices have been studied for instance in {\cite{mp-jcomplexity-2000}}.

\subsection{Artinian algebra}

In this section, we recall the properties of Artinian algebras. Let $I \subset
\mathbbm{K} [\tmmathbf{x}]$ be an ideal and let \ $\mathcal{A}= \mathbbm{K}
[\tmmathbf{x}] / I$ be the associated quotient algebra. 

\begin{definition}
  The quotient algebra $\mathcal{A}$ is Artinian if $\dim_{\mathbbm{K}}
  (\mathcal{A}) < \infty.$
\end{definition}

Notice that if $\mathbbm{K}$ is a subfield of a field $\mathbbm{L}$, we denote
by $\mathcal{A}_{\mathbbm{L}} =\mathbbm{L} [\tmmathbf{x}] / I_{\mathbbm{L}}
=\mathcal{A} \otimes \mathbbm{L}$ where $I_{\mathbbm{L}} = I \otimes
\mathbbm{L}$ is the ideal of $\mathbbm{L} [\tmmathbf{x}]$ generated by the
elements in $I$. As the dimension does not change by extension of the scalars,
we have $\dim_{\mathbbm{K}} (\mathbbm{K} [\tmmathbf{x}] / I) =
\dim_{\mathbbm{L}} (\mathbbm{L} [\tmmathbf{x}] / I_{\mathbbm{L}}) =
\dim_{\mathbbm{L}} (\mathcal{A}_{\mathbbm{L}})$. In particular, $\mathcal{A}$
is Artinian if and only if $\mathcal{A}_{\bar{\mathbbm{K}}}$ is Artinian,
where $\bar{\mathbbm{K}}$ is the algebraic closure.
For the sake of simplicity, we are going to assume hereafter that
{\tmem{$\mathbbm{K}$ is algebraically closed}}. \

A classical result \ states that the quotient algebra $\mathcal{A}=\mathbbm{K} [\tmmathbf{x}] / I$ is finite dimensional, i.e.
Artinian , if and only if, $\mathcal{V}_{\bar{\mathbbm{K}}} (I)$ is finite, that is, $I$
defines a finite number of (isolated) points in $\bar{\mathbbm{K}}^n$ (see
e.g. {\cite{cox_ideals_2015}}[Theorem 6] or {\cite{ElkMou07}}[Theorem 4.3]).
Moreover, we have the following structure theorem (see
{\cite{ElkMou07}}[Theorem 4.9]):

\begin{theorem}
  \label{thm:artindec}Let $\mathcal{A}=\mathbbm{K} [\tmmathbf{x}] / I$ be an
  Artinian algebra of dimension $r$ defined by an ideal $I$. Then we have a
  decomposition into a direct sum of subalgebras
  
  \begin{equation}
    \mathcal{A}=\mathcal{A}_{\xi_1} \oplus \cdots \oplus
    \mathcal{A}_{\xi_{r'}} \label{eq:decsubalg}
  \end{equation}
  where
  \begin{itemize}
    \item $\mathcal{V} (I) = \{ \xi_1, \ldots, \xi_{r'} \} \subset
    \bar{\mathbbm{K}}^n$ with $r' \leqslant r$.
    
    \item $I = Q_1 \cap \cdots \cap Q_{r'}$ is a minimal primary decomposition
    of $I$ where $Q_i$ is $\tmmathbf{m}_{\xi_i}$-primary with
    $\tmmathbf{m}_{\xi_i} = (x_1 - \xi_{i, 1}, \ldots, x_n - \xi_{i, n})$.
    
    \item  $\mathcal{A}_{\xi_i} \equiv \mathbbm{K} [\tmmathbf{x}] / Q_i$ and
    $\mathcal{A}_{\xi_i} \cdot \mathcal{A}_{\xi_j} \equiv 0$ if $i \neq j$.
  \end{itemize}
\end{theorem}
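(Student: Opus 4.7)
The plan is to combine the Nullstellensatz, the Lasker–Noether primary decomposition, and the Chinese Remainder Theorem, exploiting the fact that an Artinian algebra has Krull dimension zero.

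First I would invoke the classical result already cited before the theorem: $\dim_{\mathbb{K}}\mathcal{A}<\infty$ is equivalent to $\mathcal{V}(I)$ being finite, so I may write $\mathcal{V}(I)=\{\xi_{1},\ldots,\xi_{r'}\}\subset\mathbb{K}^{n}$ with $r'$ finite. Since $\mathbb{K}$ is algebraically closed, Hilbert's Nullstellensatz yields $\sqrt{I}=\mathfrak{m}_{\xi_{1}}\cap\cdots\cap\mathfrak{m}_{\xi_{r'}}$ where $\mathfrak{m}_{\xi_{i}}=(x_{1}-\xi_{i,1},\ldots,x_{n}-\xi_{i,n})$ is a maximal ideal. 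In particular the associated primes of $I$ are contained in the set of $\mathfrak{m}_{\xi_{i}}$, and being themselves prime ideals over $\mathbb{K}[\mathbf{x}]$ containing $\sqrt{I}$, they coincide with these maximal ideals.

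Next I would apply the Lasker–Noether theorem in the Noetherian ring $\mathbb{K}[\mathbf{x}]$: there exists a minimal primary decomposition $I=Q_{1}\cap\cdots\cap Q_{r'}$ where the radical of each $Q_{i}$ is one of the associated primes, i.e.\ $\sqrt{Q_{i}}=\mathfrak{m}_{\xi_{i}}$, so $Q_{i}$ is $\mathfrak{m}_{\xi_{i}}$-primary. The key observation is that the $Q_{i}$'s are pairwise comaximal: for $i\neq j$ we have $\xi_{i}\neq\xi_{j}$, hence $\mathfrak{m}_{\xi_{i}}+\mathfrak{m}_{\xi_{j}}=\mathbb{K}[\mathbf{x}]$, and since $\sqrt{Q_{i}+Q_{j}}\supseteq\mathfrak{m}_{\xi_{i}}+\mathfrak{m}_{\xi_{j}}=(1)$, we obtain $Q_{i}+Q_{j}=\mathbb{K}[\mathbf{x}]$ as well.

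From comaximality, the Chinese Remainder Theorem gives the ring isomorphism
\[
\mathcal{A}=\mathbb{K}[\mathbf{x}]/I=\mathbb{K}[\mathbf{x}]/(Q_{1}\cap\cdots\cap Q_{r'})\;\cong\;\prod_{i=1}^{r'}\mathbb{K}[\mathbf{x}]/Q_{i}.
\]
Setting $\mathcal{A}_{\xi_{i}}\cong\mathbb{K}[\mathbf{x}]/Q_{i}$, this translates into a direct sum decomposition $\mathcal{A}=\mathcal{A}_{\xi_{1}}\oplus\cdots\oplus\mathcal{A}_{\xi_{r'}}$ as $\mathbb{K}$-vector spaces, and the product structure of the CRT isomorphism forces $\mathcal{A}_{\xi_{i}}\cdot\mathcal{A}_{\xi_{j}}\equiv 0$ when $i\neq j$ (concretely, the idempotents $e_{i}$ with $e_{i}e_{j}=0$ cutting out each summand can be produced from an $1=u_{i}+v_{i}$ with $u_{i}\in Q_{i}$, $v_{i}\in\bigcap_{j\neq i}Q_{j}$). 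Finally, each $\mathcal{A}_{\xi_{i}}$ is non-zero (otherwise $Q_{i}=\mathbb{K}[\mathbf{x}]$ and the decomposition would not be minimal), so $\dim_{\mathbb{K}}\mathcal{A}_{\xi_{i}}\geq 1$, whence $r=\sum_{i=1}^{r'}\dim_{\mathbb{K}}\mathcal{A}_{\xi_{i}}\geq r'$. The only mildly delicate point is checking that the primary decomposition inherited from Lasker–Noether is indeed \emph{minimal} and uses exactly the $\mathfrak{m}_{\xi_{i}}$ as associated primes, which follows because $\mathcal{V}(I)$ has exactly $r'$ points and each must contribute an isolated associated prime.
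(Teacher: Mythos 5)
Your proof is correct. The paper does not prove this theorem itself---it cites \cite{ElkMou07}[Theorem~4.9]---but your argument via the Nullstellensatz, Lasker--Noether primary decomposition, pairwise comaximality of the $Q_i$, and the Chinese Remainder Theorem is the standard one underlying that citation, and the closing observation that each $\mathfrak{m}_{\xi_i}$ is an isolated (hence associated) prime of $I$ correctly handles minimality.
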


We check that $\mathcal{A}$ localized at $\tmmathbf{m}_{\xi_i}$ is the local algebra
$\mathcal{A}_{\xi_i}$. The dimension of $\mathcal{A}_{\xi_i}$ is the
{\tmem{multiplicity}} of the point $\xi_i$ in $\mathcal{V} (I)$.

The projection of $1$ on the sub-algebras $\mathcal{A}_{\xi_i}$ as
\[ 1 \equiv {\bdu}_{\xi_1} + \cdots +\tmmathbf{u}_{\xi_{r'}} \]
with ${\bdu}_{\xi_i} \in \mathcal{A}_{\xi_i}$ yields the so-called
{\tmem{idempotents}} $\tmmathbf{u}_{\xi_i}$ associated to the roots $\xi_i$.
By construction, they satisfy the following relations in $\mathcal{A}$, which
characterize them:
\begin{itemize}
  \item $1 \equiv {\bdu}_{\xi_1} + \cdots +\tmmathbf{u}_{\xi_{r'}}$,
  
  \item ${\bdu}_{\xi_i}^2 \equiv {\bdu}_{\xi_i}$ for $i = 1,
  \ldots, r'$,
  
  \item $ {\bdu}_{\xi_i} {\bdu}_{\xi_j} \equiv 0$ for $1
  \leqslant i, j \leqslant r'$ and $i \neq j$.
\end{itemize}
The dual $\mathcal{A}^{\ast} = \tmop{Hom}_{\mathbbm{K}} (\mathcal{A},
\mathbbm{K})$ of $\mathcal{A}=\mathbbm{K} [\tmmathbf{x}] / I$ is naturally
identified with the subspace
\[ I^{\bot} = \{ \Lambda \in \mathbbm{K} [\tmmathbf{x}]^{\ast} =\mathbbm{K}
   [[\tmmathbf{y}]] \mid \forall p \in I, \Lambda (p) = 0 \}  \]
of  $\mathbbm{K} [\tmmathbf{x}]^{\ast} =\mathbbm{K} [[\tmmathbf{y}]]$.
As $I$ is stable by multiplication by the variables $x_i$, the
orthogonal subspace $I^{\bot} =\mathcal{A}^{\ast}$ is stable by the derivations
$\frac{d}{\tmop{dy}_i}$: $\forall \Lambda\in I^{\perp}\subset \mathbbm{K}[[\tmmathbf{y}]]$, $\forall\, i=1\ldots n,$ $\frac{d}{\tmop{dy}_i}(\Lambda)\in I^{\perp}$. In the case of a primary ideal, the orthogonal subspace has a
simple form {\cite{macaulay_algebraic_1916}}, {\cite{emsalem_geometrie_1978}},
{\cite{mourrain_isolated_1996}}:

\begin{proposition}
  \label{prop:primaryortho}Let $Q$ be a primary ideal for the maximal ideal
  $\mathfrak{m}_{\xi}$ of the point $\xi \in \mathbbm{K}^n$ and let
  $\mathcal{A}_{\xi} =\mathbbm{K} [\tmmathbf{x}] / Q$. Then
  \[ Q^{\bot} =\mathcal{A}_{\xi}^{\ast} = D_{\xi} (Q) \cdummy
     \tmmathbf{e}_{\xi} (\tmmathbf{y}), \]
  where $D_{\xi} (Q) \subset \mathbbm{K} [\tmmathbf{y}]$ is the set of
  polynomials $\omega (\tmmathbf{y}) \in \mathbbm{K} [\tmmathbf{y}]$ such that
  $\forall q \in Q$, $\omega (\tmmathbf{\partial}) (q) (\xi) = 0$.
\end{proposition}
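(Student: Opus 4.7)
The plan is to prove the two inclusions.

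For the easy inclusion $D_\xi(Q)\cdot \mathbf{e}_\xi(\mathbf{y}) \subset Q^\perp$, I would first derive the identity
\[
\langle \omega(\mathbf{y})\,\mathbf{e}_\xi(\mathbf{y}) \mid p(\mathbf{x}) \rangle \;=\; \omega(\tmmathbf{\partial})(p)(\xi)
\qquad (\omega\in\mathbb{K}[\mathbf{y}],\ p\in\mathbb{K}[\mathbf{x}])
\]
by expanding $\omega$ and $p$ in monomials and applying equation~(\ref{eq:der}) term by term. Granted this formula, if $\omega\in D_\xi(Q)$ then for every $q\in Q$ we get $\langle \omega\,\mathbf{e}_\xi \mid q\rangle = \omega(\tmmathbf{\partial})(q)(\xi)=0$, so $\omega\,\mathbf{e}_\xi\in Q^\perp$.

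For the reverse inclusion, I would exploit the $\mathfrak{m}_\xi$-primary hypothesis: there exists $d\geq 1$ with $\mathfrak{m}_\xi^{d}\subset Q$, so $Q^\perp\subset (\mathfrak{m}_\xi^d)^\perp$, and it suffices to show $(\mathfrak{m}_\xi^d)^\perp = \{\omega(\mathbf{y})\,\mathbf{e}_\xi(\mathbf{y}) : \omega\in\mathbb{K}[\mathbf{y}],\ \deg\omega<d\}$. The key idea is to switch from the monomial basis $(\mathbf{x}^\beta)$ to the translated basis $((\mathbf{x}-\xi)^\beta)_{\beta\in\mathbb{N}^n}$ of $\mathbb{K}[\mathbf{x}]$. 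In this basis, $\mathfrak{m}_\xi^d$ has basis $\{(\mathbf{x}-\xi)^\beta:|\beta|\geq d\}$, so $\mathbb{K}[\mathbf{x}]/\mathfrak{m}_\xi^d$ has basis $\{(\mathbf{x}-\xi)^\beta:|\beta|<d\}$. Using the formula from the first paragraph,
\[
\langle \mathbf{y}^\alpha\,\mathbf{e}_\xi(\mathbf{y}) \mid (\mathbf{x}-\xi)^\beta \rangle
= \tmmathbf{\partial}^\alpha\bigl((\mathbf{x}-\xi)^\beta\bigr)(\xi) = \alpha!\,\delta_{\alpha,\beta},
\]
which shows that $\{\tfrac{1}{\alpha!}\mathbf{y}^\alpha\,\mathbf{e}_\xi(\mathbf{y}):|\alpha|<d\}$ is the dual basis; therefore $(\mathfrak{m}_\xi^d)^\perp$ is precisely the vector space $\{\omega(\mathbf{y})\,\mathbf{e}_\xi(\mathbf{y}):\deg\omega<d\}$.

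Combining the two steps, any $\Lambda\in Q^\perp$ is of the form $\omega(\mathbf{y})\,\mathbf{e}_\xi(\mathbf{y})$ for some $\omega\in\mathbb{K}[\mathbf{y}]$, and the easy direction forces $\omega\in D_\xi(Q)$. The main conceptual obstacle is choosing the right basis: working in the original monomial basis $(\mathbf{x}^\beta)$ makes the dual computation needlessly messy because $\partial^\alpha \mathbf{x}^\beta$ evaluated at $\xi$ involves all $\binom{\beta}{\alpha}\xi^{\beta-\alpha}$ terms, whereas the translated basis $((\mathbf{x}-\xi)^\beta)$ produces a diagonal pairing and makes the identification of $(\mathfrak{m}_\xi^d)^\perp$ transparent.
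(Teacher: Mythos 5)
Your proof is correct. The paper itself gives no proof of Proposition~\ref{prop:primaryortho}; it simply refers to Macaulay, Emsalem, and Mourrain, so there is nothing in the text to compare against. Your argument is the standard Macaulay inverse-system computation, and all the steps are sound: the identity $\langle \omega(\mathbf{y})\,\tmmathbf{e}_{\xi}(\mathbf{y})\mid p(\mathbf{x})\rangle = \omega(\tmmathbf{\partial})(p)(\xi)$ follows by bilinearity from equation~(\ref{eq:der}); the $\mathfrak{m}_{\xi}$-primary hypothesis (together with $\mathbbm{K}[\mathbf{x}]$ Noetherian) gives $\mathfrak{m}_{\xi}^{d}\subset Q$ for some $d$; the diagonal pairing $\langle \mathbf{y}^{\alpha}\,\tmmathbf{e}_{\xi}\mid(\mathbf{x}-\xi)^{\beta}\rangle = \alpha!\,\delta_{\alpha,\beta}$ in the translated basis identifies $(\mathfrak{m}_{\xi}^{d})^{\perp}$ with $\{\omega\,\tmmathbf{e}_{\xi}:\deg\omega<d\}$; and the containment $Q^{\perp}\subset(\mathfrak{m}_{\xi}^{d})^{\perp}$ plus the initial identity then forces any $\Lambda\in Q^{\perp}$ to equal $\omega\,\tmmathbf{e}_{\xi}$ with $\omega\in D_{\xi}(Q)$. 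Two small presentational points: you should explicitly note that $\mathfrak{m}_{\xi}^{d}=\operatorname{span}\{(\mathbf{x}-\xi)^{\beta}:|\beta|\geq d\}$ as a $\mathbbm{K}$-vector space (the reason the dual-basis argument identifies $(\mathfrak{m}_{\xi}^{d})^{\perp}$ cleanly), and the phrase ``the easy direction forces $\omega\in D_{\xi}(Q)$'' is really just the same displayed identity read in the other direction, not a citation of an earlier result, so it is worth stating that directly.
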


The vector space $D_{\xi} (Q) \subset \mathbbm{K} [\tmmathbf{y}]$ is called
the {\tmem{inverse system}} of $Q$. As $Q$ is an ideal, $Q^{\bot}
= D_{\xi} (Q) \cdummy \tmmathbf{e}_{\xi} (\tmmathbf{y})$ is stable by the
derivations $\frac{d}{\tmop{dy}_i}$, and so is $D_{\xi} (Q)$.

If $I = Q_1 \cap \cdots \cap Q_{r'}$ is a minimal primary decomposition of
the zero-dimensional ideal $I \subset \mathbbm{K} [\tmmathbf{x}]$ with $Q_i$
$\tmmathbf{m}_{\xi_i}$-primary, then the decomposition (\ref{eq:decsubalg})
implies that
\[ \mathcal{A}^{\ast} = I^{\bot} = Q_1^{\bot} \oplus \cdots \oplus
   Q_{r'}^{\bot} =\mathcal{A}_{\xi_1}^{\ast} \oplus \cdots \oplus
   \mathcal{A}_{\xi_{r'}}^{\ast}\]
since $Q_{i}^{\bot} \cap Q_j^{\bot} = (Q_i + Q_j)^{\bot} = (1)^{\bot} = \{
0 \}$ for $i \neq j$. 
The elements of $\mathcal{A}_{\xi_i}^{\ast}$ are the elements $\Lambda\in \mathcal{A}^{\ast}=I^{\bot}$ such that
$\forall a_{j}\in \mathcal{A}_{j}$ $j\neq i$,  $\lambda(a_{j})=0$.
Any element $\Lambda \in \mathcal{A}^{\ast}$ decomposes
as
\begin{equation}
  \Lambda = {\bdu}_{\xi_1} \star \Lambda + \cdots + {\bdu}_{\xi_{r'}} \star \Lambda. \label{eq:dualdec}
\end{equation}
As we have ${\bdu}_{\xi_i} \star \Lambda
(\mathcal{A}_{\xi_j}) = \Lambda ({\bdu}_{\xi_i}
\mathcal{A}_{\xi_j}) = 0$ for $i \neq j$, we deduce that ${\bdu}_{\xi_i} \star \Lambda \in \mathcal{A}_{\xi_i}^{\ast} = Q_i^{\bot}$

By Proposition \ref{prop:primaryortho}, $Q_i^{\perp} = D_i \tmmathbf{e}_{\xi}
(\tmmathbf{y})$ where $D_i = D_{\xi_i} (Q_i)$ is the inverse system of $Q_i$.
A basis of $D_i$ can be computed (efficiently) from $\xi_i$ and the
generators of the ideal $I$ (see e.g. {\cite{mourrain_isolated_1996}} for more
details).

From the decomposition (\ref{eq:dualdec}) and Proposition
\ref{prop:primaryortho}, we deduce the following result:

\begin{theorem}\label{thm:artindual}
Assume that $\mathbbm{K}$ is algebraically closed.
Let $\mathcal{A}$ be an Artinian algebra of dimension
  $r$ with $\mathcal{V} (I) = \{ \xi_1, \ldots, \xi_{r'} \} \subset
  \mathbbm{K}^n$. Let $D_i = D_{\xi_i} (I) \subset \mathbbm{K} [\tmmathbf{y}]$
  be the vector space of differential polynomials $\omega (\tmmathbf{y}) \in
  \mathbbm{K} [\tmmathbf{y}]$ such that $\forall p \in I, \omega (\tmmathbf{\partial}) (p) (\xi_i) = 0$. Then $D_i$ is stable by
  the derivations $\frac{d}{\tmop{dy}_i}$, $i = 1, \ldots, n$. It is of
  dimension ${\mu}_i$ with $\sum_{i = 1}^{r'} {\mu}_i = r$. Any
  element $\Lambda$ of $\mathcal{A}^{\ast}$ has a unique decomposition of the
  form
  \begin{equation}
    \Lambda (\tmmathbf{y}) = \sum_{i = 1}^{r'} \omega_i (\tmmathbf{y})
    \tmmathbf{e}_{\xi_i} (\tmmathbf{y}), \label{eq:polyexp}
  \end{equation}
  with $\omega_i (\tmmathbf{y}) \in D_i \subset \mathbbm{K}
  [\tmmathbf{y}]$, which is uniquely determined by values $\langle \Lambda | 
   b_i \rangle$ for a basis $B = \{ b_1, \ldots, b_r \}$ of
  $\mathcal{A}$. Moreover, any element of this form is in
  $\mathcal{A}^{\ast}$.
\end{theorem}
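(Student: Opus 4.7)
The approach is to combine the primary decomposition of the Artinian ideal $I$ with the local duality for primary ideals given by Proposition~\ref{prop:primaryortho}. I would first invoke Theorem~\ref{thm:artindec} to write the minimal primary decomposition $I = Q_{1} \cap \cdots \cap Q_{r'}$ with $Q_{i}$ being $\mathfrak{m}_{\xi_{i}}$-primary. Taking orthogonals and using the standard identity $Q_{i}^{\perp} \cap Q_{j}^{\perp} = (Q_{i} + Q_{j})^{\perp} = (1)^{\perp} = \{0\}$ for $i \neq j$ (since distinct maximal ideals are coprime), I obtain the direct sum decomposition $\mathcal{A}^{\ast} = I^{\perp} = Q_{1}^{\perp} \oplus \cdots \oplus Q_{r'}^{\perp}$. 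By Proposition~\ref{prop:primaryortho}, each summand has the form $Q_{i}^{\perp} = D_{\xi_{i}}(Q_{i}) \cdot \mathbf{e}_{\xi_{i}}(\mathbf{y})$, where $\dim_{\mathbb{K}} D_{\xi_{i}}(Q_{i}) = \dim_{\mathbb{K}} \mathcal{A}_{\xi_{i}} = \mu_{i}$ and $\sum \mu_{i} = r$.

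The key step is to identify $D_{i} := D_{\xi_{i}}(I)$ with $D_{\xi_{i}}(Q_{i})$. One direction is immediate: since $I \subset Q_{i}$, any $\omega \in D_{\xi_{i}}(Q_{i})$ lies in $D_{\xi_{i}}(I)$. For the converse, if $\omega \in D_{\xi_{i}}(I)$, then the extension of formula (\ref{eq:der}) to polynomial weights yields $\langle \omega \mathbf{e}_{\xi_{i}} \mid p \rangle = \omega(\partial)(p)(\xi_{i}) = 0$ for every $p \in I$, so $\omega \mathbf{e}_{\xi_{i}} \in I^{\perp}$. Using the direct sum above, I can decompose $\omega \mathbf{e}_{\xi_{i}} = \sum_{j} \omega_{j} \mathbf{e}_{\xi_{j}}$ with $\omega_{j} \in D_{\xi_{j}}(Q_{j})$; the linear independence of polynomial-exponential series with pairwise distinct frequencies (Lemma~\ref{lem:indep}) then forces $\omega_{j} = 0$ for $j \neq i$ and $\omega = \omega_{i} \in D_{\xi_{i}}(Q_{i})$.

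With this identification, existence and uniqueness of the decomposition $\Lambda(\mathbf{y}) = \sum_{i=1}^{r'} \omega_{i}(\mathbf{y})\,\mathbf{e}_{\xi_{i}}(\mathbf{y})$ for any $\Lambda \in \mathcal{A}^{\ast}$ drops out of the direct sum structure, as does the converse statement that any such sum lies in $\mathcal{A}^{\ast}$. The stability of $D_{i}$ under $d/dy_{k}$ follows from the stability of $\mathcal{A}^{\ast} = I^{\perp}$ under the derivations $d/dy_{k}$ (which corresponds to multiplication by $x_{k}$ via Lemma~\ref{lem:derivative}): applying Lemma~\ref{lem:diffpolexp} to $p = x_{k}$ gives $x_{k} \star (\omega_{i}\,\mathbf{e}_{\xi_{i}}) = (\xi_{i,k}\,\omega_{i} + \partial_{y_{k}}\omega_{i})\,\mathbf{e}_{\xi_{i}}$, and uniqueness of the decomposition in $I^{\perp}$ forces $\xi_{i,k}\omega_{i} + \partial_{y_{k}}\omega_{i} \in D_{i}$, whence $\partial_{y_{k}}\omega_{i} \in D_{i}$. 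The unique determination of $\Lambda$ by the scalars $\langle \Lambda \mid b_{i}\rangle$ is simply the duality between $\mathcal{A}$ and $\mathcal{A}^{\ast}$ evaluated on a basis $B$ of the $r$-dimensional algebra $\mathcal{A}$.

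The main obstacle, and the only genuinely non-formal step, is establishing $D_{\xi_{i}}(I) = D_{\xi_{i}}(Q_{i})$: vanishing against all of $I$ is a priori a weaker condition than vanishing against all of $Q_{i}$, and the argument crucially uses both the distinctness of the $\xi_{j}$ (so that Lemma~\ref{lem:indep} applies) and the algebraic disjointness of primary components underlying the direct sum decomposition of $I^{\perp}$. The hypothesis that $\mathbb{K}$ is algebraically closed enters here to guarantee $\mathcal{V}(I) \subset \mathbb{K}^{n}$ so that the frequencies $\xi_{j}$ genuinely live in $\mathbb{K}^{n}$ and the local structure theorem applies at each point.
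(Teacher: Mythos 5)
Your proof is correct and follows the same overall architecture as the paper's: primary decomposition of $I$ (Theorem~\ref{thm:artindec}), the induced direct sum $I^{\perp} = Q_{1}^{\perp} \oplus \cdots \oplus Q_{r'}^{\perp}$, the local duality $Q_{i}^{\perp} = D_{\xi_{i}}(Q_{i})\,\tmmathbf{e}_{\xi_{i}}(\tmmathbf{y})$ from Proposition~\ref{prop:primaryortho}, and Lemma~\ref{lem:indep} for uniqueness.

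Where you do better than the paper: you explicitly prove the identification $D_{\xi_{i}}(I) = D_{\xi_{i}}(Q_{i})$, which the theorem statement silently assumes (it defines $D_{i}$ via $I$ but then needs $\dim D_{i} = \mu_{i} = \dim \mathcal{A}_{\xi_{i}}$, which is a statement about $Q_{i}$). The paper's proof only observes, in passing, that polynomials vanishing on $Q_{i}$ also vanish on $I \subset Q_{i}$, which gives the easy inclusion $D_{\xi_{i}}(Q_{i}) \subset D_{\xi_{i}}(I)$ but leaves the converse unaddressed. Your argument — take $\omega \in D_{\xi_{i}}(I)$, observe $\omega\,\tmmathbf{e}_{\xi_{i}} \in I^{\perp}$ via (\ref{eq:der}), decompose it along the direct sum, and invoke Lemma~\ref{lem:indep} to kill all components except the $i$-th, thereby forcing $\omega \in D_{\xi_{i}}(Q_{i})$ — is exactly the missing step, and you correctly single it out as the only genuinely non-formal part of the proof. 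Your derivation-stability argument via Lemma~\ref{lem:diffpolexp} is also a clean alternative to the paper's appeal to the stability of $D_{\xi}(Q)$ noted just after Proposition~\ref{prop:primaryortho}, and it works once $D_{i} = D_{\xi_{i}}(Q_{i})$ is in hand. The paper, by contrast, routes the existence of the decomposition through the idempotents $\tmmathbf{u}_{\xi_{i}}$ and the relation (\ref{eq:dualdec}) rather than directly through the direct sum; the two are equivalent, but your route makes the uniqueness and the dimension count more transparent.
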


\begin{proof}
  For any polynomial $\omega (\tmmathbf{y}) \in \mathbbm{K} [\tmmathbf{y}]$,
  such that $\forall \xi \in \mathcal{V} (I)$, $\forall p \in I,
  \omega (\tmmathbf{\partial}) (p) (\xi) = 0$, the element $\omega (\tmmathbf{y})
  \tmmathbf{e}_{\xi} (\tmmathbf{y})$ is in $I^{\bot}$. Thus an element of the
  form (\ref{eq:polyexp}) is in $I^{\perp} =\mathcal{A}^{\ast}$.
  
  Let us prove that any element $\Lambda \in \mathcal{A}^{\ast}$ is of the
  form (\ref{eq:polyexp}). By the relation (\ref{eq:dualdec}), $\Lambda$
  decomposes as $\Lambda = \sum_{i = 1}^{r'} {\bdu}_{\xi_i} \star
  \Lambda$ with ${\bdu}_{\xi_i} \star \Lambda \in
  \mathcal{A}_{\xi_i}^{\ast} = Q_i^{\bot}$. By Proposition
  \ref{prop:primaryortho}, $Q_i^{\bot} = D_i \tmmathbf{e}_{\xi_i}
  (\tmmathbf{y})$, where $D_i = D_{\xi_i} (Q_i)$ is the set of differential
  polynomials which vanish at $\xi_i$, on $Q_i$ and thus on $I$. Thus
  ${\bdu}_{\xi_i} \star \Lambda$ is of the form ${\bdu}_{\xi_i}
  \star \Lambda = \omega_i (\tmmathbf{y}) \tmmathbf{e}_{\xi_i} (\tmmathbf{y})$
  with $\omega_i (\tmmathbf{y}) \in D_i \subset \mathbbm{K}
  [\tmmathbf{y}]$. By Lemma \ref{lem:indep}, its decomposition as a sum of
  polynomial exponentials $\Lambda (\tmmathbf{y}) = \sum_{i =
  1}^{r'} \omega_i (\tmmathbf{y}) \tmmathbf{e}_{\xi_i} (\tmmathbf{y})$ is
  unique. This concludes the proof.
\end{proof}

Theorem \ref{thm:artindual} can be reformulated in terms of solutions of
partial differential equations, using the relation between Artinian algebras
and polynomial-exponentials $\PolExp$. This duality between
polynomial equations and partial differential equations with constant
coefficients goes back to {\cite{riquier_les_1910}} and has been further
studied and extended for instance in {\cite{grobner_uber_1937}},
{\cite{emsalem_geometrie_1978}}, {\cite{pedersen_basis_1999}},
{\cite{oberst_constructive_2001}}, {\cite{hakopian_partial_2004}}. In the case
of a non-Artinian algebra, the solutions on an open convex domain are in the
closure of the set of polynomial-exponential solutions
(see e.g. {\cite{malgrange_existence_1956}}[Th{\'e}or{\`e}me 2] or
{\cite{hormander_introduction_1990}}[Theorem 7.6.14]). The following result
gives an explicit description of the solutions of partial differential
equations associated to Artinian algebras, as special elements of
$\PolExp$, with polynomial weights in the inverse systems of
the points of the characteristic variety of the differential system:

\begin{theorem}
  \label{thm:edp}Let $p_1, \ldots, p_s \in \mathbbm{C} [x_1, \ldots, x_n]$ be
  polynomials such that $\mathbbm{C} [\tmmathbf{x}] / (p_1, \ldots, p_s)$ is
  finite dimensional over $\mathbbm{C}$. Let $\Omega \subset \mathbbm{R}^n$ be
  a convex open domain of $\mathbbm{R}^n$. A function $f \in C^{\infty}
  (\Omega)$ is a solution of the system of partial differential equations
  \begin{equation}
    p_1 (\tmmathbf{\partial}) (f) = 0, \ldots, p_s (\tmmathbf{\partial}) (f) = 0 \label{eq:pde}
  \end{equation}
  if and only if it is of the form
  \[ f (\tmmathbf{y}) = \sum_{i = 1}^r \omega_i (\tmmathbf{y}) e^{\xi_i \cdot
     \tmmathbf{y}}\]
  with $\mathcal{V}_{\mathbbm{C}} (p_1, \ldots, p_s) = \{ \xi_1, \ldots, \xi_r
  \} \subset \mathbbm{C}^n$ and $\omega_i (\tmmathbf{y}) \in D_i \subset
  \mathbbm{C} [\tmmathbf{y}]$ where $D_i = D_{\xi_i} ((p_1, \ldots, p_s))$ is
  the space of differential polynomials, which vanish on the ideal $(p_1,
  \ldots, p_s)$ at $\xi_i$.
\end{theorem}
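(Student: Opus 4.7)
The plan is to prove the equivalence in two directions, exploiting the algebraic dictionary between $p(\partial)$ and the $\star$-action from Lemma \ref{lem:derivative}, together with the complete description of $\mathcal{A}^{*} = I^{\perp}$ given by Theorem \ref{thm:artindual}.

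The ``if'' direction is essentially immediate: given $f(\mathbf{y}) = \sum_{i=1}^{r} \omega_i(\mathbf{y}) e^{\xi_i \cdot \mathbf{y}}$ with $\omega_i \in D_i$ and $\xi_i \in \mathcal{V}_{\mathbb{C}}(I)$, Theorem \ref{thm:artindual} places the corresponding formal series in $I^{\perp}$, so $p_j \star f = 0$ for each generator $p_j$. Lemma \ref{lem:derivative} converts this into $p_j(\partial)(f) = 0$ as a statement about the entire function $f$, which therefore holds on all of $\mathbb{R}^n$ and in particular on $\Omega$.

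For the ``only if'' direction, I would first use the Artinian hypothesis to extract separated information: since $\mathbb{C}[\mathbf{x}]/I$ is finite dimensional, the classes of $1, x_i, x_i^2, \ldots$ are linearly dependent modulo $I$ for each $i$, so there exists a nonzero univariate polynomial $q_i(x_i) \in I \cap \mathbb{C}[x_i]$. The solution $f$ therefore satisfies $q_i(\partial_{y_i})(f) = 0$ on $\Omega$ for $i = 1, \ldots, n$. The main obstacle is to promote these one-variable ODE constraints into a single global polynomial-exponential representation on the convex domain $\Omega$. I would proceed by induction on $n$: for $n=1$, classical ODE theory on an open interval gives $f(y_1) = \sum_\lambda \pi_\lambda(y_1) e^{\lambda y_1}$ with $\lambda$ ranging over the roots of $q_1$ and $\pi_\lambda \in \mathbb{C}[y_1]$. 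For the inductive step, fix $(y_2, \ldots, y_n)$ so that the slice $\{y_1 : (y_1, y_2, \ldots, y_n) \in \Omega\}$ is an open interval (by convexity of $\Omega$), apply the $n=1$ case to obtain a decomposition $f = \sum_\lambda \pi_\lambda(y_1; y_2, \ldots, y_n) e^{\lambda y_1}$ whose coefficients are smooth in $(y_2, \ldots, y_n)$ (extracted via a fixed linear system in $y_1$-derivatives at a chosen base value), and observe that applying $q_j(\partial_{y_j})$ and invoking the linear independence from Lemma \ref{lem:indep} forces $q_j(\partial_{y_j})(\pi_\lambda) = 0$ for $j \geq 2$. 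The inductive hypothesis then expresses each $\pi_\lambda$ as a polynomial-exponential in the remaining variables on $\Omega$, and convexity keeps the global decomposition consistent.

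Once $f$ is known to be a polynomial-exponential on $\Omega$, it extends to an entire function on $\mathbb{C}^n$ and may be identified with its Taylor series at some base point in $\Omega$ (after translation, if necessary), yielding an element of $\mathbb{C}[[\mathbf{y}]]$. Lemma \ref{lem:derivative} again converts the PDE constraints $p_j(\partial)(f) = 0$ into $p_j \star f = 0$, so $f \in I^{\perp}$. Applying Theorem \ref{thm:artindual} to this formal series yields the unique decomposition $f = \sum_{i=1}^{r'} \omega_i(\mathbf{y}) \mathbf{e}_{\xi_i}(\mathbf{y})$ with $\omega_i \in D_i$ and $\xi_i \in \mathcal{V}_{\mathbb{C}}(I)$, which is exactly the required form.
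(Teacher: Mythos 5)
Your ``if'' direction matches the paper's. For the ``only if'' direction you take a genuinely different, and substantially longer, route. The paper shifts so that $0\in\Omega$, passes immediately to the formal Taylor series $\hat f$ of $f$ at $0$, notes that the system of PDEs forces $p_j\star\hat f=0$ so $\hat f\in I^\perp$, invokes Theorem~\ref{thm:artindual} to obtain a polynomial-exponential series $\tilde f$ with those Taylor coefficients, and then concludes $f=\tilde f$ on $\Omega$ by uniqueness of the $C^\infty$ solution with prescribed jet at $0$. You instead first prove, by extracting univariate $q_i(x_i)\in I\cap\mathbb{C}[x_i]$, applying classical linear ODE theory in one variable, and inducting on $n$, that $f$ is already a polynomial-exponential \emph{function} on $\Omega$, and only then take its Taylor series and apply Theorem~\ref{thm:artindual}. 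That first stage is an unnecessary detour: Theorem~\ref{thm:artindual} is a statement about elements of $\mathbb{C}[[\mathbf y]]$, so it may be applied directly to $\hat f\in I^\perp$ without knowing in advance that $f$ is polynomial-exponential. Conversely, the $q_i\in I\cap\mathbb{C}[x_i]$ observation is precisely what one needs to \emph{justify} the paper's appeal to uniqueness (restrict $f-\tilde f$ to coordinate lines, or more generally to lines through $0$ using $q_v(v\cdot x)\in I$, and apply ODE uniqueness), so you identified exactly the right ingredient and then spent it on a harder argument.

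Two things are worth flagging in your inductive step. First, ``extracted via a fixed linear system in $y_1$-derivatives at a chosen base value'' implicitly assumes the base value $y_1^0$ lies in the slice $\{y_1:(y_1,y_2,\dots,y_n)\in\Omega\}$ for \emph{all} $(y_2,\dots,y_n)$ under consideration; for a general convex $\Omega$ that can fail, so as written you only get smoothness of the coefficients on a possibly smaller subdomain. This is patchable (the decomposition coefficients are intrinsic to the ODE solution and so the locally-extracted functions agree on overlaps), but as stated there is a gap, and after patching it one still has to extend the representation back to all of $\Omega$. Second, even granting your induction, you should note (as you begin to) that once a polynomial-exponential representation is found on some open subset, it equals $f$ on all of $\Omega$ by the same line-by-line ODE uniqueness argument — which is exactly the step the paper compresses into ``by unicity of the solution.'' Net: your proof can be made to work, but it reproves classical facts that the paper's formal-series route bypasses.
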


\begin{proof}
  By a shift of the variables, we can assume that $\Omega$ contains $0$. A
  solution of $f$ of (\ref{eq:pde}) in $C^{\infty} (\Omega)$ has a Taylor
  series expansion $f (\tmmathbf{y}) \in \mathbbm{C} [[\tmmathbf{y}]]$ at $0
  \in \Omega$, which defines an element of $\mathbbm{C}
  [\tmmathbf{x}]^{\ast}$. By Lemma \ref{lem:derivative}, $f$ is a solution of
  the system (\ref{eq:pde}) if and only if we have $p_1 \star f (\tmmathbf{y})
  = 0, \ldots, p_s \star f (\tmmathbf{y}) = 0$. Equivalently, $f
  (\tmmathbf{y}) \in I^{\bot}$ where $I = (p_1, \ldots, p_s)$ is the ideal of
  $\mathbbm{K} [\tmmathbf{x}]$ generated by $p_1, \ldots, p_s$. If
  $\mathcal{A}=\mathbbm{C} [\tmmathbf{x}] / I$ is finite dimensional, i.e.
  Artinian, Theorem \ref{thm:artindual} implies that the Taylor series $f
  (\tmmathbf{y})$ is in $I^{\perp}$, if and only if, it is of the form:
  \begin{equation}
    f (\tmmathbf{y}) = \sum_{i = 1}^r \omega_i (\tmmathbf{y}) e^{\xi_i \cdot
    \tmmathbf{y}}\label{eq:solpolexp}
  \end{equation}
  with $\mathcal{V}_{\mathbbm{C}} (p_1, \ldots, p_s) = \{ \xi_1, \ldots, \xi_r
  \} \subset \mathbbm{C}^n$ and $\omega_i (\tmmathbf{y}) \in D_i = D_{\xi_i}
  (I) \subset \mathbbm{C} [\tmmathbf{y}]$ where $D_i$ is the space of
  differential polynomials which vanish on $I = (p_1, \ldots, p_s)$ at
  $\xi_i$. The polynomial-exponential function
  (\ref{eq:solpolexp}) is an analytic function with an infinite radius of
  convergence, which is a solution of the partial differential system
  (\ref{eq:pde}) on $\Omega$. By unicity of the solution with given
  derivatives at $0 \in \Omega$, $\sum_{i = 1}^r \omega_i (\tmmathbf{y})
  e^{\xi_i \cdot \tmmathbf{y}}$ coincides with $f$ on
  all the domain $\Omega \subset \mathbbm{R}^n$.
\end{proof}

Here is another reformulation of Theorem \ref{thm:artindual} in
terms of {\tmem{convolution}} or {\tmem{cross-correlation}} of sequences:

\begin{theorem}
  \label{thm:convol}Let $p_1, \ldots, p_s \in \mathbbm{C} [x_1, \ldots, x_n]$
  be polynomials such that $\mathbbm{C} [\tmmathbf{x}] / (p_1, \ldots, p_s)$
  is finite dimensional over $\mathbbm{C}$. The generating series of the
  sequences $\sigma = (\sigma_{\alpha}) \in \mathbbm{C}^{\mathbbm{N}^n}$ which
  satisfy the system of difference equations
  \begin{equation}
    p_1 \star \sigma = 0, \ldots, p_s \star \sigma = 0 \label{eq:convol}
  \end{equation}
  are of the form 
  \[ \sigma (\tmmathbf{y}) = \sum_{\alpha \in \mathbbm{N}^n} \sigma_{\alpha} 
     \frac{\tmmathbf{y}^{\alpha}}{\alpha !} = \sum_{i = 1}^r \omega_i
     (\tmmathbf{y}) e^{\xi_i \cdot \tmmathbf{y}}\]
  with $\mathcal{V}_{\mathbbm{C}} (p_1, \ldots, p_s) = \{ \xi_1, \ldots, \xi_r
  \} \subset \mathbbm{C}^n$ and $\omega_i (\tmmathbf{y}) \in D_i \subset
  \mathbbm{C} [\tmmathbf{y}]$ such that $D_i = D_{\xi_i} ((p_1, \ldots, p_s))$
  is the space of differential polynomials, which vanish on the ideal $(p_1,
  \ldots, p_s)$ at $\xi_i$.
\end{theorem}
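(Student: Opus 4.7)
The plan is to recognize that this theorem is a direct reformulation of Theorem \ref{thm:artindual} once we interpret the convolution conditions in terms of the annihilator $I^\perp$.

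First, I would observe that under the identification between sequences $\sigma = (\sigma_\alpha)_{\alpha \in \mathbb{N}^n}$ and formal power series $\sigma(\mathbf{y}) = \sum_\alpha \sigma_\alpha \frac{\mathbf{y}^\alpha}{\alpha!} \in \mathbb{C}[[\mathbf{y}]] \simeq \mathbb{C}[\mathbf{x}]^*$, the convolution condition $p_i \star \sigma = 0$ means exactly $\langle \sigma \mid p_i q \rangle = 0$ for all $q \in \mathbb{C}[\mathbf{x}]$, by definition of the $\star$ action. Since $I_\sigma = \ker H_\sigma$ is an ideal of $\mathbb{C}[\mathbf{x}]$, the system $p_1 \star \sigma = \cdots = p_s \star \sigma = 0$ is equivalent to $(p_1,\ldots,p_s) \subset I_\sigma$, which in turn is equivalent to $\sigma \in I^\perp$ where $I = (p_1,\ldots,p_s)$.

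Next, I would invoke the finite dimensionality hypothesis: by assumption $\mathcal{A} = \mathbb{C}[\mathbf{x}]/I$ is Artinian, so $I^\perp = \mathcal{A}^*$ is precisely the dual of an Artinian algebra. Theorem \ref{thm:artindual} (applied over the algebraically closed field $\mathbb{C}$) then gives the desired conclusion: every $\sigma \in I^\perp$ admits a unique decomposition of the form
\[ \sigma(\mathbf{y}) = \sum_{i=1}^{r} \omega_i(\mathbf{y})\, \mathbf{e}_{\xi_i}(\mathbf{y}) \]
with $\{\xi_1,\ldots,\xi_r\} = \mathcal{V}_{\mathbb{C}}(p_1,\ldots,p_s)$ and $\omega_i(\mathbf{y}) \in D_i = D_{\xi_i}(I)$. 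For the converse direction, Theorem \ref{thm:artindual} also guarantees that every such polynomial-exponential series lies in $I^\perp$, so it satisfies the difference equations $p_i \star \sigma = 0$.

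Since the argument is essentially a translation from the differential-equation viewpoint of Theorem \ref{thm:edp} to the sequence/generating-series viewpoint, no substantive obstacle arises: the only subtle point is verifying the equivalence between the convolution equations $p_i \star \sigma = 0$ and membership of $\sigma$ in $I^\perp$, which hinges on the associativity $(pq) \star \sigma = p \star (q \star \sigma)$ already recorded in Section~2.1, so the kernel of $H_\sigma$ is automatically an ideal containing $\{p_1,\ldots,p_s\}$ iff it contains $I$. Once this equivalence is stated, the result is immediate from Theorem \ref{thm:artindual}.
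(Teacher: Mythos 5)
Your proof is correct and follows exactly the same route as the paper: identify the difference equations $p_i\star\sigma=0$ with the condition $\sigma\in I^\perp$ for $I=(p_1,\ldots,p_s)$, then invoke Theorem~\ref{thm:artindual}. The additional care you take in spelling out the equivalence (via $I\subset I_\sigma$ and taking $q=1$) is a useful elaboration of what the paper leaves implicit, but it is not a departure.
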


\begin{proof}
  The sequence $\sigma$ is a solution of the system (\ref{eq:convol}) if and
  only if $\sigma (\tmmathbf{y}) = \sum_{\alpha \in \mathbbm{N}^n}
  \sigma_{\alpha}  \frac{\tmmathbf{y}^{\alpha}}{\alpha !} \in I^{\bot}$ where
  $I = (p_1, \ldots, p_s)$ is the ideal of $\mathbbm{K} [\tmmathbf{x}]$
  generated by $p_1, \ldots, p_s$. We deduce the form of $\sigma
  (\tmmathbf{y}) \in \PolExp (\tmmathbf{y})$ from Theorem
  \ref{thm:artindual}.
\end{proof}

The solutions $\mathcal{V} (I) = \{ \xi_1, \ldots, \xi_{r'} \}$ can be
recovered by linear algebra, from the multiplicative structure of
$\mathcal{A}$, using the properties of the following operators:

\begin{definition}
  Let $g$ be a polynomial in $\mathcal{A}$. The $g$-multiplication operator
  $\mathcal{M}_g$ is defined by
  \[ \begin{array}{cccl}
       \mathcal{M}_g : & \mathcal{A} & \to & \mathcal{A}\\
       & h & \mapsto & \mathcal{M}_g (h) = gh.
     \end{array} \]
  The transpose application $\mathcal{M}_g^t$ of the $g$-multiplication
  operator $\mathcal{M}_g$ is defined by
  \[ \begin{array}{cccl}
       \mathcal{M}_g^t : & \mathcal{A}^{\ast} & \to & \mathcal{A}^{\ast}\\
       & \Lambda & \mapsto & \mathcal{M}_g^t (\Lambda) = \Lambda \circ
       \mathcal{M}_g = g \star \Lambda.
     \end{array} \]
\end{definition}

Let $B = \{ b_1, \ldots, b_r \}$ be a basis in $\mathcal{A}$ and $B^{\ast}$
its dual basis in $\mathcal{A}^{\ast}$. We denote by $M_g^B$ \ (or simply
$M_g$ when there is no ambiguity on the basis) the matrix of $\mathcal{M}_g$
in the basis $B$. As the matrix $(M^B_g)^t$ of the transpose application
$\mathcal{M}_g^t$ in the dual basis $B^{\ast}$ in $\mathcal{A}^{\ast}$ is the
transpose of the matrix $M_g^B$ of the application $\mathcal{M}_g$ in the
basis $B$ in $\mathcal{A}$, the eigenvalues are the same for both matrices.

The main property we will use is the following (see e.g. {\cite{ElkMou07}}):

\begin{proposition}
  \label{prop:eigen}Let $I$ be an ideal of $R =\mathbbm{K} [\tmmathbf{x}]$ and
  suppose that $\mathcal{V} (I) = \{ \mathbf{\xi}_1, \mathbf{\xi}_2, \ldots,
  \mathbf{\xi}_r \}$. Then
  \begin{itemize}
    \item for all $g \in \mathcal{A}$, the eigenvalues of $\mathcal{M}_g$ and
    $\mathcal{M}_g^t$ are the values $g (\mathbf{\xi}_1), \ldots, g
    (\mathbf{\xi}_r)$ of the polynomial $g$ at the roots with multiplicities
    ${\mu}_i = \dim \mathcal{A}_{x_i}$.
    
    \item The eigenvectors common to all $\mathcal{M}_g^t$ with $g \in
    \mathcal{A}$ are - up to a scalar - the evaluations
    $\mathbf{\tmmathbf{e}}_{\mathbf{\xi}_1}, \ldots,
    \mathbf{\tmmathbf{e}}_{\mathbf{\xi}_r}$.
  \end{itemize}
\end{proposition}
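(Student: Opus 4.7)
My approach rests on the structural decomposition from Theorem \ref{thm:artindec}: $\mathcal{A} = \mathcal{A}_{\xi_1} \oplus \cdots \oplus \mathcal{A}_{\xi_{r'}}$, where each local factor $\mathcal{A}_{\xi_i}$ is an ideal of $\mathcal{A}$ (since $\mathcal{A}_{\xi_i}\cdot \mathcal{A}_{\xi_j} = 0$ for $i\ne j$). Consequently, for any $g \in \mathcal{A}$ the multiplication $\mathcal{M}_g$ preserves each summand, so in a basis adapted to this decomposition the matrix $M_g$ is block-diagonal with blocks $M_g^{(i)}$ of size $\mu_i = \dim \mathcal{A}_{\xi_i}$. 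The problem thus reduces to analyzing a single local block and to identifying the joint eigenvectors of the transposes.

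For the first item, write $g \equiv g(\xi_i) + (g - g(\xi_i))$ in $\mathcal{A}_{\xi_i}$. Since $g - g(\xi_i)$ vanishes at $\xi_i$, it lies in $\mathfrak{m}_{\xi_i}$, whose image in the local Artinian algebra $\mathcal{A}_{\xi_i}$ is its unique maximal ideal, hence nilpotent. Therefore $M_g^{(i)} = g(\xi_i)\,\mathrm{Id} + N_i$ with $N_i$ nilpotent, and its characteristic polynomial is $(T - g(\xi_i))^{\mu_i}$. Assembling blocks yields $\det(T\,\mathrm{Id} - M_g) = \prod_{i=1}^{r'} (T - g(\xi_i))^{\mu_i}$, and since $M_g$ and its transpose share characteristic polynomials, the same holds for $\mathcal{M}_g^t$.

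For the second item, suppose $\Lambda \in \mathcal{A}^*$ is a nonzero common eigenvector of every $\mathcal{M}_g^t$, so $g \star \Lambda = \chi(g)\,\Lambda$ for some scalar $\chi(g)$. The associativity of $\star$ gives
\[
\chi(gh)\,\Lambda = (gh)\star\Lambda = g\star(h\star\Lambda) = \chi(g)\,\chi(h)\,\Lambda,
\]
and linearity of $g \mapsto g\star\Lambda$ combined with $\Lambda\ne 0$ forces $\chi : \mathcal{A}\to \mathbb{K}$ to be a $\mathbb{K}$-algebra homomorphism. Its kernel is a maximal ideal of $\mathcal{A}$, hence corresponds to some $\xi_i \in \mathcal{V}(I)$ with $\chi(g) = g(\xi_i)$. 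Then for every $p \in R$, $\langle \Lambda \mid p\rangle = \langle p\star\Lambda \mid 1\rangle = p(\xi_i)\,\langle \Lambda\mid 1\rangle$, so $\Lambda$ is a scalar multiple of $\mathbf{e}_{\xi_i}$. Conversely, $\langle g\star\mathbf{e}_{\xi_i}\mid p\rangle = (gp)(\xi_i) = g(\xi_i)\,\langle\mathbf{e}_{\xi_i}\mid p\rangle$, so each $\mathbf{e}_{\xi_i}$ is indeed a common eigenvector with eigenvalue $g(\xi_i)$.

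The only genuinely subtle step is the multiplicativity of $\chi$, which is the reason one demands a \emph{common} eigenvector of all $\mathcal{M}_g^t$ (an individual transpose has $\mu_i$-dimensional generalized eigenspaces at each $g(\xi_i)$, so eigenvectors are far from unique). Once multiplicativity is in hand, the Nullstellensatz-flavored identification of maximal ideals of $\mathcal{A}$ with points of $\mathcal{V}(I)$ pins down the eigenvector up to scalar.
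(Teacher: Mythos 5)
The paper does not prove Proposition~\ref{prop:eigen} itself but cites \cite{ElkMou07}, so there is no in-paper argument to compare against; your proof supplies a complete one. Your argument is correct and is the standard one.

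For the first item, you reduce to a single local block via Theorem~\ref{thm:artindec}, observe that $g - g(\xi_i)$ lands in the nilpotent maximal ideal of the local Artinian algebra $\mathcal{A}_{\xi_i}$, and read off the characteristic polynomial $\prod_i (T - g(\xi_i))^{\mu_i}$; passing to the transpose is immediate. For the second item, the chain of observations --- that a joint eigenvector $\Lambda$ induces a character $\chi$ on $\mathcal{A}$ via associativity and linearity of $\star$, that $\chi$ determines a point $\xi = (\chi(\bar x_1),\ldots,\chi(\bar x_n)) \in \mathcal{V}(I)$, and that $\langle \Lambda \mid p\rangle = \langle p\star\Lambda \mid 1\rangle = p(\xi)\langle\Lambda\mid 1\rangle$ with $\langle\Lambda\mid 1\rangle = \chi(1)\neq 0$ forces $\Lambda$ proportional to $\mathbf{e}_\xi$ --- is exactly right, and the converse verification is routine. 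Your closing remark correctly isolates where the hypothesis of a \emph{common} eigenvector for all $g$ is used: an individual $\mathcal{M}_g^t$ generally has generalized eigenspaces of dimension $\mu_i$, and it is the multiplicativity of $\chi$, available only when $\Lambda$ is a simultaneous eigenvector, that collapses the ambiguity to a single line.

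One tiny point worth flagging: the statement of the proposition uses $r$ for the number of distinct points, while elsewhere the paper writes $r'$ for that count and reserves $r$ for $\dim\mathcal{A}$; your proof correctly uses the ``number of distinct points'' reading, which is the intended one here.
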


\begin{remark}
  \label{rem:evalcoef}If $B = \{ b_1, \ldots, b_r \}$ is a basis of
  $\mathcal{A}$, then the coefficient vector of the evaluation
  \[ {\mathbf{e}}_{\mathbf{\xi}_i} = \sum_{\beta \in \mathbbm{N}^n}
     \mathbf{\xi}_i^{\beta}  \frac{\tmmathbf{y}^{\beta}}{\beta !} + \cdots \]
  in the dual basis of $\mathcal{A}^{\ast}$ is $\left[ \left\langle
  \tmmathbf{e}_{{\xi_i}} |   b_j \right\rangle
  \right]_{\beta \in B} = [b_j (\mathbf{\xi}_i)]_{i = 1 \ldots r} = B
  (\xi_i)$. The previous proposition says that if $M_g$ is the matrix of
  $\mathcal{M}_g$ in the basis $B$ of $\mathcal{A}$, then
  \[ M_g^t B (\mathbf{\xi}_i) = g (\xi_i) B (\mathbf{\xi}_i). \]
  If moreover the basis $B$ contains the monomials $1, x_1, x_2, \ldots, x_n$,
  then the common eigenvectors of $M_g^t$ are of the form ${\bdv}_i = c
  [1, \xi_{i, 1}, \ldots, \xi_{i, n}, \ldots]$ and the root $\xi_i$ can be
  computed from the coefficients of ${\bdv}_i$ by taking the ratio of
  the coefficients of the monomials $\nocomma x_1, \ldots, x_n$ by the
  coefficient of $1$: $\xi_{i, k} = \frac{{\bdv}_{i, k + 1}}{{\bdv}_{i, 1}}$. Thus computing the common eigenvectors of all the
  matrices $M_g^t$ for $g \in \mathcal{A}$ yield the roots $\tmmathbf{\xi}_i$
  ($i = 1, \ldots, r$). In practice, it is enough to compute the
  common eigenvectors of $M_{x_1}^t, \ldots, M_{x_n}^t$,
  since $\forall g \in \mathbbm{K} [\tmmathbf{x}], M_{g}^t = g (M_{x_1}^t,  \ldots, M_{x_n}^t)$.
\end{remark}

We are going to consider special Artinian algebras, called
{\tmem{Gorenstein}} algebras. They are defined as follows:

\begin{definition}
  A $\mathbbm{K}$-algebra $\mathcal{A}$ is Gorenstein if $\exists \sigma \in
  \mathcal{A}^{\ast} = \tmop{Hom}_{\mathbbm{K}} (\mathcal{A}, \mathbbm{K})$
  such that $\forall \Lambda \in \mathcal{A}^{\ast}, \exists a \in
  \mathcal{A}$ with $\Lambda = a \star \sigma$ and $a \star \sigma = 0$
  implies $a = 0$.
\end{definition}

In other words, $\mathcal{A}$ is Gorenstein, if and only if, $\mathcal{A}^{\ast}$ is a free
$\mathcal{A}$-module of rank 1.

\section{Correspondence between Artinian Gorenstein algebras and
$\PolExp$}\label{sec:aga}

In this section, we describe how polynomial-exponential functions
are naturally associated to Artinian Gorenstein Algebras. As this property is
preserved by tensorisation by $\bar{\mathbbm{K}}$, we will also assume
hereafter that $\mathbbm{K}= \bar{\mathbbm{K}}$ is {\tmem{algebraically
closed}}.

Given $\sigma \in \mathbbm{K} [[{\bdy}]]$, we consider its
Hankel operator $H_{\sigma} : p \in \mathbbm{K} [\tmmathbf{x}] \mapsto p \star
\sigma \in \mathbbm{K} [[{\bdy}]]$. The kernel $I_{\sigma}$ of
$H_{\sigma}$ is an ideal and the elements $p \star \sigma$ of $\tmop{im}
H_{\sigma}$ for $p \in \mathbbm{K} [\tmmathbf{x}]$ are in $I_{\sigma}^{\perp}
= \mathcal{A}_{\sigma}^{\ast}$ where $\mathcal{A}_{\sigma} =\mathbbm{K}
[\tmmathbf{x}] / I_{\sigma}$: $\forall q \in I_{\sigma}$, $\langle p \star
\sigma \mid q \rangle = \langle q \star \sigma \mid p \rangle = 0$. If
$\mathcal{A}_{\sigma}$ is Artinian of dimension $r$, then
\[ \tmop{im} H_{\sigma} = \{ p \star \sigma \mid p \in R \} \subset
   I_{\sigma}^{\perp} = \mathcal{A}_{\sigma}^{\ast} \]
is of dimension $\leqslant r$. Therefore, the injective map
\begin{eqnarray*}
  \mathcal{H}_{\sigma} : \mathcal{A}_{\sigma} & \rightarrow &
  \mathcal{A}^{\ast}_{\sigma}\\
  p (\tmmathbf{x}) & \mapsto & p (\tmmathbf{x}) \star \sigma
  (\tmmathbf{y} )
\end{eqnarray*}
induced by $H_{\sigma}$ is an isomorphism, and we have the exact
sequence:
\begin{equation}
  0 \rightarrow I_{\sigma} \rightarrow \mathbbm{K} [\tmmathbf{x}]
  \xrightarrow{H_{\sigma}} \mathcal{A}_{\sigma}^{\ast} \rightarrow 0.
  \label{eq:seq}
\end{equation}
Conversely, let $\mathcal{A}$ \ be an Artinian Gorenstein Algebra of dimension
$r$, generated by $n$ elements $a_1, \ldots, a_n$. It can be represented as
the quotient algebra $\mathcal{A} =\mathbbm{K} [\tmmathbf{x}] / I$ of
$\mathbbm{K} [\tmmathbf{x}]$ by an ideal $I$. As $\mathcal{A}$ is Gorenstein, there
exists $\sigma \in \mathcal{A}^{\ast}$ such that $\sigma$ is a basis of the
free $\mathcal{A}$-module $\mathcal{A}^{\ast}$. This implies that the kernel
of the map
\begin{eqnarray*}
  H_{\sigma} :\mathbbm{K} [\tmmathbf{x}] & \rightarrow &
  \mathcal{A}^{\ast}_{\sigma}\\
  p & \mapsto & p \star \sigma
\end{eqnarray*}
is the ideal $I$. Thus $\mathcal{A}=\mathcal{A}_{\sigma}$ and $H_{\sigma}$ is
of finite rank $r = \dim \mathcal{A}$.

This construction defines a correspondence between series $\sigma \in
\mathbbm{K} [[\tmmathbf{y}]]$ of finite rank or Hankel operators $H_{\sigma}$
of finite rank and Artinian Gorenstein Algebras.

\subsection{Hankel operators of finite rank}

Hankel operators of finite rank play an important role in functional analysis.
In one variable, they are characterized by Kronecker's theorem
{\cite{kronecker_zur_1880}} as follows (see e.g.
{\cite{peller_excursion_1998}} for more details). Let $\fsupp
(\mathbbm{K}^{\mathbbm{N}})$ be the vector space of sequences $\in
\mathbbm{K}^{\mathbbm{N}}$ of finite support and let $\sigma = (\sigma_k)_{k
\in \mathbbm{N}} \in \mathbbm{K}^{\mathbbm{N}}$. The Hankel operator
$H_{\sigma} : (p_l)_{l \in \mathbbm{N}} \in \fsupp (\mathbbm{K}^{\mathbbm{N}})
\mapsto \left( \sum_l \sigma_{k + l} p_l \right)_{k \in \mathbbm{N}} \in
\mathbbm{K}^{\mathbbm{N}}$ is of finite rank $r$, if and only if, there exist
polynomials $\omega_1 (u), \ldots, \omega_{r'} (u) \in \mathbbm{K} [u]$ and
$\xi_1, \ldots, \xi_{r'} \in \mathbbm{K}$ distinct such that
\[ \sigma_k = \sum_{i = 1}^{r'} \omega_i (k) \xi_i^k, \]
with $\sum_{i = 1}^{r'} \deg (\omega_i) + 1 = \tmop{rank}
H_{\sigma}$. Rewriting it in terms of generating
series, we have $H_{\sigma} : p = \sum_l p_l x^l \in \mathbbm{K} [x] \mapsto
\sum_{k \in \mathbbm{N}} \left( \sum_l \sigma_{k + l} p_l \right) 
\frac{y^k}{k!} = p \star \sigma$ is of finite rank, if and only if,
\[ \sigma (y) = \sum_{k \in \mathbbm{N}} \sigma_k  \frac{y^k}{k!} = \sum_{i =
   1}^{r'} \omega_i (y) e^{\xi_i y} \]
with $\omega_1, \ldots, \omega_{r'} \in \mathbbm{K} [y]$ and $\xi_1, \ldots,
\xi_{r'} \in \mathbbm{K}$ distinct such that $\sum_{i = 1}^r \deg (\omega_i) +
1 = \tmop{rank} H_{\sigma}$. Notice that $\deg (\omega_i) + 1$ is the
dimension of the vector space spanned by $\omega_i (y)$ and all its
derivatives.

The following result generalizes Kronecker's theorem,
by establishing a correspondence between Hankel operators of finite rank and
polynomial-exponential series and by connecting directly the rank of the Hankel
operator with the decomposition of the associated polynomial-exponential series.
\begin{theorem}
  \label{thm:gorenstein}Let $\sigma (\tmmathbf{y} ) \in
  \mathbbm{K} [[\tmmathbf{y}]]$. Then $\tmop{rank} H_{\sigma} < \infty$, if
  and only if, $\sigma \in \PolExp (\tmmathbf{y})$.
  
  If $\sigma (\tmmathbf{y}) = \sum_{i = 1}^{r'} \hspace{0.25em} \omega_i
  (\tmmathbf{y}) \tmmathbf{e}_{{\xi_i}} (\tmmathbf{y})$ $\text{with }
  \omega_i (\tmmathbf{y}) \in \mathbbm{K} [\tmmathbf{y}] \setminus \{0\}
  \tmop{and} \xi_i \in \mathbbm{K}^n  \text{pairwise distinct}$, then the rank
  of $H_{\sigma}$ is $r = \sum_{i = 1}^{r'} {\mu} (\omega_i)$ where
  ${\mu} (w_i)$ is the dimension of the inverse system spanned by
  $\hspace{0.25em} \omega_i (\tmmathbf{y})$ and all its derivatives
  $\tmmathbf{\partial}^{\alpha} \omega_i (\tmmathbf{y})$ for
  $\alpha = (\alpha_1, \ldots, \alpha_n) \in \mathbbm{N}^n$.
\end{theorem}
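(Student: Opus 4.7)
The plan is to handle the two implications separately. For the direction $\tmop{rank} H_{\sigma} < \infty \Rightarrow \sigma \in \PolExp$, I would invoke the correspondence with Artinian Gorenstein algebras set up at the beginning of Section~\ref{sec:aga} and apply Theorem~\ref{thm:artindual}. For the converse together with the rank formula, I would compute $\tmop{im} H_{\sigma}$ explicitly using Lemma~\ref{lem:diffpolexp} and then use an interpolation argument to realise every expected summand as a piece of the image.

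Assuming first that $r := \tmop{rank} H_{\sigma}$ is finite, the discussion preceding Theorem~\ref{thm:gorenstein} shows that $\mathcal{A}_{\sigma} = \mathbb{K}[\tmmathbf{x}]/I_{\sigma}$ is Artinian Gorenstein of dimension $r$, with $\sigma \in \mathcal{A}_{\sigma}^{\ast} = I_{\sigma}^{\perp}$ generating $\mathcal{A}_{\sigma}^{\ast}$ as a free $\mathcal{A}_{\sigma}$-module of rank one. Theorem~\ref{thm:artindual} applied to $\mathcal{A}_{\sigma}$ then yields a unique decomposition $\sigma(\tmmathbf{y}) = \sum_{i=1}^{r'} \omega_i(\tmmathbf{y}) \tmmathbf{e}_{\xi_i}(\tmmathbf{y})$ with $\xi_i \in \mathcal{V}(I_{\sigma})$ pairwise distinct and $\omega_i \in D_{\xi_i}(I_{\sigma})$, in particular $\sigma \in \PolExp(\tmmathbf{y})$.

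Conversely, suppose $\sigma = \sum_{i=1}^{r'} \omega_i(\tmmathbf{y}) \tmmathbf{e}_{\xi_i}(\tmmathbf{y})$ with all $\omega_i \neq 0$ and $\xi_i$ pairwise distinct, and let $W_i := \mathrm{span}_{\mathbb{K}}\{\tmmathbf{\partial}^{\alpha} \omega_i : \alpha \in \mathbb{N}^n\}$, of dimension $\mu(\omega_i)$. Lemma~\ref{lem:diffpolexp} gives
\[
  p \star \sigma = \sum_{i=1}^{r'} p(\xi_i + \tmmathbf{\partial})(\omega_i)\, \tmmathbf{e}_{\xi_i}(\tmmathbf{y}),
\]
and Taylor-expanding $p$ at $\xi_i$ shows $p(\xi_i + \tmmathbf{\partial})(\omega_i) \in W_i$, so $\tmop{im} H_{\sigma} \subseteq V := \bigoplus_{i=1}^{r'} W_i \cdot \tmmathbf{e}_{\xi_i}(\tmmathbf{y})$ (the sum being direct by Lemma~\ref{lem:indep}), whence $\tmop{rank} H_{\sigma} \leq \dim V = \sum_i \mu(\omega_i) < \infty$. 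For the reverse inclusion, fix $i$ and set $h_i := \prod_{j \neq i} \ell_j^{N_j}$, where $\ell_j$ is a linear form with $\ell_j(\xi_j) = 0 \neq \ell_j(\xi_i)$ and $N_j > \deg \omega_j$. The Taylor expansion of $h_i$ at $\xi_j$ starts in total degree $\geq N_j$, so $h_i(\xi_j + \tmmathbf{\partial})(\omega_j) = 0$ for $j \neq i$, killing all summands of $(p h_i) \star \sigma$ except the $i$-th. On the $i$-th summand, the commutation of constant-coefficient differential operators gives $(p h_i)(\xi_i + \tmmathbf{\partial})(\omega_i) = p(\xi_i+\tmmathbf{\partial})(\tilde\omega_i)$ with $\tilde\omega_i := h_i(\xi_i + \tmmathbf{\partial})(\omega_i)$, and letting $p$ vary sweeps out the inverse system of $\tilde\omega_i$.

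The delicate step is to verify that this inverse system coincides with $W_i$. Because $h_i(\xi_i) \neq 0$, the operator $h_i(\xi_i + \tmmathbf{\partial})$ has nonzero constant term while its higher-order parts strictly lower the degree, so it is degree-preserving on leading forms and in particular injective on $\mathbb{K}[\tmmathbf{y}]$. Since it commutes with every $\tmmathbf{\partial}^{\alpha}$ and $W_i$ is stable under derivation, it restricts to an injective and hence bijective endomorphism of the finite-dimensional space $W_i$. Consequently $\tilde\omega_i$ generates the same inverse system $W_i$ as $\omega_i$, the image of $H_{\sigma}$ exhausts $V$, and $\tmop{rank} H_{\sigma} = \sum_i \mu(\omega_i)$. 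I expect the main obstacle to be precisely this injectivity/bijectivity of the ``localising'' operator $h_i(\xi_i + \tmmathbf{\partial})$ on $W_i$: everything else is bookkeeping on top of Theorem~\ref{thm:artindual} and Lemma~\ref{lem:diffpolexp}.
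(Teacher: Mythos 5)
Your proof is correct. The forward direction ($\tmop{rank} H_{\sigma}<\infty \Rightarrow \sigma\in\PolExp$) coincides with the paper's: pass to the Artinian algebra $\mathcal{A}_{\sigma}$ and apply Theorem~\ref{thm:artindual}. For the converse and the rank formula, however, you take a genuinely different and more elementary route. The paper first proves finiteness by exhibiting $\cap_i \mathfrak{m}_{\xi_i}^{d_i+1}\subset I_{\sigma}$, i.e.\ a codimension bound on the kernel, and then obtains $r=\sum_i\mu(\omega_i)$ by invoking the Artinian structure theorem (Theorem~\ref{thm:artindec}), the idempotents $\mathbf{u}_{\xi_i}$, and Proposition~\ref{prop:primaryortho} to cut the image into the local pieces $\mathcal{A}_{\xi_i}^{\ast}=D_i\,\tmmathbf{e}_{\xi_i}$. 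You instead bound the image directly from Lemma~\ref{lem:diffpolexp} and replace the abstract idempotents by explicit localizing polynomials $h_i=\prod_{j\neq i}\ell_j^{N_j}$; the one genuinely new step this creates — that $h_i(\xi_i+\tmmathbf{\partial})$ restricts to an automorphism of the inverse system $W_i$ of $\omega_i$, because its constant term $h_i(\xi_i)\neq 0$ makes it triangular with invertible diagonal for the degree filtration, while commutation with $\tmmathbf{\partial}^{\alpha}$ transports this to $W_i\subseteq W_{\tilde\omega_i}$ — is exactly where the work sits, and you identify and close it cleanly. The payoff of your approach is that the rank formula is obtained without ever appealing to primary decomposition, idempotents, or Proposition~\ref{prop:primaryortho}; the cost is having to produce and analyse the $h_i$ by hand. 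Both proofs are sound; yours is arguably more self-contained for this half of the theorem.
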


\begin{proof}
  If $H_{\sigma}$ is of finite rank $r$, then $\mathcal{A}_{\sigma}
  =\mathbbm{K} [\tmmathbf{x}] / I_{\sigma} =\mathbbm{K} [\tmmathbf{x}] / \ker
  H_{\sigma} \sim \tmop{Im} (H_{\sigma})$ is of dimension $r$ and
  $\mathcal{A}_{\sigma}$ is an Artinian algebra. By Theorem
  \ref{thm:artindec}, it can be decomposed as a direct sum of sub-algebras
  \[ \mathcal{A}_{\sigma} =\mathcal{A}_{\xi_1} \oplus \cdots \oplus
     \mathcal{A}_{\xi_{r'}} \]
  where $I_{\sigma} = Q_1 \cap \cdots \cap Q_r$ is a minimal primary
  decomposition, $\mathcal{V} (I_{\sigma}) = \{ \xi_1, \ldots, \xi_{r'} \}$
  and $\mathcal{A}_{\xi_i}$ is the local algebra for the maximal ideal
  $\tmmathbf{m}_{{\zeta i}}$ defining the root $\xi_i \in \mathbbm{K}^n$, such that
  $\mathcal{A}_{\xi_i} \equiv \mathbbm{K} [\tmmathbf{x}] / Q_i$ where $Q_i$ is a
  $\tmmathbf{m}_{\xi_i}$-primary component of $I_{\sigma}$.
  
  By Theorem \ref{thm:artindual}, the series $\sigma \in
  \mathcal{A}_{\sigma}^{\ast} = I_{\sigma}^{\bot}$ can be decomposed as
  \begin{equation}
    \sigma = \sum_{i = 1}^{r'} \omega_i (\tmmathbf{y}) \tmmathbf{e}_{{\xi_i}}
    (\tmmathbf{y}) \label{eq:decdualsubalg}
  \end{equation}
  with $\omega_i (\tmmathbf{y}) \in \mathbbm{K} [\tmmathbf{y}]$ and
  $\hspace{0.25em} \omega_i (\tmmathbf{y}) \tmmathbf{e}_{{\xi_i}}
  (\tmmathbf{y}) \in \mathcal{A}_{\xi_i}^{\ast} = Q_i^{\bot}$, i.e. $\sigma
  \in \PolExp (\tmmathbf{y})$.
  
  Conversely, let us show that if $\sigma (\tmmathbf{y}) = \sum_{i = 1}^{r'}
  \hspace{0.25em} \omega_i (\tmmathbf{y}) \tmmathbf{e}_{{\xi_i}}
  (\tmmathbf{y})$with $\omega_i (\tmmathbf{y}) \in \mathbbm{K}
  [\tmmathbf{y}] \setminus \{0\}$ and $\xi_i \in \mathbb{\mathbbm{K}}^n$
  pairwise distinct, the rank of $H_{\sigma}$ is finite. Using Lemma
  \ref{lem:diffpolexp}, we check that $I_{\sigma} = \ker H_{\sigma}$ contains
  $\cap_{i = 1}^r \tmmathbf{m}_{\xi_i}^{d_{i + 1}}$ where $d_i$ is the degree
  of $\omega_i (\tmmathbf{y})$. Thus $\mathcal{V} (I_{\sigma}) \subset \{
  \xi_1, \ldots, \xi_{r'} \}$, $\mathcal{A}_{\sigma}$ is an Artinian algebra
  and $\tmop{rank} H_{\sigma} = \dim (\tmop{Im} (H_{\sigma})) = \dim
  (\mathbbm{K} [\tmmathbf{x}] / I_{\sigma}) = \dim (\mathcal{A}_{\sigma}) <
  \infty$.
  
  Let us show now that rank $H_{\sigma} = \sum_{i = 1}^{r'} {\mu}
  (\omega_i)$. From the decomposition (\ref{eq:dualdec}) and Proposition
  \ref{prop:primaryortho}, we deduce that $\tmmathbf{u}_{\xi_i} \star \sigma =
  \omega_i (\tmmathbf{y}) \tmmathbf{e}_{{\xi_i}} (\tmmathbf{y})$. By the
  exact sequence (\ref{eq:seq}), $\mathcal{A}_{\sigma}^{\ast} = \tmop{Im}
  (H_{\sigma}) = \{ p \star \sigma \mid p \in \mathbbm{K} [\tmmathbf{x}] \}$.
  Therefore, $\mathcal{A}_{\xi_i}^{\ast} = Q_i^{\perp}$ is spanned by the
  elements $\tmmathbf{u}_{\xi_i} \star (p \star \sigma) = p \star
  (\tmmathbf{u}_{\xi_i} \star \sigma) = p \star \left( \omega_i (\tmmathbf{y})
  \tmmathbf{e}_{{\xi_i}} (\tmmathbf{y}) \right)$ for $p \in \mathbbm{K}
  [\tmmathbf{x}]$, that is, by $\omega_i (\tmmathbf{y})
  \tmmathbf{e}_{{\xi_i}} (\tmmathbf{y})$ and all its derivatives with respect
  to $\frac{d}{d y_i}$. This shows that $\mathcal{A}_{\xi_i}^{\ast} = D_i
  \tmmathbf{e}_{{\xi_i}} (\tmmathbf{y})$ where $D_i \subset \mathbbm{K}
  [\tmmathbf{y}]$ is the inverse system spanned by $\omega_i (\tmmathbf{y})$.
  It implies that
  the multiplicity ${\mu}_i = \dim \mathcal{A}^{\ast}_{\xi_i} = \dim
  \mathcal{A}_{\xi_i} $ of $\xi_i$ is the dimension ${\mu} (\omega_i)$ of
  the inverse system of $\hspace{0.25em} \omega_i (\tmmathbf{y})$. We deduce
  that $\dim \mathcal{A}_{\sigma} = \dim \mathcal{A}_{\sigma}^{\ast} = r =
  \sum_{i = 1}^{r'} {\mu} (\omega_i)$. This concludes the proof of the
  theorem.
\end{proof}

Let us give some direct consequences of this result.

\begin{proposition}
  \label{prop:iso}If $\sigma (\tmmathbf{y}) = \sum_{i = 1}^{r'}
  \hspace{0.25em} \omega_i (\tmmathbf{y}) \tmmathbf{e}_{{\xi_i}}
  (\tmmathbf{y})$ with $\omega_i (\tmmathbf{y}) \in \mathbbm{K}
  [\tmmathbf{y}] \setminus \{0\}$ and $\xi_i \in \mathbbm{K}^n$ pairwise
  distinct, then we have the following properties:
  \begin{itemize}
    \item The points \ $\mathbf{\xi}_1, \mathbf{\xi}_2, \ldots,
    \mathbf{\xi}_{r'} \in \mathbbm{K}^n$ are the common roots of the
    polynomials in $I_{\sigma} = \ker H_{\sigma} = \{ p \in \mathbbm{K}
    [{\bdx}] \mid \forall q \in \mathbbm{K} [\tmmathbf{x}], \langle
    \sigma |   p q \rangle = 0 \}$.
    
    \item The series $\omega_i (\tmmathbf{y}) \tmmathbf{e}_{\xi_i}$ is a
    generator of the inverse system of $Q_i^{\bot}$, where $Q_i$ is the
    primary component of $I_{\sigma}$ associated to $\xi_i$.
    
    
    \item The inner product $\langle \cdot , \cdot \rangle_{\sigma}$ is non-degenerate on $\mathcal{A}_{\sigma} =\mathbbm{K}
    [\tmmathbf{x}] / I_{\sigma}$.
  \end{itemize}
\end{proposition}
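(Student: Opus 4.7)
The three claims are essentially corollaries of Theorem \ref{thm:gorenstein} and its proof; my plan is to extract each of them from what is already in hand.

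For the first item, I would argue $\mathcal{V}(I_\sigma)=\{\xi_1,\ldots,\xi_{r'}\}$ by a double inclusion. The inclusion $\mathcal{V}(I_\sigma)\subset\{\xi_1,\ldots,\xi_{r'}\}$ is shown in the proof of Theorem \ref{thm:gorenstein}: using Lemma \ref{lem:diffpolexp}, for $d_i=\deg\omega_i$ the ideal $\mathfrak{m}_{\xi_i}^{d_i+1}$ annihilates $\omega_i(\tmmathbf{y})\tmmathbf{e}_{\xi_i}(\tmmathbf{y})$ under $\star$, so $\bigcap_i \mathfrak{m}_{\xi_i}^{d_i+1}\subset I_\sigma$ and every root of $I_\sigma$ lies in $\{\xi_1,\ldots,\xi_{r'}\}$. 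For the reverse inclusion, if some $\xi_i$ were not in $\mathcal{V}(I_\sigma)$ then the corresponding component $\mathcal{A}_{\xi_i}^{\ast}$ would be absent from the decomposition of $\mathcal{A}_\sigma^{\ast}$, forcing $\omega_i(\tmmathbf{y})\tmmathbf{e}_{\xi_i}(\tmmathbf{y})=0$ in the sum $\sigma=\sum_j \omega_j(\tmmathbf{y})\tmmathbf{e}_{\xi_j}(\tmmathbf{y})$; this contradicts $\omega_i\neq 0$ by the linear independence established in Lemma \ref{lem:indep}.

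For the second item, I would use the idempotent decomposition. From Theorem \ref{thm:artindec} applied to $I_\sigma$, the idempotent $\tmmathbf{u}_{\xi_i}$ projects $\sigma$ onto its $\xi_i$-component, giving $\tmmathbf{u}_{\xi_i}\star\sigma=\omega_i(\tmmathbf{y})\tmmathbf{e}_{\xi_i}(\tmmathbf{y})\in Q_i^{\bot}$. Since the exact sequence \eqref{eq:seq} shows $\mathcal{A}_\sigma^{\ast}=\{p\star\sigma\mid p\in\mathbbm{K}[\tmmathbf{x}]\}$, the local summand $Q_i^{\bot}=\mathcal{A}_{\xi_i}^{\ast}$ is spanned by $\tmmathbf{u}_{\xi_i}\star(p\star\sigma)=p\star(\omega_i(\tmmathbf{y})\tmmathbf{e}_{\xi_i}(\tmmathbf{y}))$ as $p$ ranges over $\mathbbm{K}[\tmmathbf{x}]$. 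By Lemma \ref{lem:derivative}, the $\star$-action of polynomials on a series translates to differentiation of the series, so $\omega_i(\tmmathbf{y})\tmmathbf{e}_{\xi_i}(\tmmathbf{y})$ and its derivatives span $Q_i^{\bot}$, which is exactly the statement that it is a generator of the inverse system of $Q_i^{\bot}$.

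For the third item, I would argue directly from the definition. If $p\in\mathcal{A}_\sigma$ satisfies $\langle p,q\rangle_\sigma=\langle\sigma\mid pq\rangle=0$ for every $q\in\mathcal{A}_\sigma$, then, since the pairing factors through $\mathcal{A}_\sigma=\mathbbm{K}[\tmmathbf{x}]/I_\sigma$, in fact $\langle p\star\sigma\mid q\rangle=0$ for every $q\in\mathbbm{K}[\tmmathbf{x}]$, hence $p\star\sigma=0$ and $p\in I_\sigma$, i.e.\ $p=0$ in $\mathcal{A}_\sigma$. Equivalently, this is precisely the injectivity of the isomorphism $\mathcal{H}_\sigma:\mathcal{A}_\sigma\to\mathcal{A}_\sigma^{\ast}$, $p\mapsto p\star\sigma$, identified in \eqref{eq:seq}, which is the Gorenstein property of $\mathcal{A}_\sigma$. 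The only point requiring care, and the main thing I would double-check, is that the three statements really are content-free restatements of material already proved (in particular, that the $\omega_i\tmmathbf{e}_{\xi_i}$ with $\omega_i\neq 0$ are indexed by exactly $\mathcal{V}(I_\sigma)$, not a larger set), but Lemma \ref{lem:indep} rules out cancellations among the summands and closes this gap.
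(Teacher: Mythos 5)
Your proof is correct, and all three points are established with valid arguments. However, you take a somewhat different route than the paper on the first two items, which is worth flagging.

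For the first item, the paper argues the inclusion $\{\xi_1,\ldots,\xi_{r'}\}\subset\mathcal{V}(I_\sigma)$ constructively: starting from $\tmmathbf{u}_{\xi_i}\star\sigma=\omega_i\tmmathbf{e}_{\xi_i}$ (established inside the proof of Theorem~\ref{thm:gorenstein}), it picks a $\delta_i(\tmmathbf{x})=c(\tmmathbf{x}-\xi_i)^\alpha$ with $\alpha$ the top exponent of $\omega_i$, so that $\delta_i\star(\omega_i\tmmathbf{e}_{\xi_i})=\tmmathbf{e}_{\xi_i}$ by Lemma~\ref{lem:diffpolexp}; then $\tmmathbf{e}_{\xi_i}\in\operatorname{Im}H_\sigma\subset I_\sigma^\perp$, so every $p\in I_\sigma$ vanishes at $\xi_i$. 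You instead argue by contradiction from uniqueness of the $\mathcal{PolExp}$-decomposition via Lemma~\ref{lem:indep}. Both work, and indeed your uniqueness step is exactly what the paper uses tacitly when it deduces $\tmmathbf{u}_{\xi_i}\star\sigma=\omega_i\tmmathbf{e}_{\xi_i}$ from \eqref{eq:dualdec} and Proposition~\ref{prop:primaryortho}; your version has the small advantage of not needing the idempotent $\tmmathbf{u}_{\xi_i}$ before $\xi_i\in\mathcal{V}(I_\sigma)$ is on the table, while the paper's version exhibits an explicit preimage of $\tmmathbf{e}_{\xi_i}$ under $H_\sigma$, which is sometimes useful downstream. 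For the second item, you reuse the spanning argument from the proof of Theorem~\ref{thm:gorenstein} ($Q_i^\perp$ is spanned by $p\star(\omega_i\tmmathbf{e}_{\xi_i})$ for $p\in\mathbbm{K}[\tmmathbf{x}]$), whereas the paper's proof of this proposition instead runs a short dimension count, $r=\sum_i\mu(\omega_i)\le\sum_i\dim Q_i^\perp=\dim\mathcal{A}_\sigma=r$, and concludes by equality. Your third item coincides with the paper's. One small caution: the phrase ``forcing $\omega_i(\tmmathbf{y})\tmmathbf{e}_{\xi_i}(\tmmathbf{y})=0$ in the sum'' is a slightly informal way to say that comparing the given decomposition of $\sigma$ with the $\mathcal{A}_\sigma^*$-decomposition over $\mathcal{V}(I_\sigma)$ and applying Lemma~\ref{lem:indep} forces $\omega_i=0$; spelled out this way the step is airtight.
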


\begin{proof}
  Suppose that $\sigma (\tmmathbf{y}) = \sum_{i = 1}^{r'} \hspace{0.25em}
  \omega_i (\tmmathbf{y}) \tmmathbf{e}_{{\xi_i}} (\tmmathbf{y})$.
  
  To prove the first point, we construct polynomials $\delta_i (\tmmathbf{x})
  \in \mathbbm{K} [\tmmathbf{x}]$ such that $\delta_i \star \sigma
  =\tmmathbf{e}_{{\xi_i}} (\tmmathbf{y})$. We choose a polynomial $\delta_i
  (\tmmathbf{x})$ such that $\delta_i (\xi_1 + \partial_{y_1}, \ldots, \xi_n +
  \partial_{y_n}) (\omega_i) (\tmmathbf{y}) = 1$. We can take for instance a
  term of the form $c \prod_{j = 1}^n (x_j - \xi_{i, j})^{\alpha_j}$ where $c
  \in \mathbbm{K}$ and $\alpha = (\alpha_1, \ldots, \alpha_n) \in
  \mathbbm{N}^n$ is the exponent of a monomial of $\omega_i$ of highest
  degree. By Lemma \ref{lem:diffpolexp}, we have
  \[ \delta_i (x) \star \left( \omega_i (\tmmathbf{y}) \tmmathbf{e}_{{\xi_i}}
     (\tmmathbf{y}) \right) = \delta_i (\xi_i + \tmmathbf{\partial}) (\omega_i)
     (\tmmathbf{y}) \tmmathbf{e}_{{\xi_i}} (\tmmathbf{y})
     =\tmmathbf{e}_{{\xi_i}} (\tmmathbf{y}). \]
  Then $\forall p \in I_{\sigma}$, $\langle \delta_i \star
  (\tmmathbf{u}_{\xi_i} \star \sigma)  |  p \rangle = \left\langle
  \delta_i (x) \star \left( \omega_i (\tmmathbf{y}) \tmmathbf{e}_{{\xi_i}}
  (\tmmathbf{y}) \right)  |  p \right\rangle = \left\langle
  \tmmathbf{e}_{{\xi_i}} (\tmmathbf{y}) |  p \right\rangle = p
  (\xi_i) = 0$ and $\mathcal{V} (I_{\sigma}) \supset \{ \xi_1, \ldots,
  \xi_{r'} \}$. As $I_{\sigma}$ contains $\cap_{i = 1}^r
  \tmmathbf{m}_{\xi_i}^{d_{i + 1}}$ where $d_i$ is the degree of $\omega_i
  (\tmmathbf{y})$, we also have $\mathcal{V} (I_{\sigma}) \subset \{ \xi_1,
  \ldots, \xi_{r'} \}$, which proves the first point.
  
  The second point is a consequence of Theorem \ref{thm:gorenstein}, since we
  have $\omega_i (\tmmathbf{y}) \tmmathbf{e}_{\xi_i} \in D_i = Q_i^{\perp}$ so
  that ${\mu} (\omega_i) ={\mu} (\omega_i (\tmmathbf{y})
  \tmmathbf{e}_{\xi_i}) \leqslant \dim (D_i) = \dim (Q_i^{\perp})$ and
  \[ r = \sum_{i = 1}^{r'} {\mu} (\omega_i) \leqslant \sum_{i = 1}^{r'}
     \dim (Q_i^{\perp}) = \dim (I^{\perp}) = \dim \left( \mathcal{A}_{\sigma}
     \right) = r. \]
  Therefore, the inverse system spanned by $\omega_i (\tmmathbf{y})
  \tmmathbf{e}_{\xi_i}$, of dimension ${\mu} (\omega_i) = \dim
  (Q_i^{\perp}) = \dim D_i$, is equal to $D_i$.
  
  
  Finally, we prove the third point. By definition of $I_{\sigma}$, if $p \in \mathbbm{K} [\tmmathbf{x}]$ is such
  that $\forall q \in \mathbbm{K} [\tmmathbf{x}]$,
  \[
  \langle p (\tmmathbf{x}), q (\tmmathbf{x}) \rangle_{\sigma} =
     \langle p \star \sigma (\tmmathbf{y})  |   q (\tmmathbf{x})
     \rangle = 0,
  \]
  then $p \star \sigma (\tmmathbf{y}) = 0$ and $p \in I_{\sigma}$. We deduce
  that the inner product $\langle \cdummy, \cdummy \rangle_{\sigma}$ is
  non-generate on $\mathcal{A}_{\sigma} =\mathbbm{K} [\tmmathbf{x}] /
  I_{\sigma}$, which proves the last point. 
\end{proof}

If $(b_i)_{1 \leqslant i \leqslant r}$ \ and $(b'_i)_{1 \leqslant i
\leqslant r}$ are bases of $\mathcal{A}_{\sigma}$, then the matrix of
$\mathcal{H}_{\sigma}$ in the basis $(b_i)_{1 \leqslant i \leqslant \delta}$
and in the dual basis of $(b'_i)_{1 \leqslant i \leqslant r}$ is \
$[\mathcal{H}_{\sigma}] = (\langle \sigma |  b_i (\tmmathbf{x}) b_i'
(\tmmathbf{x}) \rangle)_{1 \leqslant i, j \leqslant r}$. In particular, if
$(\tmmathbf{x}^{\beta})_{\beta \in B}$ and $(\tmmathbf{x}^{\beta'})_{\beta'
\in B'}$ are bases of $\mathcal{A}_{\sigma}$, its matrix in the corresponding
bases is
\[ [\mathcal{H}_{\sigma}] = (\langle \sigma |   \tmmathbf{x}^{\beta
   + \beta'} \rangle)_{\beta \in B, \beta' \in B'} = (\sigma_{\beta +
   \beta'})_{\beta \in B, \beta' \in B'} = H_{\sigma}^{B, B'}. \]
It is a submatrix of the (infinite) matrix $[H_{\sigma}]$. Conversely, we have
the following property:

\begin{lemma}
  \label{lem:basis} Let $B = \{ b_1, \ldots, b_r \}$, $B' = \{ b_1', \ldots,
  b_r' \} \subset \mathbbm{K} [\tmmathbf{x}]$. If the matrix $H_{\sigma}^{B,
  B'} = (\langle \sigma |   b_i b_j' \rangle)_{\beta \in B, \beta' \in
  B'}$ is invertible, then B and $B'$ are linearly independent in
  $\mathcal{A}_{\sigma}$.
\end{lemma}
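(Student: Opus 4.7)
The plan is to argue by contrapositive: assuming that $B$ is linearly dependent in $\mathcal{A}_\sigma$, I produce a nonzero vector in the kernel of $H_\sigma^{B,B'}$, contradicting invertibility; the argument for $B'$ is symmetric.

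First I would suppose $B = \{b_1,\ldots,b_r\}$ is linearly dependent modulo $I_\sigma$. Then there exist scalars $\lambda_1,\ldots,\lambda_r \in \mathbbm{K}$, not all zero, with $p := \sum_{i=1}^r \lambda_i b_i \in I_\sigma = \ker H_\sigma$. By definition of $I_\sigma$, this means $p \star \sigma = 0$, hence $\langle \sigma \mid p\, q\rangle = \langle p\star\sigma \mid q\rangle = 0$ for every $q \in \mathbbm{K}[\tmmathbf{x}]$. Specializing $q = b_j'$ for $j = 1,\ldots,r$ gives
\[
\sum_{i=1}^r \lambda_i \langle \sigma \mid b_i b_j'\rangle = 0 \qquad (j = 1,\ldots,r),
\]
so the nonzero vector $(\lambda_1,\ldots,\lambda_r)^{\top}$ lies in the kernel of $H_\sigma^{B,B'}$, contradicting its invertibility.

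For the linear independence of $B'$, the same argument applies after transposition: using the symmetry $\langle \sigma \mid b_i b_j'\rangle = \langle \sigma \mid b_j' b_i\rangle$, a nontrivial dependency $\sum_j \mu_j b_j' \in I_\sigma$ produces a nonzero vector in the left kernel of $H_\sigma^{B,B'}$, again contradicting invertibility. No real obstacle is expected; the statement is essentially the observation that the Gram-type matrix $H_\sigma^{B,B'}$ factors through the quotient $\mathcal{A}_\sigma$, so any kernel relation on either side of the pairing $\langle \cdot,\cdot\rangle_\sigma$ becomes a kernel vector of the matrix.
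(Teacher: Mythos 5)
Your proof is correct and follows essentially the same route as the paper: take a nontrivial linear relation among the $b_i$ modulo $I_\sigma$, use $p\star\sigma=0$ to conclude $\langle\sigma\mid p\,b_j'\rangle=0$ for all $j$, and read this as a nonzero kernel vector of $H_\sigma^{B,B'}$, with the transpose argument handling $B'$. The only cosmetic difference is that you phrase it explicitly as a contrapositive while the paper starts from "suppose $H_\sigma^{B,B'}$ is invertible and $p\equiv 0$" and concludes $p_i=0$; the logical content is identical.
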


\begin{proof}
  Suppose that $H_{\sigma}^{B, B'}$ is invertible. If there exist $p = \sum_i
  p_i b_i$ ($p_i \in \mathbbm{K}$) such that $p \equiv 0$ in
  $\mathcal{A}_{\sigma}$. Then $p \star \sigma = 0$ and $\forall q \in R$,
  $\langle \sigma |   p q \rangle = 0$. In particular, for $j = 1
  \ldots r$ we have
  \[ \sum_{i = 1}^r \langle \sigma |   b_i b_j' \rangle p_i = 0. \]
  As $H_{\sigma}^{B, B'}$ is invertible, $p_i = 0$ for $i = 1, \ldots, r$ and
  $B$ is a family of linearly independent elements in $\mathcal{A}_{\sigma}$.
  Since we have $(H_{\sigma}^{B, B'})^t = H_{\sigma}^{B', B}$, \ we prove by a
  similar argument that $H_{\sigma}^{B, B'}$ invertible also implies that for
  $B'$ is linearly independent in $\mathcal{A}_{\sigma}$.
\end{proof}

Notice that the converse is not necessarily true, as shown by the following
example in one variable: if $\sigma = y$, then $I_{\sigma} = (x^2)$,
$\mathcal{A}_{\sigma} =\mathbbm{K} [x] / (x^2)$ and $B = B' = \{ 1 \}$ are
linearly independent in $\mathcal{A}_{\sigma}$, but $H_{\sigma}^{B, B'} =
(\langle \sigma |   1 \rangle) = (0)$ is not invertible.

This lemma implies that if $\dim \mathcal{A}_{\sigma} < + \infty$, $| B | = |
B' | = \dim \mathcal{A}_{\sigma}$ and $H_{\sigma}^{B, B'}$ is invertible, then
$(\tmmathbf{x}^{\beta})_{\beta \in B}$ and $(\tmmathbf{x}^{\beta'})_{\beta'
\in B'}$ are bases of $\mathcal{A}_{\sigma}$.


A special case of interest is when the roots are simple. We characterize
it as follows:

\begin{proposition}
  \label{prop:simpleroot}Let $\sigma (\tmmathbf{y}) \in \mathbbm{K}
  [[\tmmathbf{y}]]$. The following conditions are equivalent:
  \begin{enumeratenumeric}
    \item $\begin{array}{lll}
      \sigma (\tmmathbf{y}) & = & \sum_{i = 1}^r \hspace{0.25em} \omega_i
      \tmmathbf{e}_{{\xi_i}} (\tmmathbf{y}),
    \end{array}$with $\omega_i \in \mathbbm{K} \setminus \{0\}$ and
    $\hspace{0.25em} \xi_i \in \mathbb{\mathbbm{K}}^n  \text{pairwise
    distinct}$.
    
    \item The rank of $H_{\sigma}$ is $r$ and the multiplicity of the points
    $\mathbf{\xi}_1, \ldots, \mathbf{\xi}_r$ in $\mathcal{V} (I_{\sigma})$ is
    $1$.
    
    \item A basis of $\mathcal{A}_{\sigma}^{\ast}$ is
    $\mathbf{\tmmathbf{e}}_{\mathbf{\xi}_1}, \ldots,
    \mathbf{\tmmathbf{e}}_{\mathbf{\xi}_r}$.
  \end{enumeratenumeric}
\end{proposition}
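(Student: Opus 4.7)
The plan is to prove the three conditions equivalent cyclically, via $(1) \Rightarrow (2) \Rightarrow (3) \Rightarrow (1)$, using the machinery already established: Theorem \ref{thm:gorenstein} on the rank of Hankel operators with polynomial-exponential symbol, Theorem \ref{thm:artindual} on the structure of the dual of an Artinian algebra, and Proposition \ref{prop:iso} on the identification of the characteristic variety with the points $\xi_i$.

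For $(1) \Rightarrow (2)$, I would observe that each weight $\omega_i \in \mathbb{K} \setminus \{0\}$ is a nonzero constant, so its inverse system is one-dimensional and ${\mu}(\omega_i) = 1$. Theorem \ref{thm:gorenstein} then immediately gives $\operatorname{rank} H_\sigma = \sum_{i=1}^r {\mu}(\omega_i) = r$. Proposition \ref{prop:iso} identifies $\mathcal{V}(I_\sigma) = \{\xi_1, \ldots, \xi_r\}$, and since each local dual $\mathcal{A}_{\xi_i}^{\ast}$ is spanned by $\omega_i \mathbf{e}_{\xi_i}$ alone, the multiplicity $\dim \mathcal{A}_{\xi_i}$ of $\xi_i$ equals $1$.

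For $(2) \Rightarrow (3)$, the hypothesis that each $\xi_i$ has multiplicity $1$ means $\dim D_i = 1$ in the description $\mathcal{A}_{\xi_i}^{\ast} = D_i \mathbf{e}_{\xi_i}$ from Proposition \ref{prop:primaryortho}. Here is the one delicate step: since $D_i \subset \mathbb{K}[\mathbf{y}]$ is stable under all the derivations $\tfrac{d}{dy_j}$ (Theorem \ref{thm:artindual}), a non-constant element $p \in D_i$ would force some nonzero $\partial_{y_j} p$ of strictly lower degree to lie in $D_i$, producing a second linearly independent element and contradicting $\dim D_i = 1$. Hence $D_i = \mathbb{K}$ and $\mathcal{A}_{\xi_i}^{\ast} = \mathbb{K}\, \mathbf{e}_{\xi_i}$. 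The direct sum decomposition $\mathcal{A}_\sigma^{\ast} = \mathcal{A}_{\xi_1}^{\ast} \oplus \cdots \oplus \mathcal{A}_{\xi_r}^{\ast}$ then yields $\mathbf{e}_{\xi_1}, \ldots, \mathbf{e}_{\xi_r}$ as a basis of $\mathcal{A}_\sigma^{\ast}$.

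For $(3) \Rightarrow (1)$, since $\sigma \in \mathcal{A}_\sigma^{\ast}$, the assumed basis yields a decomposition $\sigma = \sum_{i=1}^r \omega_i \mathbf{e}_{\xi_i}$ with $\omega_i \in \mathbb{K}$ and the $\xi_i$ distinct (distinct basis vectors). If some $\omega_i$ were zero, Theorem \ref{thm:gorenstein} would give $\operatorname{rank} H_\sigma \leqslant r-1$, contradicting $\dim \mathcal{A}_\sigma = r$; hence all $\omega_i \neq 0$. The only real subtlety throughout is the stability argument in $(2) \Rightarrow (3)$; everything else is a direct application of the structural results already proved.
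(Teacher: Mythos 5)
Your proof is correct and follows the same cyclic strategy $(1)\Rightarrow(2)\Rightarrow(3)\Rightarrow(1)$ as the paper, but your argument for $(2)\Rightarrow(3)$ takes a slightly different internal route. The paper first invokes Theorem~\ref{thm:gorenstein} to write $\sigma=\sum_i\omega_i\tmmathbf{e}_{\xi_i}$ with $\deg(\omega_i)=0$, then observes that every element of $\mathcal{A}_\sigma^\ast$ is of the form $p\star\sigma=\sum_i\omega_i\,p(\xi_i)\tmmathbf{e}_{\xi_i}$, so the evaluations span and hence (by dimension count) form a basis. You instead work directly with the local inverse systems $D_i$ from Proposition~\ref{prop:primaryortho}: multiplicity one forces $\dim D_i=1$, and the derivation-stability of $D_i$ rules out any non-constant element (a nonzero lower-degree derivative would give a second independent element), so $D_i=\mathbbm{K}$ and $\mathcal{A}_{\xi_i}^\ast=\mathbbm{K}\,\tmmathbf{e}_{\xi_i}$; the direct sum $\mathcal{A}_\sigma^\ast=\bigoplus_i\mathcal{A}_{\xi_i}^\ast$ then yields the basis. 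Both arguments ultimately rest on the same elementary fact (a polynomial of positive degree has a linearly independent derivative), but your version isolates the structure of $D_i$ cleanly without first deriving the shape of $\sigma$, which is a nice simplification. The implications $(1)\Rightarrow(2)$ and $(3)\Rightarrow(1)$ match the paper's proof; your appeal to Theorem~\ref{thm:gorenstein} in $(3)\Rightarrow(1)$ to conclude $\tmop{rank}H_\sigma\leqslant r-1$ when some $\omega_i=0$ is equivalent to the paper's ``$\dim\mathcal{A}_\sigma^\ast<r$'' contradiction.
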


  \begin{proof}
  $1 \Rightarrow 2.$ The dimension ${\mu} (\omega_i)$ of the inverse
  system spanned by $\omega_i \in \mathbbm{K} \setminus \{0\}$ and its
  derivatives is $1 $. By Theorem \ref{thm:gorenstein}, the rank
  $\mathcal{A}_{\sigma}$ is $r = \sum_{i = 1}^r {\mu} (\omega_i) = \sum_{i
  = 1}^r 1$ and the multiplicity of the roots $\mathbf{\xi}_1, \ldots,
  \mathbf{\xi}_r$ in $\mathcal{V} (I_{\sigma})$ is $1$.
  
  $2 \Rightarrow 3.$ As the multiplicity of the roots is $1$, by Theorem
  \ref{thm:gorenstein}, $\begin{array}{lll}
    \sigma (\tmmathbf{y}) & = & \sum_{i = 1}^r \hspace{0.25em} \omega_i\,
    \tmmathbf{e}_{{\xi_i}} (\tmmathbf{y})
  \end{array}$ with \ $\deg (\omega_i) = 0$. As $\mathcal{A}_{\sigma}^{\ast}$
  is spanned by the elements $p \star \sigma = \sum_{i = 1}^r \omega_i p (\xi_i) \,\tmmathbf{e}_{{\xi_i}}$ 
  for $p \in \mathbbm{K}[\tmmathbf{x}]$, $\mathbf{\tmmathbf{e}}_{\mathbf{\xi}_1}, \ldots,
  \mathbf{\tmmathbf{e}}_{\mathbf{\xi}_r}$ is a generating family of the vector
  space $\mathcal{A}_{\sigma}^{\ast}$. Thus
  $\mathbf{\tmmathbf{e}}_{\mathbf{\xi}_1}, \ldots,
  \mathbf{\tmmathbf{e}}_{\mathbf{\xi}_r}$ is a basis of
  $\mathcal{A}_{\sigma}^{\ast}$.
  
  $3 \Rightarrow 1.$ As $\mathbf{\tmmathbf{e}}_{\mathbf{\xi}_1}, \ldots,
  \mathbf{\tmmathbf{e}}_{\mathbf{\xi}_r}$ is a basis $\mathcal{A}_{\sigma}$,
  the points $\xi_i \in \mathbb{\mathbbm{K}}^n$ are pairwise distinct. As
  $\sigma \in \mathcal{A}_{\sigma}^{\ast}$, there exists
  $\omega_i \in \mathbbm{K}$ such that$\begin{array}{lll}
    \sigma & = & \sum_{i = 1}^r \hspace{0.25em} \omega_i
    \tmmathbf{e}_{{\xi_i}}
  \end{array}$. If one of these coefficients $\omega_i$ vanishes then $\dim
  (\mathcal{A}_{\sigma}^{\ast}) < r$, which is contradicting point 3. Thus
  $\omega_i \in \mathbbm{K} \setminus \{0\}$.
\end{proof}

In the case where all the coefficients of $\sigma$ are in $\mathbbm{R}$, we
can consider the following property of positivity:

\begin{definition}
  An element $\sigma \in \mathbbm{R} [[\tmmathbf{y}]] =\mathbbm{R}
  [\tmmathbf{x}]^{\ast}$ is semi-definite positive if $\forall p \in
  \mathbbm{R} [\tmmathbf{x}], \langle p, p \rangle_{\sigma} = \langle
  \sigma |  p^2 \rangle \geqslant 0$. It is denoted $\sigma
  \succcurlyeq 0$.
\end{definition}

The positivity of $\sigma$ induces a nice property
of its decomposition, which is an important ingredient of
polynomial optimisation. It is saying that a positive measure on
$\mathbbm{R}^n$ with an Hankel operator of finite rank $r$ is a convex
combination of $r$ distinct Dirac measures of $\mathbbm{R}^n$. See e.g.
{\cite{laurent_sums_2009}} for more details. For the sake of
completeness, we give here its simple proof (see also
{\cite{lasserre_moment_2013}}[prop. 3.14]).

\begin{proposition}
  \label{prop:positive}Let $\sigma \in \mathbbm{R} [[\tmmathbf{y}]]$ of finite
  rank. Then $\sigma \succcurlyeq 0$, if and only if,
  \[ \sigma (\tmmathbf{y}) = \sum_{i = 1}^r \omega_i \hspace{0.25em}
     \tmmathbf{e}_{{\xi_i}} (\tmmathbf{y}) \]
  with $\omega_i > 0$, $\xi_i \in \mathbbm{R}^n$.
\end{proposition}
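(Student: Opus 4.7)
The easy direction is $(\Leftarrow)$: if $\sigma(\mathbf{y}) = \sum_{i=1}^r \omega_i\,\mathbf{e}_{\xi_i}(\mathbf{y})$ with $\omega_i>0$ and $\xi_i\in\mathbb{R}^n$, then for every $p\in\mathbb{R}[\mathbf{x}]$ we have
\[
\langle p,p\rangle_\sigma \;=\; \langle \sigma\mid p^2\rangle \;=\; \sum_{i=1}^r \omega_i\, p(\xi_i)^2 \;\geq\; 0,
\]
so $\sigma\succcurlyeq 0$. The content of the proposition is the converse.

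\noindent Assume now $\sigma\succcurlyeq 0$ with $\mathrm{rank}\,H_\sigma=r<\infty$. By Theorem~\ref{thm:gorenstein} there is a decomposition $\sigma=\sum_{i=1}^{r'}\omega_i(\mathbf{y})\mathbf{e}_{\xi_i}(\mathbf{y})$ with $\xi_i\in\mathbb{C}^n$ pairwise distinct and $\omega_i\in\mathbb{C}[\mathbf{y}]\setminus\{0\}$. The first step is to upgrade positivity on $\mathbb{R}[\mathbf{x}]$ to positive definiteness on $\mathcal{A}_\sigma=\mathbb{R}[\mathbf{x}]/I_\sigma$: by Proposition~\ref{prop:iso} the bilinear form $\langle\cdot,\cdot\rangle_\sigma$ is non-degenerate on the quotient, and by hypothesis it is also semi-definite positive, so it is a genuine positive definite inner product on $\mathcal{A}_\sigma$.

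\noindent The decisive step is to exploit self-adjointness of the multiplication operators. For each $i$ and all $p,q\in\mathcal{A}_\sigma$ we have $\langle x_i\, p,q\rangle_\sigma=\langle \sigma\mid x_ipq\rangle=\langle p,x_i q\rangle_\sigma$, so $\mathcal{M}_{x_i}$ is self-adjoint with respect to the positive definite inner product $\langle\cdot,\cdot\rangle_\sigma$. Hence each $\mathcal{M}_{x_i}$ is diagonalizable over $\mathbb{R}$, and since the operators $\mathcal{M}_{x_1},\ldots,\mathcal{M}_{x_n}$ pairwise commute they admit a common basis of eigenvectors with real eigenvalues. By Proposition~\ref{prop:eigen}, the eigenvalues of $\mathcal{M}_{x_i}$ are the $i$-th coordinates $\xi_{j,i}$ of the roots, counted with multiplicity $\mu_j=\dim\mathcal{A}_{\xi_j}$. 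Simultaneous diagonalizability on a space of dimension $r$ with $r'$ distinct joint eigenvalues forces $r'=r$, i.e.\ every multiplicity $\mu_j$ equals $1$; the reality of the eigenvalues further forces $\xi_j\in\mathbb{R}^n$.

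\noindent At this point Proposition~\ref{prop:simpleroot} applies and gives $\sigma(\mathbf{y})=\sum_{i=1}^r \omega_i\,\mathbf{e}_{\xi_i}(\mathbf{y})$ with $\omega_i\in\mathbb{R}\setminus\{0\}$ and the $\xi_i\in\mathbb{R}^n$ pairwise distinct. It remains to show $\omega_i>0$. Since the $\xi_i$ are distinct real points, I can choose a Lagrange-type real interpolator $p_i\in\mathbb{R}[\mathbf{x}]$ with $p_i(\xi_i)=1$ and $p_i(\xi_j)=0$ for $j\neq i$. Then
\[
0\;\leq\;\langle \sigma\mid p_i^2\rangle \;=\; \sum_{j=1}^r \omega_j\, p_i(\xi_j)^2 \;=\; \omega_i,
\]
so $\omega_i\geq 0$, and since $\omega_i\neq 0$ we conclude $\omega_i>0$. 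The only mildly subtle point is the simultaneous-diagonalization argument forcing all multiplicities to drop to one; everything else is routine from the results already established.
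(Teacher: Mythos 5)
Your proof is correct and takes a genuinely different route from the paper. For the converse direction, the paper argues by commutative algebra: it shows that $I_{\sigma}$ is \emph{real radical} (from $p^{2k}+\sum_j q_j^2\in I_{\sigma}$, expanding $\langle\sigma\mid\cdot\rangle$ and iterating forces $p\in I_{\sigma}$), so $\mathcal{V}(I_{\sigma})\subset\mathbbm{R}^n$ and all multiplicities are $1$, and then invokes Proposition~\ref{prop:simpleroot}. You instead upgrade semi-definiteness to a genuine positive definite inner product on $\mathcal{A}_{\sigma}$ via the non-degeneracy in Proposition~\ref{prop:iso} (that step is sound: a positive semi-definite form has its isotropic vectors in its radical by Cauchy--Schwarz), note that the $\mathcal{M}_{x_i}$ are self-adjoint for $\langle\cdot,\cdot\rangle_{\sigma}$, and read off reality and diagonalizability from the spectral theorem. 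This replaces the real-Nullstellensatz-flavoured manipulation by a transparent linear-algebra argument, which is a nice alternative.

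The one place you compress too much is the assertion that simultaneous diagonalizability ``with $r'$ distinct joint eigenvalues forces $r'=r$.'' As a bare linear-algebra statement this is false: commuting diagonal operators on $\mathbbm{K}^r$ can certainly share a joint eigenvalue on two eigenvectors. What closes the gap is the algebra structure of $\mathcal{A}_{\sigma}$. Either argue via faithfulness of the regular representation: $a\mapsto\mathcal{M}_a$ is injective (since $\mathcal{M}_a(1)=a$), and in a common eigenbasis the diagonal entries of $\mathcal{M}_a$ are values $a(\xi_j)$ with $j$ ranging over the $r'$ roots, so the image has dimension at most $r'$, forcing $r\le r'$ and hence $r'=r$. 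Or, equivalently and perhaps more economically: any nilpotent $a\in\mathcal{A}_{\sigma}$ has $\mathcal{M}_a$ simultaneously nilpotent and diagonalizable, hence $\mathcal{M}_a=0$, hence $a=0$; so $\mathcal{A}_{\sigma}$ is reduced, each local factor $\mathcal{A}_{\xi_j}$ is a field, and $\mu_j=1$ for all $j$. With either of these supplied, the remainder of your argument --- Proposition~\ref{prop:simpleroot} and the interpolation polynomials to get $\omega_i=\langle\sigma\mid p_i^2\rangle>0$ --- matches the paper's.
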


{\color{black} \begin{proof}
  If $\sigma (\tmmathbf{y}) = \sum_{i = 1}^r \omega_i \hspace{0.25em}
  \tmmathbf{e}_{{\xi_i}}$ with $\omega_i > 0$, $\xi_i \in \mathbbm{R}^n$,
  then clearly $\forall p \in \mathbbm{R} [\tmmathbf{x}]$,
  \[ \langle \sigma |  p^2 \rangle = \sum_{i = 1}^r \omega_i
     \hspace{0.25em} p^2 (\xi_i) \geqslant 0 \]
  and $\sigma \succcurlyeq 0$.
  
  Conversely suppose that $\forall p \in \mathbbm{R} [\tmmathbf{x}]$, $\langle
  \sigma |  p^2 \rangle \geqslant 0$. Then $p \in I_{\sigma}$, if and only if,
  $\langle \sigma |  p^2 \rangle = 0$. We check that $I_{\sigma}$ is
  real radical: If $p^{2 k} + \sum_j q_j^2 \in I_{\sigma}$ for some $k \in
  \mathbbm{N}$, $p, q_j \in \mathbbm{R} [\tmmathbf{x}]$ then
  \[ \left\langle \sigma |  p^{2 k} + \sum_j q_j^2 \right\rangle =
     \langle \sigma |  p^{2 k} \rangle + \sum_j \langle \sigma |
      q_j^2 \rangle = 0 \]
  which implies that $\langle \sigma |   (p^k)^2 \rangle = 0$ ,
  $\langle \sigma |  q_j^2 \rangle = 0$ and  that $p^k, q_j \in
  I_{\sigma}$. Let $k' = \lceil \frac{k}{2} \rceil$. We have \ $\langle \sigma
  |   (p^{k'})^2 \rangle = 0$, which implies that $p^{k'} \in
  I_{\sigma}$. Iterating this reduction, we deduce that $p \in I_{\sigma}.$
  This shows that $I_{\sigma}$ is real radical and $\mathcal{V} (I_{\sigma})
  \subset \mathbbm{R}^n$. By Proposition \ref{prop:simpleroot}, we deduce that
  $\sigma = \sum_{i = 1}^r \omega_i \hspace{0.25em} \tmmathbf{e}_{{\xi_i}}$
  with $\omega_i \in \mathbbm{C} \setminus \{0\}$ and $\hspace{0.25em} \xi_i
  \in \mathbb{R}^n $. Let $\tmmathbf{u}_i \in \mathbbm{R} [\tmmathbf{x}]$ be a
  family of interpolation polynomials at $\xi_i \in \mathbbm{R}^n$:
  $\tmmathbf{u}_i (\xi_i) = 1$, $\tmmathbf{u}_i (\xi_j) = 0$ for $j \neq i$.
  Then $\langle \sigma |  \tmmathbf{u}_i^2 \rangle = \omega_i \in
  \mathbbm{R}_+$. This proves that $\sigma (\tmmathbf{y}) = \sum_{i = 1}^r
  \omega_i \hspace{0.25em} \tmmathbf{e}_{{\xi_i}} (\tmmathbf{y})$ with
  $\omega_i > 0$, $\xi_i \in \mathbbm{R}^n$.
\end{proof}}

\subsection{The decomposition of $\sigma$}\label{sec:4.2}

The sparse decomposition problem of the series $\sigma \in \mathbbm{K}
[[\tmmathbf{y}]]$ consists in computing the support $\{ \xi_1,
\ldots, \xi_r \}$ and the weights $\omega_i \hspace{0.25em} (\tmmathbf{y}) \in
\mathbbm{K} [\tmmathbf{y}]$ of so that $\sigma = \sum_{i = 1}^r \omega_i(\tmmathbf{y}) \, \tmmathbf{e}_{{\xi_i}} (\tmmathbf{y})$. In
this section, we describe how to compute it from the Hankel operator
$H_{\sigma}$.

We recall classical results on the resolution of polynomial equations by
eigenvalue and eigenvector computation, that we will use to compute the
decomposition. Hereafter, $\mathcal{A} =\mathbbm{K} [\tmmathbf{x}] / I$ is the
quotient algebra of $\mathbbm{K} [\tmmathbf{x}]$ by any ideal $I$
and $\mathcal{A}^{\ast} = \tmop{Hom}_{\mathbbm{K}} \left( \mathcal{A},
\mathbbm{K} \right)$ is the dual of $\mathcal{A}$. It is naturally identified
with the orthogonal $I^{\perp} = \{ \Lambda \in \mathbbm{K} [[\tmmathbf{y}]]
\nocomma \mid \forall p \in I, \langle \Lambda, p \rangle = 0 \}$. \
In the reconstruction problem, we will take $I = I_{\sigma}$.

By Proposition \ref{prop:iso}, $\nocomma H_{\sigma}$ induces the isomorphism
\begin{eqnarray*}
  \mathcal{H}_{\sigma} :  \mathcal{A}_{\sigma} & \rightarrow &
  \mathcal{A}^{\ast}_{\sigma}\\
  p (\tmmathbf{x}) & \mapsto & p (\tmmathbf{x}) \star \sigma
  (\tmmathbf{y} ).
\end{eqnarray*}
\begin{lemma}
  \label{eq:HankelMult}For any $g \in \mathbbm{K} [\tmmathbf{x}],$ we have
  \begin{equation}
    \mathcal{H}_{g \star \sigma} = \mathcal{M}_g^t \circ \mathcal{H}_{\sigma}
    =\mathcal{H}_{\sigma} \circ \mathcal{M}_g. \label{eq:Hankel}
  \end{equation}
\end{lemma}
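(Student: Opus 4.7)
The plan is to verify both equalities by unpacking the definitions and invoking the associativity of the $\star$-action, namely $(pq) \star \sigma = p \star (q \star \sigma)$, which holds for all $p,q \in \mathbbm{K}[\tmmathbf{x}]$ and any dual element. The only subtlety is that $\mathcal{H}_\sigma$, $\mathcal{H}_{g\star\sigma}$, $\mathcal{M}_g$, and $\mathcal{M}_g^t$ are all defined on quotient algebras, so one must confirm that the maps are well-defined on $\mathcal{A}_\sigma$ and $\mathcal{A}_\sigma^*$ respectively before comparing them.

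First I would fix $p \in \mathbbm{K}[\tmmathbf{x}]$ and compute
\[
\mathcal{H}_{g \star \sigma}(p) \;=\; p \star (g \star \sigma) \;=\; (pg) \star \sigma \;=\; \mathcal{H}_{\sigma}(gp) \;=\; \mathcal{H}_{\sigma}\bigl(\mathcal{M}_g(p)\bigr),
\]
which gives the right-hand equality $\mathcal{H}_{g\star\sigma} = \mathcal{H}_{\sigma}\circ \mathcal{M}_g$. Similarly,
\[
\mathcal{M}_g^t\bigl(\mathcal{H}_{\sigma}(p)\bigr) \;=\; g \star (p \star \sigma) \;=\; (gp) \star \sigma \;=\; \mathcal{H}_{\sigma}(gp) \;=\; \mathcal{H}_{\sigma}\circ \mathcal{M}_g(p),
\]
giving the left-hand equality. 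Both computations use only the associativity of $\star$ and the definitions of $\mathcal{M}_g$, $\mathcal{M}_g^t$, and $\mathcal{H}_\sigma$.

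The only step that requires a small verification is that all three maps descend to the quotient $\mathcal{A}_\sigma$. For $\mathcal{H}_\sigma$ this is Proposition \ref{prop:iso} (it is even an isomorphism). For $\mathcal{H}_{g \star \sigma}$ we must check that $I_{g\star\sigma} \supseteq I_\sigma$, which follows because $p \in I_\sigma$ implies $p \star \sigma = 0$, hence $p \star (g\star \sigma) = (pg)\star\sigma = g\star(p\star\sigma) = 0$, so $p \in I_{g\star\sigma}$. For $\mathcal{M}_g$ on $\mathcal{A}_\sigma$, if $p \in I_\sigma$ then $gp \in I_\sigma$ since $I_\sigma$ is an ideal. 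Dually, $\mathcal{M}_g^t$ stabilizes $\mathcal{A}_\sigma^* = I_\sigma^\perp$ because for any $\Lambda \in I_\sigma^\perp$ and $p \in I_\sigma$, $\langle g \star \Lambda \mid p\rangle = \langle \Lambda \mid gp\rangle = 0$. There is no real obstacle here; the content of the lemma is essentially the restatement of associativity of the $\mathbbm{K}[\tmmathbf{x}]$-module structure on $\mathbbm{K}[[\tmmathbf{y}]]$ in operator form, so the proof will be short.
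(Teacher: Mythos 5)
Your proof is correct and follows the same route as the paper's: the paper simply declares the lemma a direct consequence of the definitions, and your computation (unwind the definitions, use associativity of $\star$) is precisely what fills that in. Your additional check that the maps descend to $\mathcal{A}_\sigma$ and $\mathcal{A}_\sigma^\ast$ is a welcome detail the paper leaves implicit.
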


\begin{proof}
  This is a direct consequence of the definitions of $\mathcal{H}_{g \star
  \sigma}, \mathcal{H}_{\sigma}, \mathcal{M}_g^t$ and
  $\mathcal{M}_g$. 
\end{proof}

From Relation (\ref{eq:Hankel}) and Proposition \ref{prop:eigen}, we have the
following property.

\begin{proposition}
  \label{prop:geneigen}If $\sigma (\tmmathbf{y}) = \sum_{i = 1}^r
  \hspace{0.25em} \omega_i (\tmmathbf{y}) \tmmathbf{e}_{\xi_i}
  (\tmmathbf{y})$with $\omega_i \in \mathbbm{K} [\tmmathbf{y}] \setminus
  \{0\}$ and $\hspace{0.25em} \xi_i \in \mathbbm{K}^n$ distinct, then
  \begin{itemize}
    \item for all $g \in \mathcal{A}$, the generalized eigenvalues of
    $(\mathcal{H}_{g \star \sigma}, \mathcal{H}_{\sigma})$ are $g
    (\mathbf{\xi}_i)$ with multiplicity ${\mu}_i ={\mu} (\omega_i)$,
    $i = 1 \ldots r$,
    
    \item the generalized eigenvectors common to all $(\mathcal{H}_{g \star
    \sigma}, \mathcal{H}_{\sigma})$ with $g \in \mathcal{A}$ are - up to a
    scalar - \ $\mathbf{\mathcal{H}_{\sigma}^{- 1}}
    (\tmmathbf{e}_{\mathbf{\xi}_1}), \ldots, \mathbf{\mathcal{H}_{\sigma}^{-
    1}} (\tmmathbf{e}_{\mathbf{\xi}_r})$.
  \end{itemize}
\end{proposition}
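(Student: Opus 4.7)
The plan is to reduce the generalized eigenproblem for the pencil $(\mathcal{H}_{g\star\sigma}, \mathcal{H}_{\sigma})$ to the ordinary eigenproblem for $\mathcal{M}_g$ (or its transpose $\mathcal{M}_g^t$), exploiting the two factorizations provided by Lemma \ref{eq:HankelMult} together with the fact that $\mathcal{H}_{\sigma}$ is invertible.

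First, I would record the ingredients. By Theorem \ref{thm:gorenstein}, $H_{\sigma}$ has finite rank $r=\sum_{i=1}^{r'} \mu(\omega_i)$, and the proof of that theorem identifies the local dimensions $\mu_i=\dim \mathcal{A}_{\xi_i}$ with $\mu(\omega_i)$; by Proposition \ref{prop:iso} the induced map $\mathcal{H}_{\sigma}:\mathcal{A}_{\sigma}\to \mathcal{A}_{\sigma}^{\ast}$ is an isomorphism. Lemma \ref{eq:HankelMult} supplies the two identities $\mathcal{H}_{g\star\sigma}=\mathcal{H}_{\sigma}\circ \mathcal{M}_g = \mathcal{M}_g^t\circ \mathcal{H}_{\sigma}$, which are the backbone of the argument.

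For the eigenvalue statement, I would use the first factorization. Choosing a basis $B$ of $\mathcal{A}_{\sigma}$ and its dual basis $B^{\ast}$ of $\mathcal{A}_{\sigma}^{\ast}$, the matrix identity $[\mathcal{H}_{g\star\sigma}]=[\mathcal{H}_{\sigma}]\cdot [\mathcal{M}_g]$ gives
\[
\det\bigl(\mathcal{H}_{g\star\sigma}-\lambda\,\mathcal{H}_{\sigma}\bigr)
= \det(\mathcal{H}_{\sigma})\,\det(\mathcal{M}_g-\lambda\,\mathrm{Id}),
\]
so the generalized eigenvalues of the pencil coincide with the eigenvalues of $\mathcal{M}_g$ with the same algebraic multiplicities. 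By Proposition \ref{prop:eigen} these are the values $g(\xi_i)$ with multiplicity $\mu_i=\dim \mathcal{A}_{\xi_i}$, and by the observation above this is $\mu(\omega_i)$.

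For the eigenvector statement, I would use the second factorization: the generalized eigenequation $\mathcal{H}_{g\star\sigma}(v)=\lambda\,\mathcal{H}_{\sigma}(v)$ is, via $\mathcal{H}_{g\star\sigma}=\mathcal{M}_g^t\circ \mathcal{H}_{\sigma}$, equivalent to $\mathcal{M}_g^t(\mathcal{H}_{\sigma}(v))=\lambda\,\mathcal{H}_{\sigma}(v)$. Thus $v$ is a common generalized eigenvector of $(\mathcal{H}_{g\star\sigma},\mathcal{H}_{\sigma})$ for every $g\in\mathcal{A}$ if and only if $\mathcal{H}_{\sigma}(v)\in \mathcal{A}_{\sigma}^{\ast}$ is a common eigenvector of every $\mathcal{M}_g^t$. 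By the second part of Proposition \ref{prop:eigen}, such common eigenvectors are, up to a scalar, the evaluations $\mathbf{e}_{\xi_1},\ldots,\mathbf{e}_{\xi_{r'}}$, so the common generalized eigenvectors of the pencil are, up to a scalar, $\mathcal{H}_{\sigma}^{-1}(\mathbf{e}_{\xi_i})$, as claimed. There is no real obstacle here; the only point requiring care is to keep the two factorizations straight, since one of them makes the eigenvalue count transparent while the other is the one that puts the common eigenvectors of $\mathcal{M}_g^t$ in the natural dual slot.
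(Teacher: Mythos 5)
Your proof is correct and takes essentially the same route as the paper: the paper simply asserts that the proposition follows from Relation~(\ref{eq:Hankel}) and Proposition~\ref{prop:eigen}, and you have filled in precisely the details — using $\mathcal{H}_{g\star\sigma}=\mathcal{H}_{\sigma}\circ\mathcal{M}_g$ (with $\mathcal{H}_{\sigma}$ invertible by Proposition~\ref{prop:iso}) to transfer the eigenvalue count to $\mathcal{M}_g$, and $\mathcal{H}_{g\star\sigma}=\mathcal{M}_g^t\circ\mathcal{H}_{\sigma}$ to identify the common generalized eigenvectors with $\mathcal{H}_{\sigma}^{-1}$ of the common eigenvectors of the $\mathcal{M}_g^t$, together with the identification $\mu_i=\dim\mathcal{A}_{\xi_i}=\mu(\omega_i)$ established in the proof of Theorem~\ref{thm:gorenstein}. (A trivial slip: near the end you write $\xi_{r'}$ where the proposition's notation uses $r$ for the number of distinct points; the logic is unaffected.)
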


\begin{remark}
  If we take $g = x_i$, then the eigenvalues are the $i$-th coordinates of the
  points $\mathbf{\xi}_j$.
\end{remark}

\subsubsection{The case of simple roots}

In the case of simple roots, that is $\sigma$ is of the form $\sigma
(\tmmathbf{y}) = \sum_{i = 1}^r \hspace{0.25em} \omega_i \tmmathbf{e}_{\xi_i}
(\tmmathbf{y})$ with $\omega_i \in \mathbbm{K} \setminus \{0\}$ and
$\hspace{0.25em} \xi_i \in \mathbbm{K}^n$ distinct, computing the
decomposition reduces to a simple eigenvector computation, as we will see.

By Proposition \ref{prop:simpleroot}, \ $\{ \tmmathbf{e}_{\xi_1}, \ldots,
\tmmathbf{e}_{\xi_r} \}$ is a basis of $\mathcal{A}_{\sigma}^{\ast}$. We
denote by $\{ \mathbf{{\bdu}}_{\mathbf{\xi}_1}, \ldots,
\mathbf{{\bdu}}_{\mathbf{\xi}_r} \} $ the basis of
$\mathcal{A}_{\sigma}$, which is dual to $\{ \tmmathbf{e}_{\xi_1}, \ldots,
\tmmathbf{e}_{\xi_r} \}$, so that $\forall a (\tmmathbf{x}) \in
\mathcal{A}_{\sigma},$
\begin{equation}
  a (\tmmathbf{x}) \equiv \sum_{i = 1}^r \hspace{0.25em} \langle
  \mathbf{{\bde}}_{\mathbf{\xi}_i} (\tmmathbf{y}) |   a
  (\tmmathbf{x}) \rangle \tmmathbf{u}_{{\xi_i}} (\tmmathbf{x}) \equiv \sum_{i
  = 1}^r \hspace{0.25em} a (\mathbf{\xi}_i) \tmmathbf{u}_{{\xi_i}}
  (\tmmathbf{x}). \label{eq:proj}
\end{equation}
From this formula, we easily verify that the polynomials \
$\mathbf{\tmmathbf{u}}_{\mathbf{\xi}_1},
\mathbf{\tmmathbf{u}}_{\mathbf{\xi}_2}, \ldots,
\mathbf{\tmmathbf{u}}_{\mathbf{\xi}_r} $ are the {\tmem{interpolation
polynomials}} at the points $\mathbf{\xi}_1, \mathbf{\xi}_2, \ldots,
\mathbf{\xi}_r$, and satisfy the following relations in
$\mathcal{A}_{\sigma}$:
\begin{itemize}
  \item $\mathbf{{\bdu}}_{\mathbf{\xi}_i} (\mathbf{\xi}_j)$=$ \left\{
  \begin{array}{ll}
    1 & \tmop{if} i = j,\\
    0 & \tmop{otherwise}.
  \end{array} \right.$
  
  \item $\mathbf{{\bdu}}_{\mathbf{\xi}_i} (\mathbf{\tmmathbf{x}})^2
  \equiv \mathbf{\tmmathbf{u}}_{\mathbf{\xi}_i} (\mathbf{\tmmathbf{x}})$.
  
  \item $\sum_{i = 1}^r {\mathbf{u}}_{\mathbf{\xi}_i} (\tmmathbf{x})
  \equiv 1$.
\end{itemize}
\begin{proposition}
  \label{prop:37}Let $\begin{array}{lll}
    \sigma & = & \sum_{i = 1}^r \hspace{0.25em} \omega_i
    \tmmathbf{e}_{{\xi_i}} (\tmmathbf{y})
  \end{array}$with $\xi_i$ pairwise distinct and $\omega_i \in \mathbbm{K}
  \setminus \{ 0 \}$. The basis $\{ \mathbf{\tmmathbf{u}}_{\mathbf{\xi}_1},
  \ldots, \mathbf{\tmmathbf{u}}_{\mathbf{\xi}_r} \}$ is an orthogonal basis of
  $\mathcal{A}_{\sigma}$ for the inner product $\langle \cdot,\cdot \rangle_{\sigma}$ and satisfies $\langle
  \mathbf{\tmmathbf{u}}_{\mathbf{\xi}_i}, 1 \rangle_{\sigma} =
  \langle \sigma |   \tmmathbf{u}_{\mathbf{\xi}_i} \rangle =
  \omega_i$ for $i = 1 \ldots, r$.
\end{proposition}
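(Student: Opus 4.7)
The plan is to reduce everything to a direct evaluation at the support points $\xi_1,\ldots,\xi_r$, using the explicit form of $\sigma$ and the interpolation property of the dual basis already established just before the proposition.

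First, I would record the identity $\langle \sigma \mid p\rangle = \sum_{i=1}^r \omega_i\, p(\xi_i)$, valid for every $p\in \mathbbm{K}[\tmmathbf{x}]$; this is immediate from $\sigma=\sum_i \omega_i\tmmathbf{e}_{\xi_i}$ and the definition of $\tmmathbf{e}_{\xi_i}$ as evaluation at $\xi_i$. Second, I would recall the two facts about the interpolation polynomials $\tmmathbf{u}_{\xi_i}$ already stated in the excerpt: $\tmmathbf{u}_{\xi_i}(\xi_j)=\delta_{ij}$ and, in $\mathcal{A}_{\sigma}$, $\tmmathbf{u}_{\xi_i}\tmmathbf{u}_{\xi_j}\equiv 0$ for $i\neq j$ while $\tmmathbf{u}_{\xi_i}^2\equiv \tmmathbf{u}_{\xi_i}$.

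Then the computation is essentially one line. For any $i,j\in\{1,\ldots,r\}$,
\[
\langle \tmmathbf{u}_{\xi_i},\tmmathbf{u}_{\xi_j}\rangle_{\sigma}
= \langle \sigma \mid \tmmathbf{u}_{\xi_i}\tmmathbf{u}_{\xi_j}\rangle
= \sum_{k=1}^r \omega_k\,\tmmathbf{u}_{\xi_i}(\xi_k)\,\tmmathbf{u}_{\xi_j}(\xi_k)
= \sum_{k=1}^r \omega_k\,\delta_{ik}\,\delta_{jk}
= \omega_i\,\delta_{ij},
\]
which shows at once the orthogonality for $i\neq j$ and the normalization $\langle \tmmathbf{u}_{\xi_i},\tmmathbf{u}_{\xi_i}\rangle_\sigma=\omega_i$. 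The same evaluation trick applied with $q=1$ gives
\[
\langle \tmmathbf{u}_{\xi_i},1\rangle_\sigma=\langle \sigma\mid \tmmathbf{u}_{\xi_i}\rangle=\sum_{k=1}^r \omega_k\,\tmmathbf{u}_{\xi_i}(\xi_k)=\omega_i.
\]

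There is really no serious obstacle here: the inner product is defined precisely so that it factors through the pairing with $\sigma$, and $\sigma$ is a linear combination of point evaluations at exactly the points where the $\tmmathbf{u}_{\xi_i}$ form a ``delta'' system. The only subtlety worth flagging is that the products $\tmmathbf{u}_{\xi_i}\tmmathbf{u}_{\xi_j}$ must be interpreted as elements of $\mathbbm{K}[\tmmathbf{x}]$ when plugged into $\langle \sigma \mid \cdot\rangle$, but since $\sigma$ vanishes on $I_\sigma$ (as $\sigma\in \mathcal{A}_\sigma^\ast$), it is equivalent to compute in $\mathcal{A}_\sigma$, which is what makes the idempotent relations applicable if one prefers that route.
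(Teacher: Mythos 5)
Your proof is correct and follows essentially the same route as the paper's: both expand $\langle \tmmathbf{u}_{\xi_i},\tmmathbf{u}_{\xi_j}\rangle_\sigma$ as $\sum_k \omega_k\,\tmmathbf{u}_{\xi_i}(\xi_k)\,\tmmathbf{u}_{\xi_j}(\xi_k)$ using the explicit form of $\sigma$ and the interpolation property $\tmmathbf{u}_{\xi_i}(\xi_j)=\delta_{ij}$, yielding $\omega_i\delta_{ij}$, and then treat $\langle \tmmathbf{u}_{\xi_i},1\rangle_\sigma$ the same way. The closing remark about the pairing factoring through $\mathcal{A}_\sigma$ is a fair clarification but adds nothing the paper needed.
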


\begin{proof}
  For $i, j = 1 \ldots r$, we have $\langle {\bdu}_{\mathbf{\xi}_i}, {{\bdu}}_{\mathbf{\xi}_j}
  \rangle_{\sigma} = \langle \sigma |    \mathbf{{\bdu}}_{\mathbf{\xi}_i}  {\mathbf{u}}_{\mathbf{\xi}_j} \rangle =
  \sum_{k = 1}^r \hspace{0.25em} \omega_k  \mathbf{{\bdu}}_{\mathbf{\xi}_i} (\xi_k) \mathbf{{\bdu}}_{\mathbf{\xi}_j}
  (\xi_k)$. Thus
  \begin{eqnarray*}
    \langle \mathbf{{\bdu}}_{\mathbf{\xi}_i}, \mathbf{{\bdu}}_{\mathbf{\xi}_j} \rangle_{\sigma} & = & \left\{ \begin{array}{l}
      \omega_i \tmop{if} i = j\\
      0 \tmop{otherwise}
    \end{array} \right.
  \end{eqnarray*}
  and $\{ \mathbf{\tmmathbf{u}}_{\mathbf{\xi}_1}, \ldots,
  \mathbf{\tmmathbf{u}}_{\mathbf{\xi}_r} \}$ is an orthogonal basis of
  $\mathcal{A}_{\sigma}$. Moreover, $\langle
  \mathbf{\tmmathbf{u}}_{\mathbf{\xi}_i}, 1 \rangle_{\sigma} =
  \langle \sigma |   \tmmathbf{u}_{\mathbf{\xi}_i} \rangle = \sum_{k
  = 1}^r \hspace{0.25em} \omega_k  \mathbf{{\bdu}}_{\mathbf{\xi}_i}
  (\xi_k) = \omega_i$.
\end{proof}

Proposition \ref{prop:eigen} implies the following result:

\begin{corollary}
  \label{cor:22}If $g \in \mathbbm{K} [\tmmathbf{x}]$ is separating the roots
  $\mathbf{\xi}_1, \ldots, \mathbf{\xi}_r$ (i.e. $g (\mathbf{\xi}_i) \neq g
  (\mathbf{\xi}_j) $ when $i \neq j$), then
  \begin{itemize}
    \item the operator $\mathcal{M}_g$ is diagonalizable and its eigenvalues
    are $g (\mathbf{\xi}_1), \ldots, g (\mathbf{\xi}_r)$,
    
    \item the corresponding eigenvectors of $\mathcal{M}_g$ are, up to a
    non-zero scalar, the interpolation polynomials $\mathbf{{\bdu}}_{\mathbf{\xi}_1}, \ldots, {\mathbf{u}}_{\mathbf{\xi}_r}$.
    
    \item the corresponding eigenvectors of $\mathcal{M}_g^t$ are, up to a
    non-zero scalar, the evaluations $\mathbf{{\bde}}_{\mathbf{\xi}_1},
    \ldots, \mathbf{{\bde}}_{\mathbf{\xi}_r}$.
  \end{itemize}
\end{corollary}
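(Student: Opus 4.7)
The plan is to derive the three bullets as direct specializations of Proposition \ref{prop:eigen} and Proposition \ref{prop:simpleroot}, using the characterization of the interpolation polynomials $\mathbf{u}_{\xi_i}$ given by the projection formula \eqref{eq:proj}.

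First I would note that we are in the simple-root regime of Proposition \ref{prop:simpleroot}: since $\sigma=\sum_{i=1}^{r}\omega_{i}\,\mathbf{e}_{\xi_{i}}$ with $\omega_{i}\in\mathbbm{K}\setminus\{0\}$ and the $\xi_{i}$ pairwise distinct, the multiplicity $\mu_{i}=\mu(\omega_{i})$ equals $1$ for each $i$, so $\dim\mathcal{A}_{\sigma}=r$ and $\mathcal{V}(I_{\sigma})=\{\xi_{1},\ldots,\xi_{r}\}$ consists of $r$ simple points. Applying Proposition \ref{prop:eigen} to $g$ then gives that the eigenvalues of $\mathcal{M}_{g}$ (and of $\mathcal{M}_{g}^{t}$) are exactly $g(\xi_{1}),\ldots,g(\xi_{r})$, each with multiplicity $1$. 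The separating hypothesis on $g$ makes these $r$ eigenvalues pairwise distinct, and since the operator acts on an $r$-dimensional space with $r$ distinct eigenvalues, $\mathcal{M}_{g}$ is automatically diagonalizable. This establishes the first bullet.

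For the third bullet, I would identify the eigenvectors of $\mathcal{M}_{g}^{t}$ by invoking directly the second claim of Proposition \ref{prop:eigen}: the evaluations $\mathbf{e}_{\xi_{1}},\ldots,\mathbf{e}_{\xi_{r}}$ are common eigenvectors of all transposed multiplication operators, with $\mathcal{M}_{g}^{t}(\mathbf{e}_{\xi_{i}})=g(\xi_{i})\,\mathbf{e}_{\xi_{i}}$. Since $g$ is separating, each eigenvalue $g(\xi_{i})$ of $\mathcal{M}_{g}^{t}$ is simple, so its eigenspace is one-dimensional and necessarily spanned by $\mathbf{e}_{\xi_{i}}$. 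This gives the third bullet up to a nonzero scalar.

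For the second bullet I would compute directly using the interpolation identity obtained from \eqref{eq:proj}. For any $a(\mathbf{x})\in\mathcal{A}_{\sigma}$, one has $a(\mathbf{x})\equiv\sum_{j=1}^{r}a(\xi_{j})\,\mathbf{u}_{\xi_{j}}(\mathbf{x})$. Applying this with $a=g\cdot\mathbf{u}_{\xi_{i}}$ and using the interpolation property $\mathbf{u}_{\xi_{i}}(\xi_{j})=\delta_{ij}$ yields
\[
\mathcal{M}_{g}(\mathbf{u}_{\xi_{i}})\equiv\sum_{j=1}^{r}g(\xi_{j})\,\mathbf{u}_{\xi_{i}}(\xi_{j})\,\mathbf{u}_{\xi_{j}}=g(\xi_{i})\,\mathbf{u}_{\xi_{i}},
\]
so each $\mathbf{u}_{\xi_{i}}$ is an eigenvector of $\mathcal{M}_{g}$ for the eigenvalue $g(\xi_{i})$. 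Again, since $g$ separates the $\xi_{i}$, each eigenspace is one-dimensional, so the $\mathbf{u}_{\xi_{i}}$ exhaust the eigenvectors of $\mathcal{M}_{g}$ up to scalar.

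There is no real obstacle here: the corollary is essentially bookkeeping once one has Propositions \ref{prop:eigen} and \ref{prop:simpleroot} and the projection formula \eqref{eq:proj}. The one point deserving a sentence of justification is the simplicity of the eigenspaces, which is where the separating hypothesis on $g$ is used and which reduces the full statement of Proposition \ref{prop:eigen} (common eigenvectors for all $g$) to the single-operator statement for this particular $g$.
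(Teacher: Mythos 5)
Your proof is correct and follows essentially the route the paper intends: the corollary is presented without proof as a direct consequence of Proposition~\ref{prop:eigen}, and you correctly fill in the omitted details using Proposition~\ref{prop:simpleroot} (to get simple roots of multiplicity one), the separating hypothesis (to get $r$ distinct eigenvalues and hence diagonalizability and one-dimensional eigenspaces), and the projection identity~\eqref{eq:proj} (to verify directly that $\mathcal{M}_g(\mathbf{u}_{\xi_i}) \equiv g(\xi_i)\,\mathbf{u}_{\xi_i}$). The only cosmetic difference is that the paper's framing would more naturally obtain the second bullet by duality from the third (the $\mathbf{u}_{\xi_i}$ are by definition the basis dual to the $\mathbf{e}_{\xi_i}$, and dual bases of eigenvectors of $M^t$ are eigenvectors of $M$); your direct computation via~\eqref{eq:proj} is an equally short and valid alternative.
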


A simple computation shows that $H_{\sigma} (\tmmathbf{u}_{\xi_i}) = \omega_i
\tmmathbf{e}_{{\xi_i}} (\tmmathbf{y})$ for $i = 1, \ldots, r$. This leads to
the following formula for the weights of the decomposition of $\sigma$:

\begin{proposition}
  \label{prop:geneigeninterpol}If $\sigma = \sum_{i = 1}^r \hspace{0.25em}
  \omega_i \tmmathbf{e}_{{\xi_i}} (\tmmathbf{y})$ with $\xi_i$ pairwise
  distinct and $\omega_i \in \mathbbm{K} \setminus \{ 0 \}$ and $g \in
  \mathbbm{K} [\tmmathbf{x}]$ is separating the roots $\mathbf{\xi}_1, \ldots,
  \mathbf{\xi}_r$, then there are $r$ linearly independent generalized
  eigenvectors $\tmmathbf{v}_1 (\tmmathbf{x}), \ldots, \tmmathbf{v}_r
  (\tmmathbf{x})$ of $(\mathcal{H}_{g \star \sigma}, \mathcal{H}_{\sigma})$,
  which satisfy the relations:
  \begin{eqnarray*}
     \mathbf{\langle  \sigma} |  x_j
    \tmmathbf{v}_i \rangle & = & \xi_{i, j}   \mathbf{\langle
     \sigma} |  \tmmathbf{v}_i \rangle \tmop{for} j = 1,
    \ldots, n, i = 1, \ldots, r\\
    \sigma (\tmmathbf{y}) & = & \sum_{i = 1}^r
    \frac{1}{\tmmathbf{v}_i (\xi_i)}  \mathbf{\langle 
    \sigma} |  \tmmathbf{v}_i \rangle \tmmathbf{e}_{{\xi_i}}
    (\tmmathbf{y})
  \end{eqnarray*}
\end{proposition}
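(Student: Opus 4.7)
The plan is to invoke Proposition \ref{prop:geneigen} applied to the simple-root setting (where all weights $\omega_i$ are scalars and the multiplicities $\mu_i = \mu(\omega_i) = 1$), and then to do an elementary identification of the eigenvectors with the interpolation polynomials $\mathbf{u}_{\xi_i}$ introduced just before.

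First I would note that, since $g$ separates the $\xi_i$, the generalized eigenvalues $g(\xi_1),\ldots,g(\xi_r)$ of $(\mathcal{H}_{g\star\sigma},\mathcal{H}_\sigma)$ are pairwise distinct and each appears with multiplicity $1$; hence each generalized eigenspace is one-dimensional. By Proposition \ref{prop:geneigen}, a generator of the eigenspace associated to $g(\xi_i)$ is $\mathcal{H}_\sigma^{-1}(\mathbf{e}_{\xi_i})$. Using the defining relation (\ref{eq:proj}) of the interpolation basis, one computes immediately
\[
\mathcal{H}_\sigma(\mathbf{u}_{\xi_i}) = \mathbf{u}_{\xi_i}\star\sigma = \sum_{k=1}^r \omega_k \mathbf{u}_{\xi_i}(\xi_k)\,\mathbf{e}_{\xi_k}(\mathbf{y}) = \omega_i\, \mathbf{e}_{\xi_i}(\mathbf{y}),
\]
so $\mathcal{H}_\sigma^{-1}(\mathbf{e}_{\xi_i}) = \omega_i^{-1} \mathbf{u}_{\xi_i}$. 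Thus any common generalized eigenvector $\mathbf{v}_i$ of $(\mathcal{H}_{g\star\sigma},\mathcal{H}_\sigma)$ for the eigenvalue $g(\xi_i)$ is a nonzero scalar multiple $\mathbf{v}_i = c_i\, \mathbf{u}_{\xi_i}$ in $\mathcal{A}_\sigma$, giving $r$ linearly independent common generalized eigenvectors.

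Next I would verify the two identities by direct substitution. Expanding $\sigma$ and using $\mathbf{u}_{\xi_i}(\xi_k)=\delta_{ik}$,
\[
\langle \sigma \mid x_j \mathbf{v}_i \rangle = c_i \sum_{k=1}^r \omega_k\, \xi_{k,j}\, \mathbf{u}_{\xi_i}(\xi_k) = c_i\, \omega_i\, \xi_{i,j},
\qquad
\langle \sigma \mid \mathbf{v}_i \rangle = c_i \sum_{k=1}^r \omega_k\, \mathbf{u}_{\xi_i}(\xi_k) = c_i\, \omega_i,
\]
which yields $\langle \sigma \mid x_j \mathbf{v}_i\rangle = \xi_{i,j}\langle \sigma \mid \mathbf{v}_i\rangle$ for all $j = 1,\ldots,n$ and all $i=1,\ldots,r$.

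Finally, since $\mathbf{v}_i(\xi_i)= c_i\, \mathbf{u}_{\xi_i}(\xi_i) = c_i$, the scalar ratio
\[
\frac{\langle \sigma \mid \mathbf{v}_i\rangle}{\mathbf{v}_i(\xi_i)} = \frac{c_i\,\omega_i}{c_i} = \omega_i
\]
is independent of the chosen normalization $c_i$, and substituting back into the original expansion $\sigma(\mathbf{y}) = \sum_{i=1}^r \omega_i\, \mathbf{e}_{\xi_i}(\mathbf{y})$ gives the claimed reconstruction formula. No step here is really an obstacle: the only subtle point is observing that the scale-invariance of the ratio $\langle\sigma\mid \mathbf{v}_i\rangle/\mathbf{v}_i(\xi_i)$ makes the formula meaningful for an arbitrary choice of generalized eigenvector representative.
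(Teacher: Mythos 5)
Your proof is correct and follows essentially the same route as the paper's: identify the common generalized eigenvectors as scalar multiples of the interpolation polynomials $\mathbf{u}_{\xi_i}$ (via the computation $\mathcal{H}_\sigma(\mathbf{u}_{\xi_i})=\omega_i\,\mathbf{e}_{\xi_i}$), then verify the two displayed identities by direct substitution. The only cosmetic difference is which prior result you cite to pin down the eigenvectors --- you use Proposition~\ref{prop:geneigen} and the explicit inversion $\mathcal{H}_\sigma^{-1}(\mathbf{e}_{\xi_i})=\omega_i^{-1}\mathbf{u}_{\xi_i}$, whereas the paper routes through Corollary~\ref{cor:22} and Proposition~\ref{prop:37} --- but both reduce to the same identification and the same elementary computation with $\mathbf{u}_{\xi_i}(\xi_k)=\delta_{ik}$.
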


\begin{proof}
  By the relations (\ref{eq:Hankel}) and Corollary \ref{cor:22}, the
  eigenvectors $\mathbf{{\bdu}}_{\mathbf{\xi}_1}, \ldots,
  \mathbf{{\bdu}}_{\mathbf{\xi}_r}$ of $\mathcal{M}_g$ are the
  generalized eigenvectors of $(\mathcal{H}_{g \star \sigma},
  \mathcal{H}_{\sigma})$. By Corollary \ref{cor:22}, $\tmmathbf{v}_i$ is a
  multiple of the interpolation polynomial $\tmmathbf{u}_{\mathbf{\xi}_i}$,
  and thus of the form $\tmmathbf{v}_i (\tmmathbf{x}) =\tmmathbf{v}_i (\xi_i)
  \tmmathbf{u}_{\xi_i} (\tmmathbf{x})$ since $\tmmathbf{u}_{\xi_i} (\xi_i)$=1.
  We deduce that $\tmmathbf{u}_{\xi_i} (\tmmathbf{x}) =
  \frac{1}{\tmmathbf{v}_i (\xi_i)} \tmmathbf{v}_i (\tmmathbf{x})$. By
  Proposition \ref{prop:37}, we have
  \[ \omega_i = \mathbf{\langle \sigma } | {\bdu}_{\xi_i}
     \rangle = \frac{1}{\tmmathbf{v}_i (\xi_i)}  \mathbf{\langle
      \sigma} |  \tmmathbf{v}_i \rangle, \]
  from which, we dedue the decomposition of $\sigma = \sum_{i = 1}^r
  \frac{1}{\tmmathbf{v}_i (\xi_i)}  \mathbf{\langle 
  \sigma} |  \tmmathbf{v}_i \rangle \tmmathbf{e}_{{\xi_i}}
  (\tmmathbf{y})$. It implies that
  \[ \mathbf{\langle \sigma } | x_j  {\bdu}_{\xi_i} \rangle =
     \sum_{k = 1}^r \hspace{0.25em} \omega_k  {\xi}_{k, j}
     \mathbf{{\bdu}}_{\tmmathbf{\xi}_i} (\xi_k) = {\xi}_{i, j}
     \omega_i = {\xi}_{i, j} \langle \sigma  | {\bdu}_{\xi_i} \rangle. \]
  Multiplying by $\tmmathbf{v}_i (\xi_i)$, we obtain the first relations.
\end{proof}

This property shows that the decomposition of $\sigma$ can be deduced directly
from the generalized eigenvectors of $(\mathcal{H}_{g \star \sigma},
\mathcal{H}_{\sigma})$. In particular, it is not necessary to solve a
Vandermonde linear system to recover the weights $\omega_i$ as
in the pencil method (see Section \ref{sec:pronyunivar}). We summarize it in the
following algorithm, which computes the decomposition of $\sigma$, assuming a
basis $B$ of $\mathcal{A}_{\sigma}$ is known.
Hereafter $B^+ = \cup_{i = 1}^n x_i B \cup B$.
\\
\begin{algorithm}[H]\caption{Decomposition of polynomial-exponential series with constant weights}\label{algo:genprony1}
\Indm{\tmstrong{Input:}} the (first) coefficients
$\sigma_{\alpha}$ of a series $\sigma \in \mathbbm{K} [[\tmmathbf{y}]]$ for
$\alpha \in \tmmathbf{a} \subset \mathbbm{N}^n$ and  bases
$B = \{ b_1, \ldots, b_r \}$, $B' = \{ b_1', \ldots, b_r' \}$
of $\mathcal{A}_{\sigma}$ such that $\langle B'\cdot B^{+}\rangle\subset \langle \tmmathbf{x}^{\tmmathbf{a}}\rangle$.
\begin{itemizeminus}
   \item Construct the matrices $H_0 = (\mathbf{\langle \sigma } |
    b_i' b_j \rangle)_{1 \leqslant i, j \leqslant r}$ (resp. $H_k =
    (\mathbf{\langle \sigma } | x_k b_i' b_j \rangle)_{1 \leqslant i,
    j \leqslant r}$) of $\mathcal{H}_{\sigma}$ (resp. $\mathcal{H}_{x_k \star
    \sigma}$) in the bases $B, B'$ of $\mathcal{A}_{\sigma}$\;
    
    \item  Take a separating linear form ${\bdl} (\tmmathbf{x}) = l_1
    x_1 + \cdots + l_n x_n$ and construct $H_{{\bdl}} = \sum_{i = 1}^n
    l_i H_i = (\mathbf{\langle \sigma } | \tmmathbf{l}b_i' b_j
    \rangle)_{1 \leqslant i, j \leqslant r}$\;
    
     \item Compute the generalized eigenvectors $\tmmathbf{v}_1,
    \ldots, \tmmathbf{v}_r$ of $(H_{{\bdl}}, H_0)$ where $r = | B |$\;
    
     \item Compute $\xi_{i, j}$ such that $  \mathbf{\langle
     \sigma} |  x_j \tmmathbf{v}_i \rangle = \xi_{i, j} 
     \mathbf{\langle  \sigma} |  \tmmathbf{v}_i
    \rangle $for $j = 1, \ldots, n$, $i = 1, \ldots, r$\;
    
     \item Compute $\tmmathbf{u}_i (\tmmathbf{x}) =
    \frac{1}{\tmmathbf{v}_i (\xi_i)} \tmmathbf{v}_i (\tmmathbf{x})$ where
    $\xi_i = (\xi_{i, 1}, \ldots, \xi_{i, n})$\;
    
    \item  Compute $\langle \sigma |   \tmmathbf{u}_i \rangle =
    \omega_i$\;
\end{itemizeminus} 
{\tmstrong{Output:}} the decomposition $\sigma (\tmmathbf{y}) = \sum_{i = 1}^r
\omega_i \tmmathbf{e}_{\xi_i} (\tmmathbf{y})$.
\end{algorithm}

To apply this algorithm, one needs to compute a basis $B$ of
$\mathcal{A}_{\sigma}$ such that $\sigma$ is defined on $B \cdummy B^+$. In Section \ref{sec:5}, we will detail an
efficient method to compute such a basis $B$ and a characterization of the
sequences $(\sigma_{\alpha})_{\alpha \in A}$, which admits a decomposition of
rank $r$.

The second step of the algorithm consists in taking a linear form ${\bdl} (\tmmathbf{x}) = l_1 x_1 + \cdots + l_n x_n$, which separates the roots in
the decomposition (${\bdl} (\xi_i) \neq {\bdl}
(\xi_j)$ if $i \neq j$). A generic choice of ${\bdl}$ yields a
separating linear form.
This separating property can be verified by checking (in the third step of the algorithm) that $(H_{\bdl},H_{0})$ has $r$ distinct generalized eigenvalues. Notice that we only need to compute the matrix $H_{{\bdl}}$ of
$\mathcal{H}_{{\bdl} \star \sigma}$ in the basis $B$ of
$\mathcal{A}_{\sigma}$ and not necessarily all the matrices $H_k$.

The third step is the computation of generalized eigenvectors of a Hankel
pencil. The other steps involve the application of $\sigma$ on
polynomials in $B^+$.

We illustrate the method on a sequence $\sigma_{\alpha}$ obtained by
evaluation of a sum of exponentials on a grid.

\begin{example}
  \label{ex:40}We consider the function $h (u_1, u_2) = {\color{green}
  {\color{green} 2 + 3} \hspace{0.25em}} \cdummy {\color{red} 2^{u_1}
  2^{u_2} {\color{green} -} 3^{u_1}}$. Its associated
  generating series is $\sigma = \sum_{\alpha \in \mathbbm{N}^2} h (\alpha) 
  \frac{\tmmathbf{y}^{\alpha}}{\alpha !} $. Its (truncated) moment matrix is
  \[ H^{[1, x_1, x_2, x_1^2, x_1 x_2, x_2^2]}_{\sigma} = \left[
     \begin{array}{cccc}
       h (0, 0) & h (1, 0) & h (0, 1) & \cdots\\
       h (1, 0) & h (2, 0) & h (1, 1) & \cdots\\
       h (0, 1) & h (1, 1) & h (0, 2) & \cdots\\
       \vdots & \vdots & \vdots & \\
       \vdots & \vdots & \vdots & 
     \end{array} \right] = \left[ \begin{array}{cccccc}
       4 & 5 & 7 & 5 & 11 & 13\\
       5 & 5 & 11 & - 1 & 17 & 23\\
       7 & 11 & 13 & 17 & 23 & 25\\
       5 & - 1 & 17 & - 31 & 23 & 41\\
       11 & 17 & 23 & 23 & 41 & 47\\
       13 & 23 & 25 & 41 & 47 & 49
     \end{array} \right] \]
  We compute $B = \{1, x_1, x_2 \}$. The generalized eigenvalues of $(H_{x_1
  \star \sigma}, H_{\sigma})$ are $[{\color{red} 1, 2, 3]}$ and
  corresponding eigenvectors are represented by the columns of
  \[ {\bdu} \assign \left[ \begin{array}{ccc}
       2 & - 1 & 0\\
       - \frac{1}{2} & 0 & \frac{1}{2}\\
       - \frac{1}{2} & 1 & - \frac{1}{2}
     \end{array} \right], \]
  associated to the polynomials ${\bdu} (x) = [2 - \frac{1}{2} 
  \hspace{0.25em} x_1 - \frac{1}{2}  \hspace{0.25em} x_2, - 1 + x_2,
  \frac{1}{2}  \hspace{0.25em} x_1 - \frac{1}{2}  \hspace{0.25em} x_2]$. By
  computing the Hankel matrix
  \[ H_{\sigma}^{[1, x_1, x_2], {\bdu}} = \left[ \begin{array}{ccc}
       {\color{darkgreen} } \langle \sigma |  {\bdu}_1
       \rangle & {\color{darkgreen} } \langle \sigma | 
       {\bdu}_2 \rangle {\color{darkgreen} } & {\color{darkgreen}
       } \langle \sigma |  {\bdu}_3 \rangle\\
       {\color{darkgreen} } \langle \sigma |  x_1 {\bdu}_1
       \rangle & {\color{darkgreen} } \langle \sigma |  x_1
       {\bdu}_2 \rangle & {\color{darkgreen} } \langle \sigma |
        x_1 {\bdu}_3 \rangle\\
       {\color{green} } {\color{darkgreen} } \langle \sigma |  x_2 
       {\bdu}_1 \rangle & {\color{darkgreen} } \langle \sigma |
        x_2 {\bdu}_2 \rangle & {\color{darkgreen} }
       \langle \sigma |  x_2 {\bdu}_3 \rangle
     \end{array} \right] = \left[ \begin{array}{ccc}
       {\color{darkgreen} 2} & {\color{darkgreen} 3} & {\color{darkgreen} -
       1}\\
       {\color{green} {\color{darkgreen} 2 \times}}  {\color{red} 1} &
       {\color{green} {\color{darkgreen} 3 \times}}  {\color{red} 2} &
       {\color{green} {\color{darkgreen} - 1 \times}} {\color{red} 3}\\
       {\color{green} {\color{darkgreen} 2 \times}}  {\color{red} 1} &
       {\color{darkgreen} 3 \times}  {\color{red} 2} & {\color{green}
       {\color{darkgreen} - 1 \times}} {\color{red} 1}
     \end{array} \right] \]
  we deduce the weights {\color{darkgreen} ${\color{green} {\color{darkgreen} 2, 3, - 1}}$} and the frequencies ${\color{red} (1, 1), (2, 2), (3,
  1)}$, which correspond to the decomposition $\sigma = e^{y_1 + y_2} + 3
  e^{2 y_1 + 2 y_2} - e^{2 y_1 + y_2} $ and $h (u_1, u_2) = 2 \noplus + 3
  \cdummy 2^{u_1 + u_2} - 3^{u_1}$.
\end{example}

Let us recall other relations between the structured matrices involved in this
eigenproblem, that are useful to analyse the numerical behavior of the method.
For more details, see e.g. {\cite{mp-jcomplexity-2000}}. Such decompositions,
referred as Carath{\'e}odory-Fej{\'e}r-Pisarenko decompositions in 
{\cite{yang_vandermonde_2016}}, are induced by rank deficiency conditions or
flat extension properties (see Section \ref{sec:5}). They can be used to
recover the decomposition of the series in Pencil-like methods.

\begin{definition}
  Let $B = \{ b_1, \ldots, b_r \}$ be a family of polynomials. We
  define the $B$-Vandermonde matrix of the points $\mathbf{\xi}_1, \ldots,
  \mathbf{\xi}_r \in \mathbb{C}^n$ as
  \[ V_{B, \underset{}{{\mathbf\xi} }} = (
     \mathbf{\langle  {\bde}}_{\mathbf{\xi}_j} | b_i
       \rangle)_{1 \leq i, j \leq r} = (b_i (\mathbf{\xi}_j))_{1
     \leq i, j \leq r}. \]
\end{definition}

By remark \ref{rem:evalcoef}, if $\{ \tmmathbf{e}_{\xi_1}, \ldots,
\tmmathbf{e}_{\xi_r} \}$ is a basis of $\mathcal{A}_{\sigma}^{\ast}$ and $B$
is a basis of $\mathcal{A}_{\sigma}$, then $V_{B, {\mathbf\xi}}$ is the
matrix of coefficients of $\tmmathbf{e}_{\xi_1}, \ldots, \tmmathbf{e}_{\xi_r}$
in the dual basis of $B$ in $\mathcal{A}_{\sigma}^{\ast}$ and it is
invertible. Conversely, if $\{ \tmmathbf{e}_{\xi_1}, \ldots,
\tmmathbf{e}_{\xi_r} \}$ is a basis of $\mathcal{A}_{\sigma}^{\ast}$, we check
that $V_{B, {\mathbf\xi}}$ is invertible implies that $B = \{ b_1, \ldots,
b_r \}$ is a basis of $\mathcal{A}_{\sigma}$.

\begin{proposition}
  \label{dec:vdm:simple}Suppose that $\sigma = \sum_{k = 1}^r \hspace{0.25em}
  \omega_k \tmmathbf{e}_{{\xi_k}} (\tmmathbf{y})$ with $\mathbf{\xi}_1,
  \ldots, \mathbf{\xi}_r \in \mathbb{K}^n$ pairwise distinct and $\omega_1,
  \ldots, \omega_r \in \mathbbm{K} \setminus \{ 0 \}$. Let
  $D_{\tmmathbf{\omega}} = \tmop{diag} (\omega_1, \ldots, \omega_r)$ be the
  diagonal matrix associated to the weights $\omega_i$ and let $D_g =
  \tmop{diag} (g (\xi_1), \ldots, g (\xi_r))$ be the diagonal
  matrices associated to $g (\xi_1), \ldots, g (\xi_r)$. For any
  family $B, B'$ of $\mathbbm{K} [\tmmathbf{x}]$, we have
  \[ \begin{array}{lll}
       H^{B, B'}_{\sigma} & = & V_{B', {\tmmathbf\xi}} D_{\tmmathbf{\omega}}
       V_{B, {\tmmathbf\xi}}^t\\
       H_{g \star \sigma}^{B, B'} & = & V_{B', {\tmmathbf\xi}}
       D_{\tmmathbf{\omega}} D_g V_{B, {\tmmathbf\xi}}^t = V_{B', {\tmmathbf\xi}} D_g D_{\tmmathbf{\omega}} V_{B, {\tmmathbf\xi}}^t
     \end{array} \]
  If moreover $B$ is a basis of $\mathcal{A}_{\sigma}$, then $V_{B, {\tmmathbf\xi}}$ is invertible and
  \[ \begin{array}{lll}
       (M_g^B)^t & = & V_{B, {\tmmathbf\xi}} D_g V_{B, {\tmmathbf\xi}}^{-1}
                       \hspace{1em} \hspace{1em} \hspace{1em} \hspace{1em} \hspace{1em}
                       \hspace{1em} \hspace{1em} \hspace{1em} \hspace{1em} \hspace{1em}

     \end{array} \]
\end{proposition}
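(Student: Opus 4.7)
The plan is to prove the three claimed Vandermonde factorisations by direct entry-wise computation for the first two identities and by combining these with Lemma \ref{eq:HankelMult} for the third. The core observation is that the pairing with a pure exponential $\mathbf{e}_\xi$ is just point evaluation, so all matrix entries reduce to products of values of the $b_i, b_i'$ at the $\xi_k$, which are exactly the entries of the generalised Vandermonde matrices.

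First I would unwind the $(i,j)$-entry of $H_\sigma^{B,B'}$ using the expansion $\sigma = \sum_k \omega_k \mathbf{e}_{\xi_k}$ and the identity $\langle \mathbf{e}_{\xi_k}\mid p\rangle = p(\xi_k)$: by bilinearity,
\[
(H_\sigma^{B,B'})_{ij} \;=\; \langle \sigma \mid b_j b_i'\rangle \;=\; \sum_{k=1}^r \omega_k\, b_i'(\xi_k)\, b_j(\xi_k) \;=\; \sum_{k=1}^r (V_{B',\xi})_{ik}\,\omega_k\,(V_{B,\xi}^t)_{kj},
\]
which is the $(i,j)$-entry of $V_{B',\xi} D_{\boldsymbol{\omega}} V_{B,\xi}^t$.

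For the second identity, I would use Lemma \ref{lem:diffpolexp} with constant weight $\omega=1$, which gives $g(\mathbf{x})\star\mathbf{e}_{\xi_k}(\mathbf{y}) = g(\xi_k)\,\mathbf{e}_{\xi_k}(\mathbf{y})$. By linearity of $\star$, $g\star\sigma = \sum_k \omega_k g(\xi_k)\,\mathbf{e}_{\xi_k}$ is itself a polynomial-exponential series with the same frequencies $\xi_k$ and new (constant) weights $\omega_k g(\xi_k)$. Applying the already-established first identity with these modified weights yields $H_{g\star\sigma}^{B,B'} = V_{B',\xi} D_{\boldsymbol{\omega}g} V_{B,\xi}^t$, and since diagonal matrices commute, $D_{\boldsymbol{\omega}g} = D_{\boldsymbol{\omega}} D_g = D_g D_{\boldsymbol{\omega}}$, giving both stated forms.

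For the last identity, assume $B$ is a basis of $\mathcal{A}_\sigma$. By Proposition \ref{prop:simpleroot}, $\{\mathbf{e}_{\xi_1},\dots,\mathbf{e}_{\xi_r}\}$ is a basis of $\mathcal{A}_\sigma^\ast$, and by Remark \ref{rem:evalcoef}, $V_{B,\xi}$ is the matrix expressing this basis in the dual basis of $B$; hence $V_{B,\xi}$ is invertible. Lemma \ref{eq:HankelMult} gives $\mathcal{H}_{g\star\sigma} = \mathcal{H}_\sigma \circ \mathcal{M}_g$, which in matrix form (taking $B' = B$) reads $H_{g\star\sigma}^{B,B} = H_\sigma^{B,B}\, M_g^B$. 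Substituting the Vandermonde factorisations from the first two parts yields
\[
V_{B,\xi} D_{\boldsymbol{\omega}} D_g V_{B,\xi}^t \;=\; V_{B,\xi} D_{\boldsymbol{\omega}} V_{B,\xi}^t \, M_g^B.
\]
Cancelling the invertible factors $V_{B,\xi}$ and $D_{\boldsymbol{\omega}}$ on the left gives $D_g V_{B,\xi}^t = V_{B,\xi}^t M_g^B$, and transposing produces $(M_g^B)^t = V_{B,\xi} D_g V_{B,\xi}^{-1}$.

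There is no real obstacle: the only care needed is to be consistent with the convention $(H_\sigma^{B,B'})_{ij} = \langle \sigma \mid b_j b_i'\rangle$ so that the roles of $B$ (columns) and $B'$ (rows) match the placement of $V_{B,\xi}^t$ and $V_{B',\xi}$ in the factorisation, and to invoke Lemma \ref{eq:HankelMult} in the form $\mathcal{H}_\sigma\circ\mathcal{M}_g$ (rather than $\mathcal{M}_g^t\circ\mathcal{H}_\sigma$) so that the composition matches the column-side action of $M_g^B$.
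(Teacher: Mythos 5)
Your proof is correct and takes essentially the same approach as the paper: a direct entry-wise computation for the first two factorisations, followed by an application of Lemma \ref{eq:HankelMult} to deduce the similarity of $(M_g^B)^t$ to $D_g$. The only cosmetic difference is that you use the form $\mathcal{H}_\sigma\circ\mathcal{M}_g$ of the lemma and then transpose, while the paper uses $\mathcal{M}_g^t\circ\mathcal{H}_\sigma$ directly to get $(M_g^B)^t = H_{g\star\sigma}^{B,B}(H_\sigma^{B,B})^{-1}$; the two routes are trivially equivalent.
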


\begin{proof}
  If $\sigma = \sum_{k = 1}^r \hspace{0.25em} \omega_k \tmmathbf{e}_{{\xi_k}}
  (\tmmathbf{y})$ and $B = \{ b_1, \ldots, b_r \}, B' = \{ b_1', \ldots, b_r'
  \}$ are bases of $\mathcal{A}_{\sigma}$, then
  \[ H^{B, B'}_{\sigma} = \left[ \sum_{k = 1}^r \omega_k b'_i (\xi_k) b_j
     (\xi_k) \right]_{i, j = 1, \ldots, r} = V_{B', {\tmmathbf\xi}}
     D_{\tmmathbf{\omega}} V_{B, {\tmmathbf\xi}}^t. \]
  By a similar explicit computation, we check that $H_{g \star \sigma}^{B, B'}
  = V_{B', {\tmmathbf\xi}} D_{\tmmathbf{\omega}} D_g V_{B, {\tmmathbf\xi}}^t$. Equation (\ref{eq:Hankel}) implies that $(M_g^B)^t = H_{g \star
  \sigma}^{B, B} (H_{\sigma}^{B, B})^{- 1} = V_{B, {\tmmathbf\xi}} D_g V_{B,
  {\tmmathbf\xi}}^{- 1}$.
\end{proof}

\subsubsection{The case of multiple roots}

We consider now the general case where $\sigma$ is of the form $\sigma =
\sum_{i = 1}^{r'} \hspace{0.25em} \omega_i (\tmmathbf{y}) \tmmathbf{e}_{\xi_i}
(\tmmathbf{y})$ with $\omega_i (\tmmathbf{y}) \in \mathbbm{K} [\tmmathbf{y}]$
and $\hspace{0.25em} \xi_i \in \mathbb{K}^n$ pairwise distinct. By Theorem
\ref{thm:artindec}, we have
\[ \mathcal{A}_{\sigma} = \mathcal{A}_{\sigma, \xi_1} \oplus \cdots \oplus
   \mathcal{A}_{\sigma, \xi_{r'}} \]
where $\mathcal{A}_{\sigma, \xi_i} \simeq \mathbbm{K} [\tmmathbf{x}] / Q_i$ is
the local algebra associated to the $\mathfrak{m}_{\xi_i}$-primary component $Q_{i}$
of $I_{\sigma}$. The the decomposition (\ref{eq:dualdec}) and
Proposition \ref{prop:primaryortho} imply that $\mathcal{A}_{\sigma, \xi_i}$
is a local Artinian Gorenstein Algebra such that $\tmmathbf{u}_{\xi_i} \star
\sigma$ is a basis of $\mathcal{A}_{\sigma, \xi_i}^{\ast}$. The operators
$\mathcal{M}_{x_j}$ of multiplication by the variables $x_j$ in
$\mathcal{A}_{\sigma}$ for $j = 1, \ldots, n$ are commuting and have a block
diagonal decomposition, corresponding to the decomposition of
$\mathcal{A}_{\sigma}$.

It turns out that the operators $\mathcal{M}_{x_j}$ have common eigenvectors
$\tmmathbf{v}_i (\tmmathbf{x}) \in \mathcal{A}_{\sigma, \xi_i}$. Such an
eigenvector is an element of the \tmtextit{socle} $(0 : \mathfrak{m}_{\xi_i})
= \{ v \in \mathcal{A}_{\sigma, \xi_i} \mid (x_j - \xi_{i, j}) \tmmathbf{v}
\equiv 0, j = 1, \ldots, n \} = (Q_i : \mathfrak{m}_{\xi_i}) / Q_i$.

In the case of an Artinian Gorenstein algebra $\mathcal{A}_{\sigma, \xi_i}$,
the socle $(0 : \mathfrak{m}_{\xi_i})$ is a vector space of dimension $1$ (see
e.g. {\cite{ElkMou07}} [Sec. 7.1.5 and Sec. 9.5] for a simple proof). A basis
element can be computed as a common eigenvector of the commuting operators
$\mathcal{M}_{x_j}$. The corresponding eigenvalues are the coordinates
$\xi_{i, 1}, \ldots, \xi_{i, n}$ of the roots $\xi_i$, $i = 1, \ldots, r$.

For a separating linear form ${\bdl} (\tmmathbf{x}) = l_1 x_1 + \cdots
+ l_n x_n$ (such that ${\bdl} (\xi_i) \neq {\bdl}
(\xi_j)$ if $i \neq j$), the eigenspace of $\mathcal{M}_{{\bdl}}$ for
the eigenvalue ${\bdl} (\xi_i)$ is the local algebra
$\mathcal{A}_{\xi_i}$ associated to the root $\xi_i$. Let $B_i = \{ b_{i, 1},
\ldots, b_{i, {\mu}_i} \}$ be a basis of this eigenspace. It spans the
elements of $\mathcal{A}_{\xi_i}$, which are of the form ${\bdu}_{\xi_i} a$ for $a \in \mathcal{A}_{\sigma}$ where ${\bdu}_{\xi_i}$
is the idempotent associated to $\xi_i$ (see Theorem \ref{thm:artindec}). In
particular, the eigenspace of $\mathcal{M}_{\tmmathbf{l} (\tmmathbf{x})}$
associated to the eigenvalue ${\bdl} (\xi_i)$ contains the
idempotent ${\bdu}_{\xi_i}$, which can be recovered as follows:

\begin{lemma}
  Let $B_i = \{ b_{i, 1}, \ldots, b_{i, {\mu}_i} \}$ be a basis of
  $\mathcal{A}_{\xi_i}$ and $\mathbf{U_i} = ( \mathbf{\langle
   \sigma} |  b_{i, k} \rangle)_{k = 1, \ldots,
  {\mu}_i}$. Then $(H_{\sigma}^{B_i, B_i})^{- 1} U_i$ is the coefficient
  vector of the idempotent ${\bdu}_{\xi_i}$ in the basis $B_i$ of
  $\mathcal{A}_{\xi_i}$.
\end{lemma}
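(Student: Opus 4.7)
The plan is to exploit the orthogonality of the blocks $\mathcal{A}_{\xi_j}$ under the inner product $\langle\cdot,\cdot\rangle_\sigma$, together with the fact that $\mathbf{u}_{\xi_i}$ acts as the identity on $\mathcal{A}_{\xi_i}$. Let $\mathbf{v}=(v_1,\ldots,v_{\mu_i})$ be the (unknown) coefficient vector of $\mathbf{u}_{\xi_i}$ in the basis $B_i$, so that $\mathbf{u}_{\xi_i}\equiv\sum_{k=1}^{\mu_i} v_k\, b_{i,k}$ in $\mathcal{A}_\sigma$. We want to show that $H_\sigma^{B_i,B_i}\,\mathbf{v}=U_i$, and then invoke invertibility of $H_\sigma^{B_i,B_i}$ to conclude.

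First I would compute $\langle \mathbf{u}_{\xi_i},b_{i,l}\rangle_\sigma$ for each $l$. Since $b_{i,l}\in\mathcal{A}_{\xi_i}$ and $\mathbf{u}_{\xi_j}\mathcal{A}_{\xi_i}\equiv 0$ for $j\neq i$, the relation $1\equiv\mathbf{u}_{\xi_1}+\cdots+\mathbf{u}_{\xi_{r'}}$ gives $\mathbf{u}_{\xi_i}\,b_{i,l}\equiv b_{i,l}$ in $\mathcal{A}_\sigma$. Therefore
\[
\langle \mathbf{u}_{\xi_i},b_{i,l}\rangle_\sigma
=\langle\sigma\mid \mathbf{u}_{\xi_i}b_{i,l}\rangle
=\langle\sigma\mid b_{i,l}\rangle=(U_i)_l.
\]
On the other hand, expanding $\mathbf{u}_{\xi_i}$ in the basis $B_i$ yields
\[
\langle \mathbf{u}_{\xi_i},b_{i,l}\rangle_\sigma
=\sum_{k=1}^{\mu_i} v_k\,\langle\sigma\mid b_{i,k}b_{i,l}\rangle
=\bigl(H_\sigma^{B_i,B_i}\,\mathbf{v}\bigr)_l,
\]
by definition of the matrix of the truncated Hankel operator in the basis $B_i$. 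Hence $H_\sigma^{B_i,B_i}\,\mathbf{v}=U_i$.

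It remains to justify that $H_\sigma^{B_i,B_i}$ is invertible, so that $\mathbf{v}=(H_\sigma^{B_i,B_i})^{-1}U_i$. This is where I expect the main subtlety: one must observe that the decomposition $\mathcal{A}_\sigma=\bigoplus_{j} \mathcal{A}_{\xi_j}$ is orthogonal for $\langle\cdot,\cdot\rangle_\sigma$, because for $b\in\mathcal{A}_{\xi_i}$ and $b'\in\mathcal{A}_{\xi_j}$ with $j\neq i$ we have $bb'\equiv 0$ in $\mathcal{A}_\sigma$, so $\langle b,b'\rangle_\sigma=0$. By Proposition \ref{prop:iso}, $\langle\cdot,\cdot\rangle_\sigma$ is non-degenerate on $\mathcal{A}_\sigma$, and by orthogonality of the blocks this forces its restriction to each $\mathcal{A}_{\xi_i}$ to be non-degenerate as well. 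Consequently the Gram matrix $H_\sigma^{B_i,B_i}$ of this restriction in the basis $B_i$ is invertible, which completes the argument.
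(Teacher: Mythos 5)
Your proof is correct, and the core computation is the same as the paper's: both hinge on $\tmmathbf{u}_{\xi_i}\,b_{i,l}\equiv b_{i,l}$ in $\mathcal{A}_\sigma$ (from the idempotent relations and $\mathcal{A}_{\xi_i}\cdot\mathcal{A}_{\xi_j}\equiv 0$), which yields $\langle\sigma\mid \tmmathbf{u}_{\xi_i}b_{i,l}\rangle=\langle\sigma\mid b_{i,l}\rangle$, and hence $H_\sigma^{B_i,B_i}\mathbf{v}=U_i$ for the unknown coefficient vector $\mathbf{v}$. The paper phrases this in terms of the isomorphism $\mathcal{H}_\sigma:\mathcal{A}_\sigma\to\mathcal{A}_\sigma^\ast$ restricted to the block, identifying $U_i$ as the coefficient vector of $\tmmathbf{u}_{\xi_i}\star\sigma$ in the dual basis of $B_i$; you phrase it directly as a linear system. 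These are the same argument.

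Where your write-up genuinely improves on the paper is the invertibility of $H_\sigma^{B_i,B_i}$. The paper justifies it ``by Lemma \ref{lem:basis}, as $B_i$ is a basis of $\mathcal{A}_{\xi_i}$,'' but Lemma \ref{lem:basis} only gives the implication in one direction (invertibility implies linear independence), and the paper itself remarks immediately after that lemma that the converse can fail. Your argument --- that the block decomposition $\mathcal{A}_\sigma=\bigoplus_j\mathcal{A}_{\xi_j}$ is $\langle\cdot,\cdot\rangle_\sigma$-orthogonal because $\mathcal{A}_{\xi_i}\cdot\mathcal{A}_{\xi_j}\equiv 0$ for $i\neq j$, and that non-degeneracy of $\langle\cdot,\cdot\rangle_\sigma$ on $\mathcal{A}_\sigma$ (Proposition \ref{prop:iso}) therefore descends to non-degeneracy on each block --- is the correct and complete reason, and it is the one the paper's citation leaves implicit.
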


\begin{proof}
  As the idempotent $\tmmathbf{u}_{\xi_i}$ satisfies the relation ${\bdu}_{\xi_i}^2 \equiv {\bdu}_{\xi_i}$ in $\mathcal{A}_{\sigma}$ and
  $\mathcal{A}_{\xi_i} =\tmmathbf{u}_{\xi_i} \mathcal{A}_{\sigma}$, we have
  \[  \langle  {\bdu}_{\xi_i} \star \sigma
     |  b_{i, k}  \rangle =  \mathbf{\langle 
     \sigma} | {\bdu}_{\xi_i}   b_{i, k}  \rangle =
      \mathbf{\langle  \sigma} |  b_{i, k} 
     \rangle, \]
  and $\mathbf{U_i} = ( \mathbf{\langle  \sigma} |
   b_{i, k}  \rangle)_{k = 1, \ldots, {\mu}_i}$ is the
  coefficient vector of ${\bdu}_{\xi_i}{\star}{\sigma}$ in the dual
  basis of $B_i$ in $\mathcal{A}_{\xi_i}^{\ast}$. By Lemma \ref{lem:basis}, as
  $B_i$ is a basis of $\mathcal{A}_{\xi_i}$, $H_{\sigma}^{B_i, B_i}$ is
  invertible and $(H_{\sigma}^{B_i, B_i})^{- 1} U_i$ is the coefficient vector
  of ${\bdu}_{\xi_i}$ in the basis $B_i$ of $\mathcal{A}_{\xi_i}$.
\end{proof}

Using the idempotent $\bdu_{\xi_{i}}$, we have the following formula for the weights $\omega_i (\tmmathbf{y})$ in
the decomposition of $\sigma$:

\begin{proposition}
  The polynomial coefficient of ${\bde}_{\xi_i} (\tmmathbf{y})$ in the
  decomposition of $\sigma$ is
  \begin{equation}
    \omega_i (\tmmathbf{y}) = \sum_{\alpha \in \mathbbm{N}^n} \mathbf{\langle
    {\bdu}_{\xi_i} \star \sigma } | (\tmmathbf{x}-
    \xi_i)^{\alpha}  \rangle \frac{\tmmathbf{y}^{\alpha}}{\alpha !}.
    \label{eq:decomp}
  \end{equation}
\end{proposition}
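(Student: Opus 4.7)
The plan is to reduce the formula to a Taylor expansion of $\omega_i(\tmmathbf{y})$ at $0$, using two ingredients already available: the identification $\tmmathbf{u}_{\xi_i}\star \sigma = \omega_i(\tmmathbf{y})\,\tmmathbf{e}_{\xi_i}(\tmmathbf{y})$ (established in the proof of Theorem \ref{thm:gorenstein} via the decomposition (\ref{eq:dualdec}) and Proposition \ref{prop:primaryortho}), and Lemma \ref{lem:diffpolexp}, which says that acting by a shifted polynomial turns into differentiation of the weight.

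First, I would rewrite the coefficient appearing on the right-hand side by moving $(\tmmathbf{x}-\xi_i)^{\alpha}$ across the pairing using the definition of the $\star$-product:
\[
\langle \tmmathbf{u}_{\xi_i}\star\sigma \mid (\tmmathbf{x}-\xi_i)^{\alpha}\rangle
= \langle (\tmmathbf{x}-\xi_i)^{\alpha}\star(\tmmathbf{u}_{\xi_i}\star\sigma) \mid 1\rangle.
\]
Then, substituting $\tmmathbf{u}_{\xi_i}\star\sigma = \omega_i(\tmmathbf{y})\,\tmmathbf{e}_{\xi_i}(\tmmathbf{y})$ and applying Lemma \ref{lem:diffpolexp} to the polynomial $p(\tmmathbf{x})=(\tmmathbf{x}-\xi_i)^{\alpha}$ yields
\[
(\tmmathbf{x}-\xi_i)^{\alpha}\star\bigl(\omega_i(\tmmathbf{y})\,\tmmathbf{e}_{\xi_i}(\tmmathbf{y})\bigr)
= \tmmathbf{\partial}^{\alpha}(\omega_i)(\tmmathbf{y})\,\tmmathbf{e}_{\xi_i}(\tmmathbf{y}),
\]
because $((\xi_i+\tmmathbf{\partial})-\xi_i)^{\alpha} = \tmmathbf{\partial}^{\alpha}$.

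Next, pairing with $1$ amounts to extracting the constant term of the series $\tmmathbf{\partial}^{\alpha}(\omega_i)(\tmmathbf{y})\,\tmmathbf{e}_{\xi_i}(\tmmathbf{y})$ in the basis $(\tmmathbf{y}^{\beta}/\beta!)$, which is the value of that series at $\tmmathbf{y}=0$. Since $\tmmathbf{e}_{\xi_i}(0)=1$, this value is simply $\tmmathbf{\partial}^{\alpha}(\omega_i)(0)$. Thus
\[
\langle \tmmathbf{u}_{\xi_i}\star\sigma \mid (\tmmathbf{x}-\xi_i)^{\alpha}\rangle
= \tmmathbf{\partial}^{\alpha}(\omega_i)(0).
\]

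Finally, since $\omega_i(\tmmathbf{y})\in \mathbbm{K}[\tmmathbf{y}]$ is a polynomial, its Taylor expansion at $0$ gives
\[
\omega_i(\tmmathbf{y}) = \sum_{\alpha\in\mathbbm{N}^n} \tmmathbf{\partial}^{\alpha}(\omega_i)(0)\,\frac{\tmmathbf{y}^{\alpha}}{\alpha!},
\]
and substituting the previous identity yields exactly (\ref{eq:decomp}). There is no real obstacle here; the only minor subtlety is keeping track of the pairing convention $\langle \tmmathbf{y}^{\alpha}/\alpha! \mid \tmmathbf{x}^{\beta}\rangle = \delta_{\alpha,\beta}$ so that evaluating a series at $0$ coincides with pairing against the polynomial $1$.
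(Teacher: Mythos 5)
Your proof is correct and starts from the same key identity as the paper, namely $\tmmathbf{u}_{\xi_i}\star\sigma = \omega_i(\tmmathbf{y})\,\tmmathbf{e}_{\xi_i}(\tmmathbf{y})$, obtained from (\ref{eq:dualdec}), Proposition \ref{prop:primaryortho}, and Theorem \ref{thm:gorenstein}. Where you diverge is in how the coefficients are extracted: the paper invokes the shifted orthogonality relation $\langle \tmmathbf{y}^{\beta}\tmmathbf{e}_{\xi_i}(\tmmathbf{y}) \mid (\tmmathbf{x}-\xi_i)^{\alpha}\rangle = \alpha!\,\delta_{\alpha,\beta}$ in one stroke, whereas you route through Lemma \ref{lem:diffpolexp} to turn $(\tmmathbf{x}-\xi_i)^{\alpha}\star{}$ into $\tmmathbf{\partial}^{\alpha}$ acting on $\omega_i$, then evaluate at $\tmmathbf{y}=0$ and appeal to Taylor's formula. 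These are two phrasings of the same computation (indeed $\alpha!\,\omega_{i,\alpha}=\tmmathbf{\partial}^{\alpha}(\omega_i)(0)$), so there is no substantive gap; your version is a step longer but makes the differentiation mechanism explicit, while the paper's single duality identity is more compact. One small remark: you should note, as the paper does, that the sum in (\ref{eq:decomp}) is actually finite because $\omega_i\in\mathbbm{K}[\tmmathbf{y}]$ is a polynomial.
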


\begin{proof}
  By Theorem \ref{thm:gorenstein} and relation (\ref{eq:dualdec}), we have
  \[ {\bdu}_{\xi_i} \star \sigma = \omega_i (\tmmathbf{y})  {\bde}_{\xi_i} (\tmmathbf{y}). \]
  As $\mathbf{\langle \tmmathbf{y}^{\beta}  {\bde}_{\xi_i}
  (\tmmathbf{y}) } | (\tmmathbf{x}- \xi_i)^{\alpha}  \rangle =
  \left\{ \begin{array}{ll}
    \alpha ! & \tmop{if} \alpha = \beta\\
    0 & \tmop{otherwise}
  \end{array} \right.$, we deduce the decomposition (\ref{eq:decomp}), which
  is a finite sum since $\omega_i (\tmmathbf{y}) \in \mathbbm{K}
  [\tmmathbf{y}]$.
\end{proof}

This leads to the following decomposition algorithm in the case of multiple roots.

\begin{algorithm}[H]\caption{Decomposition of polynomial-exponential series with polynomial weights}\label{algo:genprony2}
\Indm{\tmstrong{Input:}} the coefficients $\sigma_{\alpha}$ of a series $\sigma \in \mathbbm{K} [[\tmmathbf{y}]]$ for
$\alpha \in \tmmathbf{a} \subset \mathbbm{N}^n$ and  bases
$B = \{ b_1, \ldots, b_r \}$, $B' = \{ b_1', \ldots, b_r' \}$
of $\mathcal{A}_{\sigma}$ such that $\langle B'\cdot B^{+}\rangle\subset \langle \tmmathbf{x}^{\tmmathbf{a}}\rangle$.
\begin{itemizeminus}
    \item Construct the matrices $H_0 = (\mathbf{\langle \sigma } |
    b_i' b_j \rangle)_{1 \leqslant i, j \leqslant r}$ (resp. $H_k =
    (\mathbf{\langle \sigma } | x_k b_i' b_j \rangle)_{1 \leqslant i,
    j \leqslant r}$) of $\mathcal{H}_{\sigma}$ (resp. $\mathcal{H}_{x_k \star
    \sigma}$) in the bases $B, B'$ of $\mathcal{A}_{\sigma}$\;
    
    \item Compute common eigenvectors $\tmmathbf{v}_1, \ldots,
    \tmmathbf{v}_r$ of all the pencils $(H_{k}, H_0)$, $k = 1, \ldots, n$
    and $\xi_i = (\xi_{i, 1}, \ldots, \xi_{i, n})$ such that $(H_{k} -
    \xi_{i, k} H_0) \tmmathbf{v}_i = 0$;
    \item Take a separating linear form ${\bdl} (\tmmathbf{x}) = l_1
    x_1 + \cdots + l_n x_n$\;
    
    \item Compute bases $B_i, i = 1, \ldots, r'$ of the generalized
    eigenspaces of $(H_{{\bdl}}, H_0)$\;
    
    \item For each basis $B_i = \{ b_{i, 1}, \ldots, b_{i, {\mu}_i} \}$,
    compute $\mathbf{U}_i = ( \langle  \sigma |  b_{i, k}  \rangle)_{k = 1, \ldots, {\mu}_i}$ and
    ${\bdu_i} = (H_{\sigma}^{B_i, B_i})^{- 1} \mathbf{U}_i$\;
    
    \item Compute $\omega_i (\tmmathbf{y}) = \sum_{\alpha \in \mathbbm{N}^n}
    \mathbf{\langle {\bdu}_i \star \sigma } | (\tmmathbf{x}-
    \xi_i)^{\alpha}  \rangle \frac{\tmmathbf{y}^{\alpha}}{\alpha !}$;
\end{itemizeminus}
{\tmstrong{Output:}} the decomposition $\sigma (\tmmathbf{y}) = \sum_{i = 1}^r
\omega_i (\tmmathbf{y}) \tmmathbf{e}_{\xi_i} (\tmmathbf{y})$.
\end{algorithm}

Here also, we need to know a basis $B$ of $\mathcal{A}_{\sigma}$ such that
$\sigma$ is known on $B'\cdummy B^+$. The second step is the computation of
common eigenvectors of pencil of matrices. Efficient methods as in
{\cite{graillat_new_2009}} can be used to computed them. The other steps
generalize the computation of the simple root case. The solution of a Hankel
system is required to compute the coefficient vector of the
idempotent ${\bdu}_i$ in the basis $B_i$ of $\mathcal{A}_{\xi_i}$.

The relations between Vandermonde matrices and Hankel matrices (Proposition
\ref{dec:vdm:simple}) can be generalized to the case of multiple roots. Let
$\sigma = \sum_{k = 1}^{r'} \hspace{0.25em} \omega_k (\tmmathbf{y})
\tmmathbf{e}_{{\xi_k}} (\tmmathbf{y})$ with $\mathbf{\xi}_1, \ldots,
\mathbf{\xi}_{r'} \in \mathbb{K}^n$ pairwise distinct, $\omega_1
(\tmmathbf{y}), \ldots, \omega_{r'} (\tmmathbf{y}) \in \mathbbm{K}
[\tmmathbf{y}] \setminus \{ 0 \}$. To deduce a decomposition of
$H_{\sigma}^{B, B'}$ similar to the decomposition of Proposition
\ref{dec:vdm:simple} for multiple roots, we introduce the Wronskian of a set
$B = \{ b_1, \ldots, b_l \} \subset \mathbbm{K} [\tmmathbf{x}]$ and a set of
exponents $\Gamma = \{ \gamma_1, \ldots, \gamma_s \} \subset \mathbbm{N}^n$ at
a point $\xi \in \mathbbm{K}^n$:
\[ W_{B, \Gamma, \xi} = \left[ \frac{1}{\gamma_j !} \tmmathbf{\partial}^{\gamma_j} (b_i)
   (\xi) \right]_{1 \leqslant i \leqslant r, 1 \leqslant j \leqslant s}. \]
For a collection $\tmmathbf{\Gamma}= \{ \Gamma_1, \ldots, \Gamma_{r'} \}$ with
$\Gamma_1, \ldots, \Gamma_{r'} \subset \mathbbm{N}^n$ and points
$\tmmathbf{\xi}= \{ \xi_1, \ldots, \xi_{r'} \} \subset \mathbbm{K}^n$ let
\[ W_{B, \tmmathbf{\Gamma}, \tmmathbf{\xi}} = [W_{B, \Gamma_1, \xi_1}
  , \ldots, W_{B, \Gamma_{r'}, \xi_{r'}}] \]
be the matrix obtained by concatenation of the columns of $W_{B, \Gamma_k,
\xi_k}$, $k = 1, \ldots, r'$. \

We consider the monomial decomposition $\omega_k (\tmmathbf{y}) =
\sum_{\alpha \in A_k} \omega_{k, \alpha}  (\tmmathbf{x}- \xi_k)^{\alpha}$ with
$\omega_{k, \alpha} \neq 0$. We denote by $\Gamma_k$ the set of all the
exponents $\alpha \in A_k$ in this decomposition and all their divisors $\beta
= (\beta_1, \ldots, \beta_n)$ with $\beta \ll \alpha$. Let us denote
by $\gamma_1, \ldots, \gamma_{s_k}$ the elements of $\Gamma_k$.

Let $\Delta_{\omega_k}^{\Gamma_k} = [(\gamma_i + \gamma_j) ! \omega_{k,
\gamma_i + \gamma_j}]_{1 \leqslant i, j \leqslant s_k}$ with the convention
that $\omega_{k, \gamma_i + \gamma_j} = 0$ if $\gamma_i + \gamma_j \not\in
A_k$ is not a monomial exponent of $\omega_k (\tmmathbf{y})$. Let
$\Delta_{\tmmathbf{\omega}}^{\tmmathbf{\Gamma}}$ be the block diagonal matrix,
which diagonal blocks are $\Delta_{\omega_k}^{\Gamma_k}$, $k = 1, \ldots, r'$.

The following decomposition generalizes the Carath{\'e}odory-Fej{\'e}r decomposition in
the case of multiple roots (it is also implied by rank deficiency conditions, see Section \ref{sec:flat}):
\begin{proposition}
  \label{dec:vdm:multiple}Suppose that $\sigma = \sum_{k = 1}^{r'}
  \hspace{0.25em} \omega_k (\tmmathbf{y}) \tmmathbf{e}_{{\xi_k}}
  (\tmmathbf{y})$ with $\mathbf{\xi}_1, \ldots, \mathbf{\xi}_{r'} \in
  \mathbb{K}^n$ pairwise distinct, $\omega_1 (\tmmathbf{y}), \ldots,
  \omega_{r'} (\tmmathbf{y}) \in \mathbbm{K} [\tmmathbf{y}] \setminus \{ 0
  \}$. For $g \in \mathbbm{K} [\tmmathbf{x}]$, $g \circledast
  \tmmathbf{\omega}= [g (\xi_1 + \tmmathbf{\partial}) (\omega_1), \ldots,
  g (\xi_{r'} + \tmmathbf{\partial}) (\omega_{r'})]$. For any set $B, B'
  \subset \mathbbm{K} [\tmmathbf{x}]$ of size $l$, we have
  \[ \begin{array}{lll}
       H^{B, B'}_{\sigma} & = & W_{B', \tmmathbf{\Gamma}, {\tmmathbf\xi}}
       \Delta_{\tmmathbf{\omega}}^{\tmmathbf{\Gamma}} W_{B, \tmmathbf{\Gamma},
       {\tmmathbf\xi}}^t\\
       H_{g \star \sigma}^{B, B'} & = & W_{B', \tmmathbf{\Gamma}, {\tmmathbf\xi}} \Delta_{g \circledast \tmmathbf{\omega}}^{\tmmathbf{\Gamma}}
       W_{B, \tmmathbf{\Gamma}, \tmmathbf{\xi}}^t
     \end{array} \]
  If moreover $B$ is a basis of $\mathcal{A}_{\sigma}$, then $W_{B',
  \tmmathbf{\Gamma}, \tmmathbf{\xi}}$ and
  $\Delta_{\tmmathbf{\omega}}^{\tmmathbf{\Gamma}} $ are invertible and the
  matrix of multiplication by $g$ in the basis $B$ of $\mathcal{A}_{\sigma}$
  is
  \[ \begin{array}{lll}
       M_g^B & = & W_{B, \tmmathbf{\Gamma}, {\tmmathbf\xi}}^{-t}
       (\Delta_{\tmmathbf{\omega}}^{\tmmathbf{\Gamma}})^{- 1} \Delta_{g
       \circledast \tmmathbf{\omega}} W_{B, \tmmathbf{\Gamma}, {\tmmathbf\xi}}^{- t}. \hspace{1em} \hspace{1em} \hspace{1em} \hspace{1em}
     \end{array} \]
\end{proposition}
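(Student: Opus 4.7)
My plan is to verify the Wronskian factorization of $H_\sigma^{B,B'}$ by a direct entry-wise computation, and then reduce the remaining claims to it via Lemma \ref{lem:diffpolexp} and the Hankel-multiplication identity \eqref{eq:Hankel}. First, for fixed $\xi_k$, I would Taylor-expand each $b_j(\mathbf{x})$ and $b_i'(\mathbf{x})$ at $\xi_k$,
\[ b(\mathbf{x}) = \sum_{\gamma \in \mathbbm{N}^n} \tfrac{1}{\gamma!}\, \tmmathbf{\partial}^{\gamma}(b)(\xi_k)\, (\mathbf{x}-\xi_k)^\gamma, \]
and multiply the two expansions so that $b_i'(\mathbf{x})b_j(\mathbf{x})$ is written as a sum of terms $(\mathbf{x}-\xi_k)^{\gamma_1+\gamma_2}$ with coefficients $\frac{1}{\gamma_1!\gamma_2!}\tmmathbf{\partial}^{\gamma_1}(b_i')(\xi_k)\tmmathbf{\partial}^{\gamma_2}(b_j)(\xi_k)$. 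The elementary pairing needed is
\[ \langle \omega_k(\mathbf{y})\, \tmmathbf{e}_{\xi_k}(\mathbf{y}) \mid (\mathbf{x}-\xi_k)^\beta \rangle = \beta!\, \omega_{k,\beta}, \]
which I obtain by iterating Lemma \ref{lem:diffpolexp} to get $(\mathbf{x}-\xi_k)^\beta \star (\omega_k(\mathbf{y})\tmmathbf{e}_{\xi_k}(\mathbf{y})) = \tmmathbf{\partial}^\beta(\omega_k)(\mathbf{y})\, \tmmathbf{e}_{\xi_k}(\mathbf{y})$ and then evaluating against $1$ at $\mathbf{y}=0$.

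Substituting $\beta = \gamma_1+\gamma_2$ and observing that the pairing vanishes unless $\gamma_1+\gamma_2 \in A_k$ (which in particular forces both $\gamma_1,\gamma_2$ to be divisors of some element of $A_k$, i.e. to lie in $\Gamma_k$), I arrive at the identity
\[ \langle \omega_k\tmmathbf{e}_{\xi_k} \mid b_i' b_j\rangle = \sum_{\gamma_1,\gamma_2 \in \Gamma_k} \tfrac{1}{\gamma_1!}\tmmathbf{\partial}^{\gamma_1}(b_i')(\xi_k) \cdot (\gamma_1+\gamma_2)!\,\omega_{k,\gamma_1+\gamma_2} \cdot \tfrac{1}{\gamma_2!}\tmmathbf{\partial}^{\gamma_2}(b_j)(\xi_k), \]
which is exactly the $(i,j)$ entry of $W_{B',\Gamma_k,\xi_k}\, \Delta_{\omega_k}^{\Gamma_k}\, W_{B,\Gamma_k,\xi_k}^{t}$. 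Summing over $k=1,\ldots,r'$ and concatenating the Wronskian blocks produces the claimed factorization $H_\sigma^{B,B'} = W_{B',\tmmathbf{\Gamma},\tmmathbf{\xi}}\, \Delta_{\tmmathbf{\omega}}^{\tmmathbf{\Gamma}}\, W_{B,\tmmathbf{\Gamma},\tmmathbf{\xi}}^{t}$.

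For the $g\star\sigma$ version, Lemma \ref{lem:diffpolexp} yields $g\star(\omega_k\tmmathbf{e}_{\xi_k}) = (g\circledast\tmmathbf{\omega})_k\, \tmmathbf{e}_{\xi_k}$, where the new weight $(g\circledast\tmmathbf{\omega})_k = g(\xi_k+\tmmathbf{\partial})(\omega_k)$ is obtained from $\omega_k$ by applying a constant-coefficient differential operator. Hence its support is still contained in $\Gamma_k$, and reapplying the entry-wise computation above with $\omega_k$ replaced by $(g\circledast\tmmathbf{\omega})_k$ gives the second factorization. The last statement then follows from Lemma \ref{eq:HankelMult}, which, read in the basis $B$ of $\mathcal{A}_\sigma$ and its dual in $\mathcal{A}_\sigma^\ast$, yields $H_{g\star\sigma}^{B,B} = H_\sigma^{B,B}\, M_g^B$. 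Since $B$ is a basis of $\mathcal{A}_\sigma$ and $\mathcal{H}_\sigma$ is the isomorphism of \eqref{eq:seq}, $H_\sigma^{B,B}$ is invertible of rank $r$; its factorization forces $W_{B,\tmmathbf{\Gamma},\tmmathbf{\xi}}$ and $\Delta_{\tmmathbf{\omega}}^{\tmmathbf{\Gamma}}$ to have full rank as well (and in fact to be square and invertible, once $\tmmathbf{\Gamma}$ is chosen minimally so that $|\Gamma_k| = \mu(\omega_k)$), so $M_g^B = (H_\sigma^{B,B})^{-1}\, H_{g\star\sigma}^{B,B}$ simplifies upon cancellation of the common Wronskian factors to the stated expression.

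\textbf{Main obstacle.} The nontrivial book-keeping lies in step two: tracking the factorials produced by Leibniz and the dual pairing so that the middle matrix comes out with the precise entries $(\gamma_i+\gamma_j)!\,\omega_{k,\gamma_i+\gamma_j}$, and verifying that the index set $\Gamma_k$ of monomial divisors of $A_k$ is simultaneously large enough to support every nonzero contribution (both for $\omega_k$ and for every $g\circledast\omega_k$ with $g\in\mathbbm{K}[\tmmathbf{x}]$) and small enough so that the Wronskian and diagonal-block matrices end up with the ranks needed for the final inversion step.
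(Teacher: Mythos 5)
Your argument is correct and is in substance the paper's own proof. The paper starts from the infinitesimal-operator formula $\langle\sigma\mid b_i'b_j\rangle=\sum_k\omega_k(\tmmathbf{\partial})(b_i'b_j)(\xi_k)$ and splits $\tmmathbf{\partial}^\alpha(b_i'b_j)$ by the Leibniz rule; you instead Taylor-expand $b_i'$ and $b_j$ separately at $\xi_k$, pair against $\omega_k\tmmathbf{e}_{\xi_k}$ via $\langle\omega_k\tmmathbf{e}_{\xi_k}\mid(\tmmathbf{x}-\xi_k)^\beta\rangle=\beta!\,\omega_{k,\beta}$, and regroup. These are dual presentations of the same factorial book-keeping and lead to exactly the same middle matrix $\Delta_{\omega_k}^{\Gamma_k}$ with entries $(\gamma_i+\gamma_j)!\,\omega_{k,\gamma_i+\gamma_j}$. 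For the $g\star\sigma$ version and the final formula for $M_g^B$ you invoke the same two ingredients the paper does: Lemma \ref{lem:diffpolexp} to see that $(g\circledast\tmmathbf{\omega})_k=g(\xi_k+\tmmathbf{\partial})(\omega_k)$ still has support in $\Gamma_k$ (because $\Gamma_k$ is divisor-closed), and Relation \eqref{eq:Hankel} to write $M_g^B=(H_\sigma^{B,B})^{-1}H_{g\star\sigma}^{B,B}$ and then cancel the Wronskian factors. Your parenthetical observation — that invertibility of $W_{B,\tmmathbf{\Gamma},\tmmathbf{\xi}}$ and $\Delta_{\tmmathbf{\omega}}^{\tmmathbf{\Gamma}}$ as square matrices really needs $|\Gamma_k|=\mu(\omega_k)$, which the paper's definition of $\Gamma_k$ as the full set of divisors of $A_k$ does not always guarantee (e.g. $\omega_k=y_1^2+y_2$ has $|\Gamma_k|=4$ but $\mu(\omega_k)=3$) — is a genuine and correct remark about an imprecision in the statement; the paper's proof glosses over this by asserting invertibility directly from $H_\sigma^{B,B}$ being invertible, which only forces $W$ and $\Delta$ to have rank $r$, not to be square. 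Good awareness of where the argument is actually delicate.
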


\begin{proof}
  By the relation (\ref{eq:der}), we have
  \[ H^{B, B'}_{\sigma} = \left[ \sum_{k = 1}^{r'} \omega_k (\tmmathbf{\partial}) (b_i' b_j) (\xi_k) \right]_{1 \leqslant i, j
     \leqslant l}. \]
  By expansion, we obtain
  \[ \omega_k (\tmmathbf{\partial}) (b_i' b_j) (\xi_k) =
     \sum_{\alpha \in A_k} \omega_{k, \alpha} \partial^{\alpha}_{\tmmathbf{x}}
     (b_i' b_j) (\xi_k). \]
  By Leibniz rule, we have
  \[ \partial^{\alpha}_{\tmmathbf{x}} (b_i' b_j) = \sum_{\beta \ll \alpha}
     \frac{\alpha !}{\beta ! (\alpha - \beta) !}
     \partial^{\beta}_{\tmmathbf{x}} (b_i') \partial^{\alpha -
     \beta}_{\tmmathbf{x}} (b_j) = \alpha ! \sum_{\beta \ll \alpha}
     \frac{\partial^{\beta}_{\tmmathbf{x}} (b_i')}{\beta !} 
     \frac{\partial^{\alpha - \beta}_{\tmmathbf{x}} (b_j)}{(\alpha - \beta) !}.
   \]
  We deduce that
  \begin{eqnarray*}
    \omega_k (\tmmathbf{\partial}) (b_i' b_j) (\xi_k) & = &
    \sum_{\alpha \in A_k} \omega_{k, \alpha} \tmmathbf{\partial}^{\alpha}
    (b_i' b_j) (\xi_k)\\
    & = & \sum_{\alpha \in A_k} \alpha ! \omega_{k, \alpha} \sum_{\beta \ll
    \alpha} \frac{\tmmathbf{\partial}^{\beta} (b_i')}{\beta !} (\xi_k) 
    \frac{\tmmathbf{\partial}^{\alpha - \beta} (b_j)}{(\alpha - \beta) !}
    (\xi_k)\\
    & = & W_{B', \Gamma_k, \xi_k} \Delta_{\omega_k}^{\Gamma_k}  \hspace{1em}
    W_{B, \Gamma_k, \xi_k}^t.
  \end{eqnarray*}
  By concatenation of the columns of $W_{B, \Gamma_k, \xi_k}$ and $W_{B',
  \Gamma_k, \xi_k}$, using the block diagonal matrix
  $\Delta_{\tmmathbf{\omega}}^{\tmmathbf{\Gamma}} $, we obtain the
  decomposition of $H^{B, B'}_{\sigma} = W_{B', \Gamma_k, \xi_k}
  \Delta_{\omega_k}^{\Gamma_k}  \hspace{1em} W_{B, \Gamma_k, \xi_k}^t$.
  
  By Lemma \ref{lem:diffpolexp}, we have
  \[ g \star \sigma = \sum_{k = 1}^{r'} g (\xi_k + \tmmathbf{\partial})  (\omega_k) \tmmathbf{e}_{\xi_k} = \sum_{k = 1}^{r'} (g \circledast
     \tmmathbf{\omega})_k \tmmathbf{e}_{\xi_k}. \]
  Thus, a similar computation yields the decomposition: $H^{B,  B'}_{g \star \sigma} = W_{B', \tmmathbf{\Gamma}, {\tmmathbf\xi}} \Delta_{g  \circledast \tmmathbf{\omega}}^{\tmmathbf{\Gamma}} W_{B, \tmmathbf{\Gamma}, {\tmmathbf\xi}}^t$.
  
  If $B$ is a basis of $\mathcal{A}_{\sigma}$, then by Proposition
  \ref{prop:iso}, $H^{B, B}_{\sigma}$ is invertible, which implies that
  $W_{B', \tmmathbf{\Gamma}, {\tmmathbf\xi}}$ and
  $\Delta_{\tmmathbf{\omega}}^{\tmmathbf{\Gamma}} $ are invertible. By
  Relation (\ref{eq:Hankel}), we have
  \[ M_g^B = (H^{B, B}_{\sigma})^{- 1} H^{B, B}_{g \star \sigma} = W_{B,
     \tmmathbf{\Gamma}, {\tmmathbf\xi}}^{- t}
     (\Delta_{\tmmathbf{\omega}}^{\tmmathbf{\Gamma}})^{- 1} \Delta_{g
     \circledast \tmmathbf{\omega}} W_{B, \tmmathbf{\Gamma}, {\tmmathbf\xi}}^{- t}. \]
\end{proof}

\section{Reconstruction from truncated Hankel operators}\label{sec:5}

Given the first terms $\sigma_{\alpha}, \alpha \in \tmmathbf{a}$ of a series
$\sigma = \sum_{\alpha \in \mathbbm{N}^n} \sigma_{\alpha} 
\frac{\tmmathbf{y}^{\alpha}}{\alpha !} \in \mathbbm{K} [[\tmmathbf{y}]]$,
where $\tmmathbf{a} \subset \mathbbm{N}^n$ is a finite set of exponents, the
reconstruction problem consists in finding points $\mathbf{\xi}_1, \ldots,
\mathbf{\xi}_r \in \mathbbm{K}^n$ and polynomial $\omega_i (\tmmathbf{y}) \in
\mathbbm{K} [\tmmathbf{y}]$ such that the coefficient of
${\frac{\tmmathbf{y}^{\alpha}}{\alpha!}}$, in the series
\begin{equation}
  \sum_{i = 1}^r \hspace{0.25em} \omega_i (\tmmathbf{y})
  \tmmathbf{e}_{{\xi_i}} (\tmmathbf{y}) \label{eq:dec}
\end{equation}
coincides with $\sigma_{\alpha}$, for all $\alpha \in \tmmathbf{a}$.

It is natural to ask when this extension problem has a solution. This problem
has been studied for the reconstruction of measures from moments. The first
answer is probably due to {\cite{fischer_uber_1911}} \ in the case of positive
sequences indexed by $\mathbbm{N}$ (see
{\cite{akhiezer_questions_1962}}[Theorem 5]). The extension to several
variables involves the notion of flat extension, which corresponds to rank
conditions on submatrices of the truncated Hankel matrix. See
{\cite{curto_solution_1996}} for semi-definite positive Hankel matrices and
its generalisation in {\cite{laurent_generalized_2009}}.

This result is closely connected to the factorisation of positive semidefinite
(PSD) Toeplitz matrices as the product of Vandermonde matrices with a diagonal
matrix {\cite{caratheodory_uber_1911}}, which has been recently
generalized to positive semidefinite multi-level block Toeplitz matrices in
{\cite{yang_vandermonde_2016}} and {\cite{andersson_kronecker_2015}}. 
Theorem \ref{thm:flatext} gives a condition under which a truncated series can
be extended in low rank. In view of Proposition \ref{dec:vdm:simple} and
\ref{dec:vdm:multiple}, it also provides a rank condition for a generalized
Carath{\'e}dory-Fej{\'e}r decomposition of general multivariate Hankel
matrices. Its specialization to multivariate positive semidefinite Hankel
matrices is given in Theorem \ref{thm:flatextsdp}.

We are going to use this flat extension property to determine when $\sigma$
has an extension of the form (\ref{eq:dec}) and to compute a basis $B$ of
$\mathcal{A}_{\sigma}$. The decomposition of $\sigma$ will be deduced from
eigenvector computation of submatrices of $H_{\sigma}$, using the algorithms
described in Section \ref{sec:4.2}.

In this section, we also analyze the problem of finding a (monomial) basis $B$ of
$\mathcal{A}_{\sigma}$ from the coefficients $\sigma_{\alpha}$, $\alpha \in  \tmmathbf{a}$.
We first characterize the series $\sigma$, which admit a decomposition of
the form (\ref{eq:dec}) such that $B$ is a basis of $\mathcal{A}_{\sigma}$.

\subsection{Flat extension}\label{sec:flat}

The flat extension property is defined as follows for general matrices:

\begin{definition}
  For any matrix $H$ which is a submatrix of another matrix $H'$, we say that
  $H'$ is a {\tmem{flat extension}} of $H$ if $\tmop{rank} H = \tmop{rank}
  H'$.
\end{definition}

Before applying it to Hankel matrices, we need to introduce the following
constructions. For a vector space $V \subset \mathbbm{K} [\tmmathbf{x}]$, we
denote by $V^+$ the vector space $V^+ = V \noplus + x_1 V \noplus + \cdots +
x_n V$. We say that $V$ is {\tmem{connected to}} $1$, if there exists an
increasing sequence of vector spaces $V_0 \subset V_1 \subset \cdots \subset
V_s = V$ such that $V_0 = \langle 1 \rangle$ and $V_{l + 1} \subset
V_l^+$. The {\tmem{index}} of an element $v \in V$ is the smallest $l$ such
that $v \in V_l$.

We say that a set of polynomials $B \subset \mathbbm{K} [\tmmathbf{x}]$ is
connected to $1$ if the vector space $\langle B \rangle$ spanned by $B$ is
connected to $1$. In particular, \ a monomial set $B = \{
\tmmathbf{x}^{\beta_1}, \ldots, \tmmathbf{x}^{\beta_r} \}$ is connected to $1$
if \ for all $m \in B$, either $m = 1$ or there exists $m' \in B$ and $i_0 \in
[1, \ldots, n]$ such that $m = x_{i_{0}} m'$.

The following result generalizes the sparse flat extension results of
{\cite{laurent_generalized_2009}} to distinct vector spaces connected to 1. We
give its proof for the sake of completeness, since the hypotheses are slightly
different.

\begin{theorem}
  \label{thm:flatext}Let $V, V' \subset \mathbbm{K} [\tmmathbf{x}]$ vector
  spaces connected to $1$ and let $\sigma \in \langle V \cdummy V' \rangle^{\ast}$.
  Let $U \subset V$, $U' \subset V'$ be vector spaces such
  that $1 \in U$, $U^+ \subset V, U'^+ \subset V'$. If $\tmop{rank}
  H_{\sigma}^{V, V'} = \tmop{rank} H_{\sigma}^{U, U'} = r$, then there is a
  unique extension $\tilde{\sigma} \in \mathbbm{K} [[\tmmathbf{y}]]$ such that
  $\tilde{\sigma}$ coincides with $\sigma$ on $\langle V \cdot V' \rangle$ and
  $\tmop{rank} H_{\tilde{\sigma}} = r$. In this case, $\tilde{\sigma} =
  \sum_{i = 1}^{r'} \hspace{0.25em} \omega_i (\tmmathbf{y})
  \tmmathbf{e}_{{\xi_i}} (\tmmathbf{y})$ with $\omega_i (\tmmathbf{y}) \in 
  \mathbbm{K} [\tmmathbf{y}]$, $\xi_i \in \mathbbm{K}^n$, $r = \sum_{i =
  1}^{r'} {\mu} (\omega_i)$ and $I_{\tilde{\sigma}} = (\ker
  H_{\sigma}^{U^+, U'})$.
\end{theorem}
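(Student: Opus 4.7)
The plan is to manufacture a commutative multiplication on $\langle B\rangle$ from the rank data, promote it to an Artinian algebra of dimension $r$, and push $\sigma$ forward to get the desired $\tilde\sigma$. First I would pick $B = \{b_1, \ldots, b_r\} \subset U$ and $B' = \{b_1', \ldots, b_r'\} \subset U'$ for which $H_\sigma^{B, B'}$ is invertible. Since $\tmop{rank} H_\sigma^{V, V'} = r$ as well, the columns of $H_\sigma^{V, V'}$ indexed by $B$ form a basis of its column space, so each $v \in V$ has a unique reduction $\bar v = \sum_k \lambda_k(v)\, b_k \in \langle B \rangle$ agreeing with $v$ against $V'$ under $\sigma$. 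Applied to $x_i b_j \in U^+ \subset V$, this produces candidate multiplication operators $M_i : \langle B \rangle \to \langle B \rangle$, $b_j \mapsto \overline{x_i b_j}$.

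The heart of the proof --- and the step I expect to be the main obstacle --- is to verify $M_i M_j = M_j M_i$. The key observation is that for $v' \in U'$ the hypothesis $U'^+ \subset V'$ puts $x_i v'$ back in $V'$, which allows the defining identity of $M_j$ to be iterated:
\[
\langle \sigma \mid M_i M_j(b_{j'})\, v' \rangle
 = \langle \sigma \mid M_j(b_{j'}) \cdot x_i v' \rangle
 = \langle \sigma \mid x_j b_{j'} \cdot x_i v' \rangle
 = \langle \sigma \mid x_i x_j b_{j'}\, v' \rangle.
\]
The symmetric computation gives the same value, hence $\langle \sigma \mid (M_i M_j - M_j M_i)(b_{j'})\, v' \rangle = 0$ for every $v' \in B' \subset U'$; invertibility of $H_\sigma^{B, B'}$ then forces $(M_i M_j - M_j M_i)(b_{j'}) = 0$ in $\langle B \rangle$. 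The twin hypotheses $U^+ \subset V$ and $U'^+ \subset V'$ are exactly what make this ``one more step'' possible.

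The commuting $M_i$'s define an algebra homomorphism $\phi : \mathbb{K}[\mathbf{x}] \to \tmop{End}(\langle B \rangle)$ with kernel $I$; set $\mathcal{A} := \mathbb{K}[\mathbf{x}]/I$. Define $\tilde\sigma \in \mathbb{K}[[\mathbf{y}]]$ by $\tilde\sigma(\mathbf{x}^\alpha) := \langle \sigma \mid \phi(\mathbf{x}^\alpha)(\bar 1) \rangle$, where $\bar 1 \in \langle B\rangle$ is the reduction of $1 \in V$. Using the filtrations $V_0 \subset \cdots \subset V_s = V$ and $V_0' \subset \cdots \subset V_{s'}' = V'$ coming from the connectedness to $1$, a double induction shows $\phi(v)(\bar 1) = \bar v$ for $v \in V$ and then $\tilde\sigma(v v') = \sigma(v v')$ for $v \in V, v' \in V'$; so $\tilde\sigma$ extends $\sigma$ on $\langle V \cdot V'\rangle$. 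Since $H_{\tilde\sigma}^{B, B'} = H_\sigma^{B, B'}$ is invertible, Lemma \ref{lem:basis} makes $B$ linearly independent in $\mathcal{A}_{\tilde\sigma}$; conversely every monomial is congruent modulo $I \subset I_{\tilde\sigma}$ to $\phi(\mathbf{x}^\alpha)(\bar 1) \in \langle B \rangle$, so $B$ also spans $\mathcal{A}_{\tilde\sigma}$. Hence $B$ is a basis, $\tmop{rank} H_{\tilde\sigma} = r$, and $I = I_{\tilde\sigma}$. Uniqueness follows because any rank-$r$ extension must admit $B$ as a basis of its quotient and reproduce the same multiplication operators, which are determined entirely by $\sigma|_{\langle V \cdot V'\rangle}$.

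Theorem \ref{thm:gorenstein} applied to $\tilde\sigma$ then yields the polynomial-exponential decomposition with $r = \sum_i \mu(\omega_i)$. Finally, for $I_{\tilde\sigma} = (\ker H_\sigma^{U^+, U'})$, each generator $x_i b_j - M_i(b_j)$ of $I$ lies in $\ker H_\sigma^{U^+, U'}$, giving $I \subset (\ker H_\sigma^{U^+, U'}) \subset I_{\tilde\sigma}$; conversely, any $p \in \ker H_\sigma^{U^+, U'}$ has $\bar p = 0$ in $\langle B \rangle$ since $H_\sigma^{B, U'}$ is injective (of rank $r$), so $p \in I$. Together with $I = I_{\tilde\sigma}$ this collapses the three ideals.
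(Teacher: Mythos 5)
Your proposal is correct and follows essentially the same route as the paper: define multiplication operators $M_i$ on an $r$-dimensional space from the truncated Hankel data, show they commute by pushing one extra $x_i$ or $x_j$ through using $U^+\subset V$ and $U'^+\subset V'$, then define $\tilde\sigma(p)=\langle\sigma\mid p(M)(1)\rangle$ and prove the extension property by induction along the filtrations that witness connectedness to $1$, with Lemma~\ref{lem:basis} giving the rank count and Theorem~\ref{thm:gorenstein} the polynomial-exponential form. The only cosmetic difference is that you work directly with a basis $B\subset U$ and $\langle B\rangle$ rather than first replacing $U$ by a complement of $\ker H^{V,V'}_{\sigma}$ of dimension exactly $r$, and you leave the degenerate case $\sigma=0$ implicit in the $B=\emptyset$ convention.
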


\begin{proof}
  Let $K = \ker H_{\sigma}^{V, V'}$. The condition $\tmop{rank} H_{\sigma}^{U,
  U'} = \tmop{rank} H_{\sigma}^{V, V'}$ implies that
  \begin{eqnarray}
    \kappa \in K & \Leftrightarrow & \forall v' \in V', \langle \sigma \mid \kappa v' \rangle = 0  \label{eq:rk15}\\
    & \Leftrightarrow & \forall u' \in U', \langle \sigma \mid \kappa u' \rangle = 0.  \label{eq:rk16}
  \end{eqnarray}
  In particular,
  \begin{eqnarray}
    \kappa \in K \tmop{and}\, x_i \kappa \in V & \tmop{implies} & x_i \kappa \in
    K,  \label{eq:coherent}
  \end{eqnarray}
  since $U^{'^+} \subset V'$, $\forall u' \in U', \langle \sigma \mid \kappa
  x_i u' \rangle = \langle \sigma \mid x_i \kappa b' \rangle = 0$, which
  implies by the relation (\ref{eq:rk16}) that $x_i \kappa \in K$.
  
  Suppose first that $1 \in K$. Then as $V$ is connected to $1$, using
  Relation (\ref{eq:coherent}) we prove by induction on the index $l$ that
  every element of $V$ of index $> 0$, which is a sum of terms of $V$ of the
  form $x_i \kappa$ with $\kappa \in K$, is in $K$. Then $\sigma = 0$ and the
  result is obviously true for $\tilde{\sigma} = 0$.
  
  Assume now that $1 \not\in K$. The condition $\tmop{rank}
  H_{\sigma}^{U, U'} = \tmop{rank} H_{\sigma}^{V, V'}$ implies that $V = K +
  U$ and we can assume that $U \cap K = \{ 0 \}$ and $\dim (U) = \dim (U') =
  \tmop{rank} H_{\sigma}^{U, U'} = \tmop{rank} H_{\sigma}^{V, V'} = r$ with $1
  \in U$. In this case, $H_{\sigma}^{U, U'}$ is invertible.
  
  Let $M_i \assign (H_{\sigma}^{U, U'})^{- 1} H_{x_i \star \sigma}^{U, U'}$.
  It is a linear operator of $U$. As $H_{x_{i \star} \sigma}^{U, U'} =
  H_{\sigma}^{U, U'} \circ M_i$, we have $\forall u \in U$, $u' \in U'$
  \[ \langle \sigma \mid x_i u u' \rangle = \langle \sigma \mid M_i (u) u'
     \rangle \]
  As $\tmop{rank} H_{\sigma}^{V, V'} = \tmop{rank} H_{\sigma}^{U, U'} = r$
  and $U^+ \subset V, U'^+ \subset V'$, we also have $\forall j = 1, \ldots,
  n$, $\forall u \in U, \forall u' \in U'$
  \[ \langle \sigma \mid x_i x_j u u' \rangle = \langle \sigma \mid x_i u x_j
     u' \rangle = \langle \sigma \mid M_i (u) x_j u' \rangle = \langle \sigma
     \mid M_j \circ M_i (u) u' \rangle. \]
  We deduce that $\langle \sigma \mid M_j \circ M_i (u) u' \rangle = \langle
  \sigma \mid M_i \circ M_j (u) u' \rangle$ and the operators $M_i$, $M_j$
  commute: $M_j \circ M_i = M_i \circ M_j$.
  
  Let us define the operator
  \begin{eqnarray*}
    \phi : \mathbbm{K} [\tmmathbf{x}] & \rightarrow & U\\
    p & \mapsto & p (M) (1)
  \end{eqnarray*}
  and the linear form
  \begin{eqnarray*}
    \tilde{\sigma} : \mathbbm{K} [\tmmathbf{x}] & \rightarrow & \mathbbm{K}\\
    p & \mapsto & \langle \sigma \mid p (M) (1) \rangle
  \end{eqnarray*}
  We are going to show that $\tilde{\sigma}$ extends $\sigma$ and that
  $I_{\tilde{\sigma}} = (\ker H_{\tilde{\sigma}}^{U, U'^+})$. As the operators
  $M_i$ commute, the operator $p (M)$ obtained by substituting the variable
  $x_i$ by $M_i$ in the polynomial $p \in \mathbbm{K} [\tmmathbf{x}]$ is
  well-defined and the kernel $J$ of $\phi$ is an ideal of \ $\mathbbm{K}
  [\tmmathbf{x}]$.
 
  We first prove that $\tilde{\sigma}$ coincides with $\sigma$ on $\langle V
  \cdot V' \rangle$. We prove by induction on the index that $\forall v \in
  V$, $\forall u' \in U'$, $\langle \sigma \mid v u' \rangle = \langle \sigma
  \mid \phi (v) u' \rangle$. If $v$ is of index $0$, then $v = 1$ (up to a
  scalar) and $\phi (1) = 1$ so that the property is true.
  
  Let us assume that the property is true for the elements of $V$ of index $l
  - 1 \geqslant 0$ and let $v \in V$ of index $l$: there exists $v_i \in V$ of
  index $l - 1$ such that $v = \sum_i x_i v_i$. By induction hypothesis and
  the relations (\ref{eq:rk15}) and (\ref{eq:rk16}), we have $\forall u' \in
  U'$,
  \begin{eqnarray*}
    \langle \sigma \mid v u' \rangle & = &  \sum_i \langle \sigma \mid v_i x_i
    u' \rangle = \sum_i \langle \sigma \mid \phi (v_i) x_i u' \rangle = \sum_i
    \langle \sigma \mid M_i \circ \phi (v_i) u' \rangle\\
    & = & \left\langle \sigma \mid \left( \sum_i \phi (x_i v_i)  \right) u'
    \right\rangle = \langle \sigma \mid \phi (v) u' \rangle.
  \end{eqnarray*}
  Using relations (\ref{eq:rk15}) and (\ref{eq:rk16}), we also have
  \begin{equation}
    \forall v \in V, \forall v' \in V', \langle \sigma \mid
    v v' \rangle = \langle \sigma \mid \phi (v) v' \rangle \label{eq:17}.
  \end{equation}
  In a similar way, we prove that
  \begin{equation}
    \forall u \in U, \forall v' \in V', \langle \sigma \mid u v'
    \rangle = \langle \sigma \mid v' (M)  (u) \rangle. \label{eq:18}
  \end{equation}
  The property is true for $v' = 1$. Let us assume that it is true for the
  elements of $V'$ of index $l - 1 > 0$ and let $v' \in V'$ be an element of
  index $l$. There exist \ $v'_i \in V'$ of index $l - 1$ such
  that $v' = \sum_i x_i v'_i$. By induction hypothesis and the relations
  (\ref{eq:rk15}) and (\ref{eq:rk16}), we have $\forall v \in V$,
  \begin{eqnarray*}
    \langle \sigma \mid u v' \rangle & = &  \sum_i \langle \sigma \mid v'_i
    x_i u \rangle = \sum_i \langle \sigma \mid v_i' M_i (u) \rangle = \sum_i
    \langle \sigma \mid v_i' (M) M_i (u)  \rangle\\
    & = & \left\langle \sigma \mid \left( \sum_i M_i \circ v_i'  (M)  \right)
    (u) \right\rangle = \langle \sigma \mid v' (M)  (u) \rangle.
  \end{eqnarray*}
  By the relations (\ref{eq:17}) and (\ref{eq:18}), we have $\forall v \in V$,
  $\forall v' \in V'$,
  \begin{eqnarray*}
    \langle \sigma \mid v v' \rangle & = & \langle \sigma \mid v' v (M) (1)
    \rangle = \langle \sigma \mid v' (M) \circ v (M) (1) \rangle =
    \langle \sigma \mid \phi (v v') \rangle = \langle \tilde{\sigma} \mid v v'
    \rangle.
  \end{eqnarray*}
  This shows that $\tilde{\sigma}$ coincides with $\sigma$ on $\langle V \cdot
  V' \rangle$.
  
  We deduce from relation (\ref{eq:17}) that $\forall u \in U$, $\forall u'
  \in U'$,  $\langle \sigma \mid (u - \phi (u)) u' \rangle = 0$ and $\phi (u)
  = u$ since $H_{\sigma}^{U, U'}$ is invertible. Therefore $\phi$ is the
  projection of $\mathbbm{K} [\tmmathbf{x}]$ on $U$ along its kernel $J$ and
  we have the exact sequence
  \[ 0 \rightarrow J \rightarrow \mathbbm{K} [\tmmathbf{x}] \xrightarrow{\phi}
     U \rightarrow 0. \]
  Let $I_{\sigma} = \ker H_{\tilde{\sigma}}$ and $\mathcal{A}_{\sigma}
  =\mathbbm{K} [\tmmathbf{x}] / I_{\sigma}$. As $J \subset I_{\sigma}$, we
  have $\dim_{\mathbbm{K}} \mathcal{A}_{\sigma} \leqslant \dim_{\mathbbm{K}}
  \mathbbm{K} [\tmmathbf{x}] / J = \dim U = r$ and $U$ is generating
  $\mathcal{A}_{\sigma}$. Since $\tilde{\sigma}$ coincides with $\sigma$ on
  $\langle U \cdot U' \rangle$ and $H_{\sigma}^{U, U'}$ is invertible, \ by
  Lemma \ref{lem:basis} a basis \ of the vector space $U \subset \mathbbm{K}
  [\tmmathbf{x}]$ is linearly independent in $\mathcal{A}_{\sigma}$. This
  shows that $\dim_{\mathbbm{K}} \mathcal{A}_{\sigma} = r$ and that $J =
  I_{\sigma}$.
  
  Since $U$ contains $1$ and $\phi$ is the projection of $\mathbbm{K}
  [\tmmathbf{x}]$ on $U$ along $I_{\sigma} = J$, we check that $I_{\sigma}$ is
  generated by the element $x_i u - \phi (x_i u)$ for $u \in U$, $i = 1,
  \ldots, n$, that is by the elements of $\ker H_{\sigma}^{U^+, U'} \subset
  K$.
  
  If there is another $\tilde{\sigma}' \in \mathbbm{K} [[\tmmathbf{y}]]$
  which coincides with $\sigma$ on $\langle V \cdot V' \rangle$ and such that
  $\tmop{rank} H_{\tilde{\sigma}'} = r$, then $\ker H_{\sigma}^{U^+, U'}
  \subset \ker H_{\tilde{\sigma}'} = I_{\tilde{\sigma}'}$ and $J = (\ker
  H_{\sigma}^{U^+, U'}) \subset I_{\tilde{\sigma}'}$. Therefore, we have
  $\forall p \in \mathbbm{K} [\tmmathbf{x}]$, $\langle \tilde{\sigma}' \mid p
  \rangle = \langle \tilde{\sigma}' \mid \phi (p) \rangle = \langle \sigma
  \mid \phi (p) \rangle = \langle \tilde{\sigma} \mid p \rangle$, so that
  $\tilde{\sigma}' = \tilde{\sigma}$, which conclude the proof of the theorem.
\end{proof}

\begin{remark}
  Let $B$ be a monomial set. If $H_{\sigma}^{B^+, B'^+}$ is a flat extension
  of $H_{\sigma}^{B, B'}$ and $H_{\sigma}^{B, B'}$ is invertible, then a basis
  of the kernel of $H_{\sigma}^{B^+, B'}$ is given by the columns of $\left(
  \begin{array}{c}
    - P\\
    I
  \end{array} \right)$, where $H_{\sigma}^{B, B' } P = H_{\sigma}^{\partial B,
  B'}$. The columns of this matrix represent polynomials of the form
  \[ \tmmathbf{x}^{\alpha} - \sum_{\beta \in B} p_{\alpha, \beta}
     \hspace{0.25em} \tmmathbf{x}^{\beta} \]
  for $\alpha \in \partial B$. These polynomials are the border relations
  which project monomials of $\partial B$ on the vector space spanned by $B$,
  modulo $\ker H_{\sigma}^{B^+}$. Using Theorem \ref{thm:flatext} and the
  characterization of border bases in terms of \ commutation relations
  {\cite{m-99-nf}}, {\cite{MoTr:2005:issac}}, we prove that they form a border
  basis of the ideal generated by $\ker \hspace{0.25em} H_{\sigma}^{B^+}$, if and only if,
  $\tmop{rank} \hspace{0.25em} H_{\sigma}^B = \tmop{rank} \hspace{0.25em}
  H_{\sigma}^{B^+} = |B|$, or in other words, if and only if, $H_{\sigma}^{B^+}$ has a
  flat extension. As shown in {\cite{BCMT:2009:laa}} (see also  {\cite{bernardi_general_2013}}), the flat extension
  condition is equivalent to the commutation property of formal multiplication
  operators.
\end{remark}

Let us consider the case where $\mathbbm{K}=\mathbbm{R}$, $V = V'$. We say that
$H_{\sigma}^{V, V}$ is semi-definite positive or simply that
$\sigma$ is semi-definite positive on V, \ if $\forall p \in V$, $\langle
H_{\sigma}^{V, V} (p) \mid p \rangle = \langle \sigma \mid p^2 \rangle
\geqslant 0$.

We have the following flat extension version:
\begin{theorem}
  \label{thm:flatextsdp}Let $V \subset \mathbbm{R} [\tmmathbf{x}]$ be a vector
  space connected to $1$ and $\sigma \in \langle V \cdummy V
  \rangle^{\ast}$. Let $U \subset V$ such that $1 \in U$, $U^+ \subset V$,
  $\tmop{rank} H_{\sigma}^{V, V} = \tmop{rank} H_{\sigma}^{U, U}$ and
  $H_{\sigma}^{V, V} \succcurlyeq 0$. Then there is a unique extension
  $\tilde{\sigma} \in \mathbbm{R} [[\tmmathbf{y}]]$ which coincides with
  $\sigma$ on $\langle V \cdot V' \rangle$ and such that $r = \tmop{rank}
  H_{\tilde{\sigma}}$, which is of the form
  \[ \tilde{\sigma} = \sum_{i = 1}^r \omega_i \tmmathbf{e}_{\xi_i}
     (\tmmathbf{y}) \]
  with \ $\omega_i > 0$ and $\xi_i \in \mathbbm{R}^n$.
\end{theorem}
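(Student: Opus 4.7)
The plan is to reduce Theorem \ref{thm:flatextsdp} to Theorem \ref{thm:flatext} combined with Proposition \ref{prop:positive}. The hypotheses of Theorem \ref{thm:flatextsdp} are exactly those of Theorem \ref{thm:flatext} specialized to $V' = V$, $U' = U$, so we immediately obtain a unique extension $\tilde{\sigma} \in \mathbbm{R}[[\tmmathbf{y}]]$ coinciding with $\sigma$ on $\langle V \cdot V \rangle$ with $\tmop{rank}\, H_{\tilde{\sigma}} = r$, and a polynomial-exponential expression $\tilde{\sigma} = \sum_{i=1}^{r'} \omega_i(\tmmathbf{y})\, \tmmathbf{e}_{\xi_i}(\tmmathbf{y})$ with $\omega_i \in \mathbbm{C}[\tmmathbf{y}]$, $\xi_i \in \mathbbm{C}^n$, and $r = \sum_{i=1}^{r'} \mu(\omega_i)$. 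It then suffices to show that $\tilde{\sigma}$ itself is semi-definite positive on all of $\mathbbm{R}[\tmmathbf{x}]$: by Proposition \ref{prop:positive} applied to $\tilde{\sigma}$ (which has finite rank $r$), this forces $\deg \omega_i = 0$, $\omega_i > 0$ and $\xi_i \in \mathbbm{R}^n$, yielding exactly the claimed decomposition.

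To establish $\tilde{\sigma} \succcurlyeq 0$, I would use the projection $\phi : \mathbbm{R}[\tmmathbf{x}] \to U$, $p \mapsto p(M)(1)$, built in the proof of Theorem \ref{thm:flatext} from the commuting operators $M_i = (H_\sigma^{U,U})^{-1} H_{x_i \star \sigma}^{U,U}$. Recall from that proof that $\ker \phi = I_{\tilde{\sigma}}$, so $p - \phi(p) \in I_{\tilde{\sigma}}$ for every $p \in \mathbbm{R}[\tmmathbf{x}]$. Since $I_{\tilde{\sigma}}$ is an ideal, the identity
\begin{equation*}
p^2 - \phi(p)^2 = \bigl(p - \phi(p)\bigr)\bigl(p + \phi(p)\bigr) \in I_{\tilde{\sigma}}
\end{equation*}
holds, and pairing with $\tilde{\sigma}$ gives $\langle \tilde{\sigma} \mid p^2 \rangle = \langle \tilde{\sigma} \mid \phi(p)^2 \rangle$. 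Now $\phi(p) \in U \subset V$, hence $\phi(p)^2 \in \langle V \cdot V \rangle$, and since $\tilde{\sigma}$ and $\sigma$ coincide on this space, $\langle \tilde{\sigma} \mid \phi(p)^2 \rangle = \langle \sigma \mid \phi(p)^2 \rangle \geqslant 0$ by the assumption $H_\sigma^{V,V} \succcurlyeq 0$. Thus $\langle \tilde{\sigma} \mid p^2 \rangle \geqslant 0$ for every $p \in \mathbbm{R}[\tmmathbf{x}]$, as required.

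The main obstacle is the step that rewrites $\langle \tilde{\sigma} \mid p^2 \rangle$ in terms of $\phi(p)^2$: this is where the ideal property of $I_{\tilde{\sigma}}$ and the fact that $\phi$ is a projection modulo $I_{\tilde{\sigma}}$ are both crucial, together with the structural output of Theorem \ref{thm:flatext} guaranteeing that $\tilde{\sigma}$ extends $\sigma$ on the full bilinear space $\langle V \cdot V \rangle$ rather than merely on $V$. Once this identity is in hand, Proposition \ref{prop:positive} closes the argument, and uniqueness of the extension is inherited directly from Theorem \ref{thm:flatext}. Note that since all weights must be constant, we automatically get $r' = r$ and $\mu(\omega_i) = 1$ for each $i$.
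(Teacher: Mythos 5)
Your proposal is correct and follows essentially the same route as the paper's own proof: apply Theorem~\ref{thm:flatext} with $V'=V$, $U'=U$, show $\tilde{\sigma}\succcurlyeq 0$ via the identity $\langle\tilde{\sigma}\mid p^2\rangle=\langle\sigma\mid\phi(p)^2\rangle\geqslant 0$, then conclude with Proposition~\ref{prop:positive}. The only difference is that you spell out the justification for $\langle\tilde{\sigma}\mid p^2\rangle=\langle\tilde{\sigma}\mid\phi(p)^2\rangle$ (factoring $p^2-\phi(p)^2$ and using that $I_{\tilde{\sigma}}=\ker\phi$ is an ideal), whereas the paper states this step more tersely.
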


\begin{proof}
  By Theorem \ref{thm:flatext}, there is a unique extension $\tilde{\sigma}
  \in \mathbbm{R} [[\tmmathbf{y}]]$ which coincides with $\sigma$ on $\langle
  V \cdot V' \rangle$ and such that $r = \tmop{rank} H_{\tilde{\sigma}}$. As
  $H_{\sigma}^{V, V} \succcurlyeq 0$, for any $p \in \mathbbm{R}
  [\tmmathbf{x}]$ $\langle \tilde{\sigma} \mid p^2 \rangle = \langle \sigma
  \mid \phi (p)^2 \rangle \geqslant 0$ and $\tilde{\sigma}$ is semi-definite
  positive (on $\mathbbm{R} [\tmmathbf{x}]$). The real decomposition of
  $\tilde{\sigma} $ with positive weights is then a consequence of Proposition
  \ref{prop:positive}.
\end{proof}

\subsection{Orthogonal bases of $\mathcal{A}_{\sigma}$}

An important step in the decomposition method consists in computing a basis
$B$ of $\mathcal{A}_{\sigma}$. In this section, we describe how to compute a
monomial basis $B = \{ \tmmathbf{x}^{\beta} \}$ and two other bases
$\tmmathbf{p}= (p_{\beta})$ and $\tmmathbf{q}= (q_{\beta})$, which are
pairwise orthogonal for the inner product $\langle \cdummy, \cdummy
\rangle_{\sigma}$:
\[ \langle p_{\beta}, q_{\beta'} \rangle_{\sigma} = \left\{ \begin{array}{ll}
     1 & \tmop{if} \beta = \beta'\\
     0 & \tmop{otherwise}.
   \end{array} \right. \]
Such pairwise orthogonal bases of $\mathcal{A}_{\sigma}$ exist, since
$\mathcal{A}_{\sigma}$ is an Artinian Gorenstein algebra and $\langle \cdummy,
\cdummy \rangle_{\sigma}$ is non-degenerate (Proposition \ref{prop:iso}).

To compute these pairwise orthogonal bases, we will use a projection process,
similar to Gram-Schmidt orthogonalization process. The main difference is that
we compute pairs $p_{\beta} , q_{\beta}$ of orthogonal polynomials. As
the inner product $\langle \cdummy, \cdummy \rangle_{\sigma}$ may be
isotropic, the two polynomials $p_{\beta} , q_{\beta}$ may not be
equal, up to a scalar.

The method proceeds inductively starting from $\tmmathbf{b}= []$, extending
the monomials basis $\tmmathbf{b}$ with new monomials $\tmmathbf{x}^{\alpha}$,
projecting them onto the space spanned by $\tmmathbf{b}$:
\[ p_{{\alpha}} =\tmmathbf{x}^{\alpha} - \sum_{\beta \in \tmmathbf{b}} \langle
   \tmmathbf{x}^{\alpha}, q_{\beta} \rangle_{\sigma} p_{\beta} \]
and computing $q_{\alpha}$, if it exists, such that $\langle
p_{\alpha}, q_{\alpha} \rangle_{\sigma} = 1$ and $\langle
\tmmathbf{x}^{\beta}, q_{\alpha} \rangle_{\sigma} = 0$ for $\beta \in \tmmathbf{b}$. Here
is a more detailled description of the algorithm:

\begin{algorithm}[H]\caption{Orthogonal bases}\label{algo:orthobasis}
{\tmstrong{Input:}} the coefficients $\sigma_{\alpha}$ of a series $\sigma \in \mathbbm{K}
[[\tmmathbf{y}]]$ for $\alpha \in \tmmathbf{a} \subset \mathbbm{N}^n$.
\begin{itemizeminus}
  \item[] Let $\tmmathbf{b} \assign []$; $\tmmathbf{b}' \assign []$;
  $\tmmathbf{d}= []$; $\tmmathbf{n} \assign [\tmmathbf{0}]$; $\tmmathbf{s}
  \assign \tmmathbf{a}$; $\tmmathbf{s}' \assign \tmmathbf{a}$; $l \assign 0$;
  
  \item[] \While{$\tmmathbf{n} \neq \emptyset$} {
   \Indp \Indp$l \assign l + 1$\;
    \For{{\bf each} $\alpha \in \tmmathbf{n}$}{
    \begin{enumeratealpha}
      \item compute $p_{{\alpha}} =\tmmathbf{x}^{\alpha} - \sum_{\beta \in B}
      \langle \tmmathbf{x}^{\alpha}, q_{\beta} \rangle_{\sigma} p_{\beta}$;
      
      \item find the first $\alpha' \in \tmmathbf{s}'$ such that $\tmmathbf{x}^{\alpha'} p_{\alpha} \in \langle \tmmathbf{a} \rangle$ and
      $\langle \tmmathbf{x}^{\alpha'}, p_{\alpha} \rangle_{\sigma} \neq 0$;
      
      \item \uIf{ such an $\alpha'$ exists}{}

        \ \  let $q_{\alpha} \assign \frac{1}{\langle \tmmathbf{x}^{\alpha'},
      p_{\alpha} \rangle_{\sigma}}  \left( \tmmathbf{x}^{\alpha'} -
      \sum_{\beta \in B} \langle \tmmathbf{x}^{\alpha'}, p_{\beta}
      \rangle_{\sigma} q_{\beta} \right)$\;
      
      \ \ add $\alpha$ to $\tmmathbf{b}$; remove $\alpha$ from $\tmmathbf{s}$\;
      
      \ \ add $\alpha'$ to $\tmmathbf{b}'$; remove $\alpha'$ from
      $\tmmathbf{s}'$\;
      
      \uElse{}
     \ \  add $\alpha$ to $\tmmathbf{d}$\;
    \end{enumeratealpha}
    }
    $\tmmathbf{n} \assign \tmop{next} (\tmmathbf{b}, \tmmathbf{d},  \tmmathbf{s}) $\;
}
\end{itemizeminus}
{\tmstrong{Output:}}
\begin{itemizeminus}
  \item monomial sets $\tmmathbf{b}= [\beta_1, \ldots, \beta_r] \subset
  \tmmathbf{a}$, $\tmmathbf{b}' = [\beta_1', \ldots, \beta_r'] \subset
  \tmmathbf{a}$.
  
  \item pairwise orthogonal bases $\tmmathbf{p}= (p_{\beta_i})$,
  $\tmmathbf{q}= (q_{\beta_i})$ for $\langle \cdummy, \cdummy
  \rangle_{\sigma}$.
  
  \item the relations $p_{\alpha} : =\tmmathbf{x}^{\alpha} - \sum_{i =
  1}^{r} \langle \tmmathbf{x}^{\alpha}, q_{\beta_i} \rangle_{\sigma}
  p_{\beta_i}$ for $\alpha \in \tmmathbf{d}$.
\end{itemizeminus}
\end{algorithm}

The algorithm manipulates the ordered lists $\tmmathbf{b}, \tmmathbf{d},
\tmmathbf{s}, \tmmathbf{s}'$ of exponents, identified with monomials. The
monomials are ordered according to a total order denoted $\prec$. The index
$l$ is the loop index.

The algorithm uses the function $\tmop{next} (\tmmathbf{b}, \tmmathbf{d}, \tmmathbf{s})$,
which computes the set of monomials $\alpha$ in
$\partial \tmmathbf{b} \cap \tmmathbf{s}$, which are not in $\tmmathbf{d}$ and
such that $\alpha +\tmmathbf{b}' \subset \tmmathbf{a}$.

We verify that at each loop of the algorithm, the lists
$\tmmathbf{b}$ and $\tmmathbf{s}$ (resp. $\tmmathbf{b}'$ and $\tmmathbf{s}'$)
are disjoint and $\tmmathbf{b} \cup \tmmathbf{s}=\tmmathbf{a}$ (resp.
$\tmmathbf{b}' \cup \tmmathbf{s}' =\tmmathbf{a}$).

We also verify by induction that at each loop, $\langle
\tmmathbf{x}^{\tmmathbf{b}} \rangle = \langle p_{\beta} \mid \beta \in
\tmmathbf{b} \rangle$ and $\langle \tmmathbf{x}^{\tmmathbf{b}'} \rangle =
\langle q_{\beta} \mid \beta \in \tmmathbf{b} \rangle$.

The following properties are also satisfied at the end of the algorithm:

\begin{theorem}
  \label{thm:flatextalgo}Let $\tmmathbf{b}= [\beta_1, \ldots, \beta_r]$,
  $\tmmathbf{b}' = [\beta_1', \ldots, \beta_r'],$ $\tmmathbf{p}= [p_{\beta_1},
  \ldots, p_{\beta_r}]$, $\tmmathbf{q}= [q_{\beta_1}, \ldots, q_{\beta_r}]$ be
  the output of Algorithm \ref{algo:orthobasis}. Let $V = \langle
  \tmmathbf{x}^{\tmmathbf{b}^+} \rangle$. If there exists a vector space $V'$ connected to $1$ such that
  $\tmmathbf{x}^{(\tmmathbf{b}')^+} \subset V'$ and $V \cdummy V' = \langle
  \tmmathbf{x}^{\tmmathbf{a}} \rangle$. Then $\sigma$ coincides on $\langle
  \tmmathbf{x}^{\tmmathbf{a}} \rangle$ with the unique series $\tilde{\sigma}
  \in \mathbbm{K} [[\tmmathbf{y}]]$ such that $\tmop{rank} H_{\tilde{\sigma}}
  = r$, and we have the following properties:
  \begin{itemizedot}
    \item $(\tmmathbf{p}, \tmmathbf{q})$ are pairwise orthogonal bases of
    $\mathcal{A}_{\tilde{\sigma}}$ for the inner product $\langle \cdummy,
    \cdummy \rangle_{\tilde{\sigma}}$.
    
    \item The family $\left\{ p_{\alpha} =\tmmathbf{x}^{\alpha} - \sum_{i =
    1}^{r} \langle \tmmathbf{x}^{\alpha}, q_{\beta_i} \rangle_{\sigma}
    p_{\beta_i}, \alpha \in \tmmathbf{d} \right\}$ is a border basis
    of the ideal $I_{\tilde{\sigma}}$, with respect to
    $\tmmathbf{x}^{\tmmathbf{b}}$.
    
    \item The matrix of multiplication by $x_k$ in the basis $\tmmathbf{p}$
    (resp. {\tmem{{\tmstrong{q}}}}) of $\mathcal{A}_{\tilde{\sigma}}$ is $M_k
    \assign (\langle \sigma | x_k p_{\beta_j} q_{\beta_i} \rangle)_{1
    \leqslant i, j \leqslant r} $ (resp. $M_k^t$).
  \end{itemizedot}
\end{theorem}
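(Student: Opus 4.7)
The plan is to identify the data produced by Algorithm~\ref{algo:orthobasis} with the ingredients required by the flat extension Theorem~\ref{thm:flatext}, setting $U = \langle \mathbf{x}^{\mathbf{b}} \rangle$ and $U' = \langle \mathbf{x}^{\mathbf{b}'} \rangle$. The algebraic hypotheses of Theorem~\ref{thm:flatext} are immediate: $\mathbf{b}$ is connected to $\mathbf{0}$ because the algorithm extends $\mathbf{b}$ only through monomials in $\partial \mathbf{b}$ starting from $\mathbf{0}$, so $V = \langle \mathbf{x}^{\mathbf{b}^+}\rangle$ is connected to $1$; $V'$ is connected to $1$ by hypothesis; $1 \in U$ since $\mathbf{0}$ is the first exponent processed; $U^+ = V$ by definition of $V$; $(U')^+ \subset V'$ by hypothesis; and $\sigma$ is defined on $\langle V \cdot V' \rangle = \langle \mathbf{x}^{\mathbf{a}} \rangle$.

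The heart of the argument is the rank equality $\operatorname{rank} H_\sigma^{U,U'} = \operatorname{rank} H_\sigma^{V,V'} = r$. The lower bound $\operatorname{rank} H_\sigma^{U,U'} = r$ follows from the loop invariant $\langle p_{\beta_i}, q_{\beta_j} \rangle_\sigma = \delta_{ij}$: each new pair $(p_\alpha, q_\alpha)$ is produced by a Gram--Schmidt-type projection against previously constructed pairs, and a direct check (as in the standard bilateral Gram--Schmidt) shows that biorthogonality is preserved at each step. Consequently, after the triangular change of bases $\mathbf{x}^{\mathbf{b}} \rightsquigarrow \mathbf{p}$ and $\mathbf{x}^{\mathbf{b}'} \rightsquigarrow \mathbf{q}$, the matrix $H_\sigma^{U,U'}$ is the identity. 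For the upper bound, I would show that for every $\alpha \in \mathbf{b}^+ \setminus \mathbf{b}$, the polynomial $p_\alpha = \mathbf{x}^\alpha - \sum_i \langle \mathbf{x}^\alpha, q_{\beta_i} \rangle_\sigma p_{\beta_i}$ lies in $\ker H_\sigma^{V,V'}$. Using $\mathbf{b}^+ + \mathbf{b}' \subset \mathbf{a}$ and $\mathbf{b}^+ \subset \mathbf{a}$ (which both follow from $V \cdot V' = \langle \mathbf{x}^{\mathbf{a}} \rangle$ together with $1 \in V'$), one verifies that $\alpha$ was eventually placed into $\mathbf{n}$, and hence processed; since it did not enter $\mathbf{b}$, it ended up in $\mathbf{d}$, giving $\langle \mathbf{x}^{\alpha'}, p_\alpha \rangle_\sigma = 0$ for every $\alpha'$ still in $\mathbf{s}'$ at that step, in addition to the orthogonality against the $q_{\beta}$ already built into the definition of $p_\alpha$. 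The principal obstacle is to upgrade this partial orthogonality to $p_\alpha \perp V'$: one has to run an induction on the processing order, showing that the recurrence $\langle p_\alpha, q_{\tilde{\beta}} \rangle_\sigma = c^{-1} \langle p_\alpha, \mathbf{x}^{\tilde{\beta}'} \rangle_\sigma$ (after killing the earlier $\langle p_\alpha, q_\gamma \rangle_\sigma = 0$ terms) transfers the orthogonality from $\mathbf{s}'$ at the processing step to every subsequently added $\tilde{\beta}' \in \mathbf{b}'$, and then propagating from $(U')^+ \supset \mathbf{x}^{(\mathbf{b}')^+}$ to all of $V'$ via the fact that $V'$ is connected to $1$.

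Once the rank equality is established, Theorem~\ref{thm:flatext} delivers the unique extension $\tilde{\sigma}$ with $\operatorname{rank} H_{\tilde{\sigma}} = r$, belonging to $\PolExp(\mathbf{y})$ (via Theorem~\ref{thm:gorenstein}) and satisfying $I_{\tilde{\sigma}} = (\ker H_\sigma^{U^+, U'})$. By Proposition~\ref{prop:iso}, $\mathcal{A}_{\tilde{\sigma}}$ is Artinian Gorenstein of dimension $r$, and Lemma~\ref{lem:basis} combined with invertibility of $H_{\tilde{\sigma}}^{U,U'}$ ensures that $\mathbf{x}^{\mathbf{b}}$ and $\mathbf{x}^{\mathbf{b}'}$, hence also $\mathbf{p}$ and $\mathbf{q}$, are bases of $\mathcal{A}_{\tilde{\sigma}}$. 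The biorthogonality relations $\langle p_{\beta_i}, q_{\beta_j} \rangle_{\tilde{\sigma}} = \delta_{ij}$ hold in $\mathcal{A}_{\tilde{\sigma}}$ because $\tilde{\sigma}$ coincides with $\sigma$ on $\langle \mathbf{x}^{\mathbf{a}} \rangle \supset \langle p_{\beta_i} q_{\beta_j} \rangle$.

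For the border basis assertion, an equality $\mathbf{d} = \partial \mathbf{b}$ follows from the termination condition $\mathbf{n} = \emptyset$ and the inclusion $\mathbf{b}^+ \subset \mathbf{a}$, $\mathbf{b}^+ + \mathbf{b}' \subset \mathbf{a}$: every $\alpha \in \partial \mathbf{b}$ satisfies the entry criterion of $\mathbf{n}$ and therefore must have been processed and placed into $\mathbf{d}$. The polynomials $p_\alpha = \mathbf{x}^\alpha - \sum_i \langle \mathbf{x}^\alpha, q_{\beta_i} \rangle_\sigma p_{\beta_i}$ for $\alpha \in \mathbf{d}$ lie in $\ker H_\sigma^{U^+, U'} \subset I_{\tilde{\sigma}}$ and have the shape $\mathbf{x}^\alpha - \sum_{\beta \in \mathbf{b}} c_{\alpha, \beta} \mathbf{x}^\beta$ required of a border basis with respect to $\mathbf{x}^{\mathbf{b}}$. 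Finally, for the multiplication matrix $M_k$ in the basis $\mathbf{p}$, I write $x_k p_{\beta_j} \equiv \sum_i (M_k)_{ij} p_{\beta_i} \pmod{I_{\tilde{\sigma}}}$ and pair both sides with $q_{\beta_i}$; biorthogonality immediately gives $(M_k)_{ij} = \langle x_k p_{\beta_j}, q_{\beta_i} \rangle_{\tilde{\sigma}} = \langle \sigma \mid x_k p_{\beta_j} q_{\beta_i} \rangle$, and exchanging the roles of $\mathbf{p}$ and $\mathbf{q}$ produces the transpose formula in the basis $\mathbf{q}$.
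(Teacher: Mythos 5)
Your proposal takes essentially the same route as the paper: identify $U = \langle \tmmathbf{x}^{\tmmathbf{b}} \rangle$, $U' = \langle \tmmathbf{x}^{\tmmathbf{b}'} \rangle$, show that the biorthogonal pairs $(p_{\beta_i}, q_{\beta_i})$ make $H_\sigma^{U,U'}$ invertible of rank $r$, use the termination condition ($\tmmathbf{n}=\emptyset$, hence $\tmmathbf{d}=\partial\tmmathbf{b}$) and the algorithm's membership test to get $p_\alpha \perp V'$ for $\alpha\in\tmmathbf{d}$ and hence $\tmop{rank} H_\sigma^{V,V'}=r$, then invoke Theorem~\ref{thm:flatext} and read off the bases, the border basis $(p_\alpha)_{\alpha\in\tmmathbf{d}}$, and the multiplication matrix via biorthogonality. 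Your discussion of the induction needed to propagate orthogonality from the $\tmmathbf{s}'$ at the step $\alpha$ is processed to all later $q_{\tilde\beta}$ makes explicit a point the paper passes over quickly in the sentence ``As $\alpha\in\tmmathbf{d}$, for each $v'\in V'$, $\langle p_\alpha, v'\rangle_\sigma=0$'', but the overall structure, key lemma, and conclusions are the same.
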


\begin{proof}
  By construction, $V = \langle \tmmathbf{x}^{\tmmathbf{b}^+} \rangle$ is
  connected to $1$ and $\tmmathbf{x}^{\tmmathbf{b}}$ contains 1, otherwise
  $\sigma = 0$. As $V'$ contains $\tmmathbf{x}^{\tmmathbf{b}'}$ and $V \cdummy
  V' = \langle \tmmathbf{x}^{\tmmathbf{a}} \rangle$, we have $\forall \alpha \in \partial
  \tmmathbf{b}, \tmmathbf{x}^{\alpha} \cdot
  \tmmathbf{x}^{\tmmathbf{b}'} \subset \tmmathbf{x}^{\tmmathbf{a}}$. Thus when
  the algorithm stops, we have $\tmmathbf{n}= \emptyset$ and $\partial
  \tmmathbf{b} =\tmmathbf{d}$. By construction, for $\alpha \in
  \tmmathbf{d}$ the polynomials $p_{{\alpha}} =\tmmathbf{x}^{\alpha} -
  \sum_{\beta \in \tmmathbf{b}} \langle \tmmathbf{x}^{\alpha}, q_{\beta}
  \rangle_{\sigma} p_{\beta}$ are orthogonal to $\langle q_{\beta} \mid \beta
  \in \tmmathbf{b} \rangle = \langle \tmmathbf{x}^{\tmmathbf{b}'} \rangle$. As
  $\alpha \in \tmmathbf{d}$, for each $v' \in V'$, we have moreover $\langle  p_{\alpha}, v' \rangle_{\sigma} = 0$.
  
  A basis of $V$ is formed by the polynomials $p_{\alpha}$ for $\alpha \in
  \tmmathbf{b}^+$ since $\langle p_{\beta} \mid \beta \in \tmmathbf{b} \rangle
  = \langle \tmmathbf{x}^{\tmmathbf{b}} \rangle$ and $p_{{\alpha}}
  =\tmmathbf{x}^{\alpha} \noplus + b_{\alpha}$ with $b_{\alpha} \in \langle
  \tmmathbf{x}^{\tmmathbf{b}} \rangle$ for $\alpha \in \tmmathbf{d}= \partial
  \tmmathbf{b}$. The matrix of $H_{\sigma}^{V, V'}$ in this basis of $V$
  and in a basis of $V',$ which first elements are $q_{\beta_1},
  \ldots, q_{\beta_r}$, is of the form 
  \[ H_{\sigma}^{V, V'} = \left(\begin{array}{cc}
       \mathbbm{I}_r & 0\\
       \ast & 0
     \end{array}\right) \]
  where $\mathbbm{I}_r$ is the identity matrix of size $r$. The kernel of
  $H_{\sigma}^{V, V'}$ is generated by the polynomials $p_{\alpha}$ for
  $\alpha \in \tmmathbf{d}$.
  
  By Theorem \ref{thm:flatext}, $\sigma$ coincides on $V \cdummy V' =
  \langle \tmmathbf{x}^{\tmmathbf{a}} \rangle$ with a series $\tilde{\sigma}$
  such that $\tmmathbf{x}^{\tmmathbf{b}}$ is a basis of
  $\mathcal{A}_{\bar{\sigma}} =\mathbbm{K} [\tmmathbf{x}] /
  I_{\tilde{\sigma}}$ and $I_{\tilde{\sigma}} = (\ker H_{\tilde{\sigma}}^{V,
  V'}) = (p_{\alpha})_{\alpha \in \tmmathbf{d}}$.
  
  As $p_{\alpha} =\tmmathbf{x}^{\alpha} + b_{\alpha}$ with $\alpha \in
  \partial \tmmathbf{b}$ and $b_{\alpha} \in \langle
  \tmmathbf{x}^{\tmmathbf{b}} \rangle$, $(p_{\alpha})_{\alpha \in \partial
  \tmmathbf{b}}$ is a border basis with respect to
  $\tmmathbf{x}^{\tmmathbf{b}}$ for the ideal $I_{\tilde{\sigma}}$, since
  $\tmmathbf{x}^{\tmmathbf{b}}$ is a basis of of $\mathcal{A}_{\bar{\sigma}}$.
  
  This shows that $\tmop{rank} H_{\tilde{\sigma}} = \dim
  \mathcal{A}_{\tilde{\sigma}} = | \tmmathbf{b} | = r$. By
  construction, $(\tmmathbf{p}, \tmmathbf{q})$ are pairwise orthogonal for the
  inner product $\langle \cdummy, \cdummy \rangle_{\sigma}$,
  which coincides with $\langle \cdummy, \cdummy \rangle_{\tilde{\sigma}}$ on
  $\langle \tmmathbf{x}^{\tmmathbf{a}} \rangle$. Thus they are pairwise
  orthogonal bases of $\mathcal{A}_{\tilde{\sigma}}$ for the inner product
  $\langle \cdummy, \cdummy \rangle_{\tilde{\sigma}}$.
  
  As we have $x_k p_{\beta_j} \equiv \sum_{i = 1}^{r} \langle x_k
  p_{\beta_j} , q_{\beta_i} \rangle_{\sigma} 
  p_{\beta_i}$, the matrix of multiplication by $x_k$ in the basis
  $\tmmathbf{p}$ of $\mathcal{A}_{\tilde{\sigma}}$ is $M_k \assign (\langle
  x_k p_{\beta_j} , q_{\beta_i} \rangle_{\sigma} )_{1
  \leqslant i, j \leqslant r} = (\langle \sigma | x_k p_{\beta_j} q_{\beta_i}
  \rangle)_{1 \leqslant i, j \leqslant r} $. Exchanging the role of
  {\tmstrong{p}} and {\tmstrong{q}}, we obtain $M_k^t$ for the matrix of
  multiplication by $x_k$ in the basis {\tmem{{\tmstrong{q}}}}.
\end{proof}

\begin{remark}
  If the polynomials $p_{\alpha}, q_{\alpha}$ are at most of degree $d$, then only the
  coefficients of $\sigma$ of degree $\leqslant 2 d + 1$ are
  involved in this computation. In this case, the border basis and the
  decomposition of the series $\sigma$ as a sum of exponential polynomials can
  be computed from these first coefficients.
\end{remark}

\begin{remark}
  When the monomials in $\tmmathbf{s}$ are chosen according to a monomial
  ordering $\prec$, the polynomials $p_{\alpha} =\tmmathbf{x}^{\alpha} +
  b_{\alpha}$, $\alpha \in \tmmathbf{d}$ are constructed in such a way that
  their leading term is $\tmmathbf{x}^{\alpha}$. They form a Gr{\"o}bner basis
  of the ideal $I_{\tilde{\sigma}}$. To construct a minimal Gr{\"o}bner basis
  of $I_{\tilde{\sigma}}$ for the monomial ordering $\prec$, it suffices to
  keep the elements $p_{\alpha}$ with $\alpha \in \tmmathbf{d}$ minimal for
  the division.
\end{remark}

\begin{remark}
  The computation can be simplified, when $\langle \cdummy, \cdummy
  \rangle_{\sigma}$ is semi-definite, that is, when for all $p \in \langle
  \tmmathbf{x}^a \rangle$ such that $p^2 \in \langle
  \tmmathbf{x}^{\tmmathbf{a}} \rangle$, we have $\langle p, p \rangle_{\sigma}
  = 0$ implies that $\forall \alpha \in \tmmathbf{a}$ with
  $\tmmathbf{x}^{\alpha} p \in \langle \tmmathbf{x}^{\tmmathbf{a}} \rangle$,
  $\langle p, \tmmathbf{x}^{\alpha} \rangle_{\sigma} = 0$. In this case, the
  algorithm constructs a family of orthogonal polynomials $\tmmathbf{p}=
  [p_{\beta_1}, \ldots, p_{\beta_r}]$ and $\tmmathbf{q}= [q_{\beta_1}, \ldots,
  q_{\beta_r}]$ with $q_{\beta_i} = \frac{1}{\langle p_{\beta_i}, p_{\beta i}
  \rangle_{\sigma}} p_{\beta_i}$ and we have $\tmmathbf{b}=\tmmathbf{b}'$.
  Indeed, in the while loop for each $\alpha \in \tmmathbf{n}$, either
  $\langle p_{\alpha}, p_{\alpha} \rangle_{\sigma} = 0$, which implies that
  $\forall \alpha' \in \tmmathbf{t}$ with $\tmmathbf{x}^{\alpha'} p_{\alpha}
  \in \langle \tmmathbf{x}^{\tmmathbf{a}} \rangle$,  $\langle \tmmathbf{x}^{\alpha'}, 
  p_{\alpha} \rangle_{\sigma} = 0$, so that $\alpha \in \tmmathbf{d}$,
  or $\langle p_{\alpha}, p_{\alpha} \rangle_{\sigma} = \langle
  \tmmathbf{x}^{\alpha}, p_{\alpha} \rangle_{\sigma} \neq 0$ and the first
  $\alpha' \in \tmmathbf{t}$ such that $\langle \tmmathbf{x}^{\alpha'},
  p_{\alpha} \rangle_{\sigma}$ is $\alpha' = \alpha \in \tmmathbf{b}$.
  
  If $\mathbbm{K}=\mathbbm{R}$ and $\sigma$ is semi-definite positive, then
  the polynomials $\frac{1}{\sqrt{\langle p_{\beta_i}, p_{\beta i}
  \rangle_{\sigma}}} p_{\beta_i}$ are classical orthogonal polynomials for
  $\langle \cdummy, \cdummy \rangle_{\sigma}$.
\end{remark}

We can now describe the decomposition algorithm of polynomial-exponential series,
obtained by combining the algorithm for computing  bases of $\mathcal{A}_{\sigma}$
and the algorithm for computing the frequency points and the weights:

\begin{algorithm}[H]\caption{Polynomial-Exponential decomposition}\label{algo:decomposition}
{\tmstrong{Input:}} the coefficients $\sigma_{\alpha}$ of a series $\sigma \in \mathbbm{K}[[\tmmathbf{y}]]$ for $\alpha \in \tmmathbf{a} \subset \mathbbm{N}^n$.
\begin{itemizeminus}
 \item Apply Algorithm \ref{algo:orthobasis} to compute bases $B=\bdx^{\bdb}$, $B'=\bdx^{\bdb'}$ of $\mathcal{A}_{\sigma}$\;
 \item \uIf{$\exists V'\supset B'$ s.t. $\langle V'\cdot B^{+}\rangle= \langle \tmmathbf{x}^{\tmmathbf{a}}\rangle$}{ 
\ \ \ \ \ Apply Algorithm \ref{algo:genprony2} (or Algorithm \ref{algo:genprony1} if the weights are constant)\;
}
\end{itemizeminus}
 {\tmstrong{Output:}} the polynomial-exponential series $\sum_{i = 1}^r
 \omega_i (\tmmathbf{y}) \tmmathbf{e}_{\xi_i} (\tmmathbf{y})$ with $\omega_i (\tmmathbf{y}) \in 
 \mathbbm{K} [\tmmathbf{y}]$, $\xi_i \in \mathbbm{K}^n$ with the same Taylor
 coefficients $\sigma_{\alpha}$ as $\sigma$ for $\alpha \in \tmmathbf{a} \subset \mathbbm{N}^n$.
\end{algorithm}
 
\subsection{Examples}

\begin{example}
  Let $n = 1$ and $\sigma (y) = \frac{y^d}{d!} \in \mathbbm{K}
  [[y]]$ with $0 < d$ and $a \neq 0 \in \mathbbm{K}$.
  
  In the first step of the algorithm, we take $p_1 = 1$ and compute the first
  $i$ such that $\langle x^i, p_1 \rangle_{\sigma}$ is not zero. This
  yields $\tmmathbf{b}= [1]$, $\tmmathbf{b}' = [x^d]$ and $q_1 = x^d$.
  
  In a second step, we have $p_x = x - \langle x, q_1 \rangle_{\sigma} p_1 =
  x$. The first $i$ such that $\langle x^i, p_x \rangle_{\sigma}$ is not
  zero yields $\tmmathbf{b}= [1, x]$, $\tmmathbf{b}' = [x^d, x^{d - 1}]$ and
  $q_x = x^{d - 1} - \langle x^{d - 1}, p_1 \rangle_{\sigma} q_1 = x^{d - 1}$.
  
  We repeat this computation until $\tmmathbf{b}= [1, \ldots, x^d]$,
  $\tmmathbf{b}' = [x^d, x^{d - 1}, \ldots, 1]$ with $p_{x^i} = x^i $,
  $q_{x^i} = x^{d - i}$ for $i = 0, \ldots, d$.
  
  In the following step, we have $p_{x^{d + 1}} = x^{d + 1} - \langle x^{d +
  1}, q_1 \rangle_{\sigma} p_1 - \cdots - \langle x^{d + 1}, q_{x^d}
  \rangle_{\sigma} p_{x^d} = x^{d + 1}$. The algorithm stops and outputs
  $\tmmathbf{b}= [1, \ldots, x^d]$, $\tmmathbf{b}' = [x^d, x^{d - 1}, \ldots,
  1]$, $p_{x^{d + 1}} = x^{d + 1}$.
\end{example}

\begin{example}
  \label{ex:62}We consider the function $h (u_1, u_2) = {\color{green}
  {\color{darkgreen} 2 + 3} \hspace{0.25em}} \cdummy {\color{red} 2^{u_1}
  2^{u_2} {\color{green} {\color{darkgreen} -}} 3^{u_1}}$. Its associated
  generating series is $\sigma = \sum_{\alpha \in \mathbbm{N}^2} h (\alpha) 
  \frac{\tmmathbf{y}^{\alpha}}{\alpha !} $. Its (truncated) moment matrix is
  \[ H^{[1, x_1, x_2, x_1^2, x_1 x_2, x_2^2]}_{\sigma} = \left[
     \begin{array}{cccc}
       h (0, 0) & h (1, 0) & h (0, 1) & \cdots\\
       h (1, 0) & h (2, 0) & h (1, 1) & \cdots\\
       h (0, 1) & h (1, 1) & h (0, 2) & \cdots\\
       \vdots & \vdots & \vdots & \\
       \vdots & \vdots & \vdots & 
     \end{array} \right] = \left[ \begin{array}{cccccc}
       4 & 5 & 7 & 5 & 11 & 13\\
       5 & 5 & 11 & - 1 & 17 & 23\\
       7 & 11 & 13 & 17 & 23 & 25\\
       5 & - 1 & 17 & - 31 & 23 & 41\\
       11 & 17 & 23 & 23 & 41 & 47\\
       13 & 23 & 25 & 41 & 47 & 49
     \end{array} \right]. \]
  At the first step, we have $\tmmathbf{b}= [1]$, $\tmmathbf{p}= [1]$,
  $\tmmathbf{q}= \left[ \frac{1}{4} \right]$. At the second step, we compute
  $\tmmathbf{b}= [1, x_1, x_2]$, $\tmmathbf{p}= [1, x_1 - \frac{5}{4}, x_2 +
  \frac{9}{5} x_1 - 4] = [p_1, p_{x_1}, p_{x_2}]$ and $\tmmathbf{q}= \left[
  \frac{1}{4} p_1, - \frac{4}{5} p_{x_1}, \frac{5}{24} p_{x_2} \right]$. At
  the third step, $\tmmathbf{d}= [x_1^2, x_1 x_2, x_2^2]$ and the algorithm
  stops. We obtain the following generators of $\ker H_{\sigma}$:
  \begin{eqnarray*}
    p_{x_1^2} & = & x_1^2 + x_2 - 4 x_1 + 2\\
    \hspace{0.25em} p_{x_1 x_2} & = & x_1 x_2 - 2 x_2 - x_1 + 2\\
    p_{x_2^2} & = & x_2^2 - 3 x_2 + 2
  \end{eqnarray*}
  We have modulo $\ker H_{\sigma}$:
  \begin{eqnarray*}
    x_1  \hspace{0.25em} p_1 & \equiv & \frac{5}{4} p_1 + p_{x_1}\\
    x_1  \hspace{0.25em} p_{x_1} & \equiv & - \frac{5}{16} p_1 + \frac{91}{20}
    p_{x_1} - p_{x_2}\\
    x_1 p_{x_2} & \equiv & \sum_i \frac{\langle x_1 p_3, p_i
    \rangle_{\sigma}}{\langle p_i, p_i \rangle_{\sigma}} b_i = \frac{96}{25}
    p_{x_1} + \frac{1}{5} p_{x_2}
  \end{eqnarray*}
  The matrix of multiplication by $x_1$ in the basis $\tmmathbf{p}$ is
  \[ M_1 = \left[ \begin{array}{ccc}
       \frac{5}{4} & - \frac{5}{16} & 0\\
       1 & \frac{91}{20} & \frac{96}{25}\\
       0 & - 1 & \frac{1}{5}
     \end{array} \right] \]
  Its eigenvalues are ${\color{red} [ 1, 2, 3]}$ and the
  corresponding matrix of eigenvectors is
  \[ U \assign \left[ \begin{array}{ccc}
       \frac{1}{2} & \frac{3}{4} & - \frac{1}{4}\\
       \frac{2}{5} & - \frac{9}{5} & \frac{7}{5}\\
       - \frac{1}{2} & 1 & - \frac{1}{2}
     \end{array} \right], \]
  that is, the polynomials $U (x) = [2 - \frac{1}{2}  \hspace{0.25em} x_1 -
  \frac{1}{2}  \hspace{0.25em} x_2, - 1 + x_2, \frac{1}{2}  \hspace{0.25em}
  x_1 - \frac{1}{2}  \hspace{0.25em} x_2]$. By computing the Hankel matrix
  \[ H_{\sigma}^{U, [1, x_1, x_2]} = \left[ \begin{array}{ccc}
       {\color{darkgreen} 2} & {\color{darkgreen} 3} & {\color{darkgreen} -
       1}\\
       {\color{green} {\color{darkgreen} 2 \times}}  {\color{red} 1} &
       {\color{green} {\color{darkgreen} 3 \times}}  {\color{red} 2} &
       {\color{green} {\color{darkgreen} - 1 \times}} {\color{red} 3}\\
       {\color{green} {\color{darkgreen} 2 \times}}  {\color{red} 1} &
       {\color{darkgreen} 3 \times}  {\color{red} 2} & {\color{green}
       {\color{darkgreen} - 1 \times}} {\color{red} 1}
     \end{array} \right] \]
  we deduce the weights {\color{darkgreen} ${\color{green} {\color{darkgreen} 2, 3, - 1}}$} and the frequencies ${\color{red} (1, 1), (2, 2), (3,
  1)}$, which corresponds to the decomposition $\sigma = e^{y_1 + y_2} + 3
  e^{2 y_1 + 2 y_2} - e^{2 y_1 + y_2} $ associated to $h (u_1, u_2) = 2
  \noplus + 3 \cdummy 2^{u_1 + u_2} - 3^{u_1}$.
\end{example}

\section{Sparse decomposition from generating series}\label{sec:dec}

To exploit the previous results in the context of functional analysis or
signal processing, we need to transform functions into series or sequences in
$\mathbbm{K}^{\mathbbm{N}^n}$. Here is the general context that we consider,
which extends the approach of {\cite{peter_generalized_2013}} to
multi-index sequences. We assume that $\mathbbm{K}$ is \ algebraically close.
\begin{itemize}
  \item Let $\mathcal{F}$ be a functional space (in which ``the
  functions, distributions or signals live'').
  
  \item Let $S_1, \ldots, S_n : \mathcal{F} \rightarrow \mathcal{F}$ be
  linear operators of $\mathcal{F}$, which are commuting: $S_i \circ S_j =
  S_j \circ S_i$.
  
  \item Let $\Delta : h \in \mathcal{F} \mapsto \Delta [h] \in \mathbbm{K}$
  be a linear functional on $\mathcal{F}$.
\end{itemize}
We associate to an element $h \in \mathcal{F}$, its generating series:

\begin{definition}
  For $h \in \mathcal{F}$, the generating series associated to $h$ is
  \begin{eqnarray}
    \sigma_h (\tmmathbf{y}) & = &  \hspace{0.25em} \hspace{0.25em}
    \sum_{\alpha \in \mathbbm{N}^n} \frac{1}{\alpha !} \Delta [S^{\alpha} (h)]
    \tmmathbf{y}^{\alpha} 
  \end{eqnarray}
  where $S^{\alpha} = S_1^{\alpha_1} \circ \cdots \circ S_n^{\alpha_n}$ for
  $\alpha = (\alpha_1, \ldots, \alpha_n) \in \mathbbm{N}^n$.
\end{definition}

\begin{definition}
  We say that the {\tmem{regularity condition}} is satisfied if the map $h \in
  \mathcal{F} \mapsto \sigma_h (\tmmathbf{y}) \in \mathbbm{K}
  [[\tmmathbf{y}]]$ is injective.
\end{definition}

We are interested in the decomposition of a function $h \in \mathcal{F}$ in
terms of (generalized) eigenfunctions of the operators $S_i$. An
eigenfunction of the operators $S_i$ is a function $E \in \mathcal{F}$ such
that $S_j (E) = \xi_j E$ for $j = 1, \ldots, n$ with $\xi = (\xi_1
\nocomma, \ldots, \xi_n) \in \mathbbm{K}^n$. Generalized eigenfunctions of the
operators $S_i$ are functions $E_1, \ldots, E_{{\mu}} \in \mathcal{F}$
such that $S_j (E_k) = \xi_j E_k \noplus + \sum_{k' < k} m_{j, k'} E_{k'}$ for
$k = 1, \ldots, {\mu}$ and $\xi_1, \ldots, \xi_n \in \mathbbm{K}$.

The following proposition shows that if a function is a linear combination of
generalized eigenfunctions, then its generating series is a sum of
polynomial-exponential series.

\begin{theorem}
  Let $S_1, \ldots, S_n$ be commuting operators of $\mathcal{F}$. Let $E_{1,
  1}, \ldots, E_{1, {\mu}_1}, \ldots,, E_{r,
  1}, \ldots, E_{r, {\mu}_r} \in \mathcal{F}$ be generalized
  eigenfunctions of $S_1, \ldots, S_n$ such that for \ $i = 1, \ldots, r$, $j
  = 1, \ldots, n$, $k = 1, \ldots, {\mu}_i$,
  \[ S_j (E_{i, k}) = \xi_{i, j} E_{i, k} \noplus + \sum_{k' < k} m^{i,
     j}_{k', k} E_{i, k'} \]
  with $\xi_i = (\xi_{i, 1}, \ldots,
  \xi_{i, n}) \in \mathbbm{K}^n$ pairwise distinct. If $h = \sum_{i = 1}^r
  \hspace{0.25em} \sum_{k = 1}^{^{{\mu}_i}} h_{i, k} E_{i, k}$, then the generating series $\sigma_h$ has a unique decomposition as:
  \begin{eqnarray*}
    \sigma_h (\tmmathbf{y}) & = &  \hspace{0.25em} \sum_{i = 1}^r
    \hspace{0.25em} \omega_i (\tmmathbf{y}) \hspace{0.25em}
    \tmmathbf{e}_{{\xi_i}} (\tmmathbf{y})
  \end{eqnarray*}
  where $\omega_i (\tmmathbf{y}) \in \mathbbm{K} [\tmmathbf{y}]$. If the
  regularity condition is satisfied, the decomposition uniquely determines the
  coefficients $h_{i, k}$ of the decomposition of $h \in \mathcal{F}$.
\end{theorem}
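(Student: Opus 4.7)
The plan is to reduce to the case of a single generalized eigenfunction via linearity of both $\Delta$ and the operators $S^{\alpha}$, then show that the generating series of a single generalized eigenfunction is exactly a polynomial weight times the exponential at its eigenvalue. First I would fix an index $i$ and work on the finite-dimensional subspace $V_i=\langle E_{i,1},\ldots,E_{i,\mu_i}\rangle$, on which each $S_j$ acts as $\xi_{i,j}\mathrm{Id}+N_{i,j}$, where $N_{i,j}$ is strictly upper triangular (in the given basis), hence nilpotent. Because the $S_j$ commute, the $N_{i,j}$ commute among themselves, so I can legitimately apply the multinomial expansion
\[
S^{\alpha}_{|V_i}=\prod_{j=1}^{n}(\xi_{i,j}\mathrm{Id}+N_{i,j})^{\alpha_j}=\sum_{\beta\ll\alpha}\binom{\alpha}{\beta}\,\xi_i^{\alpha-\beta}\,N_i^{\beta},
\]
where $N_i^{\beta}=\prod_j N_{i,j}^{\beta_j}$ and the sum is finite since the $N_{i,j}$ are nilpotent.

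Next I would substitute this expansion into the definition of $\sigma_{E_{i,k}}$, swap the order of summation (licit because the $\beta$-sum has only finitely many nonzero terms), and simplify using $\frac{1}{\alpha!}\binom{\alpha}{\beta}=\frac{1}{\beta!(\alpha-\beta)!}$. This gives
\[
\sigma_{E_{i,k}}(\tmmathbf{y})=\sum_{\beta}\frac{\Delta[N_i^{\beta}(E_{i,k})]}{\beta!}\,\tmmathbf{y}^{\beta}\sum_{\gamma\in\mathbbm{N}^n}\frac{\xi_i^{\gamma}\tmmathbf{y}^{\gamma}}{\gamma!}=\omega_{i,k}(\tmmathbf{y})\,\tmmathbf{e}_{\xi_i}(\tmmathbf{y}),
\]
with $\omega_{i,k}(\tmmathbf{y})=\sum_{\beta}\frac{\Delta[N_i^{\beta}(E_{i,k})]}{\beta!}\tmmathbf{y}^{\beta}\in\mathbbm{K}[\tmmathbf{y}]$, a genuine polynomial by nilpotency. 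Then by linearity,
\[
\sigma_h(\tmmathbf{y})=\sum_{i=1}^{r}\sum_{k=1}^{\mu_i}h_{i,k}\,\omega_{i,k}(\tmmathbf{y})\,\tmmathbf{e}_{\xi_i}(\tmmathbf{y})=\sum_{i=1}^{r}\omega_i(\tmmathbf{y})\,\tmmathbf{e}_{\xi_i}(\tmmathbf{y}),
\]
with $\omega_i(\tmmathbf{y})=\sum_k h_{i,k}\,\omega_{i,k}(\tmmathbf{y})\in\mathbbm{K}[\tmmathbf{y}]$, which is the desired decomposition.

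For uniqueness of the polynomial-exponential decomposition, I would invoke Lemma \ref{lem:indep}: the monomials $\tmmathbf{y}^{\alpha}\tmmathbf{e}_{\xi_i}(\tmmathbf{y})$ with pairwise distinct $\xi_i$ are linearly independent in $\mathbbm{K}[[\tmmathbf{y}]]$, so the polynomials $\omega_i$ attached to distinct frequencies are uniquely determined by $\sigma_h$. Finally, under the regularity condition, $h\mapsto\sigma_h$ is injective, and the correspondence $h_{i,k}\leftrightarrow\omega_{i,k}$ constructed above is linear and nondegenerate (the $\omega_{i,k}$ reflect the triangular structure of $N_i^{\beta}$ on the basis $\{E_{i,k}\}$), so the coefficients $h_{i,k}$ are determined by the polynomial weights $\omega_i$. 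The main (minor) obstacle is simply to justify cleanly the interchange of summations and to check that commutativity of the $N_{i,j}$ really does follow from commutativity of the $S_j$ on the invariant subspace $V_i$; everything else is a formal rewriting.
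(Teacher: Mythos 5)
Your proof follows essentially the same route as the paper: reduce to a single index $i$ by linearity, write $S_j|_{V_i} = \xi_{i,j}\mathrm{Id} + N_{i,j}$ with the $N_{i,j}$ commuting nilpotent operators, apply the multinomial expansion, swap finite sums, and obtain $\sigma_{E_{i,k}} = \omega_{i,k}(\tmmathbf{y})\,\tmmathbf{e}_{\xi_i}(\tmmathbf{y})$ with $\omega_{i,k}(\tmmathbf{y}) = \sum_{\beta}\frac{\Delta[N_i^{\beta}(E_{i,k})]}{\beta!}\tmmathbf{y}^{\beta}$. Uniqueness of the $\omega_i$ then follows from Lemma~\ref{lem:indep} exactly as in the paper. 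This part is correct.

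The one place you go slightly astray is the parenthetical justification of nondegeneracy in the last step. You write that ``the $\omega_{i,k}$ reflect the triangular structure of $N_i^{\beta}$ on the basis $\{E_{i,k}\}$'' as if that alone explained their linear independence, but it does not: the triangular action of the $N_{i,j}$ on the $E_{i,k}$ says nothing about how $\Delta$ evaluates on those vectors. For instance, with $n=1$, $\mu_i = 2$, $N(E_{i,1})=0$, $N(E_{i,2})=E_{i,1}$, and $\Delta(E_{i,1})=0$, $\Delta(E_{i,2})=1$, one finds $\omega_{i,1}=0$ and $\omega_{i,2}=1$, so the $\omega_{i,k}$ are \emph{not} linearly independent. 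The correct argument is the one the paper uses (and which you state but then deemphasize): the regularity condition, i.e.\ injectivity of $h\mapsto\sigma_h$, is essential. If $\sum_k c_k\,\omega_{i,k}=0$, then $\sigma_{\sum_k c_k E_{i,k}} = \left(\sum_k c_k\,\omega_{i,k}(\tmmathbf{y})\right)\tmmathbf{e}_{\xi_i}(\tmmathbf{y}) = 0$, and injectivity forces $\sum_k c_k E_{i,k} = 0$; linear independence of the $E_{i,k}$ then gives $c_k=0$. In the counterexample above the regularity condition simply fails. So the regularity hypothesis is not a formality you can hide behind a structural remark — it is precisely what guarantees that the map $(h_{i,k})_k \mapsto \omega_i$ is injective.
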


\begin{proof}
  By Lemma \ref{lem:indep}, in a decomposition of series as a
  polynomial-exponential function $\sum_{i = 1}^r \omega_i  (\tmmathbf{y}) \, \tmmathbf{e}_{{\xi_i}} (\tmmathbf{y})$, the
  polynomials $\omega_i (\tmmathbf{y}) \in \mathbbm{K} [\tmmathbf{y}]$ and the
  support $\{ \xi_1, \ldots, \xi_r \}$ are unique. Let $N_{i, j} = S_j -
  \xi_{i, j} \tmop{Id}$ be the linear operator of $\tmmathbf{E}_i = \langle
  E_{i, 1}, \ldots, E_{i, {\mu}_i} \rangle$ such that $N_{i, j} (E_{j, k})
  = \sum_{k' < k} m^i_{j, k'} E_{j, k'}$. By construction, $N_{i, j}$ is
  nilpotent of order $\leqslant {\mu}_i + 1$ and its matrix in the basis
  $\{ E_{i, 1}, \ldots, E_{i, {\mu}_i} \}$ of $\tmmathbf{E}_i$ is $(m^{i,
  j}_{k, k'})_{k, k'}$ (with $m^{i, j}_{k, k'} = 0$ if $k \geqslant k'$). As
  the operators $S_j$ restricted to $\tmmathbf{E}_i$ are $\xi_{i, j}
  \tmop{Id} + N_{i, j}$ and commute, we deduce that the operators $N_{i, j}$
  commute for $j = 1, \ldots, n$. By the binomial expansion of $S^{\alpha} =
  S_1^{\alpha_1} \cdots S_n^{\alpha_n}$ for $\alpha = (\alpha_1, \ldots,
  \alpha_n) \in \mathbbm{N}^n$ and the commutation of the matrices \ $N_{i,
  j}$, we have
  \[ S^{\alpha} (E_{i, k}) = \sum_{\beta \ll \alpha, \beta_j \leqslant
     {\mu}_j} \binom{\alpha}{\beta} \xi_i^{\alpha - \beta} N_i^{\beta}
     (E_{i, k}), \]
  where $\binom{\alpha}{\beta} = \binom{\alpha_1}{\beta_1} \cdots
  \binom{\alpha_n}{\beta_n}$ and $N_i^{\beta} = N_{i, 1}^{\beta_1} \cdots
  N_{i, n}^{\beta_n}$. As $N_{i, j}$ is nilpotent of order ${\mu}_i$+1,
  this sum involves \ at most $({\mu}_i + 1)^n$ terms such that $\beta_j
  \leqslant {\mu}_j$, $j = 1, \ldots, n$.
  
  The generating series of $E_{i, k}$ is then
  \begin{eqnarray*}
    \sigma_{E_{i, k}} (\tmmathbf{y}) & = & \hspace{0.25em} \sum_{\alpha \in
    \mathbbm{N}^n} \sum_{\beta \ll \alpha, \beta_j \leqslant {\mu}_j}
    \Delta [N_i^{\beta} (E_{i, k})] \binom{\alpha}{\beta} \xi_i^{\alpha -
    \beta}  \frac{\tmmathbf{y}^{\alpha}}{\alpha !}\\
    & = & \sum_{\beta_i \leqslant {\mu}_i} \Delta [N_i^{\beta} (E_{i,
    k})] \frac{\tmmathbf{y}^{\beta}}{\beta !} \sum_{\alpha' \in \mathbbm{N}^n}
    \xi_i^{\alpha'}  \frac{\tmmathbf{y}^{\alpha'}}{\alpha' !}\\
    & = & \sum_{\beta_i \leqslant {\mu}_i} \Delta [N_i^{\beta} (E_{i,  k})]  \frac{\tmmathbf{y}^{\beta}}{\beta !} \tmmathbf{e}_{{\xi_i}}
    (\tmmathbf{y}) = \omega_{i, k} (\tmmathbf{y}) \tmmathbf{e}_{{\xi_i}}
    (\tmmathbf{y}),
  \end{eqnarray*}
  using the relation $\frac{1}{\alpha !} \binom{\alpha}{\beta} =
  \frac{1}{\beta !}  \frac{1}{(\alpha - \beta) !} $, exchanging the summation
  order and setting $\alpha' = \alpha - \beta$. We deduce that if $h = \sum_{i
  = 1}^r \hspace{0.25em} \sum_{k = 1}^{^{{\mu}_i}} h_{i, k} E_{i, k}$, then $\sigma_h (\tmmathbf{y}) = \sum_{i = 1}^r \omega_i
  (\tmmathbf{y}) \tmmathbf{e}_{{\xi_i}} (\tmmathbf{y})$ with $\omega_i
  (\tmmathbf{y}) = \sum_k h_{i, k} \omega_{i, k} (\tmmathbf{y}) \in
  \mathbbm{K} [\tmmathbf{y}]$. If the regularity condition is satisfied, the
  map $h \in \mathcal{F} \mapsto \sigma_h (\tmmathbf{y}) \in \mathbbm{K}
  [[\tmmathbf{y}]]$ is injective and the polynomials $\omega_{i, k}
  (\tmmathbf{y})$ $k = 1, \ldots, {\mu}_i$ are linearly independent.
  Therefore, the coefficients $h_{i, k}$, $k = 1, \ldots, {\mu}_i$ are
  uniquely determined by the polynomial $\omega_i (\tmmathbf{y}) = \sum_k
  h_{i, k} \omega_{i, k} (\tmmathbf{y})$.
\end{proof}

\begin{definition}
  We say that the {\tmem{completness}} condition is satisfied if for any
  polynomial-exponential series $\omega (\tmmathbf{y}) \tmmathbf{e}_{{\xi}}
  (\tmmathbf{y})$ with $\omega (\tmmathbf{y}) \in \mathbbm{K} [\tmmathbf{y}]$
  and $\xi \in \mathbbm{K}^n$, there exists a linear combination $h \in
  \mathcal{F}$ of generalized eigenfunctions of the operators $S_i$, such
  that its generating function is $\omega (\tmmathbf{y}) \tmmathbf{e}_{{\xi}}
  (\tmmathbf{y})$.
\end{definition}

Under the completness condition and the regularity condition, any function $h
\in \mathcal{F}$ with a generating series of finite rank can be decomposed
into a linear combination of eigenfunctions. We analyse several cases, for
which this framework applies.

\subsection{Reconstruction from moments}

Let $\mathcal{E}= C^{\infty} (\mathbbm{R}^n)$, $\mathcal{S}$ be the set of
functions in $\mathcal{E}$ with fast decrease at infinity ($\forall f\in \mathcal{S}, \forall p\in \mathbbm{C}[\bdx]$,  $|p f|$ is bounded on $\mathbbm{R}^{n}$),
$\mathcal{O}_M$ be the set of functions in $\mathcal{E}$ with slow increase at infinity
($\forall f\in \mathcal{O}_{M}, |f(\bdx)|< C(1+|\bdx|)^{N}$ for some $C\in \mathbbm{R}, N\in \mathbbm{N}$),
$\mathcal{E}'$ be the set of distributions with compact support (dual to
$\mathcal{E}$), $\mathcal{S}'$ be the set of tempered distribution (dual to
$\mathcal{S}$) and $\mathcal{O}'_C$ be the space of distributions with rapid
decrease at infinity (see {\cite{schwartz_theorie_1966}}).

In this problem, we consider the following space and operators:
\begin{itemize}
  \item $\mathcal{F}=\mathcal{O}'_C$ is the space of distributions with rapid
  decrease at infinity;
  
  \item $S_i : h (\tmmathbf{x}) \in \mathcal{O}'_C \mapsto x_i h
  (\tmmathbf{x}) \in \mathcal{O}'_C$ is the multiplication by $x_j$;
  
  \item $\Delta : h (\tmmathbf{x}) \in \mathcal{O}'_C \mapsto \int h
  (\tmmathbf{x}) d\tmmathbf{x} \in \mathbbm{C}$.
\end{itemize}
For any $h \in  \mathcal{O}'_C$, for any $\alpha \in \mathbbm{N}^n$,
\[ \Delta [S^{\alpha} (h)] = \int \tmmathbf{x}^{\alpha} h (\tmmathbf{x}) 
   d\tmmathbf{x} \]
is the $\alpha^{\tmop{th}}$ moment of $h$. For $h \in \mathcal{O}'_C$ and 
$\sigma_h = \sum_{\alpha \in \mathbbm{N}^n} \int h (\tmmathbf{x}) 
\tmmathbf{x}^{\alpha} \frac{\tmmathbf{y}^{\alpha}}{\alpha !} d\tmmathbf{x}$
its generating series, we verify that $\forall p \in \mathbbm{C}[\tmmathbf{x}]$, $\langle \sigma_h \mid p \rangle = \int h (\tmmathbf{x}) p 
(\tmmathbf{x}) d\tmmathbf{x}$ (i.e. the distribution $h$ applied to $p$).
We check that 
\begin{itemize}
 \item the operators $S_j$ are well defined and commute
\item a Dirac measure $\delta_{\xi}$ with $\xi = (\xi_1, \ldots,
\xi_n) \in \mathbbm{C}^n$ is an eigenfunction of $S_j$: $S_j (\delta_{\xi}) =
\xi_j \delta_{\xi}$. Similarly for $\alpha = (\alpha_1, \ldots, \alpha_n) \in
\mathbbm{N}^n$ and
  \item the Dirac derivation $\delta_{\xi}^{(\alpha)}$ ($\forall f \in C^{\infty} (\Omega)$, $\langle \delta_{\xi}^{(\alpha)}, f
\rangle = (- 1)^{| \alpha |} \partial_{x_1}^{\alpha_1} \cdots
\partial_{x_n}^{\alpha_n} (f) (\xi)$) satisfies 
\[ S_i (\delta_{\xi}^{(\alpha)}) = x_i \delta_{\xi}^{(\alpha)} = \xi_i
   \delta_{\xi}^{(\alpha)} \noplus + \delta^{(\alpha - e_i)}_{\xi} \]
with the convention that $\delta^{(\alpha - e_i)}_{\xi} = 0$ if $\alpha_i =
0$. It is a generalized eigenfunction of the operators $S_j$.
\end{itemize}
By the relation (\ref{eq:integ}), the
generating series of $\delta_{\xi}^{(\alpha)}$ is
\[ \sigma_{\delta_{\xi}^{(\alpha)}} = \langle \delta_{\xi}^{(\alpha)},
   {\bde}^{\tmmathbf{x} \cdummy \tmmathbf{y}} \rangle
   =\tmmathbf{y}^{\alpha} \tmmathbf{e}_{{\xi}} (\tmmathbf{y}). \]
This shows that the completeness condition is satisfied.

To check the regularity condition, we use the {\tmem{Fourier transform}}
$\mathscr{F}: f \in \mathcal{O}_M \mapsto \int f (\tmmathbf{x})
e^{-\tmmathbf{i}\tmmathbf{x} \cdummy \tmmathbf{z}} d\tmmathbf{x} \in
\mathcal{O}'_C$. It is a bijection between $\mathcal{O}_M$ and
$\mathcal{O}_C'$ (see {\cite{schwartz_theorie_1966}}[Th{\'e}or{\`e}me XV]).
Its inverse is $\mathscr{F}^{- 1} : f \in \mathcal{O}'_C \mapsto (2 \pi)^n
\int f (\tmmathbf{x}) e^{\tmmathbf{i}\tmmathbf{x} \cdummy \tmmathbf{z}}
d\tmmathbf{x} \in \mathcal{O}_M$. \ Let $\iota : f (\tmmathbf{y}) \in
\mathbbm{C} [[\tmmathbf{y}]] \mapsto f (\tmmathbf{i}\tmmathbf{y}) \in
\mathbbm{C} [[\tmmathbf{y}]]$.

The generating series of $f \in \mathcal{O}'_C$ is
\begin{equation}
  \sigma_f (\tmmathbf{i}\tmmathbf{y}) = \iota \circ \sigma_f (\tmmathbf{y}) =
  \sum_{\alpha \in \mathbbm{N}^n} \int f (\tmmathbf{x}) \tmmathbf{x}^{\alpha}
  \frac{\tmmathbf{i}^{| \alpha |} \tmmathbf{y}^{\alpha}}{\alpha !}
  d\tmmathbf{x}= \int f (\tmmathbf{x})  {\bde}^{\tmmathbf{i}\tmmathbf{x} \cdummy \tmmathbf{y}} d\tmmathbf{x}= (2 \pi)^n
  \mathscr{F}^{- 1} (f). \label{eq:integ}
\end{equation}
This shows that the map $f \in \mathcal{O}'_C \mapsto \sigma_f \in \mathbbm{C}
[[\tmmathbf{y}]]$ is injective and the regularity condition is satisfied.

For $f \in \mathcal{O}'_C$, the Hankel operator $H_{\sigma_f}$ is such that
$\forall g \in \mathbbm{C} [\tmmathbf{x}]$,
\begin{eqnarray*}
  H_{\iota \circ \sigma_f} (g)  & = & \sum_{\alpha \in \mathbbm{N}^n} \int f
  (\tmmathbf{x}) g (\tmmathbf{x}) \tmmathbf{x}^{\alpha} \frac{\tmmathbf{i}^{|
  \alpha |} \tmmathbf{y}^{\alpha}}{\alpha !} d\tmmathbf{x}\\
  & = & \int f (\tmmathbf{x}) g (\tmmathbf{x})  {\bde}^{\tmmathbf{i}\tmmathbf{x} \cdummy \tmmathbf{y}} d\tmmathbf{x}= (2 \pi)^n
  \mathscr{F}^{- 1} (f g).
\end{eqnarray*}
Using Relation (\ref{eq:integ}), we rewrite it as $\forall g \in \mathbbm{C}
[\tmmathbf{x}]$,
\begin{equation}
  H_{\mathscr{F}^{- 1} (f)} (g) =\mathscr{F}^{- 1} (f g) \label{eq:H-F}
\end{equation}
with $\varphi =\mathscr{F}^{- 1} (f) \in \mathcal{O}_M$.

From this relation, we see that the operator $H_{\mathscr{F}^{- 1} (f)}$ can
be extended by continuity to an operator \ $H_{\mathscr{F}^{- 1} (f)} :
\mathcal{O}_M \mapsto \mathcal{O}_M$.

The Hankel operator $H_{\iota \circ \sigma}$ (or $H_{\mathscr{F}^{- 1} (f)}$)
can be related to integral operators on functions defined in terms of
convolution products or cross-correlation. For $\varphi \in \mathcal{S}'$,
the convolution with a distribution $\psi \in \mathcal{O}'_C$ is well-defined
{\cite{schwartz_theorie_1966}}. The convolution operator associated to
$\varphi$ on $\mathcal{O}'_C$ is:
\[ \mathfrak{H}_{\varphi} : \psi \in \mathcal{O}'_C \mapsto \varphi \star \psi
   = \int \varphi (\tmmathbf{x}-\tmmathbf{t}) \psi (\tmmathbf{t})
   d\tmmathbf{t} \in \mathcal{S}'. \]
The image of an element $\psi \in \mathcal{O}'_C$ is a tempered distribution
in $\mathcal{S}'$. The distribution $\varphi$ is the {\tmem{symbol}} of the
operator $\mathfrak{H}_{\varphi}$.

Using the property that $\forall \varphi \in \mathcal{S}', \forall \psi \in
\mathcal{O}'_C, \mathscr{F} (\varphi \star \psi) =\mathscr{F} (\varphi)
\mathscr{F} (\psi) \in \mathcal{S}'$ and the relation (\ref{eq:H-F}), we have
for any $\psi \in \mathcal{O}'_C$,
\[ H_{\mathscr{F}^{- 1} (f)} (g) =\mathscr{F}^{- 1} (f g) = \varphi \star \psi
   =\mathfrak{H}_{\varphi} (\psi), \]
with $f =\mathscr{F} (\varphi) \in \mathcal{S}'$, $g =\mathscr{F} (\psi)
\in \mathcal{O}_M$. We deduce that
\begin{equation}
  \mathfrak{H}_{\varphi} = H_{\varphi} \circ \mathscr{F}
  \label{eq:conv-hankel-fourier}
\end{equation}
with $H_{\varphi} : g \in \mathcal{O}_M \mapsto \mathscr{F}^{- 1} (\mathscr{F}
(\varphi) g) \in \mathcal{S}'$.

In the case where $\varphi \in \PolExp \cap \mathcal{O}_M$,
the operator is of finite rank:

\begin{proposition}
  \label{prop:rank1}Let $\varphi = \omega (\tmmathbf{y})
  \tmmathbf{e}_{\tmmathbf{i} \xi} (\tmmathbf{y})$ with $\omega \in \mathbbm{C}
  [\tmmathbf{y}]$ and $\xi \in \mathbbm{R}^n$. Then $\tmop{rank}
  \mathfrak{H}_{\varphi} \leqslant {\mu} (\omega)$.
\end{proposition}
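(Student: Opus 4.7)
The plan is to bound $\tmop{rank}\mathfrak{H}_{\varphi}$ by explicitly expanding the convolution $\varphi\star\psi$ and exhibiting its image inside a vector space of dimension $\mu(\omega)$.

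First, I would exploit that $\omega$ is a polynomial to write its Taylor expansion at $\tmmathbf{x}$ as a finite sum
\[
\omega(\tmmathbf{x}-\tmmathbf{t}) \;=\; \sum_{\alpha\in\mathbbm{N}^{n}} \frac{(-\tmmathbf{t})^{\alpha}}{\alpha!}\, \tmmathbf{\partial}^{\alpha}\omega(\tmmathbf{x}).
\]
Multiplying by the exponential factor $e^{\tmmathbf{i}\xi\cdot(\tmmathbf{x}-\tmmathbf{t})}$ separates the variables $\tmmathbf{x}$ and $\tmmathbf{t}$:
\[
\varphi(\tmmathbf{x}-\tmmathbf{t}) \;=\; e^{\tmmathbf{i}\xi\cdot\tmmathbf{x}}\sum_{\alpha} \frac{(-\tmmathbf{t})^{\alpha}\,e^{-\tmmathbf{i}\xi\cdot\tmmathbf{t}}}{\alpha!}\, \tmmathbf{\partial}^{\alpha}\omega(\tmmathbf{x}).
\]

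Second, for any $\psi\in\mathcal{O}'_{C}$, interpreting the convolution as the pairing $(\varphi\star\psi)(\tmmathbf{x})=\langle\psi(\tmmathbf{t}),\varphi(\tmmathbf{x}-\tmmathbf{t})\rangle$ and using linearity (the sum over $\alpha$ is finite since $\omega$ is a polynomial), I obtain
\[
(\varphi\star\psi)(\tmmathbf{x}) \;=\; e^{\tmmathbf{i}\xi\cdot\tmmathbf{x}} \sum_{\alpha} c_{\alpha}(\psi)\, \tmmathbf{\partial}^{\alpha}\omega(\tmmathbf{x}),\qquad c_{\alpha}(\psi)\;=\;\tfrac{1}{\alpha!}\bigl\langle\psi(\tmmathbf{t}),\,(-\tmmathbf{t})^{\alpha}e^{-\tmmathbf{i}\xi\cdot\tmmathbf{t}}\bigr\rangle.
\]
Thus the image of $\mathfrak{H}_{\varphi}$ is contained in the vector space
\[
W \;=\; e^{\tmmathbf{i}\xi\cdot\tmmathbf{x}}\cdot\bigl\langle \tmmathbf{\partial}^{\alpha}\omega(\tmmathbf{x}) : \alpha\in\mathbbm{N}^{n}\bigr\rangle.
\]

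Finally, by Definition of $\mu(\omega)$ the inverse system spanned by $\omega$ and all its derivatives has dimension $\mu(\omega)$; since multiplication by the nowhere-vanishing factor $e^{\tmmathbf{i}\xi\cdot\tmmathbf{x}}$ is injective, $\dim W=\mu(\omega)$, yielding $\tmop{rank}\mathfrak{H}_{\varphi}\leq\mu(\omega)$. The only mild subtlety is verifying that the distributional pairings $c_{\alpha}(\psi)$ are well defined and that the separation of variables commutes with the convolution; this is immediate because the sum is finite and each test function $(-\tmmathbf{t})^{\alpha}e^{-\tmmathbf{i}\xi\cdot\tmmathbf{t}}$ lies in $\mathcal{O}_{M}$, on which distributions in $\mathcal{O}'_{C}$ are naturally paired.
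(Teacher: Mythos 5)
Your argument is essentially identical to the paper's: Taylor-expand the polynomial $\omega$ at $\tmmathbf{x}$ to separate the $\tmmathbf{x}$ and $\tmmathbf{t}$ variables in $\varphi(\tmmathbf{x}-\tmmathbf{t})$, observe that $\mathfrak{H}_{\varphi}(\psi)$ then lies in the span of $\partial^{\alpha}\omega(\tmmathbf{x})\,e^{\tmmathbf{i}\xi\cdot\tmmathbf{x}}$, and identify that span's dimension with $\mu(\omega)$. Your extra remark on the well-definedness of the distributional pairings $c_{\alpha}(\psi)$ is a harmless elaboration, but it does not change the route.
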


\begin{proof}
  By Taylor expansion of the polynomial $\omega$ at $\tmmathbf{x}$, we have $\forall \psi \in \mathcal{O}'_C$
  \begin{eqnarray*}
    \mathfrak{H}_{\varphi} (\psi) & = & \int \omega
    (\tmmathbf{x}-\tmmathbf{t}) e^{\tmmathbf{i} \xi \cdummy
    (\tmmathbf{x}-\tmmathbf{t})} \psi (\tmmathbf{t}) d\tmmathbf{t}\\
    & = & \sum_{\alpha \in \mathbbm{N}^n} \partial^{\alpha} (\omega)
    (\tmmathbf{x}) e^{\tmmathbf{i} \xi \cdummy \tmmathbf{x}}  \int (-
    1)^{\alpha} \frac{\tmmathbf{t}^{\alpha}}{\alpha !} \psi (\tmmathbf{t})
    e^{-\tmmathbf{i} \xi \cdummy \tmmathbf{t}} d\tmmathbf{t}.
  \end{eqnarray*}
  This shows that $\mathfrak{H}_{\varphi} (\psi)$ belongs to the space spanned
  by $\partial^{\alpha} (\omega) (\tmmathbf{x}) e^{\xi \cdummy \tmmathbf{x}}$
  for $\alpha \in \mathbbm{N}^n$, which is of dimension ${\mu} (\omega)$
  and thus rank $\mathfrak{H}_{\varphi} \leqslant {\mu} (\omega)$.
\end{proof}

The converse is also true:

\begin{theorem}
  \label{thm:conv1}Suppose that $\varphi \in \mathcal{S}'$ is such that the
  convolution operator $\mathfrak{H}_{\varphi}$ is of finite rank $r$. Then
  its symbol $\varphi$ is of the form
  \[ \varphi = \sum_{i = 1}^{r'} \omega_i (\tmmathbf{y})
     \tmmathbf{e}_{\tmmathbf{i} \xi_i} (\tmmathbf{y}). \]
  with $\xi_i = (\xi_{i, 1}, \ldots, \xi_{i, n}) \in \mathbbm{R}^n$,
  $\omega_i (\tmmathbf{y}) \in \mathbbm{C} [\tmmathbf{y}]$. The rank $r$ of $\mathfrak{H}_{\varphi}$ is the
  sum of the dimension of the vector spaces spanned by $\omega_i
  (\tmmathbf{y})$ and all its derivatives $\tmmathbf{\partial}^{\gamma} \omega_i (\tmmathbf{y})$, $\gamma \in \mathbbm{N}^n$.
\end{theorem}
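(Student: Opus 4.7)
The plan is to transport the problem to an algebraic setting via the Fourier transform and then combine Schwartz's structure theorem for distributions of finite support with Theorem \ref{thm:gorenstein}. Setting $f := \mathscr{F}(\varphi) \in \mathcal{S}'$, the identity $\mathfrak{H}_{\varphi} = \mathscr{F}^{-1} \circ M_f \circ \mathscr{F}$ implicit in \eqref{eq:conv-hankel-fourier} (where $M_f : g \mapsto f g$) and the bijectivity of $\mathscr{F}$ give $\tmop{rank}\, \mathfrak{H}_{\varphi} = \tmop{rank}\, M_f = r < \infty$. So the problem reduces to classifying those $f \in \mathcal{S}'$ for which multiplication by $f$ has finite rank on $\mathcal{O}_M$.

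I would first argue that such an $f$ must have finite support. If $\tmop{supp}(f)$ contained infinitely many points (or was not discrete), then picking cutoff functions $g_1, g_2, \ldots \in \mathcal{O}_M$ with pairwise disjoint supports each meeting $\tmop{supp}(f)$ would produce linearly independent products $f g_j$, contradicting $\tmop{rank}\, M_f < \infty$. Hence $\tmop{supp}(f) = \{\xi_1, \ldots, \xi_{r'}\} \subset \mathbbm{R}^n$. By the classical Schwartz theorem, a distribution supported at a point is a finite sum of Dirac derivatives, so $f = \sum_{i=1}^{r'} \sum_{\alpha \in A_i} c_{i,\alpha}\, \delta_{\xi_i}^{(\alpha)}$ for finite sets $A_i \subset \mathbbm{N}^n$. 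Using $\mathscr{F}^{-1}(\delta_{\xi}^{(\alpha)}) = (2\pi)^{-n} (\tmmathbf{i}\tmmathbf{y})^{\alpha}\, \tmmathbf{e}_{\tmmathbf{i}\xi}(\tmmathbf{y})$, one inverts to obtain $\varphi = \sum_{i=1}^{r'} \omega_i(\tmmathbf{y})\, \tmmathbf{e}_{\tmmathbf{i}\xi_i}(\tmmathbf{y})$ with $\omega_i \in \mathbbm{C}[\tmmathbf{y}]$ and $\xi_i \in \mathbbm{R}^n$.

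It remains to identify $r$ with $\sum_i \mu(\omega_i)$. Proposition \ref{prop:rank1} combined with Lemma \ref{lem:indep} (linear independence of polynomial-exponentials with distinct frequencies) already yields $\tmop{rank}\, \mathfrak{H}_{\varphi} \leq \sum_i \mu(\omega_i)$. For the reverse inequality I would localize: since the $\xi_i$ are distinct and discrete, $\tmop{rank}\, M_f = \sum_i \tmop{rank}\, M_{f_i}$, where $f_i$ is the local component of $f$ at $\xi_i$, and the Leibniz rule
\[
g \cdot \delta_{\xi_i}^{(\alpha)} = \sum_{\beta \ll \alpha} \binom{\alpha}{\beta} (-1)^{|\beta|} \partial^{\beta} g(\xi_i)\, \delta_{\xi_i}^{(\alpha - \beta)}
\]
shows that the image of $M_{f_i}$ is spanned by the $\delta_{\xi_i}^{(\beta)}$ with $\beta$ ranging over the exponent set of the inverse system of $\omega_i$; this dimension is exactly $\mu(\omega_i)$ by Lemma \ref{lm:mu=rank}. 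The main obstacle will be the distribution-theoretic localization and the Leibniz bookkeeping in this last step; everything else follows directly from the Fourier identities already in the text, Proposition \ref{prop:rank1}, and (alternatively) Theorem \ref{thm:gorenstein} applied to the algebraic Hankel operator attached to the generating series of $\varphi$.
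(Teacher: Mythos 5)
Your proposal is correct and takes a genuinely different route from the paper. The paper restricts the Fourier-transformed Hankel operator to the polynomial subspace $\mathbbm{C}[\tmmathbf{x}] \subset \mathcal{O}_M$ and invokes the algebraic Kronecker theorem (Theorem \ref{thm:gorenstein}) directly: this produces the polynomial-exponential form of $\varphi$ together with the lower bound $\tilde{r}=\sum_i\mu(\omega_i)\leq r$ in one stroke, after which Proposition \ref{prop:rank1} gives the matching upper bound, and the realness of the $\xi_i$ is argued separately from the slow growth of $\varphi$. You argue entirely on the distribution side instead: the bump-function argument shows $\tmop{supp}\mathscr{F}(\varphi)$ is finite (and automatically lies in $\mathbbm{R}^n$), Schwartz's structure theorem for distributions with point support gives $\mathscr{F}(\varphi)$ as a finite sum of Dirac derivatives, and you then compute the rank locally via the Leibniz identity, reducing the dimension count to Lemma \ref{lm:mu=rank}. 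This buys a self-contained analytic proof that bypasses the Artinian--Gorenstein machinery for the structural step and gets the realness of the frequencies for free; the paper's version is shorter because it reuses the algebraic theorem wholesale, and it is also the phrasing that transfers verbatim to the bounded-domain variant (Theorem \ref{thm:conv2}). Both proofs close with Proposition \ref{prop:rank1} for the inequality $r\leq\sum_i\mu(\omega_i)$.

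Two points to tighten in your last step. First, the image of $M_{f_i}$ is not literally spanned by a \emph{subset} of the Dirac derivatives $\delta_{\xi_i}^{(\beta)}$; it is the span of the vectors $v_{\beta}=\sum_{\beta\ll\alpha}\binom{\alpha}{\beta}c_{i,\alpha}\,\delta_{\xi_i}^{(\alpha-\beta)}$, a subspace of $\langle\delta_{\xi_i}^{(\gamma)}\rangle_{\gamma}$ whose \emph{dimension} --- rather than a monomial basis of it --- equals $\mu(\omega_i)$. Second, the coefficients of $\omega_i$ are not the $c_{i,\alpha}$ themselves but $c_{i,\alpha}$ twisted by $\tmmathbf{i}^{|\alpha|}$ and a power of $2\pi$; since this twist is multiplicative in $\alpha$, it acts on the binomial-coefficient matrix of Lemma \ref{lm:mu=rank} by left and right diagonal scaling, hence does not change its rank, so the identification of the local rank with $\mu(\omega_i)$ does go through.
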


\begin{proof}
  Since $\mathscr{F}$ is a bijection between $\mathcal{O}'_C$ and
  $\mathcal{O}_M$, the relation (\ref{eq:conv-hankel-fourier}) implies that
  $\mathfrak{H}_{\varphi}$ is of finite rank $r$, if and only if, $H_{\varphi} :
  \mathcal{O}_M \mapsto \mathcal{O}_M$ is of rank $r$. As the
  restriction of $H_{\varphi}$ to the set of polynomials $\mathbbm{C}
  [\tmmathbf{x}] \subset \mathcal{O}_M$ is of rank  $\tilde{r} \leq r = \mathrm{rank} \, \mathfrak{H}_{\varphi}$, Theorem
  \ref{thm:gorenstein} implies that
  \[ \varphi = \sum_{i = 1}^{r'} \sum_{\alpha \in A_i} \omega_{i, \alpha}
     \tmmathbf{y}^{\alpha} \tmmathbf{e}_{\xi'_i} (\tmmathbf{y}) \]
  with $\xi_i' \in \mathbbm{C}^n$ distincts, $A_i \subset \mathbbm{N}^n$
  finite and $\tilde{r} = \sum_{i = 1}^{r'} {\mu} (\omega_i)$ where
  ${\mu} (\omega_i)$ is the dimension of the inverse system of $\omega_i =
  \sum_{\alpha \in A_i} \omega_{i, \alpha} \tmmathbf{y}^{\alpha}$, spanned by
  $\omega_i (\tmmathbf{y})$ and all its derivatives. As $\varphi \in
  \mathcal{S}'$ is a distribution with slow increase at infinity, we have
  $\xi_i' =\tmmathbf{i} \xi_i$ with $\xi_i \in \mathbbm{R}^n$.
  
  By Proposition \ref{prop:rank1}, we have $r = \tmop{rank}
  \mathfrak{H}_{\varphi} \leqslant \sum_{i = 1}^{r'} {\mu} (\omega_i) =
  \tilde{r}$. This shows that $\tmop{rank} \mathfrak{H}_{\varphi} = \sum_{i =
  1}^{r'} {\mu} (\omega_i)$ and concludes the proof of the theorem.
\end{proof}

We can derive a similar result for the convolution by functions
or distributions with support in a bounded domain $\Omega$ of $\mathbbm{R}^n$.
The main ingredient is the decomposition $\mathfrak{H}_{\varphi} = H_{\varphi}
\circ \mathscr{F}$, which extends the construction used in
{\cite{rochberg_toeplitz_1987}} for Hankel and Toeplitz operators on
$L^2 (I)$ where $I$ is a bounded interval in $\mathbbm{R}$.

By the generalized Paley-Wiener theorem (see
{\cite{schwartz_theorie_1966}}[Th{\'e}or{\`e}me XVI]), the
Fourier transform $\mathscr{F}$ is a bijection between the set $\mathcal{E}'$
of distributions with a compact support and \ the set of continuous functions
$f \in C (\mathbbm{R}^n)$ with an analytic extension of exponential type
(there exists $A \in \mathbbm{R}, C \in \mathbbm{R}_+$ \ such that
$\forall \tmmathbf{z} \in \mathbbm{C}^n, | f (\tmmathbf{z}) | \leqslant C e^{A
(| z_1 | + \cdots + | z_n |)}$). Let us denote by $\mathcal{E}' (\Omega)$ the
set of distributions with a support in $\Omega$, and by $\mathcal{{PW}}
(\Omega) = \{ \mathscr{F} (\varphi) \mid \varphi \in \mathcal{E}' (\Omega)
\}$ the set of their Fourier transforms.

\begin{theorem}
  \label{thm:conv2}Let $\Omega, \Xi$ be open bounded domains of
  $\mathbbm{R}^n$ with , $\Upsilon = \Xi + \Omega \subset \mathbbm{R}^n$ and
  $\varphi \in \mathcal{E}' (\Omega)$. The convolution operator
  \[ \mathfrak{H}_{\varphi} : \psi \in \mathcal{E}' (\Xi) \mapsto \int
     \varphi (x - t) \psi (t) \tmop{dt} \in \mathcal{E}' (\Upsilon) \]
  is of finite rank $r$, if and only if, the symbol $\varphi$ is of the form
  \[ \varphi = \mathbbm{1}_{\Omega}  \sum_{i = 1}^{r'} \omega_i
     (\tmmathbf{y}) \tmmathbf{e}_{\xi_i} (\tmmathbf{y}) \]
  where $\xi_i = (\xi_{i, 1}, \ldots, \xi_{i, n}) \in \mathbbm{C}^n$,
  $\omega_i (\tmmathbf{y}) \in \mathbbm{C} [\tmmathbf{y}]$. The rank $r$ of
  $\mathfrak{H}_{\varphi}$ is the sum of the dimensions ${\mu} (\omega_i)$
  of the vector spaces spanned by $\omega_i (\tmmathbf{y})$ and all the
  derivatives $\partial^{\gamma}_{\tmmathbf{y}} \omega_i (\tmmathbf{y})$,
  $\gamma \in \mathbbm{N}^n$.
\end{theorem}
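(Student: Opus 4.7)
My plan is to adapt the strategy of Theorem \ref{thm:conv1} to compactly supported distributions, using the generalized Paley-Wiener theorem in place of the Schwartz isomorphism $\mathscr{F}: \mathcal{O}'_C \to \mathcal{O}_M$. First, by Paley-Wiener, $\mathscr{F}$ gives bijections $\mathcal{E}'(\Xi) \to \mathcal{PW}(\Xi)$ and $\mathcal{E}'(\Upsilon) \to \mathcal{PW}(\Upsilon)$. The convolution-multiplication identity $\mathscr{F}(\varphi * \psi) = \mathscr{F}\varphi \cdot \mathscr{F}\psi$ then yields a factorization
\[
\mathfrak{H}_\varphi = \mathscr{F}_\Upsilon^{-1} \circ M_f \circ \mathscr{F}_\Xi,
\]
with $f = \mathscr{F}\varphi$ and $M_f: \mathcal{PW}(\Xi) \to \mathcal{PW}(\Upsilon)$ the multiplication by $f$. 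Since the Fourier maps are bijections, $\tmop{rank}\,\mathfrak{H}_\varphi = \tmop{rank}\,M_f$.

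Next, I would introduce the generating series $\sigma_\varphi(\mathbf{y}) = \langle \varphi, e^{\langle x, y\rangle}\rangle$, which by Paley-Wiener is an entire function of exponential type in $\mathbf{y}$ and essentially coincides with $f$ up to the substitution $\mathbf{y} \mapsto i \mathbf{z}$. For polynomials $g \in \mathbbm{C}[\mathbf{x}]$ the identity $g \star \sigma_\varphi = \sigma_{g\varphi}$ shows that the Hankel operator $H_{\sigma_\varphi}: \mathbbm{C}[\mathbf{x}] \to \mathbbm{C}[[\mathbf{y}]]$ is controlled by polynomial multiplications pushed through $M_f$. In particular the restriction of $H_{\sigma_\varphi}$ to polynomials has some rank $\tilde r \leq r$. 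Applying the multivariate Kronecker Theorem \ref{thm:gorenstein} then gives
\[
\sigma_\varphi(\mathbf{y}) = \sum_{i=1}^{r'} \omega_i(\mathbf{y}) \, \tmmathbf{e}_{\xi_i}(\mathbf{y})
\]
with $\xi_i \in \mathbbm{C}^n$ pairwise distinct, $\omega_i \in \mathbbm{C}[\mathbf{y}] \setminus \{0\}$, and $\tilde r = \sum_i {\mu}(\omega_i)$.

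Inverting the generating-series transform, a polynomial-exponential series of this shape is precisely the generating series of a distribution $\varphi = \mathbbm{1}_\Omega \sum_i \omega_i(\mathbf{y}) \tmmathbf{e}_{\xi_i}(\mathbf{y})$: the indicator arises from the support constraint $\tmop{supp}(\varphi) \subset \overline{\Omega}$ built into $\mathcal{E}'(\Omega)$, while the polynomial-exponential factor matches the generating series term by term. For the converse implication, I would start from such a $\varphi$, compute $\sigma_\varphi$ explicitly, and use Theorem \ref{thm:gorenstein} together with an analog of Proposition \ref{prop:rank1} adapted to the compact-support Paley-Wiener setting to deduce that $\mathfrak{H}_\varphi$ has rank at most $\sum_i {\mu}(\omega_i)$; combining both directions forces $r = \tilde r = \sum_i {\mu}(\omega_i)$.

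The main obstacle is the interaction between the compact support of $\varphi$ and the passage from the algebraic Hankel rank to the analytic convolution rank. In Theorem \ref{thm:conv1}, polynomials sit inside $\mathcal{O}_M$, so the restriction of $H_\varphi$ to $\mathbbm{C}[\mathbf{x}]$ is a genuine restriction of a single operator; here, $\mathcal{E}'(\Xi)$ does not contain polynomials, and one must instead use test distributions $\delta_t^{(\alpha)}$ (for $t \in \Xi$, $\alpha \in \mathbbm{N}^n$) whose convolutions $\varphi * \delta_t^{(\alpha)} = \partial^\alpha \varphi(\cdot - t)$ span the derivatives of $\varphi$ and thereby encode the inverse-system structure. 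The finite-rank hypothesis then translates into the condition that $\{\partial^\alpha \varphi\}_\alpha$ spans a finite-dimensional space, yielding an Artinian annihilator ideal. Upgrading the resulting distributional PDE to a smooth polynomial-exponential on $\Omega$ via Theorem \ref{thm:edp} together with a regularity argument for Artinian overdetermined systems, and verifying that extending by zero outside $\overline{\Omega}$ does not introduce spurious boundary contributions, is the technical heart of the argument.
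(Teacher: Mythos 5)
Your overall strategy --- factor $\mathfrak{H}_\varphi$ through the Fourier transform via Paley--Wiener, restrict to polynomials inside the Paley--Wiener space, and invoke Theorem~\ref{thm:gorenstein} --- matches the paper's. The gap is in how you pass from ``the Hankel symbol is a polynomial-exponential series'' back to ``$\varphi = \mathbbm{1}_\Omega\sum_i\omega_i(\tmmathbf{y})\tmmathbf{e}_{\xi_i}(\tmmathbf{y})$.'' You assert that the polynomial-exponential factor ``matches the generating series term by term,'' but the moment generating series of $\mathbbm{1}_\Omega\Phi$ is the truncated Laplace transform $\int_\Omega\Phi(\tmmathbf{x})\,e^{\langle\tmmathbf{x},\tmmathbf{y}\rangle}\,d\tmmathbf{x}$, which is not $\Phi(\tmmathbf{y})$: for $n=1$, $\Omega=(-1,1)$, $\Phi\equiv 1$, one gets $\sigma_\varphi(y) = \tfrac{2\sinh y}{y}$, not $1$. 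Conversely, if $\sigma_\varphi(\tmmathbf{y}) = \langle\varphi, e^{\langle x,y\rangle}\rangle$ were literally equal to $\sum_i\omega_i(\tmmathbf{y})\tmmathbf{e}_{\xi_i}(\tmmathbf{y})$, then $\varphi$ would have to be a finite linear combination of derivatives of Dirac masses at the $\xi_i$ (since $\langle(-\tmmathbf{\partial})^\alpha\delta_\xi, e^{\langle x,y\rangle}\rangle = \tmmathbf{y}^\alpha e^{\langle\xi,\tmmathbf{y}\rangle}$), not $\mathbbm{1}_\Omega$ times a polynomial-exponential function. The paper's argument keeps the polynomial-exponential structure on the Paley--Wiener side, as the entire function whose Taylor series at $0$ is the Hankel symbol, and recovers $\varphi$ via ``the relation $\mathscr{F}^{-1}(\mathscr{F}(\varphi)) = \varphi$ on $\Omega$'': the indicator arises from Fourier inversion intersected with the support constraint, not from any term-by-term match of series coefficients.

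The ``main obstacle'' you identify --- that $\mathcal{E}'(\Xi)$ contains no polynomials --- is a misreading of where the restriction is taken. The paper restricts $H_\varphi$ to $\mathbbm{C}[\tmmathbf{x}]\subset\mathcal{{PW}}(\Xi)$ on the Fourier side, exactly as in Theorem~\ref{thm:conv1}; the distributions $\delta_0^{(\alpha)}\in\mathcal{E}'(\Xi)$ you propose to substitute are precisely the Fourier preimages of those polynomials, so the workaround reproduces the paper's step from the other side rather than avoiding a genuine difficulty. Consequently, the extra machinery you invoke to handle the supposed obstacle --- Theorem~\ref{thm:edp}, a regularity argument for distributional solutions of Artinian differential systems, and an analysis of boundary contributions from the zero extension --- is not part of the paper's argument and is not needed once the restriction is made in $\mathcal{{PW}}(\Xi)$.
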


\begin{proof}
  Using the relations $\forall \varphi \in \mathcal{E}' (\Omega), \psi \in
  \mathcal{E}' (\Xi)$,
  \[ \mathscr{F} (\varphi \star \psi) =\mathscr{F} (\varphi) \mathscr{F}
     (\psi), \]
  and (\ref{eq:H-F}), we still have the decomposition
  \[ \mathfrak{H}_{\varphi} = H_{\varphi} \circ \mathscr{F}. \]
  with $H_{\varphi} : g \in \mathcal{{PW}} (\Xi) \rightarrow
  \mathscr{F}^{- 1} (\mathscr{F} (\varphi) g) \in \mathcal{E}' (\Upsilon)$.
  Thus $\mathfrak{H}_{\varphi}$ is of finite rank $r$, if and only if, $H_{\varphi}$ is of
  finite rank $r$.  
  As the rank of the restriction of $H_{\varphi}$ to
  $\mathbbm{C} [\tmmathbf{x}] \subset \mathcal{{PW}} (\Xi)$ is at most
  $r$, we conclude by using Theorem \ref{thm:gorenstein}, a result similar to
  Proposition \ref{prop:rank1} for elements $\psi \in \mathcal{E}' (\Xi)$ and
  the relation $\mathscr{F}^{- 1} (\mathscr{F} (\varphi)) = \varphi$ on
  $\Omega$. 
\end{proof}

Similar results also apply for the cross-correlation operator defined as
\[ \tilde{\mathfrak{H}}_{\varphi} : \psi \in \mathcal{E}' \mapsto \varphi \ast
   \psi = \int \varphi (x + t)  \bar{\psi} (t) \tmop{dt} \in \mathcal{S}'.
\]
Using the relation $\mathscr{F} (\varphi \ast \psi) =\mathscr{F} (\varphi)
\bar{\mathscr{F}} (\psi)$ (with $\bar{\mathscr{F}} = \varsigma \circ
\mathscr{F}$ where $\varsigma : z \in \mathbbm{C} \mapsto \bar{z} \in
\mathbbm{C}$ is the complex conjugation), we have
$\tilde{\mathfrak{H}}_{\varphi} = H_{\varphi} \circ \bar{\mathscr{F}}$. As
$\bar{\mathscr{F}}^{- 1} =\mathscr{F}^{- 1} \circ \varsigma$, we deduce that
$\tilde{\mathfrak{H}}_{\varphi}$ and $H_{\varphi}$ have the same rank and the
same type of decomposition of the symbol $\varphi$ holds when
$\tilde{\mathfrak{H}}_{\varphi}$ is of finite rank.

\begin{remark}
  To compute the decomposition of $\varphi \in \mathcal{S}'$ (resp. $\varphi
  \in \mathcal{E}' (\Omega)$) as a polynomial exponential function, we first
  compute the Taylor coefficients of $\sigma_{\mathscr{F} (\varphi)}
  =\mathfrak{H}_{\varphi} (1)$, that is, the values $\sigma_{\alpha} =
  (-\tmmathbf{i})^{| \alpha |} \mathscr{F} (\tmmathbf{x}^{\alpha} \varphi)
  (\tmmathbf{0})$ for some $\alpha \in \tmmathbf{a} \subset \mathbbm{N}^n$ and
  apply the decomposition algorithm \ref{algo:decomposition}
  to the (truncated) sequence $(\sigma_{\alpha})_{\alpha \in \tmmathbf{a}}$.
\end{remark}

\subsection{Reconstruction from Fourier coefficients}

We consider here the problem of reconstruction of functions or distributions
from Fourier coefficients. Let $T = (T_1, \ldots, T_n) \in \mathbbm{R}_+^n$
and $\Omega = \prod_{i = 1}^n \left[ - \frac{2 \pi T_i}{2},
\frac{2 \pi T_i}{2} \right] \subset \mathbbm{R}^n$.
We take:
\begin{itemize}
  \item $\mathcal{F}= L^2 (\Omega)$;
  
  \item $S_i : h (\tmmathbf{x}) \in L^2 (\Omega) \mapsto e^{2 \pi
  \frac{x_i}{T_i}} h (\tmmathbf{x}) \in L^2 (\Omega)$ is the multiplication by
  $e^{2 \pi \frac{x_i}{T_i}}$;
  
  \item $\Delta : h (\tmmathbf{x}) \in \mathcal{O}'_C \mapsto \int h
  (\tmmathbf{x}) d\tmmathbf{x} \in \mathbbm{C}$.
\end{itemize}
For $f \in \mathcal{E}'(\Omega)$ with a support in $\Omega$ and $\gamma = (\gamma_1,
\ldots, \gamma_n) \in \mathbbm{Z}^n$, the $\gamma$-th Fourier coefficient of
$f$ is
\[ \sigma_{\gamma} = \frac{1}{\prod_{j = 1}^n T_j} \mathscr{F} (f) \left( 2
   \pi \frac{\gamma_1}{T_1}, \ldots, 2 \pi \frac{\gamma_n}{T_n} \right) =
   \frac{1}{\prod_{j = 1}^n T_j}  \int f (\tmmathbf{x}) e^{- 2 \pi
   \tmmathbf{i} \sum_{j = 1}^n \frac{\gamma_j x_j}{T_j}} d\tmmathbf{x} \]
Let $\sigma = (\sigma_{\gamma})_{\gamma \in \mathbbm{Z}^n}$ be the sequence
of the Fourier coefficients. The discrete convolution operator associated to
$\sigma$ is $\Phi_{\sigma} : (\rho_{\beta})_{\beta \in \mathbbm{Z}^n} \in
\ell^2 (\mathbbm{Z}^n) \mapsto \left( \sum_{\beta} \sigma_{\alpha - \beta}
\rho_{\beta} \right)_{\alpha \in \mathbbm{Z}^n} \in \ell^2 (\mathbbm{Z}^n)$. The
discrete cross-correlation operator of $\sigma$ is $\Gamma_{\sigma} :
(\rho_{\beta})_{\beta \in \mathbbm{Z}^n} \in \ell^2 (\mathbbm{Z}^n) \mapsto
\left( \sum_{\beta} \sigma_{\alpha + \beta} \rho_{\beta} \right)_{\alpha \in
\mathbbm{Z}^n} \in \ell^2 (\mathbbm{Z}^n)$. It is obtained from $\Phi_{\sigma}$
by composition by $\mathcal{R}: (\rho_{\beta})_{\beta \in \mathbbm{Z}^n} \in
\ell^2 (\mathbbm{Z}^n) \mapsto (\rho_{- \beta})_{\beta \in \mathbbm{Z}^n} \in
\ell^2 (\mathbbm{Z}^n)$: $\Gamma_{\sigma} = \Phi_{\sigma} \circ \mathcal{R}$.

A decomposition similar to the previous section also holds:

\begin{theorem}
  \label{thm:discreteconv}Let $f \in L^2 (\Omega)$ and let $\sigma =
  (\sigma_{\gamma})_{\gamma \in \mathbbm{Z}^n}$ be its sequence of Fourier
  coefficients. The discrete convolution (resp. cross-correlation) operator
  $\Phi_{\sigma}$ (resp. $\Gamma_{\sigma}$) is of finite rank if and only if
  \[ f = \sum_{i = 1}^{r'}  \sum_{\alpha \in A_i \subset \mathbbm{N}^n}
     \omega_{i, \alpha} \delta_{\xi_i}^{(\alpha)} \]
  where
  \begin{itemize}
    \item $\xi_i = (\xi_{i, 1}, \ldots, \xi_{i, n}) \in \Omega$, $\omega_{i,
    \alpha} \in \mathbbm{C}$, $A_i \subset \mathbbm{N}^n$ is finite,
    
    \item the rank of $\Gamma_{\sigma}$ is the sum of the dimensions
    ${\mu} (\omega_i)$ of the vector spaces spanned by \ $\omega_i
    (\tmmathbf{y}) = \sum_{\alpha \in A_i} \omega_{i, \alpha}
    \tmmathbf{y}^{\alpha}$ and all the derivatives
    $\partial_{\tmmathbf{y}}^{\gamma} (\omega_i)$, $\gamma \in \mathbbm{N}^n$.
  \end{itemize}
\end{theorem}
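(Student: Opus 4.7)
The plan is to follow the template of Theorems \ref{thm:conv1} and \ref{thm:conv2}, reducing the finite-rank property of the discrete convolution/cross-correlation operator to a statement about its symbol via the multivariate Kronecker theorem (Theorem \ref{thm:gorenstein}). Since $\Phi_{\sigma} = \Gamma_{\sigma} \circ \mathcal{R}$ and $\mathcal{R}$ is an invertible isometry of $\ell^2(\mathbbm{Z}^n)$, the two operators have the same rank and it suffices to treat $\Gamma_{\sigma}$.

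First I would handle the easy direction by direct computation. If $f = \sum_{i=1}^{r'} \sum_{\alpha \in A_i} \omega_{i,\alpha}\, \delta_{\xi_i}^{(\alpha)}$ with $\xi_i \in \Omega$, then differentiating the exponential kernel under the duality pairing gives
\[
\sigma_{\gamma} \;=\; \frac{1}{\prod_{j} T_j}\sum_{i=1}^{r'} P_i(\gamma)\, \zeta_i^{\gamma}, \qquad \zeta_{i,j} = e^{-2\pi \tmmathbf{i}\xi_{i,j}/T_j},
\]
where $P_i$ is a polynomial in $\gamma$ whose coefficient structure mirrors that of $\omega_i(\tmmathbf{y}) = \sum_{\alpha \in A_i} \omega_{i,\alpha} \tmmathbf{y}^\alpha$; in particular the dimension of the inverse system spanned by $P_i$ and its derivatives equals $\mu(\omega_i)$. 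Restricting the cross-correlation kernel $(\sigma_{\alpha+\beta})$ to $\mathbbm{N}^n\times\mathbbm{N}^n$ yields a Hankel matrix whose symbol lies in $\PolExp(\tmmathbf{y})$, and Theorem \ref{thm:gorenstein} bounds the rank of $\Gamma_{\sigma}$ by $\sum_i \mu(\omega_i)$, with equality by Lemma \ref{lem:indep}.

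For the converse, assume $\Gamma_{\sigma}$ is of finite rank $r$. Restricting $\Gamma_\sigma$ to sequences indexed by $\mathbbm{N}^n$ yields a standard Hankel operator of rank at most $r$, whose symbol in the $\tmmathbf{z}$-formalism of Section \ref{sec:ztransform} is $\sum_{\gamma \in \mathbbm{N}^n} \sigma_{\gamma}\, \tmmathbf{z}^\gamma$. By Theorem \ref{thm:gorenstein} (in the divided-power setting), one obtains $\sigma_{\gamma} = \sum_{i=1}^{r'} P_i(\gamma)\zeta_i^{\gamma}$ on $\mathbbm{N}^n$ with pairwise distinct $\zeta_i \in \mathbbm{C}^n$ having no zero coordinate and with polynomials $P_i \in \mathbbm{C}[\gamma]$ of total inverse-system dimension $r$. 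Repeating the argument shifted to each of the remaining $2^n-1$ orthants of $\mathbbm{Z}^n$, and invoking the uniqueness of the polynomial-exponential form (Lemma \ref{lem:indep}) to match the frequencies across overlaps, extends the formula coherently to all of $\mathbbm{Z}^n$. Since $\sigma$ is bounded on $\mathbbm{Z}^n$, each $|\zeta_{i,j}|=1$, so $\zeta_{i,j} = e^{-2\pi\tmmathbf{i}\xi_{i,j}/T_j}$ for a unique $\xi_i \in \Omega$. Comparing with the Fourier inversion formula on $\Omega$ identifies $f$ (as a distribution) with $\sum_{i}\sum_{\alpha \in A_i}\omega_{i,\alpha}\delta_{\xi_i}^{(\alpha)}$, the weights $\omega_{i,\alpha}$ being recovered from the coefficients of $P_i$ by a linear change of basis that preserves $\mu(\omega_i)$.

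The rank identification then follows by matching the two computations through Theorem \ref{thm:gorenstein}. The main obstacle is the middle step: proving that the polynomial-exponential representation valid on $\mathbbm{N}^n$ extends to all of $\mathbbm{Z}^n$ with unimodular frequencies. This requires combining boundedness of $\sigma$ with the rigidity of the Kronecker decomposition, together with a careful argument lifting the frequencies $\zeta_i$ to points $\xi_i$ in the fundamental domain $\Omega$ rather than merely modulo the period lattice.
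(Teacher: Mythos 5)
Your proposal takes a genuinely different route from the paper's proof, and it has a real gap in the converse direction that is not merely a matter of filling in details.

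The paper's proof does not restrict $\Gamma_{\sigma}$ to sequences on $\mathbbm{N}^n$ at all. Instead it establishes the conjugation identity
\[
\Phi_{\sigma} \;=\; S \circ \mathscr{F} \circ H_{\mathscr{F}^{-1}\circ S^{-1}(\sigma)} \circ S^{-1},
\]
where $S$ is the discrete Fourier transform (an isometry $L^2(\Omega)\to\ell^2(\mathbbm{Z}^n)$), so that $\Phi_{\sigma}$ has \emph{exactly} the same rank as the operator $H_{\mathscr{F}^{-1}\circ S^{-1}(\sigma)}: g\in\mathcal{PW}(\Omega)\mapsto \mathscr{F}^{-1}(fg)$. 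Since $\mathbbm{C}[\tmmathbf{x}]\subset\mathcal{PW}(\Omega)$, Theorem \ref{thm:gorenstein} applies directly, and the rank equality (not just an inequality) follows from the analogue of Proposition \ref{prop:rank1}. The decomposition of $f$ and the unimodularity constraint $\xi_i\in\Omega$ then come out of applying $\mathscr{F}$ and using the support condition. Everything happens in one global conjugation step.

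By contrast, you restrict $\Gamma_{\sigma}$ to the orthant $\mathbbm{N}^n$, apply the multivariate Kronecker theorem there, and then try to patch across the $2^n$ orthants. This introduces several obstacles you do not resolve: (i) the restricted Hankel operator on $\mathbbm{N}^n$ could have rank strictly less than $r$, so the rank count $r=\sum_i\mu(\omega_i)$ does not follow from the restriction alone; (ii) Theorem \ref{thm:gorenstein} gives you no control over zero coordinates of the frequencies $\zeta_i$ — a frequency with a vanishing component is perfectly admissible on $\mathbbm{N}^n$ but cannot be extended to $\mathbbm{Z}^n$, and ruling this out requires a Laurent-polynomial version of the theory (the ambient algebra would be a quotient of $\mathbbm{K}[\tmmathbf{x},\tmmathbf{x}^{-1}]$, forcing the characteristic variety into $(\mathbbm{K}^*)^n$), which is not developed in the paper; (iii) the ``match the frequencies across overlaps'' step is not a consequence of Lemma \ref{lem:indep}, which concerns linear independence of formal power series in $\mathbbm{K}[[\tmmathbf{y}]]$, not consistency of linear recurrence relations across lower-dimensional overlaps of cones in $\mathbbm{Z}^n$. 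You flag the middle step as the main obstacle yourself, and that is accurate: as written it is a gap, not a routine completion. The missing ingredient is precisely the conjugation of $\Phi_{\sigma}$ to a Hankel operator on the Paley--Wiener space, which the paper uses to sidestep the entire orthant issue.
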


\begin{proof}
  Let $S : f \in L^2 (\Omega) \mapsto (\sigma_{\gamma})_{\gamma \in
  \mathbbm{Z}^n} \in \ell^2 (\mathbbm{Z}^n)$ be the discrete Fourier transform
  where $\sigma_{\gamma} = \frac{1}{\prod_{j = 1}^n T_j} \mathscr{F} (f)
  \left( 2 \pi \frac{\gamma_1}{T_1}, \ldots, 2 \pi \frac{\gamma_n}{T_n}
  \right)$. Its inverse is $S^{- 1} : \sigma = (\sigma_{\gamma})_{\gamma \in
  \mathbbm{Z}^n} \in \ell^2 (\mathbbm{Z}^n) \mapsto \sum_{\alpha \in
  \mathbbm{Z}^n} \sigma_{\gamma} \mathbbm{1}_{\Omega} e^{2 \pi \tmmathbf{i}
  \sum_{j = 1}^n \frac{\gamma_j x_j}{T_j}} \in L^2 (\Omega)$. As the discrete
  Fourier transform exchanges the convolution and the product, using Relation
  (\ref{eq:H-F}), we have $\forall \sigma, \rho \in \ell^2 (\mathbbm{Z}^n)$,
  \[ 
    \Phi_{\sigma} (\rho) = S (S^{- 1} (\sigma) S^{- 1} (\rho)) = S (f g)= S \circ
     \mathscr{F} \circ \mathscr{F}^{- 1} (f g) = S \circ \mathscr{F} \circ
     H_{\mathscr{F}^{- 1} (f)} (g)
     \]
  where $f = S^{- 1} (\sigma), g = S^{- 1} (\rho) \in L^2 (\Omega)$ and
  $H_{\mathscr{F}^{- 1} (f)} : g \in L^2 (\Omega) \mapsto \mathscr{F}^{- 1}
  (f g) \in \mathcal{{PW}} (\Omega)$. We deduce that
  \[ \Phi_{\sigma} = S \circ \mathscr{F} \circ H_{\mathscr{F}^{- 1} \circ S^{-
     1} (\sigma)} \circ S^{- 1}. \]
  As $S$ is an isometry between $\ell^2 (\mathbbm{Z}^n)$ and $L^2 (\Omega)$ and
  $\mathscr{F}$ is an isomorphism between $L^2 (\Omega)$ and
  $\mathcal{{PW}} (\Omega)$, $\Phi_{\sigma} = S \circ \mathscr{F} \circ
  H_{\mathscr{F}^{- 1} \circ S^{- 1} (\sigma)} \circ S^{- 1}$ and
  $H_{\mathscr{F}^{- 1} \circ S^{- 1} (\sigma)}$ have the same rank.
  
  As $\mathbbm{C} [\tmmathbf{x}] \subset \mathcal{\tmop{PW}} (\Omega)$, we
  deduce from Theorem \ref{thm:gorenstein} that
  \[ \mathscr{F}^{- 1} \circ S^{- 1} (\sigma) = \sum_{i = 1}^{r'}
     \tilde{\omega}_i (\tmmathbf{y}) \tmmathbf{e}_{\tilde{\xi}_i}
     (\tmmathbf{y}) \]
  where $\tilde{\xi}_i = (\tilde{\xi}_{i, 1}, \ldots, \tilde{\xi}_{i, n}) \in
  \mathbbm{C}^n$, $\tilde{\omega}_i (\tmmathbf{y}) = \sum_{\alpha \in A_i}
  \tilde{\omega}_{i, \alpha} \tmmathbf{y}^{\alpha} \in \mathbbm{C}
  [\tmmathbf{z}]$. Using a result similar to Proposition \ref{prop:rank1} for
  the elements $\psi \in L^2 (\Omega)$, we deduce that the rank $r$ of
  $\Phi_{\sigma}$ is $r = \sum_{i = 1}^{r'} {\mu} (\tilde{\omega}_i)$.
  Consequently, \
  \[ f = S^{- 1} (\sigma) =\mathscr{F} \left( \sum_{i = 1}^{r'} \sum_{\alpha
     \in A_i} \tilde{\omega}_{i, \alpha} \tmmathbf{y}^{\alpha}
     \tmmathbf{e}_{\tilde{\xi}_i} (\tmmathbf{y}) \right) = (2 \pi)^n \sum_{i =
     1}^{r'}  \sum_{\alpha \in A_i \subset \mathbbm{N}^n} \tmmathbf{i}^{|
     \alpha |} \tilde{\omega}_{i, \alpha} \delta_{\tmmathbf{i}
     \tilde{\xi}_i}^{(\alpha)}. \]
  As the support of $f$ is in $\Omega$, we have $\xi_i =\tmmathbf{i}
  \tilde{\xi}_i \in \Omega$. We deduce the decomposition of $f$ with $\omega_i
  = (2 \pi)^n \sum_{i = 1}^{r'}  \sum_{\alpha \in A_i \subset \mathbbm{N}^n}
  \tmmathbf{i}^{| \alpha |} \tilde{\omega}_{i, \alpha} \tmmathbf{y}^{\alpha}$.
  
  The dimension ${\mu} (\tilde{\omega}_i)$ of the vector space spanned by
  $\tilde{\omega}_i (\tmmathbf{y}) = \sum_{\alpha \in A_i} \omega_{i, \alpha}
  \tmmathbf{y}^{\alpha}$ and all its derivatives is the same as the dimension
  ${\mu} (\omega_i)$ of the space spanned by $\omega_i (\tmmathbf{y}) = (2
  \pi)^n \sum_{\alpha \in A_i} \omega_{i, \alpha} \tmmathbf{i}^{| \alpha |}
  \tmmathbf{y}^{\alpha}$ and all its derivatives, since $\omega_i
  (\tmmathbf{y}) = (2 \pi)^n  \tilde{\omega}_i (\tmmathbf{i}\tmmathbf{y})$.
  Therefore, $\tmop{rank} \Phi_{\sigma} = r = \sum_{i = 1}^{r'} {\mu}
  (\tilde{\omega}_i) = \sum_{i = 1}^{r'} {\mu} (\omega_i)$. This concludes
  the proof of the theorem.
\end{proof}

\begin{remark}
  To compute the decomposition of $f \in L^2 (\Omega)$ as a weighted sum of
  Dirac measures \ and derivates, we apply the decomposition algorithm
 \ref{algo:decomposition} to the (truncated)
  sequence of Fourier coefficients $(\sigma_{\alpha})_{\alpha \in
  \tmmathbf{a}}$ for some subset $\tmmathbf{a} \subset \mathbbm{N}^n$. The
  polynomial-exponential decomposition $\varphi = \sum_{i = 1}^{r'}
  \sum_{\alpha \in A_i} \tilde{\omega}_{i, \alpha} \tmmathbf{y}^{\alpha}
  \tmmathbf{e}_{\tilde{\xi}_i} (\tmmathbf{y})$, from which we deduce the
  decomposition $f = (2 \pi)^n \sum_{i = 1}^{r'}  \sum_{\alpha \in A_i \subset
  \mathbbm{N}^n} \tmmathbf{i}^{| \alpha |} \tilde{\omega}_{i, \alpha}
  \delta_{\tmmathbf{i} \tilde{\xi}_i}^{(\alpha)}$.
\end{remark}

\subsection{Reconstruction from values}\label{sec:values}

In this problem, we are interested in reconstructing a function in $C^{\infty}
(\mathbbm{R}^n)$ from sampled values. We take
\begin{itemize}
  \item $\mathcal{F}= C^{\infty} (\mathbbm{R}^n)$,
  
  \item $S_j : h (x) \mapsto h (x_{1}, \ldots, x_{j - 1}, x_j + 1, x_{j + 1},
  \ldots, x_n)$ the shift operator of $x_j$ by 1,
  
  \item $\Delta : h (x) \mapsto \Delta [h] = h (0)$ the evaluation at
  $0$.
\end{itemize}
The generating series of $h$ is
\[ \sigma_h (\tmmathbf{y}) = \sum_{\alpha \in \mathbbm{N}^n} h (\alpha_1,
   \ldots, \alpha_n)  \frac{\tmmathbf{y}^{\alpha}}{\alpha !} = \sum_{\alpha
   \in \mathbbm{N}^n} h (\alpha)  \frac{\tmmathbf{y}^{\alpha}}{\alpha !}. \]
The operators $S_j$ are commuting and we have $S_j (e^{{\bdf} \cdummy
\tmmathbf{x}}) = \xi_j e^{{\bdf} \cdummy {\bdx}}$ where
$\tmmathbf{f}= (f_1, \ldots, f_n) \in \mathbbm{C}^n$ and $\xi_j = e^{f_j}$.
The generating series associated to $e^{{\bdf} \cdummy \tmmathbf{x}}$
is $\tmmathbf{e}_{\xi} (\tmmathbf{y})$ where $\xi = (\xi_1, \ldots, \xi_n) =
(e^{f_1}, \ldots, e^{f_n})$.

Similarly for any $\alpha = (\alpha_1, \ldots, \alpha_n) \in \mathbbm{N}$,
$S_j (\tmmathbf{x}^{\alpha} e^{{\bdf} \cdummy \tmmathbf{x}}) = \xi_j 
\sum_{i = 0}^{\alpha_j}  \binom{\alpha_j}{i} x_j^i \prod_{i \neq j}
x_i^{\alpha_i} e^{{\bdf} \cdummy \tmmathbf{x}}$, which shows that the
function $\tmmathbf{x}^{\alpha} e^{{\bdf} \cdummy \tmmathbf{x}}$ is a
generalized eigenfunction of the operators $S_j$. Its generating series is
\begin{equation}
  \sigma_{\tmmathbf{x}^{\alpha} e^{{\bdf} \cdummy \tmmathbf{x}}}
  (\tmmathbf{y}) = \sum_{\beta \in \mathbbm{N}^n} \beta^{\alpha} \xi^{\beta} 
  \frac{\tmmathbf{y}^{\beta}}{\beta !}. \label{eq:genbeta}
\end{equation}
Let $b_{\alpha} (\tmmathbf{y}) = \binom{y_1}{\alpha_1} \cdots
\binom{y_n}{\alpha_n}$ be the Macaulay binomial polynomial with
$\binom{y_i}{\alpha_i} = \frac{1}{\alpha_i !} y_i  (y_i - 1) \cdots (y_i -
\alpha_i + 1)$, which roots are $0, \ldots, \alpha_i - 1$. It satisfies the
following relations:
\[ \sum_{\beta \in \mathbbm{N}^n} b_{\alpha} (\beta) \xi^{\beta} 
   \frac{\tmmathbf{y}^{\beta}}{\beta !} = \sum_{\beta_{\gg} \alpha}
   b_{\alpha} (\beta) \xi^{\beta}  \frac{\tmmathbf{y}^{\beta}}{\beta !} =
   \sum_{\beta \gg \alpha} \frac{1}{\alpha !} \xi^{\beta} 
   \frac{\tmmathbf{y}^{\beta}}{(\beta - \alpha) !} = \frac{1}{\alpha !}
   \xi^{\alpha} \tmmathbf{y}^{\alpha} \tmmathbf{e}_{\xi} (\tmmathbf{y}). \]
As $\tmmathbf{y}^{\alpha} = \sum_{\alpha' \ll \alpha} m_{\alpha', \alpha}
b_{\alpha'} (\tmmathbf{y})$ for some coefficients $m_{\alpha', \alpha} \in
\mathbbm{Q}$ such that $m_{\alpha, \alpha} = 1$, we have
\begin{equation}
  \sigma_{\tmmathbf{x}^{\alpha} e^{{\bdf} \cdummy \tmmathbf{x}}}
  (\tmmathbf{y}) = \left( \sum_{\alpha' \ll \alpha} m_{\alpha', \alpha}
  \xi^{\alpha'} \frac{\tmmathbf{y}^{\alpha'}}{\alpha' !} \right)
  \tmmathbf{e}_{\xi} (\tmmathbf{y}) = \omega_{\alpha} (\tmmathbf{y})
  \tmmathbf{e}_{\xi} (\tmmathbf{y}). \label{eq:genxa}
\end{equation}
The monomials of $\omega_{\alpha} (\tmmathbf{y})$ are among the monomials
$\tmmathbf{y}^{\alpha'} = y_1^{\alpha'_1} \cdots y_n^{\alpha_n'}$ such that $0
\leqslant \alpha'_i \leqslant \alpha_i$, which divide $\tmmathbf{y}^{\alpha}$.
As the coefficient of $\tmmathbf{y}^{\alpha}$ in $\omega_{\alpha}
(\tmmathbf{y})$ is $1$, we deduce that $(\omega_{\alpha})_{\alpha \in
\mathbbm{N}^n}$ is a basis of $\mathbbm{C} [\tmmathbf{y}]$ and the
completeness property is satisfied. \

Let $h = (h (\alpha))_{\alpha \in \mathbbm{N}^n}$. The Hankel operator $H_h$
is such that $\forall p = \sum_{\beta} p_{\beta} \tmmathbf{x}^{\beta} \in
\mathbbm{C} [\tmmathbf{x}]$,
\[ H_h (p) = \sum_{\alpha \in \mathbbm{N}^n} \left( \sum_{\beta} h
   (\alpha + \beta) p_{\beta} \right)  \frac{\tmmathbf{y}^{\alpha}}{\alpha !}
\]
Identifying the series $\sigma (\tmmathbf{y}) = \sum_{\alpha \in
\mathbbm{N}^n} \sigma_{\alpha}  \frac{\tmmathbf{y}^{\alpha}}{\alpha !} \in
\mathbbm{C} [[\tmmathbf{y}]]$ with the multi-index sequence
$(\sigma_{\alpha})_{\alpha \in \mathbbm{N}^n}$ and a polynomial $p =
\sum_{\alpha \in A} p_{\alpha} \tmmathbf{x}^{\alpha}$ with the sequence
$(p_{\alpha})_{\alpha \in \mathbbm{N}^n}$ $\fsupp (\mathbbm{N}^n)$ of
finite support, the operator $H_h$ corresponds to the discrete
cross-correlation operator by the sequence $h$. This operator can
be extended to sequences $h, p$ are in $\ell^2 (\mathbbm{N}^n)$.

\begin{theorem}
  \label{thm:decvalues}Let $h \in C^{\infty} (\mathbbm{R}^n)$. The discrete
  cross-correlation operator $\Gamma_h : p \in \ell^2 (\mathbbm{N}^n) \mapsto h
  \star p = \left( \sum_{\beta} h (\alpha + \beta) p_{\beta} \right)_{\alpha
  \in \mathbbm{N}^n} \in \ell^2 (\mathbbm{N}^n)$ is of finite rank, if and only
  if,
  \[ h (\tmmathbf{x}) = \sum_{i = 1}^{r'} g_i (\tmmathbf{x}) e^{f_i \cdummy
     \tmmathbf{x}} + r (\tmmathbf{x}) \]
  where
  \begin{itemize}
    \item $f_i = (f_{i, 1}, \ldots, f_{i, n}) \in \mathbbm{C}^n$, $g_i
    (\tmmathbf{x}) \in \mathbbm{C} [\tmmathbf{x}]$,
    
    \item $r (\tmmathbf{x}) \in C^{\infty} (\mathbbm{R}^n)$ such that $r
    (\alpha) = 0$, $\forall \alpha \in \mathbbm{N}^n$, \
    
    \item The rank of $\Gamma_h$ is the sum of the dimension ${\mu}
    (g_i)$ of the vector space spanned by $g_i (\tmmathbf{x})$ and all its
    derivatives $\partial_{\tmmathbf{x}}^{\alpha} g_i$, $\alpha \in
    \mathbbm{N}^n$.
  \end{itemize}
\end{theorem}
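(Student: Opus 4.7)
The plan is to reduce Theorem \ref{thm:decvalues} to the multivariate Kronecker theorem (Theorem \ref{thm:gorenstein}) via the generating series $\sigma_h(\mathbf{y}) = \sum_{\alpha \in \mathbb{N}^n} h(\alpha)\, \mathbf{y}^\alpha/\alpha!$. First I would identify the discrete cross-correlation $\Gamma_h$ restricted to finitely-supported sequences with the Hankel operator $H_{\sigma_h}$ acting on $\mathbb{C}[\mathbf{x}]$ under the identification $(p_\beta)_\beta \leftrightarrow \sum_\beta p_\beta \mathbf{x}^\beta$. Since finite-support sequences are $\ell^2$-dense and a bounded operator has finite rank iff its restriction to a dense subspace has the same finite rank, $\mathrm{rank}\, \Gamma_h = \mathrm{rank}\, H_{\sigma_h}$. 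Theorem \ref{thm:gorenstein} then asserts that this common rank is finite iff
\[
  \sigma_h(\mathbf{y}) = \sum_{i=1}^{r'} \omega_i(\mathbf{y})\, \mathbf{e}_{\xi_i}(\mathbf{y}),
\]
with $\xi_i \in \mathbb{C}^n$ pairwise distinct, $\omega_i \in \mathbb{C}[\mathbf{y}] \setminus \{0\}$, and $r = \sum_i \mu(\omega_i)$.

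Next I would translate this decomposition of $\sigma_h$ back to a decomposition of $h$ on $\mathbb{N}^n$. For each $\xi_i$ with all components nonzero, choose $f_i \in \mathbb{C}^n$ with $\xi_{i,j} = e^{f_{i,j}}$. Equations (\ref{eq:genbeta})--(\ref{eq:genxa}) show that, for any $g \in \mathbb{C}[\mathbf{x}]$, the generating series of $g(\mathbf{x})\, e^{f_i \cdot \mathbf{x}}$ equals $T_{\xi_i}(g)(\mathbf{y})\, \mathbf{e}_{\xi_i}(\mathbf{y})$, where $T_{\xi_i}: \mathbb{C}[\mathbf{x}] \to \mathbb{C}[\mathbf{y}]$ sends $\mathbf{x}^\alpha$ to $\sum_{\alpha' \ll \alpha} m_{\alpha',\alpha}\, \xi_i^{\alpha'}\, \mathbf{y}^{\alpha'}/\alpha'!$. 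This transition is triangular with nonzero diagonal in the monomial basis, so $T_{\xi_i}$ is a linear isomorphism. Setting $g_i = T_{\xi_i}^{-1}(\omega_i)$ and $\tilde h(\mathbf{x}) = \sum_i g_i(\mathbf{x})\, e^{f_i \cdot \mathbf{x}}$ gives $\sigma_{\tilde h} = \sigma_h$, so $r(\mathbf{x}) := h(\mathbf{x}) - \tilde h(\mathbf{x})$ is a $C^\infty$ function vanishing on $\mathbb{N}^n$. The converse implication is immediate: if $h = \sum g_i\, e^{f_i \cdot \mathbf{x}} + r$ with $r|_{\mathbb{N}^n} = 0$, then $\sigma_h = \sum T_{\xi_i}(g_i)\, \mathbf{e}_{\xi_i}$ is polynomial-exponential, hence $H_{\sigma_h}$ and $\Gamma_h$ have finite rank.

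It remains to match the rank contributions, i.e., to show $\mu(\omega_i) = \mu(g_i)$. My plan is a direct computation: by Taylor expansion of $g_i(\alpha + \beta)$,
\[
  \Gamma_{g_i e^{f_i \cdot \mathbf{x}}}(p)_\alpha
  = e^{f_i \cdot \alpha} \sum_{\gamma \in \mathbb{N}^n}
    \tfrac{1}{\gamma!}\,\partial^\gamma g_i(\alpha)\,
    \Big(\sum_\beta \beta^\gamma e^{f_i \cdot \beta} p_\beta\Big),
\]
so the image has dimension equal to $\dim \langle \partial^\gamma g_i : \gamma \in \mathbb{N}^n \rangle = \mu(g_i)$, since the inner sums can be made to attain arbitrary values on any finite set of $\gamma$ as $p$ varies. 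On the other hand, Theorem \ref{thm:gorenstein} applied to the single-term series $\omega_i\, \mathbf{e}_{\xi_i}$ gives rank $\mu(\omega_i)$, hence the two quantities agree, and summing over $i$ yields $r = \sum_i \mu(g_i)$.

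I expect the main obstacle to be the analytic subtlety arising when some component $\xi_{i,j} = 0$, so that $f_{i,j} = \log \xi_{i,j}$ is undefined: such frequencies cannot be written as $(e^{f_{i,1}}, \ldots, e^{f_{i,n}})$ and must be handled separately, either by showing they do not occur under the stated hypothesis or by carefully absorbing the corresponding contribution into the residue $r$. The algebraic steps above are essentially forced by Theorem \ref{thm:gorenstein} and the computations (\ref{eq:genbeta})--(\ref{eq:genxa}); only this boundary case requires additional care.
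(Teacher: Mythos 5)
Your overall architecture matches the paper's: pass to the generating series $\sigma_h$, apply Theorem~\ref{thm:gorenstein}, and translate $\sigma_h = \sum_i \omega_i(\tmmathbf{y})\,\tmmathbf{e}_{\xi_i}(\tmmathbf{y})$ back to a decomposition of $h$ with a residue $r$ vanishing on $\mathbbm{N}^n$. The one step where you genuinely diverge is the proof that $\mu(\omega_i)=\mu(g_i)$. The paper fixes a monomial ordering, notes that the change of basis $\tmmathbf{x}^{\alpha}\mapsto\omega_{\alpha}$ is triangular with nonzero diagonal so that $\tmop{in}(\tmmathbf{\partial}^{\beta}\omega_i)=\tmop{in}(\tmmathbf{\partial}^{\beta}g_i)$ for every $\beta$, and concludes that the two inverse systems have the same leading-term spaces and hence the same dimension. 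You instead compute $\tmop{rank}\,\Gamma_{g_i e^{f_i\cdot\tmmathbf{x}}}=\mu(g_i)$ directly by Taylor expansion and observe that this operator is $H_{\omega_i\tmmathbf{e}_{\xi_i}}$, whose rank is $\mu(\omega_i)$ by Theorem~\ref{thm:gorenstein}. Both arguments are valid; yours is shorter and more operator-theoretic, while the paper's is self-contained and does not invoke Theorem~\ref{thm:gorenstein} a second time. Note that your surjectivity claim for the inner sums $c_{\gamma}(p)=\sum_{\beta}\beta^{\gamma}e^{f_i\cdot\beta}p_{\beta}$ is precisely the linear independence of the sequences $\beta\mapsto\beta^{\gamma}e^{f_i\cdot\beta}$, which is Lemma~\ref{lem:indep}; worth citing rather than asserting.

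On the boundary case $\xi_{i,j}=0$: you are right to flag it, and the paper does not address it — the proof simply writes $\xi_i=(e^{f_{i,1}},\ldots,e^{f_{i,n}})$, which presupposes that every coordinate of every $\xi_i$ is nonzero. Unfortunately neither remedy you suggest works. The frequency $\xi=0$ does occur under the stated hypothesis: take a smooth $h$ with $h(0)=1$ and $h(\alpha)=0$ for all other $\alpha\in\mathbbm{N}^n$; then $\sigma_h=1=1\cdot\tmmathbf{e}_0$ and $\Gamma_h$ has rank $1$. Nor can such a contribution be absorbed into $r$, since $r$ must vanish on all of $\mathbbm{N}^n$, while no nonzero polynomial $g$ can make $g(\alpha)e^{f\cdot\alpha}$ agree with the sequence $\delta_{\alpha=0}$ (a nonzero polynomial cannot vanish on all of $\mathbbm{N}^n\setminus\{0\}$). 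So as stated the theorem needs the additional hypothesis that no $\xi_{i,j}$ vanishes (equivalently, that $I_{\sigma_h}$ contains no coordinate variable $x_j$), or the conclusion must allow an extra polynomial term. Your instinct to isolate this case was correct; just be aware that it is a genuine gap in the paper's statement, not something that can be patched along either of the two lines you sketched.
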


\begin{proof}
  Since $H_h$ is of finite rank, Theorem \ref{thm:gorenstein} implies
  that
  \[ \sigma_h = \sum_{\alpha \in \mathbbm{N}^n} h (\alpha) 
     \frac{\tmmathbf{y}^{\alpha}}{\alpha !} = \sum_{i = 1}^{r'} \omega_i
     (\tmmathbf{y}) \tmmathbf{e}_{\xi_i} (\tmmathbf{y}) \]
  where $\xi_i \in \mathbbm{C}^n$, $\omega_i (\tmmathbf{x}) \in \mathbbm{C}
  [\tmmathbf{x}]$ and rank $H_{\sigma_h} = \sum_{i = 1}^{r'} {\mu}
  (\omega_i)$. Let ${\bdf}_i = (f_{i, 1}, \ldots, f_{i, n}) \in
  \mathbbm{C}^n$ such that $\xi_i = (e^{f_{i, 1}}, \ldots, e^{f_{i, n}})$ and
  $g_{i, \alpha} \in \mathbbm{C}$ for $\alpha \in A_i \subset \mathbbm{N}^n$
  such that
  \[ \omega_i (\tmmathbf{y}) = \sum_{\alpha \in A_i} g_{i, \alpha}
     \omega_{\alpha} (\tmmathbf{y}). \]
  By the relation (\ref{eq:genxa}), the generating series of $r(\bdx)=h - \sum_{i =
  1}^r \sum_{\alpha \in A_i} g_{i, \alpha} \tmmathbf{x}^{\alpha} e^{{\bdf}_i \cdummy \tmmathbf{x}} $ is $0$, which implies that $r$ is a
  function in $C^{\infty} (\mathbbm{R}^n)$ such that $r (\alpha) = 0$,
  $\forall \alpha \in \mathbbm{N}^n$.
  
  It remains to prove that the inverse systems spanned by $\omega_i
  (\tmmathbf{y}) = \sum_{\alpha \in A_i} g_{i, \alpha} \omega_{\alpha}
  (\tmmathbf{y})$ and by $g_i (\tmmathbf{x}) = \sum_{\alpha \in A_i} g_{i,
  \alpha} \tmmathbf{x}^{\alpha}$ have the same dimension. The polynomials
  $\omega_{\alpha}$ are of the form
  \[ \omega_{\alpha} (\tmmathbf{y}) =\tmmathbf{y}^{\alpha} + \sum_{\alpha'
     \neq \alpha, \alpha' \ll \alpha} \omega_{\alpha, \alpha'}
     \tmmathbf{y}^{\alpha'}, \]
  with $\omega_{\alpha, \alpha'} \in \mathbbm{Q}$. Let $\rho$ denotes the
  linear map of $\mathbbm{C} [\tmmathbf{y}]$ such that $\rho
  (\tmmathbf{y}^{\alpha}) = \omega_{\alpha} (\tmmathbf{y})
  -\tmmathbf{y}^{\alpha}$. We choose a monomial ordering $\succ$, which is a
  total ordering on the monomials compatible with the multiplication. Then,
  the initial $\tmop{in} (\omega_{\alpha})$ of $\omega_{\alpha}$, that is the
  maximal monomial of the support of $\omega_{\alpha}$, is
  $\tmmathbf{y}^{\alpha}$ since $\tmmathbf{y}^{\alpha} \succ \tmop{in} (\rho
  (\tmmathbf{y}^{\alpha}))$. 
 As the support of $\omega_{\alpha}$ is in $\{\alpha', \alpha' \ll \alpha\}$, 
 the support of $\partial^{\beta}\omega_{\alpha}$ ($\beta\in \mathbbm{N}^{n}$)
 is  $\{\alpha', \alpha' \ll \alpha-\beta\}$ and  the initial of 
 $\tmmathbf{\partial}^{\beta}\omega_{\alpha}$ is $\tmmathbf{\partial}^{\beta}(\bdx^{\alpha})$.
 By linearity, for any $g \in \mathbbm{C}
  [\tmmathbf{y}]$, we have $\tmop{in} (g) \succ \tmop{in} (\rho (g))$. We
  deduce that
  \[ \omega_i (\tmmathbf{y}) = \sum_{\alpha \in A_i} g_{i, \alpha}
     \omega_{\alpha} (\tmmathbf{y}) = \sum_{\alpha \in A_i} g_{i, \alpha} 
     (\tmmathbf{y}^{\alpha} + \rho (\tmmathbf{y}^{\alpha})) = g_i
     (\tmmathbf{y}) + \rho (g_i) 
\]
and the initial $\tmop{in} (\tmmathbf{\partial}^{\beta}\omega_i)$ is also the initial of  $\tmmathbf{\partial}^{\beta}g_{i}$ ($\beta\in \mathbbm{N}^{n}$).
  Therefore the initial of the vector space spanned by $\omega_i  (\tmmathbf{y}) = g_i (\tmmathbf{y}) + \rho (g_i)$ and all its derivatives
  coincides with the vector space spanned by the initial of $\omega_i (\tmmathbf{y}) = g_i (\tmmathbf{y})$ and all its derivatives. Therefore, the two
  vector spaces have the same dimension. This concludes the proof.
\end{proof}

\begin{remark}
  Instead of a shift by $1$ and the generating series of $h$ computed on the
  unitary grid $\mathbbm{N}^n$, one can consider the shift
  $S_j (h) = h \left( x_{1}, \ldots, x_{j - 1}, x_j + \frac{1}{T_i}, x_{j + 1}, \ldots, x_n \right)$ for $T_j \in \mathbbm{R}_+$ and the generating
  series of the sequence $\left( h \left( \frac{\alpha_1}{T_1}, \ldots,
  \frac{\alpha_n}{T_n} \right) \right)_{\alpha \in \mathbbm{N}^n}$. The
  previous results apply directly, replacing the function $h$ by $h_T : (x_1,
  \ldots, x_n) \mapsto h \left( \frac{x_1}{T_1}, \ldots, \frac{x_n}{T_n}
  \right)$ where $T = (T_1, \ldots, T_n)$.
\end{remark}

\begin{remark}
  Using Lemma \ref{lem:indep}, we check that the map $h \in
  \PolExp \mapsto \sigma_h \in \mathbbm{C} [[\tmmathbf{y}]]$
  is injective and the regularity condition is satisfied on
  $\PolExp$. Thus, in Theorem \ref{thm:decvalues} if $h \in
  \PolExp$ then we must have $r (\tmmathbf{x}) = 0$. 
\end{remark}

\begin{remark}
  By applying Algorithm \ref{algo:decomposition} to the sequence of
  evaluations of a function $h \in
  \PolExp$ on the (first) points of a
  regular grid in $\mathbbm{R}^n$, we obtain a
  method to decompose functions  in $\in
  \PolExp$ as a sum of products of polynomials by
  exponentials.
\end{remark}

\subsection{Sparse interpolation}

For $\beta = (\beta_1, \ldots, \beta_n) \in \mathbbm{N}^n$ and $\tmmathbf{x}
\in \mathbbm{C}^n$, we denote $\log^{\beta} \tmmathbf{x}= \prod_{i = 1}^n
(\log (x_i))^{\beta_i}$ where $\log (x)$ is the principal value of the complex
logarithm $\mathbbm{C}\setminus\{0\}$. Let 
$$ 
\PolyLog (x_1, \ldots, x_n) = \left\{
\sum_{\alpha, \beta} p_{\alpha, \beta} \tmmathbf{x}^{\alpha} \log^{\beta}
(\tmmathbf{x}), p_{\alpha, \beta} \in \mathbbm{C} \right\}
$$
be the set of functions, which are the sum of products of polynomials in $\tmmathbf{x}$ and
polynomials in $\log (\tmmathbf{x})$.

For $h = \sum_{\alpha, \beta} h_{\alpha, \beta} \tmmathbf{x}^{\alpha}
\log^{\beta} (\tmmathbf{x}) \in \PolyLog (\tmmathbf{x})$, we
denote by $\varepsilon (h)$ the set of exponents $\alpha \in \mathbbm{N}^n$
such that $h_{\alpha, \beta} \neq 0$.

The sparse interpolation problem consists in computing the decomposition of a
function $p$ of $\PolyLog (\tmmathbf{x})$ as a sum of terms of
the form $p_{\alpha, \beta} \tmmathbf{x}^{\alpha} \log^{\beta} (\tmmathbf{x})$
from the values of $p$. We apply the construction introduced in Section
\ref{sec:dec} with
\begin{itemize}
  \item $\mathcal{F}= \PolyLog (\tmmathbf{x})$,
  
  \item $S_j : h (x_1, \ldots, x_n) \mapsto h (x_{1}, \ldots, x_{j - 1},
  \lambda_j x_j, x_{j + 1}, \ldots, x_n)$ the scaling operator of $x_j$ by
  $\lambda_j \in \mathbbm{C}$,
  
  \item $\Delta : h (x_1, \ldots, x_n) \mapsto \Delta [h] = h (1,
  \ldots, 1)$ the evaluation at $\tmmathbf{1}= (1, \ldots, 1)$.
\end{itemize}
We easily check that
\begin{itemize}
  \item the operators $S_j$ are commuting,
  
  \item for $\alpha = (\alpha_1, \ldots, \alpha_n) \in \mathbbm{N}^n$, the
  monomial $\tmmathbf{x}^{\alpha}$ is an eigenfunction of $S_j$: $S_j
  (\tmmathbf{x}^{\alpha}) = \lambda_j^{\alpha_j}
  \tmmathbf{x}^{\alpha}$.
  
  \item for $\alpha = (\alpha_1, \ldots, \alpha_n) \in \mathbbm{N}^n$, $\beta
  = (\beta_1, \ldots, \beta_n) \in \mathbbm{N}^n$, $\tmmathbf{x}^{\alpha}
  \log^{\beta} (\tmmathbf{x})$ is a generalized eigenfunction of $S_j$:
  \[ S_j (\tmmathbf{x}^{\alpha} \log^{\beta} ( \tmmathbf{x} )) = \sum_{0
     \leqslant \beta' \leqslant \beta_j} \lambda_j^{\alpha_j}
     \binom{\beta_j}{\beta'} \log^{\beta_j - \beta'} \lambda_j \log^{\beta'}
     (x_j) \tmmathbf{x}^{\alpha}  \prod_{k \neq j} \log^{\beta_k} (x_k). \]
\end{itemize}
More generally, for $\gamma \in \mathbbm{N}^n$, we have
\begin{eqnarray*}
  S^{\gamma} (\tmmathbf{x}^{\alpha} \log^{\beta} (\tmmathbf{x})) & = & \left(
  \prod_{i = 1}^n (\lambda_i^{\gamma_i} x_i)^{\alpha_i} \right) \left(
  \prod_{i = 1}^n (\gamma_i \log (\lambda_i) + \log (x_i))^{\beta_i} \right)\\
  & = & \xi^{\gamma} \tmmathbf{x}^{\alpha} \left( \sum_{\beta' \ll \beta}
  \binom{\beta}{\beta'} \gamma^{\beta'} \log^{\beta'} (\tmmathbf{\lambda})
  \log^{\beta - \beta'} (\tmmathbf{x}) \right)
\end{eqnarray*}
where $\xi = (\lambda_1^{\alpha_1}, \ldots, \lambda_n^{\alpha_n})$. We
deduce that,
\begin{equation}
  \Delta [S^{\gamma} (\tmmathbf{x}^{\alpha} \log^{\beta} (\tmmathbf{x}))] =
  \xi^{\gamma} \gamma^{\beta} \log^{\beta} (\tmmathbf{\lambda}).
  \label{eq:polylogcoeff}
\end{equation}
\begin{theorem}
  Let $h \in \PolyLog (\tmmathbf{x})$. For $\lambda_1, \ldots,
  \lambda_n \in \mathbbm{C}$, the generating series $\sigma_h = \sum_{\gamma
  \in \mathbbm{N}^n} h (\lambda_1^{\gamma_1}, \ldots, \lambda_n^{\gamma_n})
  \frac{\tmmathbf{y}^{\gamma}}{\gamma !}$ of $h$ is of the form
  \[ \sigma_h (\tmmathbf{y}) = \sum_{i = 1}^{r'} \omega_i (\tmmathbf{y})
     \tmmathbf{e}_{\xi_i} (\tmmathbf{y}) \]
  with
  \begin{itemize}
    \item $\varepsilon (h) = \{ \alpha_1, \ldots, \alpha_{r'} \},$
    
    \item $\xi_i = (\lambda_1^{\alpha_{i, 1}}, \ldots, \lambda_n^{\alpha_{i,
    n}}) \in \mathbbm{C}^n$,
    
    \item $\omega_i (\tmmathbf{y}) = \sum_{\beta \in B_i} \omega_{i, \beta}
    \tmmathbf{y}^{\beta} \in \mathbbm{C} [\tmmathbf{y}]$.
  \end{itemize}
  If moreover $\lambda_i \neq 1$ and the points $\xi_i =
  (\lambda_1^{\alpha_{i, 1}}, \ldots, \lambda_n^{\alpha_{i, n}})$, $\alpha_i
  \in \varepsilon (h)$ are distinct, then $h = \sum_{i = 1}^{r'} \sum_{\beta
  \in B_i} \omega_{i, \beta} \tmmathbf{x}^{\alpha_i} \log^{\beta}
  (\tmmathbf{x})$.
\end{theorem}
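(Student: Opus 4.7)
The plan is to exploit linearity of the map $h \mapsto \sigma_h$ in order to reduce the problem to computing $\sigma$ for a single polylog monomial $\tmmathbf{x}^{\alpha}\log^{\beta}(\tmmathbf{x})$, then to group the resulting contributions by the exponent $\alpha \in \varepsilon(h)$. The second assertion will then follow from uniqueness of polynomial-exponential decompositions (Lemma~\ref{lem:indep}) combined with the invertibility of a triangular change of basis.

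First I would write $h = \sum_{\alpha \in \varepsilon(h)}\sum_{\beta} h_{\alpha,\beta}\, \tmmathbf{x}^{\alpha}\log^{\beta}(\tmmathbf{x})$ and use linearity of $\sigma_{(\cdot)}$. For a single basic term, equation \eqref{eq:polylogcoeff} gives
\[
\sigma_{\tmmathbf{x}^{\alpha}\log^{\beta}(\tmmathbf{x})}(\tmmathbf{y}) \;=\; \log^{\beta}(\lambda) \sum_{\gamma \in \mathbbm{N}^n} \gamma^{\beta}\,\xi_\alpha^{\gamma}\,\frac{\tmmathbf{y}^{\gamma}}{\gamma!},
\]
with $\xi_{\alpha} := (\lambda_1^{\alpha_1},\ldots,\lambda_n^{\alpha_n})$. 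The inner sum coincides, up to an interchange of the roles of $\alpha$ and $\beta$, with the generating series \eqref{eq:genbeta} of Section~\ref{sec:values}, so by \eqref{eq:genxa} it factors as $P_{\beta}^{(\xi_\alpha)}(\tmmathbf{y})\,\tmmathbf{e}_{\xi_\alpha}(\tmmathbf{y})$, for a polynomial $P_{\beta}^{(\xi_\alpha)} \in \mathbbm{C}[\tmmathbf{y}]$ whose support lies in exponents $\beta'\ll\beta$ and whose leading term is $\xi_\alpha^{\beta}\,\tmmathbf{y}^{\beta}/\beta!$. Collecting contributions over $\alpha \in \varepsilon(h)$ and all $\beta$ and grouping by $\alpha$ yields
\[
\sigma_h(\tmmathbf{y}) \;=\; \sum_{i=1}^{r'} \omega_i(\tmmathbf{y})\,\tmmathbf{e}_{\xi_i}(\tmmathbf{y}),\qquad \omega_i(\tmmathbf{y}) = \sum_{\beta} h_{\alpha_i,\beta}\,\log^{\beta}(\lambda)\,P_{\beta}^{(\xi_i)}(\tmmathbf{y}),
\]
with $\xi_i = \xi_{\alpha_i}$ and $\omega_i \in \mathbbm{C}[\tmmathbf{y}]$, which establishes the first part.

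For the ``if moreover'' assertion, I would assume the $\xi_i = \lambda^{\alpha_i}$ pairwise distinct and $\lambda_j \neq 1$ for all $j$. Lemma~\ref{lem:indep} then makes the polynomial-exponential decomposition of $\sigma_h$ unique, so that the pairs $(\xi_i,\omega_i)$ are intrinsic to $\sigma_h$; the distinctness of $\xi_i$ makes the map $\alpha\mapsto\lambda^{\alpha}$ injective on $\varepsilon(h)$, hence the $\alpha_i$ are recovered unambiguously from the $\xi_i$. The condition $\lambda_j\neq 1$ guarantees $\log\lambda_j \neq 0$, so every factor $\log^{\beta}(\lambda)$ is nonzero; combined with the triangularity of the polynomials $P_{\beta}^{(\xi_i)}$ (whose diagonal entries $\xi_i^{\beta}/\beta!$ are nonzero since $\xi_i \neq 0$), the linear map $(h_{\alpha_i,\beta})_{\beta} \mapsto \omega_i$ displayed above is invertible. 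Therefore the coefficients $h_{\alpha_i,\beta}$ are uniquely determined by $\omega_i$, and reading them off via the monomial expansion $\omega_i(\tmmathbf{y}) = \sum_{\beta\in B_i}\omega_{i,\beta}\,\tmmathbf{y}^{\beta}$ yields the stated reconstruction of $h$ in the polylog basis $\{\tmmathbf{x}^{\alpha_i}\log^{\beta}(\tmmathbf{x})\}_{i,\beta}$.

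The main obstacle is the algebraic bookkeeping underpinning the change of basis between the monomial coefficients $\omega_{i,\beta}$ of $\omega_i$ and the polylog coefficients of $h$: one must identify the polynomials $P_{\beta}^{(\xi_i)}$ explicitly enough to verify their triangular structure and to control the diagonal factors $\xi_i^{\beta}\log^{\beta}(\lambda)/\beta!$, ruling out any degeneracy. Once this is in hand, everything else reduces to linearity, \eqref{eq:polylogcoeff} (applied termwise), the factorisation \eqref{eq:genxa} from Section~\ref{sec:values}, and the uniqueness provided by Lemma~\ref{lem:indep}.
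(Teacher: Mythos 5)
Your argument follows the same route as the paper's own proof: linearity reduces to a single polylog term, \eqref{eq:polylogcoeff} together with \eqref{eq:genbeta} and \eqref{eq:genxa} gives the factorisation $\log^{\beta}(\lambda)\,\omega_{\beta}(\mathbf{y})\,\mathbf{e}_{\xi_{\alpha}}(\mathbf{y})$, terms are grouped by $\alpha$, and Lemma~\ref{lem:indep} plus the (triangular, hence invertible) relation between the $\omega_{\beta}$ basis and the monomial basis establishes unique recovery of the $h_{\alpha_i,\beta}$. You are in fact somewhat more explicit than the paper about why that change of basis is invertible (nonvanishing diagonal entries $\xi_i^{\beta}\log^{\beta}(\lambda)/\beta!$), which is a welcome clarification; the only caution is that the very last step — ``reading them off via the monomial expansion'' — should be understood as applying the inverse of that triangular map rather than a literal identification $h_{\alpha_i,\beta}=\omega_{i,\beta}$, an imprecision shared with the theorem's phrasing.
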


\begin{proof}
  Let $\alpha, \beta \in \mathbbm{N}^n$. As $\bdx^{\alpha}$ is an eigenfunction of the operators $S_{j}$, its generating series associated to
  $\tmmathbf{x}^{\alpha}$ is $\tmmathbf{e}_{\xi} (\tmmathbf{y})$ where $\xi =
  (\lambda_1^{\alpha_1}, \ldots, \lambda_n^{\alpha_n} )$. From the relations
  (\ref{eq:genbeta}) and (\ref{eq:genxa}), we deduce that the generating
  series of $\tmmathbf{x}^{\alpha} \log^{\beta} (\tmmathbf{x})$ is
  \[ \sigma_{\tmmathbf{x}^{\alpha} \log^{\beta} (\tmmathbf{x})} =
     \log^{\beta} (\tmmathbf{\lambda}) \sum_{\gamma \in \mathbbm{N}^n}
     \gamma^{\beta} \xi^{\gamma}  \frac{\tmmathbf{y}^{\gamma}}{\gamma !} =
     \log^{\beta} (\tmmathbf{\lambda}) \omega_{\beta} (\tmmathbf{y})
     \tmmathbf{e}_{\xi} (\tmmathbf{y}) \]
  where $\omega_{\beta} (\tmmathbf{y})$ is the polynomial obtained from the
  expansion of $\tmmathbf{y}^{\beta}$ in terms of the Macaulay binomial
  polynomials $b_{\alpha} (\tmmathbf{y})$. As in Section \ref{sec:values},
  this shows that the completeness property is satisfied.
  
  If $h = \sum_{i = 1}^{r'} \sum_{\beta \in B_i} h_{i, \beta}
  \tmmathbf{x}^{\alpha_i} \log^{\beta} (\tmmathbf{x})$, $\lambda_i \neq 1$ and
  the points $\xi_i = (\lambda_1^{\alpha_{i, 1}}, \ldots,
  \lambda_n^{\alpha_{i, n}})$ are distinct, then
  \[ \sigma_h = \sum_{i = 1}^{r'} \left( \sum_{\beta \in B_i} h_{i, \beta}
     \log^{\beta} (\tmmathbf{\lambda}) \omega_{\beta} (\tmmathbf{y}) \right)
     \tmmathbf{e}_{\xi_i} (\tmmathbf{y}) = \sum_{i = 1}^{r'} \omega_i
     (\tmmathbf{y}) \tmmathbf{e}_{\xi_i} (\tmmathbf{y}) \]
  with $\xi_i = (\lambda_1^{\alpha_{i, 1}}, \ldots, \lambda_n^{\alpha_{i,
  n}})$ and $\omega_i (\tmmathbf{y}) \in \mathbbm{C} [\tmmathbf{x}]$. By Lemma
  \ref{lem:indep} and the linear independency of the polynomials
  $\omega_{\beta}$, we deduce that the coefficients $h_{i, \beta}$ are
  uniquely determined from the coefficients of the decomposition of $\omega_i
  (\tmmathbf{y})$ in terms of the Macaulay binomial polynomials
  $\omega_{\beta}$, since $\log^{\beta} (\tmmathbf{\lambda}) \neq 0$.
\end{proof}

This result leads to a new method to decompose an element $h \in
\PolyLog(\tmmathbf{x})$ with an exponent set $\varepsilon (h)
\subset A \subset \mathbbm{N}^n$. By choosing $\lambda_1, \ldots, \lambda_n
\in \mathbbm{C} \setminus \{ 1 \}$ such that the points
$(\lambda_1^{\alpha}, \ldots, \lambda_n^{\alpha})$ for $\alpha \in A$ are
distinct and by computing the decomposition of the generating series as a
polynomial-exponential series $\sum_{i = 1}^{r'} \omega_i (\tmmathbf{y})
\tmmathbf{e}_{\xi_i} (\tmmathbf{y})$ (Algorithm \ref{algo:decomposition}), we deduce the exponents
$\alpha_i = (\log_{\lambda_1} (\xi_{i, 1}), \ldots, \log_{\lambda_n} (\xi_{i,
n}))$ and the coefficients $h_{i, \beta}$ in the decomposition $h = \sum_{i =
1}^{r'} \sum_{\beta \in B_i} h_{i, \beta} \tmmathbf{x}^{\alpha_i} \log^{\beta}
(\tmmathbf{x})$ from the weight polynomials $\omega_i (\tmmathbf{y})$.

This method generalizes the sparse interpolation methods of
{\cite{BenOrTiwari:1988}}, {\cite{Zippel:1979}}, {\cite{GiLaWe09}}, where a
single operator $S : h (x_1, \ldots, x_n) \mapsto h (\lambda_1 x_{1}, \ldots,
\lambda_n x_n)$ is used for some $\lambda_1, \ldots, \lambda_n \in
\mathbbm{C}$ and where only polynomial functions are considered. The monomials
$\tmmathbf{x}^{\alpha}$ ($\alpha \in \mathbbm{N}^n$) are eigenfunctions of $S$
for the eigenvalue $\tmmathbf{\lambda}^{\alpha} = \prod_{i = 1}^n
\lambda_i^{\alpha_i}$. For $h = \sum_{i = 1}^r \omega_i
\tmmathbf{x}^{\alpha_i}$, the corresponding univariate generating series
$\sigma_h$ defines an Hankel operator, which kernel is generated by the
polynomial $p (x) = \prod_{i = 1}^r (x -\tmmathbf{\lambda}^{\alpha_i})$ when
$\tmmathbf{\lambda}^{\alpha_1}, \ldots, \tmmathbf{\lambda}^{\alpha_r}$ are
distinct. If $\lambda_1, \ldots, \lambda_n \in \mathbbm{C}$ are chosen
adequately (for instance distinct prime integers {\cite{BenOrTiwari:1988}},
{\cite{Zippel:1979}} or roots of unity of different orders {\cite{GiLaWe09}}),
the roots of $p$ yield the exponents of $h \in \mathbbm{C}
[\tmmathbf{x}]$.

The multivariate approach allows to use moments $h (\lambda_1^{\alpha_1},
\ldots, \lambda_n^{\alpha_n})$ with $\alpha = (\alpha_1, \ldots, \alpha_n) \in
\mathbbm{N}^n$ of degree $| \alpha | = \alpha_1 + \cdots + \alpha_n$ less than
the degree $2 r - 1$ needed in the previous sparse interpolation methods. Sums
of products of polynomials and logarithm functions can also be recovered by
this method, the logarithm terms corresponding to multiple roots.

{\small

}
\end{document}